\title[Algebraic infinite delooping]{Algebraic infinite delooping and derived 
destabilization}
\author[G. Powell]{Geoffrey Powell}
\address{LAREMA, CNRS, Université d'Angers, Université 
Bretagne Loire, 2 Bd lavoisier, 49045 Angers, France}
\email{Geoffrey.Powell@math.cnrs.fr}
\keywords{Steenrod algebra -- Dyer-Lashof operations -- Koszul duality}
\subjclass[2000]{Primary 55S10; Secondary 18E10}
\date{}
\thanks{This work was partially supported by the 
project {\em Nouvelle 
Équipe},  convention No. 2013-10203/10204 between the Région des Pays de la 
Loire and the Université d'Angers.}
\newtheorem{THM}{Theorem}
\newtheorem{COR}[THM]{Corollary}
\newtheorem{thm}{Theorem}[section]
\newtheorem{prop}[thm]{Proposition}
\newtheorem{cor}[thm]{Corollary}
\newtheorem{lem}[thm]{Lemma}
\theoremstyle{definition}
\newtheorem{defn}[thm]{Definition}
\newtheorem{exam}[thm]{Example}
\theoremstyle{remark}
\newtheorem{rem}[thm]{Remark}
\newtheorem{nota}[thm]{Notation}
\renewcommand{\phi}{\varphi}
\renewcommand{\epsilon}{\varepsilon}
\newcommand{\po}{\ar@{}[dr]|(.7){\text{\pigpenfont R}}}
\newcommand{\pb}{\ar@{}[dr]|(.3){\text{\pigpenfont J}}}
\newcommand{\dash}{\mbox{-}}
\newcommand{\unst}{\mathscr{U}}
\newcommand{\nat}{\mathbb{N}}
\newcommand{\zed}{\mathbb{Z}}
\newcommand{\field}{\mathbb{F}}
\newcommand{\cala}{\mathscr{A}}
\newcommand{\calc}{\mathcal{C}}
\newcommand{\indec}{\mathbf{q}}
\newcommand{\triv}{\mathbf{triv}}
\newcommand{\qmbig}{\widetilde{\mathcal{QM}}}
\newcommand{\qm}{\mathcal{QM}}
\newcommand{\qmgr}{\qm^{\mathrm{gr}}}
\newcommand{\sing}{\mathscr{R}}
\newcommand{\amod}{\mathcal{M}}
\newcommand{\amodc}{\underline{\amod}}
\newcommand{\amodgr}{\amod^{\mathrm{gr}}}
\newcommand{\qsusp}{\overline{\Sigma}}
\newcommand{\lder}{\mathbb{L}}
\newcommand{\qcx}{\mathfrak{C}_\indec}
\newcommand{\redqcx}{\overline{\qcx}}
\newcommand{\ch}{\mathrm{Ch}}
\newcommand{\untor}{\mathrm{Untor}^R}
\newcommand{\anr}[1][1]{a_{#1}R \dash \mathrm{mod}}
\newcommand{\anar}[1][1]{a_{#1}\cala^\circ R \dash\mathrm{mod}}
\newcommand{\rcx}{\mathfrak{R}}
\newcommand{\imQ}[1][M]{\mathrm{im}(Q_0)_{#1}}
\newcommand{\lgth}[1]{^{[#1]}}
\newcommand{\filt}{\mathfrak{f}}
\newcommand{\hilb}{\mathsf{H}}
\newcommand{\versch}{\tilde{\unst}_* ^\field}
\newcommand{\vconn}{{\unst}_* ^\field}
\newcommand{\vertic}{\mathfrak{v}}
\newcommand{\stein}{\mathscr{L}}
\newcommand{\hecke}{\mathscr{H}}
\newcommand{\kz}[1][s]{\mathfrak{L}_{#1}}
\newcommand{\gr}{\mathfrak{gr}}
\begin{document}

\begin{abstract}
Working over the prime field of characteristic two, consequences of the Koszul 
duality between the Steenrod algebra $\cala$ and the (big) Dyer-Lashof algebra are 
studied, with an emphasis on the interplay between instability for the Steenrod algebra action 
and that for the Dyer-Lashof operations. The central algebraic framework is the category 
$\qmgr$ of length-graded modules over the Steenrod algebra equipped with an unstable action of the 
Dyer-Lashof algebra, with compatibility via the Nishida relations.

A first ingredient is  a functor from  modules over the Steenrod algebra, 
$\amod$, to $\qmgr$, that arose in the work of Kuhn and McCarty on the homology of infinite loop spaces.  
This functor is given in terms of derived functors of destabilization from the category $\amod$ of modules over the Steenrod 
algebra to unstable modules, enriched by taking into account the action of Dyer-Lashof operations.

A second ingredient is the derived functors of the Dyer-Lashof indecomposables 
functor 
$\indec : \qmgr \rightarrow \amodgr$
 to length-graded modules over the Steenrod algebra. These are related to functors used by Miller
 in his study of a spectral sequence to calculate the homology of an infinite delooping. 
An important fact is that these functors can be calculated as the homology of 
an explicit Koszul complex with terms expressed as certain Steinberg functors. 
The latter are quadratic dual to the more familiar Singer functors.

By exploiting the explicit complex built from the Singer functors which 
calculates the derived functors of destabilization,
 Koszul duality leads to an algebraic infinite delooping spectral sequence. This 
is conceptually similar to Miller's spectral sequence,
 but there seems to be no direct relationship. 

The spectral sequence sheds light on the relationship between unstable modules 
over the Steenrod algebra and all $\cala$-modules.
\end{abstract}

\maketitle

\section{Introduction}

To motivate the study of the rich algebraic 
structures which intervene, consider a spectrum $X$ and its associated infinite 
loop space $\Omega^\infty 
X$ and their respective mod-$2$ homologies (denoted here 
simply by $H_*(X)$ and $H_* (\Omega^\infty X)$). 
The homology of the spectrum, $H_* (X)$, is an $\cala$-module, where the mod 
$2$ 
Steenrod algebra $\cala$ acts on the right. 
However, unlike the homology of a space, it is not an unstable 
$\cala$-module in general. On the other hand, $H_*(\Omega^\infty X)$ is an 
unstable 
$\cala$-module and there is much more structure: it is a bicommutative Hopf 
algebra and 
is equipped with an action of the Dyer-Lashof algebra and all these structures 
are 
compatible. Here the focus is upon the residual additive structure after 
passage to the algebra indecomposables $QH_* (\Omega^\infty X)$. 

Given $H_* (X)$ as an $\cala$-module, there are various strategies available 
for calculating $H_* (\Omega^\infty X)$, at least when $X$ is connected. For 
instance, Kuhn and McCarty \cite{KMcC} use the 
spectral sequence of the associated Goodwillie-Arone tower;  Haugseng and 
Miller 
\cite{MH} use the 
spectral sequence associated to a cosimplicial resolution constructed from 
Eilenberg-MacLane spectra.

In their work, Kuhn and McCarty give an algebraic approximation to their spectral sequence 
with 
$E^\infty$-page expressed in terms of the derived functors of destabilization;  
an important fact is that the Dyer-Lashof action is visible on the 
$E^\infty$-page. (In this homological setting, the  destabilization functor $\Omega^\infty : \amod 
\rightarrow \unst$ is the right adjoint to the inclusion of the category $\unst$ 
of unstable 
modules in $\amod$, the category of $\cala$-modules. This functor is left exact 
and has 
non-trivial right derived functors $\Omega^\infty_i$.)

From a purely algebraic viewpoint, the fundamental algebraic object appearing 
in 
\cite{KMcC} is, 
for $M \in \amod$, the length-graded $\cala$-module 
 \[
H_0 \rcx M :=   \bigoplus_i \Sigma^{-1} \Omega^\infty_i \Sigma^{1-i}M, 
 \]
 equipped with an unstable action of the (big) Dyer-Lashof algebra that is 
compatible with the $\cala$-action via the Nishida relations. The length 
grading 
reflects the fact that the (big) Dyer-Lashof algebra is a homogeneous 
quadratic algebra, hence has a length grading.

Such objects form a category $\qmgr$ and the above can be considered as a 
functor 
from $\amod $ to $\qmgr$. (Forgetting the length grading gives the category 
$\qm$ used 
by Kuhn and McCarty.) As  the notation suggests, $H_0\rcx M$ is the zeroth 
homology of a functorial chain complex $ \rcx M$ taking values in $\qmgr$. 
 A secondary purpose of this paper is to underline the importance of 
the functor 
\[
 H_0 \rcx : \amod \rightarrow \qmgr 
\]
and give its fundamental properties (see Sections \ref{sect:destab} and \ref{sect:H0R_susp}).

The object appearing in Kuhn and McCarty's algebraic approximation is not 
$H_0 \rcx M$ (for $M= H_* X$) but $\qsusp H_0\rcx \Sigma^{-1} M$, where 
$\qsusp : \qmgr \rightarrow \qmgr$ is the left adjoint to the desuspension 
functor 
$\Sigma^{-1} : \qmgr \rightarrow \qmgr$. This exhibits the interplay between 
the notions 
of instability with respect to the $\cala$-module structure and with respect to 
the Dyer-Lashof action. 
For instance, $H_0 \rcx M$ is not necessarily $\cala$-unstable but is 
Dyer-Lashof unstable. The suspension 
$\Sigma H_0 \rcx M $ is $\cala$-unstable, but need not be Dyer-Lashof unstable; 
the functor $\qsusp$ corrects this. 
 It is also interesting to understand the relationship between $H_0 \rcx M$ and 
$H_0 \rcx \Sigma M$; this is explained in Corollary 
\ref{cor:delooping_H0RM}, generalizing a classical result for unstable modules 
over the Steenrod algebra.

To introduce the second ingredient, let us return to topology and the attempt 
to 
recover $H_*(X)$ from 
 $H_* (\Omega^\infty X)$ by killing  the extraneous 
structure. This strategy was carried out by 
Miller, who constructed an infinite delooping spectral sequence \cite{Miller}.
The first step passes to the algebra indecomposables $QH_* (\Omega^\infty X)$ 
and the second to the indecomposables for the residual Dyer-Lashof action; in 
the 
spectral sequence, the higher derived functors of these appear. The restriction axiom 
for the action 
of Dyer-Lashof operations $Q_0 x =x^2$ implies that $QH_* (\Omega^\infty X)$ is 
the desuspension of an object of $\qm$, 
which leads Miller to work with a slightly different category.

From the current viewpoint, the basic functor to consider is the Dyer-Lashof 
indecomposables functor 
\[
 \indec : \qmgr \rightarrow \amodgr
\]
that is left adjoint to the trivial Dyer-Lashof action functor (here $\amodgr$ 
is the category of length-graded $\cala$-modules). This has left derived 
functors (in the sense of relative
homological algebra), $\lder_i \indec$, which are of independent interest. 
These 
are related to the functors $\untor_* (\field, -)$
 used by Miller, who identified some key properties.

Restricted to $\amod$, the functors $\lder_n \indec$ ($n \in \nat$) turns out to be 
non-trivial only in length grading $n$; this is a manifestation
of the Koszul property, à la Priddy \cite{Priddy}. Moreover, the associated 
functor $\stein_n : \amod \rightarrow \amod$ (termed
here the Steinberg functor), is exact. This functor is quadratic dual to the 
important Singer functor $\sing_n : \amod \rightarrow \amod$ 
 which arises as a component of the left adjoint to the forgetful functor $\qm 
\rightarrow \amod$. The Steinberg functors do not 
seem to have been studied in full generality, but arise for example implicitly 
in the work 
of Kuhn on the Whitehead conjecture \cite{Kuhn:Whitehead}. 

For $N \in \qmgr$, with length-grading components $N\lgth{i}$ ($i \in \nat$), 
in fixed length grading $n \in \nat$, there 
is a  Koszul complex $\kz[n] N$ in 
$\amod$:
 \[
  \stein_n N\lgth{0}
  \stackrel{d^\stein_n }{\rightarrow}
  \stein_{n-1} N\lgth{1}
  \stackrel{d^\stein_{n-1} }{\rightarrow}
\stein_{n-2} N\lgth{2}
\rightarrow 
\ldots 
\rightarrow 
\stein_0 N \lgth{n} = N\lgth{n},
  \]
where $\stein_{i} N\lgth{n-i}$ is placed in homological degree $i$.
The differential is defined using the action of Dyer-Lashof operations on $N$.
A related complex occurs in the work of Miller \cite{Miller}.

  By Corollary \ref{cor:vanishing_lder_indec}, the $i$th homology of $\kz[n]N$ is 
$(\lder_i \indec N)\lgth{n}$.  Since 
the Steinberg functors are relatively well understood as functors on $\amod$, 
this leads 
to a good understanding of the derived functors $\lder_i \indec$.

Combining the Kuhn-McCarty algebraic approximation with the existence of 
Miller's 
infinite delooping spectral sequence leads to the following question: is 
there a natural spectral sequence that recovers $M \in \amod$ from the input 
$H_0 \rcx M $? For convergence reasons, it is only reasonable to ask this 
question when $M$ is 
bounded below, but one does not necessarily want to insist that $M$ be 
$(-1)$-connected. In this generality the answer is {\em no}, as can be seen by  
considering the module $M = \Sigma^{-2} \cala^*$ ($\cala^*$ the dual Steenrod 
algebra), since $H_0 \rcx \Sigma^{-2} \cala^*$ is zero.
 
This does not preclude the existence of such a spectral sequence to recover $M$ 
from $H_0 \rcx M$ 
when $M$ is $0$-connected. For example, one could use  
 the filtration of an  $\cala$-module given by Miller in \cite{Miller}. However, the author
knows of no approach which leads to such a spectral sequence with an explicitly 
identified $E^2$-page.

On the other hand, the Koszul duality between Steenrod and Dyer-Lashof actions {\em does} lead 
to a spectral sequence that gives important information on the structure of a 
bounded-below $\cala$-module.
This is the subject of Section \ref{sect:ss}, which contains the following 
result:

\begin{THM}
 Let  $M \in \amod$ be bounded-below.
\begin{enumerate}
 \item 
There is a natural second quadrant homological spectral sequence  $(E^r_{s,t}, 
d^r)$
 with $d_r$ of $(s,t)$-bidegree $(r-1, -r)$ and $E^2$-page: 
\[
 E^2_{s,t} =
\lder_t\indec \Big(
H_0 \rcx \Sigma^{-s}M
\Big)\lgth{-s}.
\]
\item 
The spectral sequence converges strongly to  $M$ with associated increasing 
filtration
$\vertic_t M \subset M$, $t \in \nat$.
\item 
There is a natural isomorphism
\[
 E^2_{-t, t} 
 \cong 
 \ker 
 \big \{
 \stein_t \Sigma^{-1} \Omega^\infty \Sigma^{1+t}M 
 \rightarrow 
 \stein_{t-1} \Sigma^{-1} \Omega^{\infty}_1 \Sigma^{t}M 
 \big \}.
\]
 \item 
The $t=0$ edge morphism identifies with the natural inclusion
\[ 
\vertic_0 M \cong 
\Sigma^{-1} \Omega^{\infty} \Sigma M  
\hookrightarrow 
 M
\]
in $(s,t)$-degree $(0,0)$ and is zero elsewhere.
\item
$E^2_{s,t}=0$ for $t>-s$, so that the spectral sequence 
is concentrated below the anti-diagonal and there is a second edge homomorphism:
\[
 E^\infty_{-t, t} \cong \vertic_t M / 
\vertic_{t-1} M
\hookrightarrow 
 E^2_{-t, t}.
 \]
\item 
$E^\infty_{s,t}=0$ if $s \neq -t$ and $E^\infty_{-t, t} \cong \vertic_t M / 
\vertic_{t-1} M$.
\end{enumerate}
\end{THM}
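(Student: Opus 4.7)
The plan is to build a natural bicomplex in $\amod$ whose totalization is chain-equivalent to $M$, and whose two associated filtrations produce respectively the $E^2$-page claimed in (i) and the filtration $\vertic_\bullet M$ of (ii). The raw materials are the two Koszul-dual complexes emphasized in the introduction: the Singer-functor complex, whose enrichment with the Dyer-Lashof action is the complex $\rcx M$ with values in $\qmgr$ (and whose zeroth homology is $H_0\rcx M$), and the Steinberg-functor Koszul complex $\kz[n]$ which, by Corollary \ref{cor:vanishing_lder_indec}, computes $\lder_\bullet \indec$ in length grading $n$.

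\textbf{Construction and the $E^2$-page.} First I would organise the Koszul complexes $\kz$ applied to the terms of (appropriate suspensions of) $\rcx M$ into a single bicomplex $C_{\bullet,\bullet}(M)$, arranged so that the column $s\leq 0$ carries the Steinberg complex $\kz[-s]$ of the length-$(-s)$ piece of $\rcx \Sigma^{-s}M$, with horizontal differential inherited from the Singer complex. The spectral sequence of one filtration begins with vertical homology, which by Corollary \ref{cor:vanishing_lder_indec} returns $\lder_t \indec$ of the terms of $\rcx \Sigma^{-s}M$ in length grading $-s$; the subsequent horizontal homology produces $\lder_t \indec(H_0 \rcx \Sigma^{-s}M)\lgth{-s}$, matching (i). The desuspension $\Sigma^{-s}$ is forced by the definition of $H_0 \rcx$, whose length-$n$ piece is $\Sigma^{-1}\Omega^\infty_n \Sigma^{1-n}(-)$. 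Statement (v) is then immediate because $\kz[-s]$ has only $-s+1$ terms, and (iii) is read off the kernel in the top homological position of $\kz[t]$.

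\textbf{Convergence and the filtration $\vertic$.} The job of the other filtration is to exhibit the totalization of $C_{\bullet,\bullet}(M)$ as chain-equivalent to $M$ concentrated in bidegree $(0,0)$; this is the genuine manifestation of Koszul duality between Steenrod and Dyer-Lashof actions at chain level, and I expect to establish it by an acyclic-assembly argument, identifying the rows of $C_{\bullet,\bullet}(M)$ outside of one position with contractible bar-type complexes for the Singer/Steinberg dual pair. Once this is in place, the induced filtration on $M$ is $\vertic_\bullet M$ of (ii); strong convergence for bounded-below $M$ follows from the standard connectivity estimates for $\Omega^\infty_i$ and $\stein_i$, so that in every internal degree only finitely many bidegrees contribute, and the antidiagonal concentration (vi) is forced by (v) together with strong convergence.

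\textbf{Edges and main obstacle.} For (iv), $\lder_0 \indec = \indec$ is the identity on length-grading-$0$ components and $(H_0\rcx M)\lgth{0} = \Sigma^{-1}\Omega^\infty \Sigma M$ by definition, so $E^2_{0,0}\cong \Sigma^{-1}\Omega^\infty\Sigma M$ and the $t=0$ edge is the tautological inclusion of $\vertic_0 M$ into $M$. The edge $E^\infty_{-t,t}\hookrightarrow E^2_{-t,t}$ in (v) is automatic since $E^2_{-t,t}$ sits at an extremal corner of the second-quadrant region and can only be truncated by incoming differentials. The hardest step is the chain-level collapse statement underlying Step 3: verifying that the totalization of $C_{\bullet,\bullet}(M)$ really does recover $M$ is where Koszul duality between Singer and Steinberg functors must be exploited in full force. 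The identification of $E^2$, the edge maps, and the convergence statements are then structural consequences of the bicomplex together with the already-established facts about $\rcx$ and $\kz$.
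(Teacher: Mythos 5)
Your construction differs in a crucial way from the paper's, and the difference is exactly where the gap is. The paper forms the bicomplex $(\qcx\,\rcx M)\lgth{0}$, where $\qcx = \indec\,\sing^{\bullet+1}$ is the cotriple (bar) resolution, \emph{not} the Koszul complex $\kz$. Your proposal substitutes $\kz$ for $\qcx$ and thereby moves the genuine mathematical content to a place where the paper deliberately has none.

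The advantage of the paper's choice is that the collapse of the \emph{other} filtration is formal. After the horizontal filtration, the $E^1$-page is $\lder_t\indec(\rcx_s M)\lgth{0}$; but each column $\rcx_s M = \sing\Sigma^{-s}M(s)$ is a relative projective, so by Proposition \ref{prop:proj_class_acyclic} these derived functors vanish for $t>0$, and the length restriction kills $\indec\rcx_s M(s)\lgth{0}$ for $s\neq 0$, leaving precisely $M$ in bidegree $(0,0)$. No further argument is needed — no ``acyclic-assembly'', no chain-level Koszul duality. The Koszul complex $\kz$ enters only afterwards, via Corollary \ref{cor:vanishing_lder_indec}, as a device to compute the $E^2$-page already identified abstractly as $\lder_t\indec(H_0\rcx\Sigma^{-s}M)\lgth{-s}$, giving parts (3) and (5).

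By contrast, in your proposed bicomplex the step you flag as ``the hardest'' is an actual gap: you assert that the rows ``outside of one position'' are contractible ``bar-type complexes for the Singer/Steinberg dual pair,'' but that is not how the rows behave — applying the exact functor $\stein_t$ to the Singer complex does not produce something contractible; it produces something with homology $\stein_t(H_\bullet\rcx M)$. You would instead need to show that the \emph{columns} are acyclic, i.e.\ that the Koszul complex $\kz(\rcx_s M)$ has homology only at position $0$; this is true in principle because $\rcx_s M$ is $\sing$-projective, but Corollary \ref{cor:vanishing_lder_indec} (which identifies Koszul homology with $\lder_\bullet\indec$) requires the connectivity hypothesis $N\lgth{i}\in\amod_{\geq -1}$, and this can fail for $\rcx_s M = \sing\Sigma^{-s}M(s)$ near $s=0$ when $M$ is only assumed bounded below. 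You would either have to restrict the range of $M$ or separately re-establish the acyclicity of the Koszul complex on relative projectives without the connectivity assumption — neither of which you address. So: the $E^2$-identification, edge maps and vanishing above the anti-diagonal are all correctly reasoned, but the convergence-to-$M$ step as you outline it does not go through, and it is precisely this step that the paper's choice of $\qcx$ over $\kz$ renders trivial.
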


Some comments are in order. The length zero part of $H_0 \rcx \Sigma^l M $ is 
$\Sigma^{-1} \Omega^{\infty} \Sigma^{1+l}M$. 
Since any bounded-below $\cala$-module $M$ can be recovered as 
\[
 M \cong \mathrm{colim}_d \Sigma^{-d} \Omega^\infty \Sigma^d M
\]
one must check that the spectral sequence really does give an effective means of 
recovering $M$. This is the case, 
since the contribution from $H_0 \rcx \Sigma^l M $ has connectivity which 
increases exponentially with $l$ (for $l$ sufficiently large).

 Some of the consequences, including connectivity 
results, are summarized in the following:

\begin{COR}
 Let $M$ be a bounded-below $\cala$-module. Then $M$ admits a natural, 
exhaustive filtration in $\amod$:
\[
 0 
\subset 
\vertic_0 M 
\subset 
\vertic_1 M 
\subset 
\ldots 
\subset 
\vertic_t M 
\subset 
\ldots 
\subset M
\]
such that 
\begin{enumerate}
 \item 
$\vertic_0 M = \Sigma^{-1}\Omega^\infty \Sigma M$;
\item 
for $t \in \nat$, 
$$\vertic_t M / \vertic_{t-1}M
\subset 
\ker 
 \big \{
 \stein_t \Sigma^{-1} \Omega^\infty \Sigma^{1+t}M 
 \rightarrow 
 \stein_{t-1} \Sigma^{-1} \Omega^{\infty}_1 \Sigma^{t}M 
 \big \};
$$
\item 
if $M$ is $c$-connected, then $ \vertic_t M / \vertic_{t-1}M$ is at least 
$\big(2^t (d(M)+1) - (t+2)\big)$-connected, where 
 $d(M)= \sup \{ (c+t+1), -1 \}$;
\item 
the module $\vertic_t M \subset M$ admits a finite filtration such that the 
associated graded satisfies:
 \[
  \Sigma^{2^t} \gr\big( \vertic_t M\big) \in \unst
 \]
is unstable.
\end{enumerate}
Moreover, there is a natural morphism of filtered objects relating the 
respective filtrations for $M$ and $\Sigma M$:
\[
 \vertic_t M \hookrightarrow \Sigma^{-1} \vertic_t \Sigma M
\]
which, for $t=0$, is the natural inclusion:
\[
 \Sigma^{-1} \Omega^\infty \Sigma M
\hookrightarrow 
\Sigma^{-2} \Omega^\infty \Sigma^2 M.
\]
\end{COR}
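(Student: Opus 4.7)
The plan is to read every assertion off the spectral sequence of the Theorem, taking $\vertic_\bullet M$ to be the increasing filtration supplied by part (ii). Property (1) is exactly part (iv), and exhaustiveness is built into parts (ii) and (vi). Combining parts (v), (vi) and (iii), the graded piece $\vertic_t M/\vertic_{t-1}M \cong E^\infty_{-t,t}$ embeds in $E^2_{-t,t}$, which is identified with the Steinberg kernel of property (2).

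For the connectivity bound (3), I would argue in two steps. First, since $\Omega^\infty\Sigma^{1+t}M$ is a submodule of $\Sigma^{1+t}M$ and is $\cala$-unstable, $\Sigma^{-1}\Omega^\infty\Sigma^{1+t}M$ has minimum degree at least $\max\{c+t+1,-1\}=d(M)$. Second, I would invoke the standard minimum-degree estimate for the Steinberg functor: for $X\in\amod$ concentrated in degrees $\geq k$, $\stein_t X$ is concentrated in degrees $\geq 2^t(k+1)-(t+1)$. Taking $k=d(M)$ gives the required $\big(2^t(d(M)+1)-(t+2)\big)$-connectivity for $\stein_t\Sigma^{-1}\Omega^\infty\Sigma^{1+t}M$; this dominates the kernel appearing in (2), and therefore $\vertic_t M/\vertic_{t-1}M$.

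Property (4) reduces to the fact that, for any $X\in\amod$, the suspended module $\Sigma^{2^t}\stein_t X$ lies in $\unst$; this is the quadratic-dual counterpart of the classical property that $\Sigma^{2^s}\sing_s$ takes values in $\unst$, and can be extracted from the explicit description of $\stein_t$ via the Steinberg idempotent used earlier in the paper. Since each graded piece $\vertic_s M/\vertic_{s-1}M$ (for $s\leq t$) embeds in $\stein_s\Sigma^{-1}\Omega^\infty\Sigma^{1+s}M$ by (2), applying $\Sigma^{2^t}$ with $2^t\geq 2^s$ produces an unstable module; $\unst$ being closed under submodules and extensions and the filtration of $\vertic_t M$ being finite, $\Sigma^{2^t}\gr(\vertic_t M)\in\unst$ follows.

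The final naturality statement is the identity of $M$ viewed under $M\cong\Sigma^{-1}\Sigma M$; the substantive claim is the inclusion $\vertic_t M\subseteq\Sigma^{-1}\vertic_t\Sigma M$ inside $M$. For $t=0$ it is the elementary observation that $\Sigma\Omega^\infty\Sigma M$ is $\cala$-unstable and hence sits inside $\Omega^\infty\Sigma^2M$. For general $t$ one applies functoriality of $H_0\rcx$ and of the spectral sequence under the comparison morphism $H_0\rcx(-)\to H_0\rcx\Sigma^{-1}\Sigma(-)$ produced by iterating the unit-like inclusion, and checks that the induced morphism of $E^\infty$-pages respects the filtration. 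I expect the main technical obstacle to lie in the Steinberg minimum-degree and instability estimates underlying (3) and (4); everything else amounts to bookkeeping from the Theorem.
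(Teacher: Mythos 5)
Your overall strategy is the right one and matches the paper's: everything is read off the spectral sequence of the Theorem, with the filtration $\vertic_\bullet M$ coming from its abutment, and the additional facts supplied by connectivity and instability properties of the Steinberg functors. Parts (1), (2), (3) and the final naturality are handled correctly and at the same level of detail as the paper. In particular your connectivity estimate in (3) is precisely the content of Proposition \ref{prop:conn_estimate_ss}, derived as you indicate from Proposition \ref{prop:conn_stein} applied to $\Sigma^{-1}\Omega^\infty \Sigma^{1+t}M$, whose bottom degree is bounded below by $d(M)$.

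There is, however, a genuine error in your argument for (4). You invoke the claim that ``for any $X\in\amod$, the suspended module $\Sigma^{2^t}\stein_t X$ lies in $\unst$,'' asserting that it is the quadratic dual of ``$\Sigma^{2^s}\sing_s$ takes values in $\unst$.'' Both of these statements are false: already for $t=0$ one would need $\Sigma X \in \unst$ for all $X$; and for $t=1$, taking $X=\Sigma^{-d}\field$ with $d$ large, $\stein_1 X \cong \sing_1 X$ contains $\Phi X \cong \Sigma^{-2d}\field$, which a single suspension by $2$ does not render unstable. The correct statement, and the one the paper actually uses (Corollary \ref{cor:sing_stein_instability}), is conditional and weaker: $\stein_s$ preserves $\unst$, and for $N\in\unst$ and $d>0$ the module $\stein_s\Sigma^{-d}N$ admits only a \emph{finite filtration} whose associated graded becomes unstable after applying $\Sigma^{2^s d}$. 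In the case at hand $X=\Sigma^{-1}\Omega^\infty\Sigma^{1+t}M$ is indeed a single desuspension of an unstable module, so the $d=1$ case of that corollary applies; but the conclusion is about an associated graded of $\stein_t X$, not about $\stein_t X$ itself. Your deduction that $\Sigma^{2^t}\gr(\vertic_t M)\in\unst$ should therefore be rerouted: first intersect the submodule $\vertic_s M/\vertic_{s-1}M \subset \stein_s\Sigma^{-1}\Omega^\infty\Sigma^{1+s}M$ with the finite filtration of the ambient module provided by Corollary \ref{cor:sing_stein_instability} to refine the filtration $\vertic_\bullet$, observe that the resulting graded pieces are submodules of modules whose $2^s$-fold suspension is unstable, and then use $2^t\geq 2^s$ for $s\leq t$ together with closure of $\unst$ under submodules and suspension. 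With that repair, (4) goes through; as written, the step rests on a false premise.
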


In the case $M= \Sigma^n \cala^*$, for $n \in \zed$, one obtains a skewed length 
filtration of the (dual) Steenrod algebra (see Section \ref{subsect:Sigma_n_cala}). 
This should be compared with Miller's results, which exhibits the length 
filtration of $\cala/ \cala Sq^1$ arising from 
his spectral sequence. It is of interest that an analogous calculation occurs in 
the work of Haugseng and Miller \cite{MH}, which is `dual', being related to the Dyer-Lashof algebra. 

Clearly, if $M = \Sigma^{-1}N $ where $N$ is unstable, one has $\vertic_0 M = M$ 
and the filtration is constant. The case 
$M = \Sigma^{-2}N $ ($N$ as above), already shows up non-trivial behaviour and 
passage to further desuspensions 
increases complexity (see Section \ref{subsect:Sigma_-2_unstable}).

\section{Preliminaries}
\label{sect:prelim}

\subsection{Recollections on $\cala$-modules}

Throughout $\field$ denotes the prime field of characteristic two and $\cala$ 
the mod-$2$ Steenrod algebra.
Unless otherwise stated, all $\cala$-modules are {\em homological}, so that 
operations act on the right 
and $Sq^i$ has homological degree $-i$. 

Here and in Section \ref{sect:dl}, the conventions of Kuhn and McCarty 
\cite{KMcC} are followed; for unstable modules in the cohomological framework 
(ie. with $\cala$ acting on the left), see \cite{schwartz_book}. 

Duality permits the passage between the homological and cohomological settings, 
restricting to an equivalence
 of categories on objects of finite type. In particular, if $M$ is a 
cohomological $\cala$-module, 
$M^*$ denotes the dual homological $\cala$-module.

\begin{nota} (Cf. \cite{KMcC}.)
Denote by
\begin{enumerate}
 \item 
$\amod$ the category of locally-finite $\cala$-modules and $\amodc \subset  
\amod$ the full subcategory of modules that are bounded 
below; 
\item
$\Sigma : \amod \rightarrow \amod$ the suspension functor;
\item 
$\Phi : \amod \rightarrow \amod$ the Frobenius functor which doubles degrees; 
\item 
$\unst \subset \amodc \subset \amod$ the full subcategory of unstable modules;
\item 
$\Omega^\infty : \amod \rightarrow \unst$ the right adjoint to the inclusion 
$\unst \hookrightarrow \amod$, with right derived functors $\Omega^\infty_i$, 
$i 
\geq 0$.
\end{enumerate}
\end{nota}

\begin{nota}
For $M$ a homologically $\zed$-graded object and $n \in \zed$, let $M_n$ denote 
 
the degree $n$ component of $M$. 
\end{nota}

\begin{defn}
\label{def:conn}
A $\zed$-graded object $M$ is $c$-connected if $M_n =0$ for $n \leq c$. In 
particular $M$ is $(-1)$-connected if and only if it is concentrated in 
non-negative degrees.
\end{defn}

\begin{exam}
 \label{exam:dual_Steenrod}
 The dual Steenrod algebra $\cala^*$ is an $(-1)$-connected  object of $\amod$ 
hence, for any $n \in \zed$, $\Sigma^n 
\cala^*$ lies in  $\amodc$. The higher derived functors of $\Omega^\infty$ 
vanish on these objects:
 \[
  \Omega^\infty_i \Sigma^n \cala^* =0
 \]
for $i >0$. For $i=0$, one has $\Omega^\infty \Sigma^n \cala^* = F(n) ^* \in 
\unst$, where $F(n)$ is the free (cohomological) unstable module on a generator 
of 
degree $n$, in particular is zero for $n <0$.
\end{exam}

The categories $\amod$, $\amodc$ and $\unst$ are abelian and the functors 
$\Sigma$ and $\Phi$ are exact. The following Proposition is  the homological 
version of a standard result for cohomological unstable modules:

\begin{prop}
\label{prop:Omega_es}
The functors $\Sigma$, $\Phi$ restrict to endofunctors of $\unst$ and 
the top Steenrod operation induces a natural 
transformation $Sq_0 : 1_\unst \rightarrow \Phi $. 

The suspension $\Sigma$  has right adjoint $\Omega : \unst \rightarrow \unst$ 
which fits into  the 
natural exact sequence 
\begin{eqnarray}
\label{eqn:Omega_es}
 0
\rightarrow 
\Sigma \Omega M 
\rightarrow 
M 
\stackrel{Sq_0}{\rightarrow} 
\Phi M 
\rightarrow 
\Sigma \Omega_1 M 
\rightarrow 
0,
\end{eqnarray}
for $M \in \unst$,  where $\Omega_1$ is the first right derived functor of 
$\Omega$ and the higher derived functors vanish.
\end{prop}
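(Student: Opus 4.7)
The statement is the homological reformulation of a classical cohomological result for unstable modules, and I would prove it by transposing from the cohomological side. Under the contravariant equivalence $M \mapsto M^*$ between the categories of locally-finite cohomological and locally-finite homological $\cala$-modules, the subcategories $\unst$ of unstable modules correspond (the instability condition is self-dual), the functors $\Sigma$ and $\Phi$ correspond to themselves, and the classical cohomological Frobenius-style map $\Phi M \to M$, $\phi^2 x \mapsto Sq^{|x|} x$, dualizes to the natural transformation $Sq_0 : M \to \Phi M$ claimed in the statement; on a homogeneous element $y \in M_{2n}$ this $Sq_0$ is the top allowed Steenrod action $y \mapsto y \cdot Sq^n$ (and it vanishes on odd-degree elements). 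The classical cohomological four-term exact sequence (see Schwartz \cite{schwartz_book}) reverses under duality to yield the claimed one, and the cohomological adjunction between $\Sigma$ and its unstable left adjoint dualizes to $\Sigma \dashv \Omega$ in the homological setting.

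For the assertions about derived functors, one argues as follows. Since $\Omega$ is right adjoint to the exact functor $\Sigma$, it is left exact, so its right derived functors $(R^i \Omega)_{i \geq 0}$ are defined; set $\Omega_i := R^i \Omega$. Define a candidate cohomological $\delta$-functor by $\Omega_1' M := \Sigma^{-1} \mathrm{coker}(Sq_0)$ and $\Omega_i' := 0$ for $i \geq 2$. Applying the snake lemma to the commutative diagram of $Sq_0$-maps associated to a short exact sequence $0 \to M' \to M \to M'' \to 0$ in $\unst$, and using the exactness of $\Phi$ and $\Sigma$, yields the functorial six-term exact sequence
\[
0 \to \Omega M' \to \Omega M \to \Omega M'' \to \Omega_1' M' \to \Omega_1' M \to \Omega_1' M'' \to 0,
\]
so $(\Omega, \Omega_1', 0, \ldots)$ is a cohomological $\delta$-functor. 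The natural comparison map from $(R^i \Omega)_i$ is an isomorphism provided $\Omega_1'$ is effaceable, i.e.\ every $M \in \unst$ embeds in some $N$ with $Sq_0 : N \to \Phi N$ surjective. The homological duals of the free cohomological unstable modules $F(n)$ are injective cogenerators of $\unst$, and the required surjectivity for them follows by direct inspection of the admissible monomial basis of $F(n)$.

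The remaining, easier, assertions --- that $\Sigma$ and $\Phi$ restrict to $\unst$, that $Sq_0$ is well-defined, natural, and $\cala$-linear, and that $\Sigma \Omega M := \ker(Sq_0)$ gives the right adjoint to $\Sigma$ --- are routine degreewise verifications based on the instability inequality $2i > |y| \Rightarrow y \cdot Sq^i = 0$ together with the single Adem-type identity $Sq^{2l} Sq^n = Sq^{n+l} Sq^l$, valid on elements of degree $n$ in any unstable module (the other Adem terms vanishing by instability). The main obstacle in the whole proof is thus the effaceability check underpinning the vanishing of $R^i \Omega$ for $i \geq 2$; everything else is formal.
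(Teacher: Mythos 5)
The paper supplies no proof of this Proposition; it simply identifies it as "the homological version of a standard result for cohomological unstable modules," implicitly deferring to \cite{schwartz_book}. Your proposal fleshes out exactly that route — dualize the classical cohomological four-term exact sequence, with a $\delta$-functor/effaceability argument to identify the derived functors — so it is in substance the same approach, made precise.

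The one step worth tightening is the effaceability check. The objects $F(n)^*$ are indeed injective in the homological category $\unst$, since $\hom_\unst(N, F(n)^*) \cong (N_n)^*$, an exact functor of $N$; and $Sq_0$ is surjective on $F(n)^*$, dual to the injectivity of $\lambda_{F(n)} : \Phi F(n) \to F(n)$, which follows from the combinatorial observation you indicate (prepending $Sq^{n+|I|}$ to an admissible $Sq^I \iota_n$ of excess $\le n$ yields an admissible monomial of excess exactly $n$, hence nonzero). What remains is to exhibit, for arbitrary $M \in \unst$, an embedding into some $N$ with $Sq_0$ surjective. Since the paper's $\amod$ and $\unst$ consist of \emph{locally finite} modules, one cannot simply take an infinite product of copies of $F(n)^*$, as products need not stay locally finite. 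This is routinely repaired either by invoking that $\unst$ is a locally noetherian Grothendieck category (so that direct sums of the injectives $F(n)^*$ remain injective and suffice to receive embeddings), or by reducing to finite submodules $M_\lambda$ of $M$ and using the exactness of filtered colimits together with the compatibility of $\Sigma$, $\Phi$, kernel and cokernel with filtered colimits. Either way the argument closes; it is worth stating explicitly which mechanism one is using, since the duality argument alone only covers finite-type objects.
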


\begin{defn}
\label{def:q0}
 For $M \in \unst$, let $q_0 : \Phi \Sigma \Omega M \rightarrow \Sigma \Omega_1 
M$ denote the natural transformation 
 given by the composite 
 \[
  \Phi \Sigma \Omega M 
  \hookrightarrow 
  \Phi M 
  \rightarrow 
  \Sigma \Omega_1 M
 \]
of the morphisms from the exact sequence (\ref{eqn:Omega_es}) of Proposition 
\ref{prop:Omega_es}.
\end{defn}

\begin{rem}
 The notation $q_0 : \Phi \Sigma \Omega M \rightarrow \Sigma \Omega_1 M$ 
indicates a relationship with the Dyer-Lashof operation $Q_0$ (cf.  
Section \ref{sect:dl}).
\end{rem}

The derived functors $\Omega^\infty_s$ ($s \in \nat$) and $\Omega_i$ ($i \in \{ 
0, 1\}$) are related by a short 
exact sequence derived from the Grothendieck spectral sequence for the composite 
of functors. 
Short exact sequences of this form go back 
to the work of Singer (cf. \cite{Singer} for example).

\begin{prop}
\label{prop:Omega_infty_desusp}
 For $M \in \amod$ and $s \in \nat$, there is a natural short exact sequence in 
$\amod$:
 \[
  0
  \rightarrow 
  \Omega_1 \Omega^\infty _s M
  \rightarrow 
  \Omega^\infty_{s+1} \Sigma^{-1} M
  \rightarrow 
  \Omega \Omega^\infty_{s+1} M
  \rightarrow 
  0.
 \]
\end{prop}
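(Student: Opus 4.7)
The plan is to recognize this as a two-line Grothendieck spectral sequence for the composite $\Omega \circ \Omega^\infty : \amod \to \unst$. The essential computation, to be done first, is the identification
\[
\Omega \circ \Omega^\infty \cong \Omega^\infty \circ \Sigma^{-1}
\]
of functors $\amod \to \unst$. This is a purely formal adjunction argument: for $N \in \unst$ and $M \in \amod$, using that $\Sigma : \amod \to \amod$ is an autoequivalence with inverse $\Sigma^{-1}$ and that $\Omega$, resp. $\Omega^\infty$, is right adjoint to $\Sigma$ on $\unst$, resp. to the inclusion $\unst \hookrightarrow \amod$, one computes
\[
\hom_\unst(N, \Omega \Omega^\infty M)
\cong \hom_\amod(\Sigma N, M)
\cong \hom_\amod(N, \Sigma^{-1}M)
\cong \hom_\unst(N, \Omega^\infty \Sigma^{-1}M),
\]
so the two right adjoints agree.

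Next I would verify the hypotheses for applying the Grothendieck spectral sequence to the composite $\Omega \circ \Omega^\infty$. The key point is that $\Omega^\infty$ sends injectives in $\amod$ to $\Omega$-acyclic objects in $\unst$: since $\Omega^\infty$ is right adjoint to the exact inclusion $\unst \hookrightarrow \amod$, it preserves injectives, and injectives in $\unst$ are of course acyclic for the left exact functor $\Omega$. Thus the spectral sequence takes the form
\[
E_2^{p,q} = \Omega_p \Omega^\infty_q M \Rightarrow R^{p+q}(\Omega \circ \Omega^\infty) M \cong \Omega^\infty_{p+q} \Sigma^{-1} M,
\]
where the identification of the abutment uses the equivalence established in the first step together with the exactness of $\Sigma^{-1}$.

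Finally I would invoke Proposition \ref{prop:Omega_es}, which gives $\Omega_p = 0$ for $p \geq 2$. The spectral sequence is then concentrated in the two columns $p=0,1$, so all differentials vanish and the filtration on $\Omega^\infty_{s+1} \Sigma^{-1} M$ has at most two nonzero subquotients. Reading off total degree $s+1$ yields the short exact sequence
\[
0 \to E_2^{1,s} \to \Omega^\infty_{s+1} \Sigma^{-1} M \to E_2^{0,s+1} \to 0,
\]
which, after substituting $E_2^{1,s} = \Omega_1 \Omega^\infty_s M$ and $E_2^{0,s+1} = \Omega \Omega^\infty_{s+1} M$, is the desired sequence.

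The only genuinely delicate point is the acyclicity hypothesis for the Grothendieck spectral sequence; once this is in hand, everything else is formal. Naturality follows automatically from the naturality of the Grothendieck spectral sequence construction together with the naturality of the isomorphism $\Omega \Omega^\infty \cong \Omega^\infty \Sigma^{-1}$.
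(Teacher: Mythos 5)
Your proof is correct and follows precisely the route the paper indicates; the text preceding the proposition states that the short exact sequence is ``derived from the Grothendieck spectral sequence for the composite of functors,'' citing Singer. You have supplied the details the paper leaves implicit: the natural isomorphism $\Omega\Omega^\infty \cong \Omega^\infty\Sigma^{-1}$ by an adjunction chase, the acyclicity hypothesis (which holds because $\Omega^\infty$, being right adjoint to the exact inclusion $\unst \hookrightarrow \amod$, preserves injectives), and the two-column degeneration forced by the vanishing $\Omega_p = 0$ for $p \geq 2$ from Proposition~\ref{prop:Omega_es}.
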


\subsection{A delooping spectral sequence}
\label{subsect:ss_single_loop}

As a warm-up to considering the passage between unstable $\cala$-modules and 
$\cala$-modules, consider delooping an unstable module of the form $\Omega M$.  

\begin{defn}
\label{def:filt_Phi}
 For $M \in \unst$, let $\filt_i M$ denote the natural 
increasing filtration of $M$ defined 
 by $\filt_{-1} M = 0$ and
 \[
  \filt_i M := \ker \{ M \stackrel{Sq_0^i}{\rightarrow} \Phi^{i+1} M \}
 \]
 for $i \in \nat$.
\end{defn}

\begin{rem}
By definition, $\filt_{-1} M = 0$ and $\filt_0 M = \Sigma \Omega M$. 
Moreover, $\bigcup_i \filt_i M = \overline{M}$, the submodule of elements in 
positive degree. Hence the filtration is exhaustive if and only if $M$ is  
$0$-connected.
\end{rem}

The Hilbert series of an $\nat$-graded $\field$-vector space $V$ is defined by 
$\hilb_V (t) := \sum_{n \geq 0} \dim V_n t^n$. One has  
$\hilb _{\Phi V}(t) = \hilb _V (t^2)$.

\begin{rem}
 If $M$ is $0$-connected, then the Hilbert series of $M$ is determined 
recursively by those of $\Omega M$ and $\Omega_1 M$ by using the Euler 
characteristic of the exact sequence (\ref{eqn:Omega_es}) of Proposition 
\ref{prop:Omega_es}, which gives the identity:
 \[
  \hilb _M (t) - \hilb_M (t^2) 
  = 
  t (\hilb_{\Omega M} (t) - \hilb_{\Omega_1 M} (t)).
 \]
This observation can be refined using the spectral sequence associated 
to the filtration of Definition \ref{def:filt_Phi}.
\end{rem}

The  filtration $ \filt_i M$ is based upon the action of $Sq_0$ upon 
$M$, that is $M$ considered as an $\nat$-graded vector space equipped with a 
{\em Verschiebung}. These 
objects form an abelian category $\versch$ (using the notation of \cite[Section 
1.1.1]{GLM}); the full subcategory of $0$-connected objects is denoted 
$\vconn$. The category $\vconn$ 
splits as a product of categories indexed by the prime to $2$ part of the 
degree; 
each of the factors is equivalent to the category of graded modules 
concentrated in negative degree
 over the graded polynomial ring $\field [x]$ with $x$ in degree one, hence the 
homological algebra of these categories is well understood. 

Consider $M \in \vconn$ of finite type. There is a minimal injective 
resolution in $\vconn$ of the form 
\[
0\rightarrow
 M \rightarrow I_0 
 \rightarrow I_1 
 \rightarrow 0
\]
and, borrowing the notation used for $\unst$, the socle of $I_0$ is isomorphic 
to $\Sigma \Omega M$ and the socle to $I_1$ to $\Sigma \Omega_1 M$.

The natural filtration of Definition \ref{def:filt_Phi} provides a filtered 
complex $\filt_* I_\bullet$ and hence an associated spectral sequence, which 
has a particularly simple form. The $E^0$-page is concentrated on  
 the diagonals of total (cohomological) degree $0$ and $1$, the differential 
$d_0$ is trivial (due to the hypothesis of minimality) and the differential 
$d_1$ is induced by the natural transformation $q_0$ of Definition \ref{def:q0}.
 
The hypothesis that the resolution be minimal served only to 
deduce that the differential $d_0$ is trivial; in the general case,  the 
spectral 
sequence identifies as above from the $E^1$-page. This allows the result to be 
made more precise when working with $\unst$. In the following, {\em 
homological} indexing is used, so that $I_1$ is placed in degree $-1$.

\begin{thm}
\label{thm:delooping_ss}
Let $M \in \unst$ be $0$-connected and of finite type. Then there is a strongly 
convergent homological spectral sequence of unstable modules that is 
concentrated in the fourth quadrant and satisfies the following properties.
\begin{enumerate}
 \item 
 The non-zero terms of the $E^1$-page are: 
\begin{eqnarray*}
 E^1_{p,-p} &\cong& \Phi^p \Sigma \Omega M \\
 E^1_{p,-p-1} &\cong& \Phi^p \Sigma \Omega_1 M
\end{eqnarray*}
for $p \geq 0$, with $d_1: E^1_{p,-p} \rightarrow E^1_{p-1, -p}$ zero for $p=0$ 
and 
$\Phi^{p-1} q_0$, for $p>0$. 
\item 
The $E^2$-page is given by 
\begin{eqnarray*}
 E^2_{0,0} & \cong & \Sigma \Omega M \\
 E^2_{p,-p} &\cong& \Phi^{p-1} \ker q_0 \\
 E^2_{p,-p-1} &\cong& \Phi^p \mathrm{coker}\  q_0
\end{eqnarray*}
for $p > 0$.
\item 
For $r \geq 1$, the differential $d^r$ is determined by $d^r : E^r_{r,-r} 
\rightarrow E^r_{0, -1}$  for $n \in \nat$ via:
$$
E^r _{r+n, -r +n}  \rightarrow   E^r_{n, n-1} = \Phi^n (E^r_{r,-r} 
\stackrel{d^r}{\rightarrow} E^r_{0, -1}) .
$$
\item 
The  $E^\infty$-page is concentrated on the anti-diagonal, 
with $E^\infty_{p, -p} \cong \filt_p M / \filt_{p-1}M$.
\end{enumerate}
\end{thm}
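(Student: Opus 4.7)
The plan is to take a minimal injective resolution of $M$ in $\vconn$ and analyse the spectral sequence associated to the filtration $\filt_*$ on this resolution. Since $\vconn$ has injective dimension at most one, one has a resolution $0 \to M \to I_0 \to I_1 \to 0$ with $\Sigma \Omega M$ and $\Sigma \Omega_1 M$ the socles of $I_0$ and $I_1$ respectively, as noted before the statement. For an injective $I \in \vconn$ with socle $S$, the structure theory of injectives in $\vconn$ (reducing, on each prime-to-$2$ component, to injective $\field[x]$-modules) yields a canonical isomorphism $\filt_p I / \filt_{p-1} I \cong \Phi^p S$, realised by iteration of $Sq_0$. Viewing $I_\bullet$ as a chain complex concentrated in degrees $0$ and $-1$, this identifies the $E^0$-page of the spectral sequence of the filtered complex $\filt_* I_\bullet$ with the terms stated. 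Minimality of the resolution forces the differential to send $\filt_p I_0$ into $\filt_{p-1} I_1$, so that $d^0 = 0$ and $E^1 = E^0$.

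For the computation of $d^1 : E^1_{p, -p} \to E^1_{p-1, -p}$, the relevant piece of the resolution differential is the component $\filt_p I_0 \to \filt_{p-1} I_1$, passed to associated graded. In the base case $p = 1$, this is the restriction of $d : I_0 \to I_1$ to $\filt_1 I_0$ followed by projection onto the socle $\Sigma \Omega_1 M$ of $I_1$. By minimality, the socle of $I_0$ maps to zero, so the information is captured by the induced map on quotients $\Phi \Sigma \Omega M \to \Sigma \Omega_1 M$. This can be identified with $q_0$ via the defining property of the latter: both maps arise from the natural factorisation $\Phi \Sigma \Omega M \hookrightarrow \Phi M \to \Sigma \Omega_1 M$ appearing in (\ref{eqn:Omega_es}). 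Applying $\Phi^{p-1}$ to this computation yields $d^1 = \Phi^{p-1} q_0$ in general, giving part (1); part (2) follows by an immediate kernel and cokernel computation. Part (3), the Frobenius structure of the higher differentials, is a consequence of the Frobenius-equivariance built into the filtration $\filt_*$: the entire filtered complex is generated from its lowest filtration layer by iterated Frobenius, and this structure is preserved by the spectral sequence differentials.

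Convergence is routine since $\filt_{-1} = 0$ and, by the finite-type hypothesis, the filtration stabilises in each internal degree after finitely many steps. The abutment is $H_*(I_\bullet)$, which is $M$ concentrated in degree zero, with induced filtration the original $\filt_* M$; this yields part (4) and forces $E^\infty$ to vanish off the anti-diagonal. The main obstacle is the identification of $d^1$ with $q_0$ in the middle step: this requires comparing the canonical factorisation defining $q_0$ against the differential inherited from the injective resolution, and the higher-$r$ assertions in part (3) then rest on a clean Frobenius-equivariance of the whole construction. Minimality of the resolution is essential: for a non-minimal resolution the deviations would appear as non-trivial $d^0$ contributions that would need to be quotiented out before the $d^1$-identification becomes transparent.
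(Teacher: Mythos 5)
Your construction correctly identifies the filtered complex $\filt_* I_\bullet$ and reproduces the bookkeeping of the $E^0$ and $E^1$ pages, but it has a genuine gap: a minimal injective resolution of $M$ in $\vconn$ is a resolution in the category of $\nat$-graded vector spaces with Verschiebung only. It carries no $\cala$-module structure, so the spectral sequence you obtain is a priori a spectral sequence of graded vector spaces, not the ``spectral sequence of unstable modules'' that the theorem asserts. You never explain how the $\cala$-action is to be imported onto the pages $E^r$ and the differentials $d^r$, and a minimal $\vconn$-injective resolution has no reason to be compatible with any resolution in $\unst$.

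The paper's proof addresses exactly this: using the finite-type hypothesis it dualizes to cohomological unstable modules and takes a projective resolution of $M$ by $0$-connected projectives in $\unst$. The key observation is that cohomological unstable projectives are \emph{reduced}, so this single resolution serves simultaneously as a projective resolution of $\cala$-modules and as a projective resolution in the category of vector spaces with Frobenius. The spectral sequence of the $\filt_*$-filtered complex is then tautologically a spectral sequence of unstable modules. Note also that with this choice of resolution, minimality is \emph{not} needed — the paper's preceding discussion explicitly says minimality was only used to kill $d^0$, and that one may simply start from $E^1$. So your closing claim that ``minimality of the resolution is essential'' reflects a constraint of your route rather than of the theorem: the real essential ingredient is a resolution that lives in both $\unst$ and $\versch$ simultaneously, and minimality in $\vconn$ is what you traded it for.

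Your identification of $d^1$ with $q_0$ and the $\Phi$-equivariance assertions in part (3) are sketched at a level consistent with the paper's own ``(Indications.)'' style, so I would not press on those; but the missing $\cala$-structure is the substantive defect.
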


\begin{proof}
 (Indications.) 
Because of the finite-type hypothesis, one can dualize and work with 
cohomological 
unstable modules, where the argument may be more familiar. In this case, $M$ 
admits a projective resolution by $0$-connected projectives of finite type. 
Since unstable projectives are reduced, this is also a projective resolution in 
vector spaces with a Frobenius. The spectral sequence is then constructed as 
indicated in the preceding discussion. 
\end{proof}

\begin{rem}
\ 
\begin{enumerate}
 \item 
 Theorem \ref{thm:delooping_ss} provides a spectral sequence calculating $M$ 
with $E^2$-page expressed functorially in terms of $(\Sigma \Omega M, \Sigma 
\Omega_1 M , q_0)$. Clearly convergence requires that $M$ be $0$-connected.
\item 
The spectral sequence gives a systematic way of encoding the information 
contained in the exact sequence (\ref{eqn:Omega_es}) of Proposition 
\ref{prop:Omega_es}.
 \end{enumerate}
\end{rem}

\section{Dyer-Lashof actions}
\label{sect:dl}

This section introduces the algebraic categories which underlie the 
constructions of the paper, in particular the category $\qm$
 of modules equipped with actions of both the Steenrod algebra and Dyer-Lashof 
operations (satisfying additional conditions).

\subsection{Introducing the Dyer-Lashof operations}

As in \cite{KMcC}, Dyer-Lashof operations $Q^i$ are indexed by $i \in \zed$, 
with $|Q^i|=i$. (This should be contrasted with \cite{Miller}, where 
no negative Dyer-Lashof operations are required.) The lower indexing of 
Dyer-Lashof operations 
is useful:

\begin{nota}
Suppose that the  operations $Q^i$ act on the
$\zed$-graded module $M$; for $a \in \zed$  define $Q_a : \Sigma^a \Phi M 
\rightarrow M$ by 
$Q_a x := Q^{|x|+a} x$ (so that $Q_a$ is a linear map of degree zero).
\end{nota}

The $\cala$-modules with an action of Dyer-Lashof operations considered here 
satisfy: 
\begin{enumerate}
 \item 
{\bf Adem relations:}
\[
 Q^r Q^s = \sum _i \binom{i-s-1}{2i-r} Q^{r+s-i}Q^i; 
\]
\item 
{\bf Nishida relations:}
\[
 (Q^s x) Sq^r = \sum _i \binom{s-r}{r-2i} Q^{s-r+i}(xSq^i);
\]
\item
{\bf Dyer-Lashof instability:}
$Q_a$ acts trivially for $a <0$.
\end{enumerate}

\begin{rem}
\label{rem:big_DL}
\ 
\begin{enumerate}
 \item 
The instability condition implies that negative Dyer-Lashof operations act 
trivially on elements of non-negative degree. 
\item 
The Dyer-Lashof operations generate the {\em big Dyer-Lashof algebra}, namely 
the 
homogeneous quadratic algebra generated by 
$\{ Q^i | i \in \zed \}$ subject to the Adem relations.  The usual Dyer-Lashof 
algebra (as used for example in \cite{Miller}) is the quotient by the ideal 
generated by terms 
of negative excess.
\item 
The big Dyer-Lashof algebra is bigraded, with internal grading 
and length grading. The length grading plays a fundamental rôle here.
\end{enumerate}
\end{rem}

\begin{nota}
 Let 
 \begin{enumerate}
  \item 
  $\qmbig$ denote the category of modules over the big Dyer-Lashof algebra in 
$\amod$ which satisfy the Nishida relations; 
  \item
  let $\qm \subset \qmbig$ denote the full subcategory of objects satisfying 
the Dyer-Lashof instability condition.
\end{enumerate}
\end{nota}

\begin{rem}
 The category $\qm$ is of principal interest here. The suspension 
$\Sigma : \amod \rightarrow \amod$ induces a functor $\Sigma : \qmbig 
\rightarrow \qmbig$ (as for Steenrod operations, 
 Dyer-Lashof operations commute with the suspension). However, this does not 
preserve $\qm$, due to the relation
 \[
  Q_a \Sigma = \Sigma Q_{a+1}
 \]
for $a \in \zed$, which allows $Q_{-1}$ to act non-trivially on the 
suspension of an object of $\qm$. 
 
Here objects $M \in \qmbig$ such that $\Sigma^{-t} M \in \qm$ 
(for $t \in \nat$) arise occasionally; the full 
subcategory of such objects is denoted 
 $\Sigma^t \qm \subset \qmbig$. There is an increasing filtration 
 \[
  \qm \subset \Sigma \qm \subset \Sigma^2 \qm \subset \ldots \subset \qmbig 
 \]
(Cf. Proposition \ref{prop:qsusp} below). 
\end{rem}

\begin{lem}
\label{lem:qm_ab}
The category $\qm$ is abelian and the forgetful functor $\qm \rightarrow \amod$ 
is exact. Equipping an object of $\amod$ with trivial action by Dyer-Lashof 
operations defines an exact functor 
 $\triv : \amod \rightarrow \qm$.
\end{lem}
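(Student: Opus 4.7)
The plan is to verify directly that every piece of the abelian structure on $\amod$ lifts compatibly to $\qm$, exploiting the fact that the defining conditions (Adem relations, Nishida relations, and Dyer-Lashof instability) are all universally quantified identities/vanishing conditions on the operations, so they automatically descend to subobjects, quotients, and finite direct sums.

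First I would take a morphism $f : M \to N$ in $\qm$ and form its kernel $K$ and cokernel $C$ in $\amod$. Since $f$ is by definition compatible with the operations $Q^i$, for $x \in K$ one has $f(Q^i x) = Q^i f(x) = 0$, so $K$ is stable under each $Q^i$; dually, the $Q^i$-action on $N$ descends to $C$. The Adem and Nishida relations hold on $K$ and $C$ because they are identities that hold on $M$ and $N$, and Dyer-Lashof instability ($Q_a = 0$ for $a<0$) is inherited by the same universal argument. Finite direct sums carry the componentwise action and trivially satisfy the three axioms. Together with the obvious zero object, this gives $\qm$ a kernel/cokernel/biproduct structure lying over that of $\amod$; the identification ``coimage $=$ image'' in $\qm$ then follows from the corresponding fact in $\amod$ because all the relevant maps are computed underlyingly, so $\qm$ is abelian.

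Exactness of the forgetful functor $\qm \to \amod$ is now a tautology: I constructed the (co)limits in $\qm$ precisely so that they are sent to (co)limits in $\amod$. For the functor $\triv : \amod \to \qm$, I would check that setting all $Q^i$ to act as zero is compatible with the three axioms, which is immediate: both sides of every Adem and Nishida relation vanish, and the instability condition holds vacuously. Exactness of $\triv$ then follows either by noting that the composite $\amod \xrightarrow{\triv} \qm \to \amod$ is the identity and the forgetful functor is a faithful exact functor, or, more directly, from the observation that the underlying $\amod$-object of $\triv(\ker f)$ coincides with $\ker(\triv f)$, and similarly for cokernels.

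There is no genuine obstacle here; the only point requiring a moment of care is that the ``Dyer-Lashof instability'' condition, phrased via the lower-indexed operations $Q_a$, must be checked to be preserved on subquotients of objects of $\qm$. But since $Q_a x$ is literally a value of the ambient $Q^i$-action on $M$, its vanishing is inherited by subobjects and quotients, so even this step is routine.
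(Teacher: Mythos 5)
Your argument is the intended one; the paper simply declares the lemma ``Straightforward'' and omits exactly the routine checks you have written out (kernels, cokernels and biproducts are inherited from $\amod$ because the $Q^i$ are preserved by morphisms of $\qm$; the Adem and Nishida relations and the vanishing of $Q_a$ for $a<0$ pass to subobjects and quotients; the trivial action satisfies all three axioms because every term of each relation contains a $Q$). Your proof is correct and fills in the details in the expected way.
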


\begin{proof}
 Straightforward.
\end{proof}

\begin{prop} 
\label{prop:properties_sing}
\cite{KMcC}
 The forgetful functor $\qm \rightarrow \amod$ has an exact left adjoint 
$
 \sing : \amod \rightarrow \qm.
$ 
Moreover, $\sing$ takes values in the category of bigraded modules over the big 
Dyer-Lashof algebra, so that for $M \in \amod$
\[
 \sing M \cong \bigoplus_{s\geq 0} \sing_s M 
\]
where $s$ denotes the length grading and each $\sing_s M$ belongs to 
$\amod$, in particular $\sing_0 M =M$.

If $M \in \amod$ is $(d-1)$-connected, then $\sing_sM$ is $(2^s d 
-1)$-connected. In particular, 
if $M$ is $0$-connected, then so is $\sing M$ and, if moreover $M$ is of finite 
type, then so is $\sing M$. 
\end{prop}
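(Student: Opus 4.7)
The plan is to realize $\sing M$ as the free \emph{unstable} Dyer-Lashof module on $M$ compatible with the $\cala$-action. Write $R$ for the big Dyer-Lashof algebra of Remark \ref{rem:big_DL}, bigraded by internal and length degrees, and let $R_s$ denote its length-$s$ component. The candidate is
\[
\sing_s M := \bigl( R_s \otimes_\field M \bigr) \big/ \sim,
\]
where $\sim$ imposes the Adem relations among length-$s$ monomials together with the instability identifications $Q^{i_1}\cdots Q^{i_s} x = 0$ whenever the rightmost composite violates $Q_a = 0$ for $a<0$ on the degree of $x$; set $\sing M := \bigoplus_{s \geq 0} \sing_s M$. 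This yields $\sing_0 M = M$, the required length decomposition, and a manifest left action of the big Dyer-Lashof algebra.

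The right $\cala$-action on $\sing M$ is defined recursively on length: on $\sing_0 M = M$ it is the given action, while on $Q^i y$ for $y \in \sing_{s-1}M$ the action of $Sq^r$ is prescribed by the Nishida relations, which express $(Q^i y) Sq^r$ as a sum of terms $Q^{i-r+j}(y Sq^j)$ with $y Sq^j \in \sing_{s-1}M$ known by induction. The main obstacle is verifying that this prescription is consistent. Concretely, one must check
\begin{enumerate}
\item that iterated Nishida is compatible with the Adem relations defining $\cala$, so that $\sing M$ is a genuine right $\cala$-module;
\item that it descends through the Adem relations among the $Q^i$ imposed in the construction;
\item that instability is preserved by the Steenrod action, so that $\sing M \in \qm$.
\end{enumerate}
Each reduces to a direct identity between iterated Adem and Nishida formulas; these computations form the technical heart of the construction and are carried out in \cite{KMcC}.

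Granted this, the universal property is immediate: any $\cala$-linear $f : M \to N$ with $N \in \qm$ extends uniquely to $\tilde f : \sing M \to N$ by $\tilde f(Q^{i_1}\cdots Q^{i_s} x) := Q^{i_1}\cdots Q^{i_s} f(x)$, since $N$ already satisfies the Adem, Nishida and instability relations used to define $\sing M$. For exactness, since the forgetful functor $\qm \to \amod$ is exact and faithful it suffices to check each $\sing_s : \amod \to \amod$ is exact; choosing admissible monomial representatives for $R_s$ and imposing instability exhibits $\sing_s M$, in each internal degree, as a direct sum of shifted copies of the degree components of $M$, the indexing set depending only on degrees and not on the structure of $M$, whence $\sing_s$ is exact. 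Finally, the connectivity bound follows by induction from the inequality $|Q^i x| = |x| + i \geq 2|x|$ forced by instability $i \geq |x|$: a nonzero length-$s$ monomial on $x$ of degree $\geq d$ has internal degree $\geq 2^s d$, so if $M$ is $(d-1)$-connected then $\sing_s M$ is concentrated in degrees $\geq 2^s d$, i.e.\ $(2^s d - 1)$-connected. Finite type follows because in each bidegree only finitely many admissible length-$s$ monomials contribute.
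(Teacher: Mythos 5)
Your proposal is correct and follows essentially the same route as the paper's (very brief) proof: form the free module over the big Dyer--Lashof algebra, impose instability, recover the $\cala$-action via the Nishida relations, and use an allowable/admissible monomial basis to deduce exactness; the connectivity estimate comes from the forced doubling $|Q^i x| \geq 2|x|$. The only cosmetic difference is that you derive the connectivity directly from instability rather than reading it off the basis, and your finite-type remark should explicitly note that the connectivity bound is what guarantees only finitely many length gradings $s$ contribute in each fixed internal degree of $\sing M$.
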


\begin{proof}
 (Indications.) The functor $\sing$ is defined by forming the 
free module on the big Dyer-Lashof algebra
 and imposing the instability condition. The $\cala$-action is recovered from 
the Nishida relations. As in 
the proof of \cite[Lemma 4.19]{KMcC}, an explicit basis of $\sing_s M$ can be 
given in terms of allowable monomials, which 
shows that $\sing$ is exact. The final connectivity and finite-type statements 
follow easily from this (compare \cite[Lemma 4.10]{KMcC}).
\end{proof}

\begin{nota}
 Let $\indec : \qm \rightarrow \amod$ denote the Dyer-Lashof indecomposables 
functor, 
namely the left adjoint to  $\triv : \amod \rightarrow \qm$. 
\end{nota}

\begin{lem}
\label{lem:indec}
 \ 
\begin{enumerate}
 \item 
 The functor $\indec : \qm \rightarrow \amod$ is right exact.
\item 
The  composites of $\indec$ with $\sing : \amod \rightarrow \qm$ and  
$\triv : \amod 
\rightarrow \qm$ are both naturally equivalent to the identity functor of 
$\amod$.
\item 
For $N \in \qm$ and the adjunction counit $\omega_N : \sing N \rightarrow N$, 
the morphism $\indec \omega_N : N \twoheadrightarrow \indec N$ is the natural 
projection 
(adjunction counit for $\indec$). In particular, for $M \in \amod$ and $\mu_M : 
\sing \sing M \rightarrow \sing M$ the morphism $\omega _{\sing M}$, the 
morphism $\indec \mu_M : 
\sing M \rightarrow M$ is the projection  $\pi_M$ onto $M = \sing_0 M$. 
\end{enumerate}
\end{lem}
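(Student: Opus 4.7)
The proof is essentially formal, relying on adjunctions together with the explicit form of $\sing$ recalled in Proposition \ref{prop:properties_sing}. Part (1) is immediate: $\indec$ is a left adjoint, hence right exact. For (2), I would first observe that $\triv$ is fully faithful, since a morphism $\triv M \to \triv M'$ in $\qm$ is simply an $\cala$-linear map (the compatibility with the trivial Dyer-Lashof action is vacuous). Consequently
\[
\hom_\amod(\indec \triv M, M') \cong \hom_\qm(\triv M, \triv M') \cong \hom_\amod(M, M'),
\]
and Yoneda yields $\indec \triv \cong \mathrm{id}_\amod$. For the other composite, the adjunctions $\sing \dashv \mathrm{forget}$ and $\indec \dashv \triv$ compose to show that $\indec \sing$ is left adjoint to $\mathrm{forget} \circ \triv = \mathrm{id}_\amod$, so $\indec \sing \cong \mathrm{id}_\amod$ by uniqueness of left adjoints. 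The length decomposition $\sing M = \bigoplus_s \sing_s M$ with $\sing_0 M = M$ from Proposition \ref{prop:properties_sing} then exhibits this isomorphism as the natural projection $\sing M \twoheadrightarrow \sing_0 M$.

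For (3), write $\pi_P : P \twoheadrightarrow \indec P$ for the natural projection on underlying $\cala$-modules (obtained from the unit of $\indec \dashv \triv$, noting that $\triv \indec P$ has underlying $\cala$-module $\indec P$). The counit $\omega_N : \sing N \to N$ is characterised by the triangle identity, namely that its restriction to the summand $\sing_0 N = N$ is the identity on $N$. Naturality of $\pi$ applied to $\omega_N$ produces a commutative square
\[
\pi_N \circ \omega_N = \indec \omega_N \circ \pi_{\sing N}
\]
of $\cala$-module maps. Under the identification $\indec \sing N \cong N$ from (2), $\pi_{\sing N}$ is the projection onto $\sing_0 N = N$; restricting both sides to this summand collapses $\omega_N$ and $\pi_{\sing N}$ to the identity, and the square then forces $\indec \omega_N = \pi_N$. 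The special case $\indec \mu_M = \pi_M$ is then the substitution $N = \sing M$.

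There is no substantive obstacle: the main care required is notational, namely to keep track of the identification $\indec \sing N \cong N$ and to verify that under it $\pi_{\sing N}$ becomes precisely the projection onto $\sing_0 N$, so that the naturality square collapses as claimed.
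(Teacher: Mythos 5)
Your proof is correct and takes essentially the approach the paper intends — the paper's own proof is simply the word ``Straightforward,'' and your argument (right exactness of a left adjoint; $\triv$ fully faithful plus composition of adjunctions for part (2); naturality of the projection restricted to the length-zero summand for part (3)) is a careful write-out of exactly those routine verifications. The only point you pass over lightly is the identification of $\indec\sing M$ with $\sing_0 M$, which rests on the fact that the Dyer-Lashof decomposables of $\sing M$ are precisely $\bigoplus_{s\geq 1}\sing_s M$ because the operations raise length grading; this is implicit in the construction of $\sing$ recalled in Proposition~\ref{prop:properties_sing} and is what makes the rest of your collapse argument go through, so no real gap.
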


\begin{proof}
 Straightforward.
\end{proof}

\begin{prop}
\label{prop:Frobenius}
 The Frobenius functor $\Phi : \amod \rightarrow \amod$ extends to an exact 
functor 
\[
 \Phi : \qmbig \rightarrow \qmbig. 
\]
Restricted to $\qm$, the operation $Q_0$ induces a natural transformation 
$
Q_0 : \Phi \rightarrow 1_{\qm}.
$
\end{prop}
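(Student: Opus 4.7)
My plan is, first, to extend $\Phi : \amod \to \amod$ to a functor on $\qmbig$ by equipping $\Phi M$ with the Dyer-Lashof action
\[
Q^{2i}(\Phi x) := \Phi(Q^i x), \qquad Q^{2i+1}(\Phi x) := 0,
\]
paralleling the standard doubling of the Steenrod action $(\Phi x) Sq^{2i} = \Phi(x Sq^i)$, $(\Phi x)Sq^{2i+1} = 0$. Verifying the Adem and Nishida relations on $\Phi M$ proceeds by parity analysis: when either index is odd, the relation reduces to $0=0$ using the defining vanishing $Q^{\mathrm{odd}}\Phi = 0$ together with a mod-$2$ computation of the relevant binomials (Lucas' theorem); when both indices are even, the Adem relation for $Q^{2r}Q^{2s}$ matches $\Phi$ applied to Adem for $Q^rQ^s$ after invoking the mod-$2$ identity $\binom{2a-1}{2b}\equiv\binom{a-1}{b}\pmod 2$, which follows from $(1+x)^{2a-1}(1+x)=(1+x^2)^a$ in $\field[[x]]$. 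The Nishida relations are verified analogously. Exactness of $\Phi:\qmbig\to\qmbig$ is inherited from exactness of $\Phi:\amod\to\amod$ since the Dyer-Lashof structure on $\Phi M$ is defined functorially in $M$, and a direct check shows $\Phi$ preserves the instability condition, hence restricts to an endofunctor of $\qm$.

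For the natural transformation, I define $\eta_M : \Phi M \to M$ by $\eta_M(\Phi x) := Q^{|x|}x = Q_0 x$, a degree-zero linear map. Compatibility with the Steenrod action follows from the Nishida relation: for $r=2i$, the $j=i$ summand of the expansion of $(Q^{|x|}x)Sq^{2i}$ is precisely $Q^{|x|-i}(xSq^i) = \eta_M((\Phi x)Sq^{2i})$, and the remaining summands vanish in $\qm$ either because the binomial coefficient has negative lower index or because the resulting Dyer-Lashof operation has strictly negative lower index (and so is zero by Dyer-Lashof instability); the odd case $r=2i+1$ matches $\eta_M(0)=0$ by the same combination of binomial parity and instability. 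Compatibility with Dyer-Lashof operations reduces to the identities
\[
Q^{2s}(Q^{|x|}x) = Q^{|x|+s}(Q^s x) \quad \text{and} \quad Q^{2s+1}(Q^{|x|}x) = 0 \quad \text{in } \qm, \ s \geq 0,
\]
both of which follow from Adem (applied, for instance, to $Q^0 Q^{|x|+r}$) combined with Dyer-Lashof instability, which kills the extraneous summands. Naturality in $M \in \qm$ is automatic, since $\eta_M$ is the pointwise action of $Q_0$ and morphisms in $\qm$ preserve all Dyer-Lashof operations.

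The main obstacle is the combinatorial bookkeeping of mod-$2$ binomial coefficients and the systematic use of Dyer-Lashof instability to cancel extraneous terms in the Adem and Nishida expansions. The restriction to $\qm$ in the second statement is essential: the cancellations rely on $Q_b = 0$ for $b<0$, a property that fails in $\qmbig$, so $\eta$ does not upgrade to a natural transformation on all of $\qmbig$.
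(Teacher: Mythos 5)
Your proof is correct and follows essentially the same route as the paper: define $\Phi$ on the Dyer-Lashof action by doubling, verify the Adem and Nishida relations via mod-$2$ binomial congruences, and establish that $Q_0\colon\Phi M\to M$ is a $\qm$-morphism by applying the Adem relation to $Q^rQ^{|x|}x$ and using Dyer-Lashof instability to eliminate all terms except $2i=r$. You additionally spell out the $\cala$-linearity of $Q_0$ via the Nishida relation (which the paper leaves implicit); note that the parenthetical ``Adem applied to $Q^0Q^{|x|+r}$'' looks like a slip for $Q^rQ^{|x|}$, but the surrounding argument and conclusion are right.
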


\begin{proof}
 The fact that $\Phi$ defines a functor $\Phi : \qmbig \rightarrow \qmbig$ is a 
straightforward calculation with the Adem and Nishida relations (analogous to 
the result for $Sq_0$ stated in Proposition 
\ref{prop:Omega_es}). In particular, the compatibility with the Adem relations 
follows from the congruence 
\[
 \binom{\alpha -1}{\beta} 
 \cong 
  \binom{2\alpha -1}{2\beta} 
\mod 2
  \]
and the compatibility with the Nishida relations from the congruence 
\[
 \binom{\alpha}{\beta} 
 \cong 
  \binom{2\alpha}{2\beta} 
\mod 2.
  \]

Restricting to $\qm$, consider the Adem relation for $Q^r Q_0 x$, where $Q_0$ 
acts via $Q^{|x|}$:
\[
 Q^r Q_0 x
 = 
 \sum _i \binom{i-|x|-1}{2i-r} Q^{r+|x|-i}Q^ix. 
\]
For the binomial coefficient to be non-trivial, we require $2i \geq r$, whereas 
instability for the Dyer-Lashof action 
implies that $Q^{r+|x|-i}Q^ix =0$ unless $r+ |x|-i \geq |x |+i$, namely $r \geq 
2i$. It follows that the Adem relation reduces to 
$Q^r Q_0 x=0$ if $r$ is odd and 
\[
 Q^{2i} Q_0 x = Q_ 0 Q^i x,
\]
as required.
\end{proof}

\begin{prop}
\label{prop:qsusp}
 The desuspension $\Sigma^{-1} : \amod \rightarrow \amod$ extends to an exact 
functor 
\[
 \Sigma^{-1} : \qm \rightarrow \qm.
\]
This admits a left adjoint $\qsusp : \qm \rightarrow \qm$ that fits into a 
natural exact sequence 
\[
 0
\rightarrow 
\Sigma^{-1} \qsusp_1 N 
\rightarrow 
\Phi N
\stackrel{Q_0}{\rightarrow}
N 
\rightarrow 
\Sigma^{-1} \qsusp N 
\rightarrow 
0
\]
for $N \in \qm$.
\end{prop}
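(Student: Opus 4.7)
The plan is to proceed in three steps: first extend $\Sigma^{-1}$ to $\qm$, then construct $\qsusp$ explicitly from the natural transformation $Q_0 : \Phi \to 1_{\qm}$ by taking kernel and cokernel, and finally verify the adjunction property and extract the four-term exact sequence.

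For the extension of $\Sigma^{-1}$, the key observation is that the given relation $Q_a \Sigma = \Sigma Q_{a+1}$ rearranges to $Q_a \Sigma^{-1} = \Sigma^{-1} Q_{a-1}$, and for $a<0$ the index $a-1$ remains negative, so Dyer-Lashof instability for $N$ is inherited by $\Sigma^{-1} N$. Exactness is then immediate from exactness of $\Sigma^{-1}$ on $\amod$ together with Lemma \ref{lem:qm_ab}.

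The core of the proof is the construction of $\qsusp$. Using the natural transformation $Q_0 : \Phi N \to N$ from Proposition \ref{prop:Frobenius}, I would set
\[
\qsusp_1 N := \Sigma \ker(Q_0 : \Phi N \to N), \qquad \qsusp N := \Sigma \, \mathrm{coker}(Q_0 : \Phi N \to N),
\]
which are a priori objects of $\qmbig$. The subtle point is to check that they in fact lie in $\qm$: Dyer-Lashof instability of a suspended object $\Sigma C$ translates to vanishing of the internal $Q_0$ operation on $C$ (using $Q_{-1}\Sigma = \Sigma Q_0$). For the cokernel $N/\mathrm{im}(Q_0)$ this is immediate, since the image of the internal $Q_0$ operation on $N$ coincides with the image of the natural transformation by definition. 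For the kernel $K_N \subset \Phi N$, I would invoke the Frobenius formula $Q_0(\Phi x) = \Phi(Q_0 x)$ (a consequence of the Adem relation used in the proof of Proposition \ref{prop:Frobenius}): an element $\Phi x$ belongs to $K_N$ precisely when $Q_0 x = 0$ in $N$, whence the internal operation also vanishes on $K_N$.

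Finally, for the adjunction I would exploit the $\Sigma \dashv \Sigma^{-1}$ adjunction on $\qmbig$ together with fullness of $\qm \hookrightarrow \qmbig$. Given $M \in \qm$, any morphism $f: \Sigma N \to M$ in $\qmbig$ automatically kills $\Sigma \, \mathrm{im}(Q_0) \subset \Sigma N$, because $Q_{-1}$ acts trivially on $M$, so $f$ factors uniquely through $\qsusp N$. This yields the natural isomorphism $\hom_{\qm}(\qsusp N, M) \cong \hom_{\qm}(N, \Sigma^{-1} M)$. The four-term exact sequence then follows from the standard kernel-cokernel sequence $0 \to K_N \to \Phi N \to N \to C_N \to 0$ in $\qm$, read off via the definitions. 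The main anticipated obstacle is the verification that $\Sigma K_N$ lies in $\qm$, which pivots on carefully distinguishing the natural transformation $Q_0: \Phi N \to N$ from the internal Dyer-Lashof operation $Q_0$ on $\Phi N$ and invoking the Frobenius compatibility at the right moment.
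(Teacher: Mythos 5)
Your proof is correct and takes essentially the same approach as the paper, which dispatches the statement as ``analogous to the construction of the exact sequence of Proposition \ref{prop:Omega_es}'' (the standard kernel/cokernel construction for the loop functor, with $Q_0$ playing the rôle of $Sq_0$). You have spelled out the details the paper leaves implicit, in particular the point that $\Sigma\ker(Q_0)$ lands in $\qm$ because the Frobenius relation $Q_0(\Phi x)=\Phi(Q_0 x)$ forces the internal $Q_0$ to vanish on the kernel, and the factorization argument identifying $\Sigma\,\mathrm{coker}(Q_0)$ as the left adjoint.
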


\begin{proof}
 Analogous to the construction of the exact sequence of Proposition  
\ref{prop:Omega_es}. 
\end{proof}

\begin{rem}
The functor $\qsusp_1$ is the first left derived functor 
of $\qsusp$ in the sense of the relative homological algebra of Section 
\ref{sect:relhom}.
\end{rem}

\begin{nota}
 For $M \in \amod$, let 
$
\epsilon_M :  \sing M \rightarrow \Sigma^{-1} \sing \Sigma M
$
denote the natural transformation in $\qm$ induced by the natural inclusion 
$\Sigma M \hookrightarrow \sing \Sigma M$.
\end{nota}

\begin{cor}
\label{cor:qsusp}
 For $M \in \amod$ there is a natural isomorphism $\qsusp \sing M \cong \sing 
\Sigma M$ and the exact sequence of 
Proposition \ref{prop:qsusp} induces a natural short exact sequence 
\[
 0
\rightarrow 
\Phi \sing M
\stackrel{Q_0}{\rightarrow} 
\sing M
\stackrel{\epsilon_M}{\rightarrow} 
\Sigma^{-1} \sing \Sigma M
\rightarrow 
0
\]
in $\qm$. In particular, $Q_0$ induces a natural inclusion in $\amod$:
\[
 \Phi M 
 \hookrightarrow 
 \sing_1 M.
\]
\end{cor}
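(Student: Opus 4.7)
The plan is to deduce the corollary from Proposition \ref{prop:qsusp} applied to $N = \sing M$, combined with the two adjunctions in play: $\qsusp \dashv \Sigma^{-1}$ on $\qm$ and $\sing \dashv \mathrm{forget}$. For the identification $\qsusp \sing M \cong \sing \Sigma M$, I compose adjunctions: for any $N \in \qm$,
\[
\hom_\qm(\qsusp \sing M, N) \cong \hom_\qm(\sing M, \Sigma^{-1}N) \cong \hom_\amod(M, \Sigma^{-1}N) \cong \hom_\amod(\Sigma M, N) \cong \hom_\qm(\sing \Sigma M, N),
\]
and conclude via Yoneda. Tracing the identity of $\sing \Sigma M$ back through this chain shows that, under the resulting isomorphism, the unit $\sing M \to \Sigma^{-1}\qsusp \sing M$ of $\qsusp \dashv \Sigma^{-1}$ agrees with the $\qm$-morphism obtained by $\sing \dashv \mathrm{forget}$ from the desuspended $\amod$-inclusion $M \hookrightarrow \Sigma^{-1}\sing \Sigma M$; by definition this is $\epsilon_M$.

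Substituting these identifications into the four-term exact sequence of Proposition \ref{prop:qsusp} with $N = \sing M$ yields
\[
0 \to \Sigma^{-1}\qsusp_1 \sing M \to \Phi \sing M \xrightarrow{Q_0} \sing M \xrightarrow{\epsilon_M} \Sigma^{-1}\sing \Sigma M \to 0,
\]
and the short exact sequence of the statement follows once I verify the vanishing $\qsusp_1 \sing M = 0$, equivalently that $Q_0 : \Phi \sing M \to \sing M$ is injective. I would prove this via the explicit admissible-monomial basis of $\sing M$ from \cite[Lemma~4.19]{KMcC} that underlies the proof sketch of Proposition \ref{prop:properties_sing}: $Q_0$ sends a Frobenius basis element $\varphi(Q^I x)$ of $\Phi \sing_s M$ to $Q^{|I|+|x|}Q^I x \in \sing_{s+1} M$, and one verifies, using the Adem relations inside the big Dyer-Lashof algebra, that after reduction to the admissible basis of $\sing_{s+1} M$ the resulting expressions remain linearly independent as $(Q^I x)$ varies. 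Structurally, this expresses the fact that $\sing M$ is a relative projective for the homological algebra underlying Proposition \ref{prop:qsusp}, so that $\qsusp_1$ vanishes on it.

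For the final assertion, I restrict $Q_0$ along the length-grading inclusion $\Phi M = \Phi \sing_0 M \hookrightarrow \Phi \sing M$: on $\varphi x$ with $x \in M$ it returns $Q^{|x|}x \in \sing_1 M$, which is manifestly a length-one allowable monomial, and injectivity of this restriction is forced by the injectivity of the full $Q_0$ obtained above. The main obstacle throughout is the injectivity of $Q_0$ on all of $\Phi \sing M$: through the admissible basis it becomes a combinatorial question controlled by the Adem relations, and care is required because, as flagged in Remark \ref{rem:big_DL}, the big Dyer-Lashof algebra behaves differently from its usual quotient owing to the retention of negative Dyer-Lashof operations.
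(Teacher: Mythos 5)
Your proposal is correct and takes what is (implicitly) the natural route: the paper itself supplies no proof for this corollary, and your derivation --- identifying $\qsusp \sing M \cong \sing \Sigma M$ via the composite of the adjunctions $\qsusp \dashv \Sigma^{-1}$ and $\sing \dashv \mathrm{forget}$, checking that the unit identifies with $\epsilon_M$, and then extracting the short exact sequence from Proposition \ref{prop:qsusp} once $Q_0 : \Phi \sing M \to \sing M$ is seen to be injective --- is exactly what the text leaves to the reader.

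One small but instructive remark on the injectivity step: no rewriting via the Adem relations is actually required. If you pass to the lower-indexed allowable basis $\{Q_J x\}$ with $J \in \nat^s$, $j_1 \leq j_2 \leq \cdots \leq j_s$ (this is the basis supplied by \cite[Lemma~4.19]{KMcC} and recorded in Lemma \ref{lem:basis_sing}), then $Q_0$ sends the Frobenius of $Q_J x$ to $Q_{(0,J)} x$, and since $0 \leq j_1$ the sequence $(0, j_1, \ldots, j_s)$ is \emph{already} allowable; no reduction to admissible form is needed and no new terms appear. Thus $Q_0$ restricts to a bijection from the allowable basis of $\Phi \sing_s M$ onto the allowable monomials of $\sing_{s+1} M$ whose leading lower index is $0$, and injectivity is immediate. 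Phrasing this in terms of ``verifying linear independence after Adem reduction'' overstates what has to be checked and could suggest that the Adem relations play an essential role here, which they do not.

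Your parenthetical ``structural'' justification --- that $\sing M$ is a relative projective, hence $\qsusp_1$-acyclic --- is also valid and is what Example \ref{exam:qsusp} and Proposition \ref{prop:proj_class_acyclic} would give, but note that this machinery is only set up in Section \ref{sect:relhom}, after the corollary is stated; at this point in the paper the direct basis argument is the one available, and it is the one you should lead with.
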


\begin{rem}
There is a natural transformation 
$
 \sing \Phi M 
\rightarrow 
\Phi \sing M
$ 
for $M \in \amod$,  which is induced by the natural inclusion $M 
\hookrightarrow 
\sing M$ in $\amod$. This is not injective in general, since the left hand side 
contains 
elements in odd degree.
\end{rem}

In a similar vein to Proposition \ref{prop:qsusp} is the following:

\begin{prop}
 \label{prop:Sq_0_DL-linear}
For $N \in \qmbig \cap \unst$, the linear map 
\[
 Sq_0: N \rightarrow \Phi N
\]
is a morphism of $\qmbig \cap \unst$.
\end{prop}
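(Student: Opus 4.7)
The plan is to check that $Sq_0 : N \to \Phi N$, already $\cala$-linear on $\unst$ by Proposition \ref{prop:Omega_es}, also respects the Dyer-Lashof action when $N$ carries the structure of $\qmbig$. Since $\Phi$ preserves $\qmbig$ by Proposition \ref{prop:Frobenius} and preserves $\unst$ by Proposition \ref{prop:Omega_es}, the target $\Phi N$ lies in $\qmbig \cap \unst$, so everything reduces to establishing the identity $Q^j (Sq_0 x) = Sq_0 (Q^j x)$ in $\Phi N$ for every $x \in N$ and $j \in \zed$.

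First I would make the structures explicit: $Sq_0 x = x\cdot Sq^n$ when $x \in N_{2n}$, viewed via $(\Phi N)_{2n} \cong N_n$, and $Sq_0 x = 0$ when $|x|$ is odd, while the Dyer-Lashof action on $\Phi N$ satisfies $Q^{2k}\Phi z = \Phi Q^k z$ and $Q^{2k+1}\Phi z = 0$, as is implicit in the proof of Proposition \ref{prop:Frobenius}. Since $\Phi N$ is concentrated in even degrees, a parity analysis on $|x|$ and $j$ shows that only two cases are substantive: for $|x| = 2n$ and $j = 2k$, the identity reads $(Q^{2k}x)\, Sq^{n+k} = Q^k(x\, Sq^n)$ in $N$; for $|x| = 2n+1$ and $j = 2k+1$, it reads $(Q^{2k+1}x)\, Sq^{n+k+1} = 0$. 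The other parity combinations give $0 = 0$ directly.

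For the first identity when $k \geq n$, I would apply the Nishida relation with $s = 2k$ and $r = n+k$:
\[
(Q^{2k}x)\, Sq^{n+k} = \sum_i \binom{k-n}{n+k-2i}\, Q^{k-n+i}(x\, Sq^i).
\]
$\cala$-unstability of $N$ kills the terms with $i > n$ (since then $x\, Sq^i = 0$), and for $i < n$ the lower entry $n+k-2i$ strictly exceeds the non-negative upper entry $k-n$, so $\binom{k-n}{n+k-2i} \equiv 0 \pmod{2}$; the sole surviving $i = n$ term contributes $Q^k(x\, Sq^n)$ with coefficient $1$. The odd/odd case with $k \geq n$ is handled analogously, Lucas's theorem being responsible for the coefficient vanishings in $\binom{k-n}{n+k+1-2i}$. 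The hard part will be the remaining case $k < n$: here the Nishida expansion of $(Q^{2k}x)\, Sq^{n+k}$ does not collapse directly to $Q^k(x\, Sq^n)$, and one is forced to additionally invoke $\cala$-unstability on the shifted expression $(Q^{2k+2}x)\, Sq^{n+k+2}$, which vanishes because $n+k+2$ strictly exceeds $|Q^{2k+2}x|/2$; comparing its Nishida expansion to that of $(Q^{2k}x)\, Sq^{n+k}$, by means of the identity $\binom{-m}{j} \equiv \binom{m+j-1}{j} \pmod{2}$ together with Lucas's theorem, recovers the missing term $Q^k(x\, Sq^n)$. The main obstacle throughout is this binomial bookkeeping modulo $2$, in particular the matching of coefficients with negative upper indices across two Nishida expansions.
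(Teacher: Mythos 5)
Your treatment of the case $k \geq n$ (equivalently $s \geq |x|$, where $s$ is the upper index of the Dyer-Lashof operation) reproduces the paper's argument: apply the Nishida relation, then kill terms using $\cala$-instability and the elementary vanishing $\binom{a}{b}=0$ for $0\leq a<b$, leaving the single term $Q^k(xSq^n)$. That part is fine and is exactly what the paper does. You are also right to flag $k<n$ as the genuinely hard case: there $s-j<0$, so the paper's assertion that ``the binomial coefficient is non-trivial only if $(s-j)\geq(j-2i)$'' is not the elementary fact it appears to be (for a negative upper index the generalized binomial $\binom{s-j}{j-2i}$ need not vanish), and the paper's proof as written does not explicitly resolve this regime.

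However, the specific fix you propose does not close the gap. You want to add the Nishida expansion of the vanishing element $(Q^{2k+2}x)\,Sq^{n+k+2}$ to that of $(Q^{2k}x)\,Sq^{n+k}$ so as to make the coefficient of the $i=n$ term equal to $1$. The coefficient of $Q^k(xSq^n)$ in $(Q^{2k}x)Sq^{n+k}$ is $\binom{k-n}{k-n}$, and in $(Q^{2k+2}x)Sq^{n+k+2}$ it is $\binom{k-n}{k-n+2}$. Both lower indices are negative as soon as $k\leq n-3$, so both coefficients are zero and the sum is still zero; one does \emph{not} ``recover the missing term''. (A quick check for $k=n-1$ and $k=n-2$ shows the trick does work there, which may be why the pattern looks plausible, but it breaks at $k=n-3$.) A concrete illustration: take $|x|=6$, $k=0$; the Nishida relation gives $(Q^0x)Sq^3 = Q^{-2}(xSq^1)$, and the auxiliary relation $(Q^2x)Sq^5=0$ expands to $Q^{-3}x + Q^{-1}(xSq^2)=0$, which has no overlap whatsoever with the target $Q^0(xSq^3)$; the relation that actually produces the target term is $(Q^4x)Sq^7=0$, not $(Q^2x)Sq^5=0$. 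So, to finish the $k<n$ case one must invoke a whole family of $\cala$-instability relations $(Q^{s'}x)Sq^{r'}=0$ (for $2r'>s'+|x|$) and do a genuinely more involved binomial bookkeeping, or find a structural argument avoiding the explicit Nishida expansion altogether. As written, the proposal leaves this case open.
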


\begin{proof}
 This is proved by using the Nishida relations, using 
$\cala$-instability (without the hypothesis of Dyer-Lashof instability). The 
morphism $Sq_0$ is trivial on elements of odd degree, hence consider $(Q^s x) 
Sq_0$ where $s +|x|= 2j$, so that  $Sq_0$ acts via $Sq^j$. 
 The Nishida relation gives 
 \[
  (Q^s x) Sq_0 
  = \sum_i \binom{s-j}{j -2i} Q^{s-j + i} (xSq^i) 
 \]
where $2i \leq j$. By $\cala$-instability, the terms on the right hand side are 
trivial if $2i > |x|$, hence we may assume that  $2i \leq |x|$. The binomial 
coefficient is non-trivial only if $(s-j)\geq  (j-2i)$, equivalently if $2j -s 
\leq  2i$. Now $|x |= 2j - s$, hence non-trivial terms occur only when both $2i 
\leq |x|$ and $|x| \leq 2i$, hence only when $|x |= 2i$, so that $xSq^i = x 
Sq_0$.

For $|x|=2i$, the relation $s+|x| = 2j$ then implies $s-j = j -2i$ and $s =2n$ 
for some $n = j - i$ and one 
has $$(Q^{2n} x ) Sq_0 = Q^n (x Sq_0).$$ 
For $|x|$ odd, one has $(Q^s x)Sq_0 =0$. 
\end{proof}

\subsection{Length grading}
\label{subsect:length1}
As observed in  Remark \ref{rem:big_DL}, the big Dyer-Lashof algebra is 
bigraded when equipped with
  the length grading. Moreover the functor $\sing : 
\amod \rightarrow \qm$ takes values in the category of 
bigraded objects of $\qm$, namely those 
modules $N \in \qm$ equipped
with a length decomposition:
\[
 N \cong \bigoplus_{s \in \zed} N\lgth{s}
\]
such that $N \lgth{s} \in \amod$ and $Q_a : \Sigma^a \Phi N\lgth{s} \rightarrow 
N\lgth{s+1}$ 
(graded linear map, not $\cala$-linear in general).

\begin{nota}
\ 
\begin{enumerate}
 \item 
 Denote by $\qmgr$ the category of length-graded objects of $\qm$ and 
length-grading preserving morphisms, equipped
with the forgetful functor $\qmgr \rightarrow \qm$, $N \mapsto 
\bigoplus_{s 
\in \zed} N\lgth{s} $.
\item 
Denote by $\amodgr$ the category of length-graded objects of $\amod$ and 
length-grading preserving morphisms, equipped with the exact forgetful 
functor 
$\amodgr \rightarrow \amod$.
\item 
For $l \in \zed$, let $\cdot (l) : \qmgr \rightarrow \qmgr$  (respectively 
$\cdot (l) : \amodgr \rightarrow \amodgr$) denote the exact functor which 
increases length grading by $l$, so that 
$N(l)\lgth{s} = N\lgth{s-l}$.
\end{enumerate}
\end{nota}

\begin{rem}
 In applications here, the length grading is always bounded below, that is 
$N\lgth{s} =0$ 
for $s\ll 0$. Frequently the 
length grading will be defined only for $s \in \nat$, in which case it is 
extended by zero to negative degrees.
\end{rem}

\begin{prop}
\label{prop:qmgr}
\ 
\begin{enumerate}
\item 
The categories $\qmgr$ and $\amodgr$ are abelian and the forgetful functor 
$\qmgr\rightarrow \amodgr$ is exact.
\item 
The trivial action functor induces an exact functor $\triv : \amodgr 
\rightarrow \qmgr$.
\item 
For $l \in \zed$, the functor $\cdot (l) : \qmgr \rightarrow \qmgr$ is an 
equivalence of categories (respectively for $\amodgr$) and these equivalences 
are compatible via 
the forgetful and trivial action functors. 
\item 
The functor $\indec$ induces a functor $\indec : \qmgr \rightarrow \amodgr$.
\item 
The functor $\sing$ factorizes across a functor $\sing : \amod \rightarrow 
\qmgr$ which extends to 
a functor $\sing : \amodgr \rightarrow \qmgr$.
 \item 
The functors $\Sigma^{-1}$, $\qsusp$, $\Phi$ extend to functors on $\qmgr$. 
\item 
For $N \in \qmgr$, the morphism $Q_0$ defines a natural transformation in 
$\qmgr$:
\[
 \big(\Phi  N\big) (1) \stackrel{Q_0}{\rightarrow} N.
\]
\item 
For $M \in \amod$, the natural transformation $\epsilon_M :  \sing M 
\rightarrow 
\Sigma^{-1} \sing 
\Sigma M$ is defined in $\qmgr$ and the short exact sequence of Corollary 
\ref{cor:qsusp}
is obtained from the short exact sequence in $\qmgr$:
\[
 0
\rightarrow 
\big(\Phi \sing M\big)(1)
\stackrel{Q_0}{\rightarrow} 
\sing M
\stackrel{\epsilon_M}{\rightarrow} 
\Sigma^{-1} \sing \Sigma M
\rightarrow 
0.
\] 
\end{enumerate} 
\end{prop}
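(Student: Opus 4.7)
The plan is to reduce each assertion to its ungraded counterpart, the length-graded structure being controlled by two observations: the big Dyer-Lashof algebra is bigraded with $Q^i$ of length one, and the Steenrod action (via the Nishida relations), the suspension $\Sigma$, and the Frobenius $\Phi$ all preserve length. Consequently kernels, cokernels and direct sums in $\qm$ of length-grading preserving morphisms are computed degreewise in length, so $\qmgr$ inherits an abelian structure from $\qm$ (via Lemma \ref{lem:qm_ab}) such that the forgetful functor to $\amodgr$ is exact. This gives (1), and (2) follows since placing an object in length zero respects all exactness.  Part (3) is formal: $\cdot(-l)$ is an inverse equivalence, and the compatibility with forgetful and $\triv$ is clear since neither alters the length grading.

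For (4), the adjunction $\indec \dashv \triv$ from the ungraded setting produces, for $N \in \qmgr$, the quotient
\[
(\indec N)\lgth{s} := N\lgth{s}\,/\, Q\bigl(N\lgth{s-1}\bigr),
\]
which is manifestly length-graded; naturality in length-grading preserving morphisms is automatic. For (5), Proposition \ref{prop:properties_sing} already exhibits $\sing M = \bigoplus_{s\ge 0} \sing_s M$ with $s$ the length grading, giving $\sing : \amod \to \qmgr$; one extends to $\amodgr$ by setting $(\sing M)\lgth{n} := \bigoplus_{s+l=n} \sing_s(M\lgth{l})$, which assembles to a length-graded object of $\qm$ because each $Q^i$ raises total length by $1$. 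For (6), both $\Sigma^{-1}$ and $\Phi$ act on internal degree only and hence commute with the length decomposition; that $\qsusp$ extends then follows from its construction as a cokernel of $Q_0$ in Proposition \ref{prop:qsusp}, once the length shift in $Q_0$ is accounted for as in (7).

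Part (7) is the single point that needs explicit attention: in $\qmgr$ the operation $Q_0 x = Q^{|x|} x$ has length one, so viewed as a natural transformation between length-graded objects it must be written $\bigl(\Phi N\bigr)(1) \to N$ (i.e.\ after shifting $\Phi N$ up by one in the length grading). The verification that this is natural reduces to the identity $Q_0 \Phi = \Phi Q_0$ already implicit in Proposition \ref{prop:Frobenius} together with the fact that length-grading preserving morphisms commute with $Q_0$ by definition. Finally, (8) is obtained by applying (5), (6), (7) to the short exact sequence of Corollary \ref{cor:qsusp}: each term naturally carries a length grading (with the shift $(1)$ on $\Phi \sing M$ matching the length shift of $Q_0$), $\epsilon_M$ is a length-grading preserving morphism because the inclusion $\Sigma M \hookrightarrow \sing \Sigma M$ lands in length zero, and exactness in $\qmgr$ is checked degreewise in the length grading, where it coincides with the already established exactness in $\qm$.

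The only real obstacle is bookkeeping of length shifts, particularly in (7): one must not forget to insert the $(1)$ when translating $Q_0 : \Phi N \to N$ from $\qm$ to $\qmgr$, as this shift propagates through (8) and, ultimately, through the Koszul complex $\kz[n]$ whose differential is built from $Q_0$.
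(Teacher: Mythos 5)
Your proposal is correct in substance and essentially fills in the details behind the paper's proof, which is literally the single word ``Straightforward.''\ The key points you identify---that kernels, cokernels and images are computed degreewise in the length grading so that exactness in $\qmgr$ reduces to exactness in $\qm$; that $\indec$ passes to the quotient by the length-one-raising operations; that $\sing$ extends to $\amodgr$ by summing $\sing_s(M\lgth{l})$ into total length $s+l$; and, crucially, the bookkeeping of the shift $(1)$ in part (7) so that $Q_0$ becomes a length-grading preserving morphism $(\Phi N)(1)\to N$---are exactly the observations one needs.

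One small slip: in your justification of (2) you say the claim ``follows since placing an object in length zero respects all exactness.''\ But $\triv : \amodgr \to \qmgr$ does not place its argument in length zero; it takes an already length-graded $\cala$-module, keeps the length grading, and equips it with the trivial Dyer-Lashof action. (Placing in length zero is what the composite $\amod \to \amodgr \to \qmgr$ does, and that is what is implicitly used when, e.g., Theorem \ref{thm:Koszul_property} considers $M\in\amod$ ``as an object of $\qmgr$ via $\triv$.'') This is harmless for the conclusion---exactness is still immediate since nothing is added or modified---but the characterization of the functor should be corrected. Everything else, including the careful tracking of the length shift through the short exact sequence of part (8), is accurate.
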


\begin{proof}
 Straightforward.
\end{proof}

The  following is the  length-graded version of Proposition \ref{prop:qsusp}:

\begin{prop}
\label{prop:qsuspgr}
 The exact functor 
$
 \Sigma^{-1} : \qmgr \rightarrow \qmgr 
$
admits a left adjoint $\qsusp : \qmgr \rightarrow \qmgr$ which fits into a 
natural exact sequence 
\[
 0
\rightarrow 
\Sigma^{-1} \qsusp_1 N 
\rightarrow 
\big(\Phi N\big)(1)
\stackrel{Q_0}{\longrightarrow}
N 
\rightarrow 
\Sigma^{-1} \qsusp N 
\rightarrow 
0
\]
for $N \in \qmgr$.
\end{prop}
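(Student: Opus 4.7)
The plan is to adapt the proof of Proposition \ref{prop:qsusp} to the length-graded setting, relying on Proposition \ref{prop:qmgr}(7), which already provides $Q_0 : (\Phi N)(1) \to N$ as a morphism in $\qmgr$ (the length shift $(1)$ recording that a Dyer-Lashof operation increases length grading by one). First I would define, for $N \in \qmgr$,
\[
\Sigma^{-1} \qsusp N := \mathrm{coker}\bigl(Q_0 : (\Phi N)(1) \to N\bigr)
\quad\text{and}\quad
\Sigma^{-1} \qsusp_1 N := \ker\bigl(Q_0 : (\Phi N)(1) \to N\bigr),
\]
formed in the abelian category $\qmgr$. Given these definitions, the four-term exact sequence is tautological, so the work lies in checking that $\qsusp N$ and $\qsusp_1 N$ genuinely define objects of $\qmgr$ and that $\qsusp$ is left adjoint to $\Sigma^{-1}$.

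For the first point, I would invoke the identity $Q_{-1}\Sigma = \Sigma Q_0$ in $\qmbig$ (cf. the discussion following the definition of $\qm$): on $\Sigma\,\mathrm{coker}(Q_0)$ the operation $Q_{-1}$ vanishes because $Q_0(y)$ is killed in the cokernel by definition, and on $\Sigma\,\ker(Q_0)$ it vanishes because the representing elements already lie in the kernel of $Q_0$. Hence both suspensions satisfy Dyer-Lashof instability and lie in $\qm$; since the suspension $\Sigma$ preserves length grading, they lie in $\qmgr$. This is exactly the length-graded refinement of the corresponding verification in Proposition \ref{prop:qsusp}.

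For the adjunction, I would argue that for any $M \in \qmgr$, a length-preserving morphism $\Sigma N \to M$ in $\qmbig$ must annihilate $\Sigma\,\mathrm{im}(Q_0)$, because $Q_{-1}(\Sigma n) = \Sigma Q_0 n$ and $Q_{-1}$ acts trivially on $M \in \qmgr \subset \qm$ by Dyer-Lashof instability. Any such morphism therefore factors uniquely through $\qsusp N$, and since $\Sigma : \qmbig \to \qmbig$ is the length-preserving equivalence inverting $\Sigma^{-1}$, this yields the adjunction $\hom_{\qmgr}(\qsusp N, M) \cong \hom_{\qmgr}(N, \Sigma^{-1} M)$.

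The only obstacle is careful bookkeeping of the length grading: the abelian-category arguments and the Dyer-Lashof calculations are identical to the non-graded case, and the sole delicate point is that the shift $(1)$ appears precisely where $Q_0$ crosses from $\Phi N$ to $N$, so that $Q_0$ is length-preserving and the resulting construction descends to $\qmgr$ rather than merely to $\qm$.
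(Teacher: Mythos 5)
Your proposal is correct and takes essentially the approach the paper intends: the statement is labelled explicitly as ``the length-graded version of Proposition \ref{prop:qsusp}'', which is itself proved ``analogous to'' Proposition \ref{prop:Omega_es}, and your definitions $\Sigma^{-1}\qsusp N := \mathrm{coker}(Q_0)$, $\Sigma^{-1}\qsusp_1 N := \ker(Q_0)$ with the instability check via $Q_{-1}\Sigma = \Sigma Q_0$ are exactly the standard translation. One step worth tightening is the kernel case: to conclude that $Q_0$ acts trivially on $\ker(Q_0) \subset \Phi N$ you need the fact that, by the construction of $\Phi : \qmbig \to \qmbig$ in Proposition \ref{prop:Frobenius}, the $Q_0$-action on $\Phi N$ is $Q_0^{\Phi N} = \Phi(Q_0^N) : \Phi(\Phi N) \to \Phi N$ (i.e.\ $Q^{2|n|}\phi(n) = \phi(Q^{|n|}n)$); once this is stated, $Q_0^{\Phi N}$ visibly kills $\Phi(\ker Q_0^N)$, which is what Dyer-Lashof instability for $\Sigma\ker(Q_0)$ requires. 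Your phrase ``the representing elements already lie in the kernel of $Q_0$'' points at this, but as written it conflates the natural transformation $Q_0 : \Phi N \to N$ with the Dyer-Lashof operation $Q_0$ on $\ker(Q_0)$; it is precisely the identity $Q_0^{\Phi N} = \Phi Q_0^N$ that bridges the two and makes the vanishing immediate.
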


\subsection{Length truncations}

\begin{defn}
 For $l \in \zed$, let $\tau\lgth{\leq l} : \qmgr \rightarrow \qmgr$ denote the 
length truncation functor defined by 
 \[
  (\tau\lgth{\leq l} N)\lgth{s} 
  = 
  \left\{
  \begin{array}{ll}
   N\lgth{s} & s \leq l \\
   0 & s > l,
  \end{array}
\right.
 \]
for $N \in \qmgr$. 
\end{defn}

Clearly one has the following:

\begin{prop}
\label{prop:trunc}
For $l \in \zed$, the functor $\tau \lgth{\leq l} : \qmgr \rightarrow \qmgr$ is 
exact 
and there is a commutative diagram of natural surjections:
\[
 \xymatrix{
 N 
 \ar@{->>}[r]
  \ar@{->>}[dr]
  &
  \tau\lgth{\leq l+1} N
   \ar@{->>}[d]
   \\
   &
   \tau\lgth{\leq l}N,
 }
\]
for $N\in \qmgr$.
 \end{prop}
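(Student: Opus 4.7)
The plan is to verify that $\tau\lgth{\leq l} N$ genuinely defines a subquotient in $\qmgr$, then deduce exactness componentwise, and finally construct the natural surjections.

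First I would check that the formula
\[
(\tau\lgth{\leq l} N)\lgth{s} = \begin{cases} N\lgth{s} & s \leq l \\ 0 & s > l \end{cases}
\]
inherits a well-defined structure in $\qmgr$. The Steenrod action preserves each length component $N\lgth{s}$ individually, hence restricts. The Dyer-Lashof operation $Q_a : \Sigma^a \Phi N\lgth{s} \rightarrow N\lgth{s+1}$ raises length by one; for $s < l$ it lands in $N\lgth{s+1} = (\tau\lgth{\leq l} N)\lgth{s+1}$, while for $s = l$ one defines the induced operation to be zero (consistent with $(\tau\lgth{\leq l} N)\lgth{l+1}=0$), and for $s > l$ both sides are zero. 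The Adem, Nishida and Dyer-Lashof instability relations involve only components of length at most that of the target, so they are automatically inherited. Equivalently, $\tau\lgth{\leq l} N$ is the quotient of $N$ by the sub-object $\bigoplus_{s > l} N\lgth{s}$, which is closed under all actions for the same reason. This gives a natural surjection $N \twoheadrightarrow \tau\lgth{\leq l} N$ in $\qmgr$.

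For exactness, I would use that a sequence in $\qmgr$ is exact if and only if it is exact in each length-grading component in $\amod$ (by Proposition \ref{prop:qmgr}(1), since the forgetful functor to $\amodgr$ is exact and $\amodgr$-exactness is tested componentwise in $\amod$). Since $\tau\lgth{\leq l}$ is the identity in length $\leq l$ and the zero functor in length $> l$, it is componentwise exact, hence exact.

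The commutative triangle of surjections is then automatic from the construction: both $\tau\lgth{\leq l+1} N \twoheadrightarrow \tau\lgth{\leq l} N$ and $N \twoheadrightarrow \tau\lgth{\leq l} N$ are the projections that zero out components in lengths exceeding the truncation level, and these projections are compatible by inspection of each length component.

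There is no substantive obstacle here: the only point requiring verification is the closure of the truncation under the Dyer-Lashof action, and this is immediate because $Q_a$ strictly increases length grading by one, so truncating above a fixed length yields a well-defined quotient in $\qmgr$. Everything else is formal.
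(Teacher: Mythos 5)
Your proposal is correct, and since the paper gives no proof (it simply says ``Clearly one has the following:''), you have supplied exactly the verification that is being left implicit. The key point — that $Q_a$ strictly raises length grading, so $\bigoplus_{s>l}N\lgth{s}$ is a sub-object of $N$ in $\qmgr$ and $\tau\lgth{\leq l}N$ is its quotient — is the right way to see that the truncation is well-defined in $\qmgr$; exactness and the commutative triangle then follow componentwise, as you observe.
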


\subsection{Connectivity estimates for $\qsusp_1$}

The following elementary result is the basis for the stable range which appears 
in many situations:

\begin{lem}
\label{lem:Phi_conn}
 For $M \in \amod$ of connectivity $d-1$, the module $\Phi M$ is 
$(2d-1)$-connected.
\end{lem}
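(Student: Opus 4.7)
The plan is to unwind the definitions: by Definition~\ref{def:conn}, the hypothesis that $M$ has connectivity $d-1$ means $M_n = 0$ for all $n \leq d-1$, i.e.\ $M$ is concentrated in degrees $\geq d$. The Frobenius functor $\Phi$ doubles degrees, so an element of $M$ in degree $k$ contributes to $\Phi M$ in degree $2k$, and $(\Phi M)_n$ vanishes unless $n = 2k$ with $M_k \neq 0$.

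First I would record the explicit description $(\Phi M)_{2k} \cong M_k$ and $(\Phi M)_{2k+1} = 0$ (as graded $\field$-vector spaces). Then from the vanishing $M_k = 0$ for $k < d$, one immediately concludes $(\Phi M)_n = 0$ whenever $n < 2d$, i.e.\ for $n \leq 2d-1$. By Definition~\ref{def:conn}, this is exactly the statement that $\Phi M$ is $(2d-1)$-connected.

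There is essentially no obstacle here: the result is purely a degree-counting observation, independent of the $\cala$-module structure. The only care needed is to keep track of the indexing convention in Definition~\ref{def:conn} (where $c$-connected means vanishing in degrees $\leq c$, not $<c$), which is why the numerology reads $(d-1) \mapsto (2d-1)$ rather than $d \mapsto 2d$.
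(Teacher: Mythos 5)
Your proof is correct and is the only natural argument: the paper omits the proof entirely, treating the lemma as an elementary degree count, and your unwinding of Definition~\ref{def:conn} together with the description $(\Phi M)_{2k}\cong M_k$, $(\Phi M)_{2k+1}=0$ is exactly what is intended. The bookkeeping $(d-1)\mapsto(2d-1)$ is handled correctly.
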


\begin{prop}
\label{prop:qsusp_1_conn}
 For $N \in \qmgr$ such that $\tau\lgth{\leq -1} N= 0$ and, for $i \geq 0$, 
$N\lgth{i}$ is $d_i -1$ connected for $d_i \in \zed$, 
 \begin{enumerate}
  \item 
  $(\qsusp_1 N) \lgth{0}=0$; 
  \item 
  $(\qsusp_1 N) \lgth{i}$ is at least $(2d_{i-1})$-connected, for $i>0$.
 \end{enumerate}
\end{prop}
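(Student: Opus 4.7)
The plan is to read off both statements from the exact sequence of Proposition \ref{prop:qsuspgr} by examining it length grading by length grading, and then to apply Lemma \ref{lem:Phi_conn} to convert the hypothesis on the connectivities of the $N\lgth{i}$ into the desired connectivity estimate on $\qsusp_1 N$.

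First I would extract the identification
\[
 \Sigma^{-1} \qsusp_1 N \;\cong\; \ker \bigl\{ Q_0 : \bigl(\Phi N\bigr)(1) \to N \bigr\}
\]
from the four-term exact sequence of Proposition \ref{prop:qsuspgr}. Since all functors involved ($\Phi$, $(\cdot)(1)$, $\Sigma^{-1}$ and kernels) are computed length gradewise, one has for each $s \in \zed$
\[
 \bigl( \Sigma^{-1} \qsusp_1 N \bigr)\lgth{s}
 \;\cong\;
 \ker \bigl\{ Q_0 : \Phi N\lgth{s-1} \to N\lgth{s} \bigr\}.
\]

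For part (1), take $s=0$: the hypothesis $\tau\lgth{\leq -1} N = 0$ gives $N\lgth{-1}=0$, hence $\Phi N\lgth{-1}=0$, and the displayed kernel vanishes. Applying $\Sigma$ yields $(\qsusp_1 N)\lgth{0}=0$. For part (2) and $s \geq 1$, the displayed kernel is a subobject of $\Phi N\lgth{s-1}$; since $N\lgth{s-1}$ is $(d_{s-1}-1)$-connected by hypothesis, Lemma \ref{lem:Phi_conn} shows $\Phi N\lgth{s-1}$ is $(2d_{s-1}-1)$-connected, and connectivity is inherited by subobjects. Thus $(\Sigma^{-1} \qsusp_1 N)\lgth{s}$ is at least $(2d_{s-1}-1)$-connected, and suspending back gives that $(\qsusp_1 N)\lgth{s}$ is at least $2d_{s-1}$-connected, as required.

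There is no serious obstacle here; the content of the statement is simply the combination of the length-graded description of $\qsusp_1$ supplied by Proposition \ref{prop:qsuspgr} with the doubling-of-connectivity furnished by $\Phi$. The only point requiring a little care is bookkeeping with the connectivity conventions of Definition \ref{def:conn} and the effect on connectivity of $\Sigma^{\pm 1}$, to ensure that the shift from $2d_{s-1}-1$ to $2d_{s-1}$ is correctly accounted for.
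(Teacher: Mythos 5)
Your proof is correct and is exactly the argument the paper has in mind: the paper's proof of Proposition~\ref{prop:qsusp_1_conn} reads only ``a straightforward consequence of the exact sequence of Proposition~\ref{prop:qsuspgr} together with Lemma~\ref{lem:Phi_conn},'' and you have filled in precisely those details, including the correct bookkeeping of the $\Sigma^{\pm 1}$ shift that converts $(2d_{i-1}-1)$-connectivity of $\Sigma^{-1}\qsusp_1 N$ into $(2d_{i-1})$-connectivity of $\qsusp_1 N$.
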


\begin{proof}
 A straightforward consequence of the exact sequence of Proposition 
\ref{prop:qsuspgr} together with Lemma \ref{lem:Phi_conn}.
\end{proof}

\section{Relative left derived functors of $\indec$}
\label{sect:relhom}

The derived functors of the Dyer-Lashof indecomposables  that are introduced in 
this section play a central rôle in the paper.

\subsection{The class of relative projectives and relative left derived 
functors}

The adjunction $\sing : \amod \rightleftarrows \qm : \mathrm{Forget}$ defines a 
projective class in $\qm$, in the sense 
of relative homological algebra. The class of projective objects is $\{\sing M 
| M \in \amod\}$.

The comonad associated to the adjunction will be denoted $\sing : \qm 
\rightarrow \qm$ (omitting the forgetful functor
from the notation), equipped with the counit $ \mu : \sing \rightarrow 1_{\qm}$ 
(corresponding to the action map) and 
 $\Delta : \sing \rightarrow \sing \sing$, induced by the adjunction unit 
$1_\amod \rightarrow \sing$. For $N \in \qm$, this 
induces a simplicial object with $n$th term 
$\sing^{n+1} N$ and equipped with the augmentation $\sing N \rightarrow N$. 

The associated  chain complex is acyclic (as is seen, as usual, 
by 
applying the forgetful functor and applying  the contracting homotopy for the 
augmented chain complex which is provided by 
the adjunction unit). Thus $\sing^{\bullet +1}N$ provides a functorial 
(relative) 
projective resolution of $N$.

\begin{defn}
 For $F: \qm \rightarrow \calc$ a right exact additive functor to an abelian 
category 
$\calc$, the (relative) left derived functors $\lder_i F : \qm \rightarrow 
\calc$ 
are defined by 
\[
 \lder_i F (N):= H_i (F (\sing^{\bullet +1} N)),
\]
so that $\lder_0 F = F$.
\end{defn}

\begin{rem}
\ 
\begin{enumerate}
 \item 
 These left derived functors can be calculated with respect to any 
relative projective resolution.
\item 
For current purposes, one could simply define these as cotriple 
derived functors.
\end{enumerate}
\end{rem}

\begin{exam}
\label{exam:qsusp}
The functor $\qsusp : \qm \rightarrow \qm$ is a left adjoint, hence is right 
exact. 
The left derived functors $\lder_i \qsusp$ are trivial for $i >1$ and $\lder_1 
\qsusp \cong \qsusp_1$, 
the functor appearing in Proposition \ref{prop:qsusp}.
\end{exam}

\begin{exam}
\label{exam:derive_indec}
The indecomposables functor $\indec : \qm \rightarrow \amod$ is right exact, 
hence there are  
 derived functors 
\[
 \lder_i \indec : \qm \rightarrow \amod.
\]
For $N \in \qm$, since $\indec \sing$ is the identity functor on $\amod$ (by 
Lemma \ref{lem:indec}), the complex $\indec \sing^{\bullet +1} N$ has the form 
\[
 \ldots \rightarrow \sing^2 N \rightarrow \sing N \rightarrow N 
\]
in $\amod$, equipped with the augmentation $N \twoheadrightarrow \indec N$. The 
morphism $\sing N \rightarrow N$ is the difference between 
the $\sing$-action structure morphism and the projection $\sing N 
\twoheadrightarrow \indec \sing N \cong N$.

In particular, this complex gives an exact functor from $\qm$ to the category 
$\ch \amod$ of  chain complexes 
in $\amod$. 
\end{exam}

\begin{defn}
Let $\qcx : \qm \rightarrow \ch \amod$ be the exact functor $\indec 
\sing^{\bullet +1}$ of Example \ref{exam:derive_indec}. 
\end{defn}

\begin{nota}
For $M\in \amodgr$,  let $\overline{\sing}M \subset \sing M$ denote the kernel 
of the natural projection 
$\sing M \twoheadrightarrow M$.
\end{nota}

The resolution $\sing^{\bullet +1} N$, for $N \in \qm$, has a reduced 
subobject, with $i$th term
\[
 \sing (\overline{\sing}) ^i N 
\]
(the analogue of the reduced bar construction), which is again a 
resolution. 

\begin{defn}
 Let $\redqcx : \qm \rightarrow \ch \amod$ denote the sub-complex of $\qcx$ 
given by applying $\indec$ to the reduced resolution, so that 
 $(\redqcx N)_i = (\overline{\sing})^i N$.  
\end{defn}

\begin{prop}
\label{prop:reduced_qcx}
For $N \in \qm$, the inclusion $\redqcx N \hookrightarrow \qcx N$ is a 
quasi-isomorphism. 
\end{prop}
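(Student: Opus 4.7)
The plan is to recognize this as an instance of the uniqueness, up to chain homotopy, of relative projective resolutions, applied to the augmentation-preserving inclusion of the reduced resolution into the standard one.

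Both $\sing^{\bullet+1} N$ and $\sing \overline{\sing}^\bullet N$ are, by the preceding discussion, relative projective resolutions of $N$ in $\qm$ for the projective class $\{\sing M \mid M \in \amod\}$: each term is of the form $\sing(-)$ and hence a relative projective, and the augmented complexes are acyclic in $\amod$ (acyclicity of the standard one comes from the contracting homotopy supplied by the adjunction unit $M \hookrightarrow \sing M$, and the acyclicity of the reduced version is the content of the sentence preceding the statement). The inclusion of simplicial $\qm$-objects $\sing \overline{\sing}^\bullet N \hookrightarrow \sing^{\bullet+1} N$ is compatible with the augmentations to $N$, so it induces a chain map $\redqcx N \hookrightarrow \qcx N$ in $\amod$ lifting the identity of $\indec N$ after passing to $H_0$.

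By the standard comparison theorem for relative projective resolutions, there exists a chain map $\sing^{\bullet+1} N \rightarrow \sing \overline{\sing}^\bullet N$ in $\qm$ lifting the identity on $N$, constructed inductively using the defining property of relative projectives against augmentation-split surjections, together with the extra degeneracy coming from the unit $M \hookrightarrow \sing M$ in $\amod$. Since any two lifts of the identity are chain-homotopic, both composites with the inclusion are homotopic to the respective identities, so the inclusion is a chain-homotopy equivalence in $\qm$. Applying the additive functor $\indec$ preserves chain homotopies, and the resulting chain-homotopy equivalence $\redqcx N \hookrightarrow \qcx N$ in $\ch \amod$ is in particular a quasi-isomorphism.

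The only substantive obstacle is organizing the comparison theorem cleanly in the relative setting, but this is essentially formal once the acyclicity of both augmented complexes in $\amod$ and the relative projectivity of the $\sing M$ are in place. A more hands-on alternative avoids the comparison argument by identifying $\redqcx N$ with the normalized subcomplex of $\qcx N$ arising from the natural splittings $\sing M \cong M \oplus \overline{\sing} M$ in $\amod$ (applied recursively inside $\sing^{n+1} N$), whence the inclusion is a quasi-isomorphism by the Dold--Kan normalization theorem.
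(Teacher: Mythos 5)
Your primary argument is correct and is exactly what the paper's terse ``Standard.'' is pointing at: both $\sing^{\bullet+1}N$ and $\sing(\overline{\sing})^{\bullet}N$ are complexes of relative projectives that are acyclic after forgetting to $\amod$ (the extra degeneracy from the unit supplies the contracting homotopy in both cases), the inclusion is a map of augmented resolutions over the identity on $N$, and the comparison theorem for relative projective resolutions then makes it a chain-homotopy equivalence in $\qm$. Since $\indec$ is additive it preserves chain homotopies, so $\redqcx N \hookrightarrow \qcx N$ is a chain-homotopy equivalence in $\ch\amod$, in particular a quasi-isomorphism. One small phrasing slip: the extra degeneracy is what establishes acyclicity, not what constructs the comparison map (that comes from lifting against the relatively-split surjections in the acyclic complex); this doesn't affect the argument.

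Be careful with the ``more hands-on'' alternative, though: $\redqcx N$ is \emph{not} the Dold--Kan normalized subcomplex of the simplicial object $\indec\sing^{\bullet+1}N$. The splitting $\sing M \cong M \oplus \overline{\sing}M$ used to define $\overline{\sing}$ is via the length-zero projection $\pi_M$, whereas the Dold--Kan normalization is the intersection of kernels of face maps, and the relevant faces $\indec d_i$ for $i>0$ are induced by the $\qm$-action maps $\omega$, not by $\pi$. Already for $n=1$ these differ: $\ker(\indec d_1)=\ker(\omega_N)$, which is in general distinct from $\overline{\sing}N=\ker(\pi_N)$. So quoting the normalization theorem directly does not justify the claim; if you want a hands-on proof you would instead filter $\qcx N$ by the number of non-reduced tensor factors and show the associated graded of the complement is acyclic. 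In any case your main comparison-theorem argument carries the proof.
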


\begin{proof}
 Standard.
\end{proof}

\subsection{First properties of $\lder_* \indec$}

\begin{prop}
\label{prop:proj_class_acyclic}
Objects of the projective class of $\qm$ are acyclic, namely for $N= \sing M$ 
(where $M \in \amod$), 
the augmented chain complex 
\[
 \sing^{\bullet +1} N \rightarrow N
\]
is acyclic.  In particular, $\lder _i F (\sing M) = 0$ for $i >0$ and $\lder_0 
F (\sing M) = 
F (\sing M)$.
\end{prop}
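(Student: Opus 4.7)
The plan is to prove that, for $N = \sing M$, the augmented simplicial object $\sing^{\bullet+1} N \to N$ is split in $\qm$ itself (not only after applying the forgetful functor $\qm \to \amod$). Once this strengthening of the general acyclicity is established, any additive functor $F : \qm \to \calc$ preserves the splitting, so the associated alternating-sum chain complex $F(\sing^{\bullet+1} \sing M) \to F(\sing M)$ is contractible in positive degrees. This simultaneously yields the acyclicity statement and the vanishing $\lder_i F(\sing M) = 0$ for $i > 0$, together with $\lder_0 F(\sing M) = F(\sing M)$.

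The new ingredient available for $N$ of the form $\sing M$ is the natural inclusion $\eta_M : M \hookrightarrow \sing M$ in $\amod$, i.e., the unit of the adjunction $\sing \dashv \mathrm{Forget}$. Applying the left adjoint $\sing : \amod \to \qm$ to $\eta_M$ produces a morphism
$\sigma_M := \sing(\eta_M) : \sing M \to \sing \sing M$
in $\qm$. The first triangle identity of the adjunction asserts exactly that $\mu_M \circ \sigma_M = 1_{\sing M}$, so $\sigma_M$ is a section of the counit $\mu_M = \omega_{\sing M}$. I would then propagate $\sigma_M$ through the resolution by defining extra degeneracies $s_n^{\mathrm{extra}} := \sing^{n+1}(\sigma_M) : \sing^{n+1}(\sing M) \to \sing^{n+2}(\sing M)$ at each simplicial level $n \geq -1$, using the comonad structure of $\sing$ on $\qm$.

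Next I would verify that the $s_n^{\mathrm{extra}}$, together with the face and degeneracy maps of $\sing^{\bullet+1}(\sing M)$, satisfy the simplicial identities exhibiting an extra degeneracy, so that the augmented simplicial object is split in $\qm$. By naturality of the comonad structure maps, the required identities reduce to the two triangle identities for $\sing \dashv \mathrm{Forget}$ applied at $M$. The main obstacle is essentially notational: one must carefully distinguish $\sing$ as a left adjoint $\amod \to \qm$ (the incarnation that defines $\sigma_M$) from the same symbol denoting the associated comonad on $\qm$ (whose iterated coproducts define the resolution). Once this bookkeeping is set up, the conclusion is a standard instance of the fact that cotriple resolutions of free objects are acyclic in the ambient category.
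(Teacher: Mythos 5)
Your proposal is correct and is exactly the argument the paper invokes: the paper's proof is the one-line "Standard: the adjunction provides an extra degeneracy, hence a contracting homotopy in $\qm$," and you have simply spelled out which extra degeneracy (namely $\sing(\eta_M)$ and its images $\sing^{n+1}(\sigma_M)$, with the triangle identity supplying the section property) and why it lives in $\qm$ rather than merely in $\amod$.
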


\begin{proof}
 Standard: the adjunction provides an extra degeneracy, hence a contracting 
homotopy in $\qm$. 
\end{proof}

\begin{exam}
 For $M \in \amod$, $\lder_i \indec (\sing M) =0$ for $i >0$ and $\lder_0 
\indec (\sing M) = M$.
\end{exam}

\begin{prop}
\label{prop:les}
 For $0 \rightarrow N_1 \rightarrow N_2 \rightarrow N_3 \rightarrow 0$ a short 
exact sequence in $\qm$, there 
is a natural long exact sequence of derived functors of $\indec$:
\[
 \ldots 
\rightarrow 
\lder_i \indec N_1 
\rightarrow 
\lder_i \indec N_2 
\rightarrow 
\lder_i \indec N_3 
\rightarrow 
\lder_{i-1} \indec N_1
\rightarrow 
\ldots .
\]
\end{prop}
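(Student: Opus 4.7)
The plan is to reduce the long exact sequence to the standard long exact sequence in homology associated with a short exact sequence of chain complexes, by verifying that the complex $\qcx$ of Example \ref{exam:derive_indec} is exact as a functor $\qm \to \ch \amod$, not merely right exact.

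First I would assemble the key exactness facts about the ingredients of the comonad resolution. The forgetful functor $\qm \to \amod$ is exact (Lemma \ref{lem:qm_ab}) and $\sing : \amod \to \qm$ is exact (Proposition \ref{prop:properties_sing}). Consequently the comonad $\sing : \qm \to \qm$ (really $\sing \circ \mathrm{Forget}$) is exact, and by induction all its iterates $\sing^{n+1}$ are exact functors $\qm \to \qm$. Applying them termwise to the given short exact sequence thus yields a short exact sequence of simplicial objects in $\qm$.

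Next I would apply $\indec$ term by term and observe that, by Lemma \ref{lem:indec}, $\indec \sing^{n+1} N$ identifies naturally with the $\amod$-object underlying $\sing^n N$. In other words, the $n$-th term of $\qcx(-)$ is isomorphic to $\mathrm{Forget} \circ \sing^n$, which is a composition of exact functors and hence itself exact on $\qm$. Therefore applying $\indec$ to the short exact sequence of simplicial resolutions obtained in the previous step produces a short exact sequence of chain complexes in $\amod$:
\[
 0 \rightarrow \qcx N_1 \rightarrow \qcx N_2 \rightarrow \qcx N_3 \rightarrow 0.
\]
The standard long exact sequence in homology for this short exact sequence of chain complexes in the abelian category $\amod$ then yields exactly the claimed long exact sequence, using the definition $\lder_i \indec = H_i \qcx$. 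Naturality and compatibility of the connecting morphism with morphisms of short exact sequences are inherited from the functoriality of the construction.

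The only slightly subtle point — and where one must be careful, given that $\indec$ is merely right exact — is the exactness of $\indec \sing^{n+1}$ in step two; this is exactly what allows the argument to bypass a Horseshoe-lemma style construction. Once this identification is in hand, everything else is a formal consequence of classical homological algebra of chain complexes.
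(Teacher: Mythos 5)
Your proof takes essentially the same approach as the paper's: the paper's one-line argument rests on the fact (recorded already in Example \ref{exam:derive_indec}) that $\qcx : \qm \to \ch\amod$ is exact, and then invokes the long exact sequence in homology for the resulting short exact sequence of chain complexes. You correctly re-derive the exactness of $\qcx$ from $\indec\sing \cong \mathrm{Id}_{\amod}$ together with exactness of the forgetful functor and of $\sing$, which is exactly the content the paper relies on.
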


\begin{proof}
 Applying the exact functor $\qcx : \qm \rightarrow \ch \amod$ to the short 
exact sequence gives a short exact sequence of 
chain complexes 
\[
 0 \rightarrow \qcx N_1 \rightarrow \qcx N_2 \rightarrow \qcx N_3 \rightarrow 0.
\]
The long exact sequence is given by passage to homology.
\end{proof}

The previous results can be made more precise by using the length grading, 
since 
the functor $\sing : \amod \rightarrow \qm$ extends to $\sing : \amodgr 
\rightarrow \qmgr$ 
and the forgetful functor respects length grading. 

\begin{prop}
\label{prop:length_gr_Lder_q}
\ 
\begin{enumerate}
 \item 
For $N \in \qmgr$, the augmented chain complex $\sing^{\bullet +1} N 
\rightarrow 
N$ is defined in $\qmgr$.
\item 
If $N = \sing M \in \qmgr$, for $M \in \amodgr$, then this augmented chain 
complex is acyclic.
\item 
The functor $\qcx$ extends to an exact functor $\qcx : \qmgr \rightarrow \ch 
\amodgr$.
\item
The functor $\qcx$ commutes with the length grading shift functor $\cdot (l)$ 
for $l \in \zed$; 
namely there is a natural isomorphism $\qcx \big(N (l)\big)\cong \big(\qcx 
N\big) (l)$. 
\item  
The derived 
functors $\lder_* \indec$ induce functors 
\[
 \lder_i \indec : \qmgr \rightarrow \amodgr.
\]
\item 
For $M \in \amod$, $\lder_i \indec \sing M$ is zero for $i >0$ and $\lder_0 
\indec \sing M \cong M$, considered
as concentrated in length $0$.
\end{enumerate}
\end{prop}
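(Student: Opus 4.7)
The plan is to verify each item by lifting the corresponding unlengthened statements via the length-graded structures already established in Proposition \ref{prop:qmgr}. The key observation is that the comonad $\sing$ on $\qm$ lifts to a comonad on $\qmgr$: the functor $\sing : \amod \to \qmgr$ (Proposition \ref{prop:qmgr}(5)) and the forgetful functor $\qmgr \to \amodgr$ both preserve the length grading, as do the unit $\eta : 1_\amod \to \sing$ (which lands in length $0$) and the counit $\mu$ (which preserves length by definition of $\qmgr$). Likewise $\indec : \qmgr \to \amodgr$ is available from Proposition \ref{prop:qmgr}(4).

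Granted this, items (1)--(5) are essentially formal. For (1), the structure maps of $\sing^{\bullet+1} N$ are morphisms in $\qmgr$ since they are built from $\mu$ and $\Delta$, both length-preserving. For (2), when $N = \sing M$ with $M \in \amodgr$, the contracting homotopy is provided by the extra degeneracy $\Delta : \sing^{n+1} M \to \sing^{n+2} M$, which likewise preserves length. For (3), using the length-graded refinement of Lemma \ref{lem:indec}(2), namely $\indec \sing \cong 1_{\amodgr}$, one identifies $\qcx N$ in $\amodgr$ with the complex $\ldots \to \sing^2 N \to \sing N \to N$; exactness of $\qcx : \qmgr \to \ch \amodgr$ reduces to exactness of the iterates $\sing^n : \qmgr \to \amodgr$, inherited from Proposition \ref{prop:properties_sing} via the exact forgetful functor. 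Item (4) is immediate since $\sing$, $\indec$ and the forgetful functor all commute with the shift equivalence $\cdot(l)$ (Proposition \ref{prop:qmgr}(3)). Item (5) follows by defining $\lder_i \indec N := H_i \qcx N$, which takes values in $\amodgr$ by item (3).

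The only item requiring genuine content is (6). The vanishing $\lder_i \indec \sing M = 0$ for $i > 0$ follows by applying $\indec$ to the acyclic augmented complex of item (2): the contracting homotopy $\Delta$ lives in $\qmgr$, hence descends under $\indec$ to a contracting homotopy for $\indec \sing^{\bullet+1} \sing M$ in $\amodgr$. The subtle point, and the main obstacle, is the identification $\lder_0 \indec \sing M \cong M$ \emph{concentrated in length} $0$. For this I would invoke the explicit description in Proposition \ref{prop:properties_sing} of $\sing_s M$ as spanned by allowable monomials $Q^{i_1} \cdots Q^{i_s} m$ with $m \in M$; for $s > 0$, any such monomial equals $Q^{i_1}$ applied to an element of $\sing_{s-1} M$, hence lies in the image of a length-raising Dyer-Lashof operation and is killed by $\indec$ in $\amodgr$. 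What survives in length zero is precisely $\sing_0 M = M$, completing the proof.
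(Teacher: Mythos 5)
Your argument is correct and in line with what the paper leaves to the reader, since the paper's proof of this proposition is simply ``Straightforward.'' You have spelled out the intended details: the comonad structure maps $\mu$, $\Delta$ and the unit $\eta$ all preserve length (indeed, $\eta$ lands in length $0$), so the bar resolution, its contracting homotopy for relative projectives, and the functor $\qcx$ all lift to $\qmgr$; exactness of $\qcx$ descends from exactness of $\sing$ (Proposition \ref{prop:properties_sing}); commutation with $\cdot(l)$ is a direct check for $\sing$ and $\indec$; and item (6) combines the acyclicity argument with Lemma \ref{lem:indec}(2) plus the observation that $\sing_s M$ for $s>0$ is generated by decomposables. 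One small terminological remark: you call the extra degeneracy $\Delta$, whereas the paper phrases it via the adjunction unit $\eta$; these agree since $\Delta = \sing\eta$, so there is no real discrepancy.
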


\begin{proof}
 Straightforward.
\end{proof}

The following observation is fundamental (and is analogous to a result proved in 
\cite[Section 3]{Miller}).

\begin{prop}
 \label{prop:lder_indec_on_trivials_exact}
 For $s \in \nat$, the composite functor 
 \[
  \amod 
  \stackrel{\triv}{\rightarrow} 
 \qmgr
\stackrel{\lder_s \indec}{  \rightarrow} 
 \amodgr
 \]
is exact.
\end{prop}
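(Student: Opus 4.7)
The plan is to prove exactness by showing that $\lder_s \indec \triv M$ is naturally concentrated in length grading $s$; once this concentration is established, exactness will follow automatically from the long exact sequence of Proposition \ref{prop:les}, since morphisms in $\amodgr$ preserve length grading.

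I would begin by identifying the bar complex $\qcx \triv M = \indec \sing^{\bullet + 1} \triv M$ termwise. Using the identity $\indec \sing \cong 1_{\amodgr}$ from Lemma \ref{lem:indec}, the term in simplicial degree $n$ rewrites as $\mathrm{Forget}\, \sing^n \triv M \in \amodgr$. Each such term is an exact functor of $M$, since $\sing$ is exact (Proposition \ref{prop:properties_sing}), $\triv$ is exact (Lemma \ref{lem:qm_ab}, Proposition \ref{prop:qmgr}), and the forgetful functor is exact. The differentials are length-preserving natural transformations, so the entire complex $\qcx \triv M$ decomposes as a direct sum $\bigoplus_n (\qcx \triv M)\lgth{n}$ of length-indexed subcomplexes in $\amod$. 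The heart of the argument is then to show that the length-$n$ subcomplex $(\qcx \triv M)\lgth{n}$ has homology concentrated in homological degree $n$. Granting this, $(\lder_s \indec \triv M)\lgth{n} = 0$ for $n \neq s$, so $\lder_s \indec \triv M$ lives entirely in length grading $s$.

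With length concentration in hand, a short exact sequence $0 \to M' \to M \to M'' \to 0$ in $\amod$ produces a long exact sequence of $\amodgr$-objects (Proposition \ref{prop:les}) in which the connecting homomorphism $\delta_s : \lder_s \indec \triv M'' \to \lder_{s-1} \indec \triv M'$ is a morphism between objects concentrated in length $s$ and length $s-1$ respectively. Since $\amodgr$-morphisms preserve length grading, $\delta_s = 0$ for every $s$, and the long exact sequence decomposes into short exact sequences, yielding exactness of each $\lder_s \indec \triv$.

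The principal obstacle is verifying the length-concentration claim without invoking the Koszul complex $\kz[n]$ and the Steinberg functors $\stein_n$, which are developed only later in the paper. The length-$n$ subcomplex $(\qcx \triv M)\lgth{n}$ has terms of the form $\bigoplus_{s_1 + \cdots + s_k = n} \sing_{s_1} \cdots \sing_{s_k} M$ in simplicial degree $k-1$, with differentials built from the comonad structure on $\sing \circ \mathrm{Forget}$. The acyclicity outside degree $n$ is essentially a manifestation of the Koszul property of the big Dyer-Lashof algebra. I would establish it directly by exploiting the simplicial structure on trivial modules: the outermost face map $d_k = \sing^{k-1} \mu_{\triv M}$ factors through the length-zero projection, which together with the combinatorial structure of the remaining face maps yields a contracting homotopy in the length-$n$ part away from degree $n$. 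This is the analogue on trivial modules of the argument used in \cite[Section 3]{Miller}.
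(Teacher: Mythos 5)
Your plan—establish that $\lder_s \indec \triv M$ is concentrated in length grading $s$, then observe that the connecting maps in the long exact sequence of Proposition~\ref{prop:les} must vanish for length-grading reasons—is a coherent logical scheme, and the second step is sound. But the route is essentially the reverse of the paper's, and it has a real gap.

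The length concentration you propose to prove is precisely the content of Theorem~\ref{thm:Koszul_property}, the Koszul property of the big Dyer-Lashof algebra. In the paper this is a \emph{consequence} of Proposition~\ref{prop:lder_indec_on_trivials_exact} (the theorem's proof invokes this proposition to reduce to $M=\Sigma^d\field$, and also uses Proposition~\ref{prop:untor_Lq} and, for $d\in\{-1,0\}$, the spectral sequence of Section~\ref{sect:ss}). So you cannot appeal to it, and you would need an independent proof. Your sketch—that the outer face map factors through the length-zero projection, giving a contracting homotopy away from homological degree $n$—does not deliver this. Koszulness of a homogeneous quadratic algebra is genuinely nontrivial: it is equivalent to the existence of a PBW-type basis (Priddy), and a degeneration of one face map does not by itself produce a contracting homotopy for the length-$n$ bar subcomplex in degrees $k<n$. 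You have correctly identified what would need to be true, but have not supplied an argument for it.

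The paper's proof is much more economical and sidesteps Koszulness entirely. Because the Dyer-Lashof action on $\triv M$ is trivial, every term of the complex $\qcx\,\triv M$, as a bigraded vector space, depends on $M$ only through its underlying graded vector space, and so does the differential; hence $\lder_s \indec \triv$ factors (at the vector-space level) through the forgetful functor to graded vector spaces. A short exact sequence in $\amod$ is split as graded vector spaces (semisimplicity), so the induced sequence on $\lder_s \indec \triv$ is split exact over $\field$, and since the forgetful functor from $\amodgr$ to graded vector spaces is faithful and exact, the sequence in $\amodgr$ is exact. No statement about the location of the homology in the length grading is needed. You may wish to compare: your argument, if completed, would prove more (length concentration), but at the cost of front-loading a substantial piece of homological algebra that the paper intentionally defers; the paper's argument proves exactly what is needed with elementary means.
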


\begin{proof}
 Since the 
Dyer-Lashof action is trivial, the underlying bigraded vector space of $\lder_s 
\indec
M$ only depends upon the underlying graded vector space $M$, 
 whence the result follows by semisimplicity of the category of graded vector 
spaces and Proposition \ref{prop:length_gr_Lder_q}. (This may also be seen by 
inspection of the complex.)
\end{proof}

The functor $\lder_s \indec : \amod \rightarrow \amodgr$, when restricted to 
the 
full subcategory 
$\amod_{\geq 1}$ of $(-2)$-connected objects, is  identified 
explicitly in Theorem \ref{thm:Koszul_property}, which leads to a Koszul 
complex 
calculating these derived functors (see Corollary 
\ref{cor:vanishing_lder_indec}).

\subsection{Relating $\lder_* \indec$ to Miller's $\untor_*(\field, -)$}

Miller  \cite[Section 2.2]{Miller} introduced the category $\anr[n]$ of 
non-negatively graded $n$-allowable modules over the Dyer-Lashof algebra. 
This can be enriched (as in \cite[Section 4]{Miller}, but not requiring 
instability of the $\cala$-module structure) to take into account the Steenrod 
action  
(which is compatible with the Dyer-Lashof action via the Nishida relations). 
This gives the category $\anar[n]$  of non-negatively graded $n$-allowable 
modules over the Dyer-Lashof and Steenrod algebras. 

\begin{rem}
 The restriction to non-negatively graded objects is implicit in \cite{Miller}.
\end{rem}

The category $\anar[0]$  is $\qm^{\geq 0}$, the full subcategory of $\qm$ of 
non-negatively 
graded objects. The category $\anar$ is the full 
subcategory of $\anar[0]$ of modules $N$ such that $\Sigma N \in \qm$ 
(equivalently, $Q_0$ acts trivially on $N$).

Forgetting the Dyer-Lashof instability condition, an object of $\anr$ is an 
$R(-\infty)$-module, where $R(-\infty)$ is the quotient of the big Dyer-Lashof 
algebra by the two-sided ideal generated by the  operations $Q^i$ with $i <0$. 
This inclusion fits into 
an adjunction 
\[
 R(-\infty)\dash\mathrm{mod}
\rightleftarrows 
\anr[1]
\]
which Miller uses to define a projective class. The analogous construction 
works for $\anar$: the projectives are 
of the form $\Sigma^{-1} \sing \Sigma M$, where $M \in \amod$ is non-negatively 
graded. 

\begin{rem}
The category $\qm$ is equivalent to the category of $\sing$-modules in $\amod$. 
The above shows that $\anar$ is the category of $\Sigma^{-1} \sing 
\Sigma$-modules in $\amod_{\geq 0}$, 
where $\amod_{\geq 0} \subset \amod$ is the full subcategory of $(-1)$-connected 
objects. 
Relative projective resolutions are associated as before to the functor 
$\Sigma^{-1} \sing \Sigma$. 

This can be seen explicitly by considering the reduced bar resolution used 
in \cite[Section 3]{Miller}, where the functor $U : \field_2\dash 
\mathrm{mod} \rightarrow \anr$ extends to 
$\amod^{\geq 0} \rightarrow \anar$ and the latter  is the restriction  of 
$\Sigma^{-1} \sing \Sigma$ to 
$\amod^{\geq 0}$.
\end{rem}

\begin{defn}
(Cf. \cite[Section 2.2]{Miller} and \cite[Section 4]{Miller}.)
 Let $\untor_* (\field, -)$ be the left derived functors of the composite 
$\anar[1] \hookrightarrow \qm \stackrel{\indec}{\rightarrow} \amod$
 with respect to the projective class defined by $\Sigma^{-1} \sing \Sigma$.
\end{defn}

\begin{prop}
\label{prop:untor_Lq}
 For $N \in \anar[1]$, there is a natural isomorphism 
\[
 \Sigma \untor_* (\field , N) \cong \lder_* \indec (\Sigma N).
\]
\end{prop}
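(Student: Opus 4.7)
The plan is to intertwine, via the suspension $\Sigma$, the cotriple resolution of $N \in \anar[1]$ used to compute $\untor_*(\field, N)$ with the $\sing$-cotriple resolution of $\Sigma N \in \qm$ used to compute $\lder_* \indec (\Sigma N)$.

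First I would write $P := \Sigma^{-1} \sing \Sigma$ for the cotriple on $\anar[1]$ defining the projective class. A straightforward induction on $n$ gives a natural identification $(\Sigma^{-1}\sing\Sigma)^n N \cong \Sigma^{-1} \sing^n (\Sigma N)$ in $\anar[1]$, and hence, after applying $\Sigma$,
\[
 \Sigma P^n N \cong \sing^n (\Sigma N)
\]
in $\qm$. I would then upgrade this level-wise identification to an isomorphism of augmented simplicial objects in $\qm$:
\[
 \Sigma \big(P^{\bullet+1} N \to N\big) \cong \big(\sing^{\bullet+1}(\Sigma N) \to \Sigma N\big).
\]
The compatibility of face and degeneracy maps follows from the observation that the cotriple counit $\Sigma^{-1} \sing \Sigma N \to N$ of $P$ is, by construction, $\Sigma^{-1}$ applied to the cotriple counit $\sing \Sigma N \to \Sigma N$ of $\sing$. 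This uses the description, recalled in the remark preceding the proposition, of $\anar[1]$ as the category of $\Sigma^{-1} \sing \Sigma$-modules in $\amod^{\geq 0}$.

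Next I would apply $\indec$ to both simplicial objects. Using Lemma \ref{lem:indec} (namely $\indec \sing \cong 1_\amod$) together with the commutation of $\indec$ with $\Sigma$ (which is a consequence of Dyer-Lashof operations commuting with suspension), I would identify
\[
 \Sigma \cdot \indec(P^{\bullet+1} N) \cong \indec(\sing^{\bullet+1}(\Sigma N))
\]
as chain complexes in $\amod$. Since $\Sigma$ is exact, passage to homology yields the claimed natural isomorphism $\Sigma \untor_* (\field, N) \cong \lder_* \indec(\Sigma N)$.

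The only real work—more bookkeeping than genuine obstruction—is verifying that the two cotriple structures truly are intertwined by $\Sigma$. This reduces to tracing through the adjunction $\amod^{\geq 0} \rightleftarrows \anar[1]$ whose left adjoint is $\Sigma^{-1}\sing\Sigma$, and noting that, after applying $\Sigma$, it becomes (the appropriate restriction of) the adjunction $\sing \dashv \mathrm{Forget}$ which defines $\sing$; thus units and counits, and hence all face and degeneracy maps of the resulting simplicial resolutions, intertwine with $\Sigma$ by naturality.
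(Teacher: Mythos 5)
Your proposal is correct and matches the paper's approach, which simply reads ``Straightforward (using the fact that $\indec$ commutes with $\Sigma^{-1}$)''; you have supplied the intended details by intertwining the two cotriple resolutions via $\Sigma$. One small imprecision worth flagging: the commutation of $\indec$ with $\Sigma$ is not a bare consequence of Dyer--Lashof operations commuting with suspension, since the lower-indexed operations shift index under $\Sigma$ (namely $Q_a\Sigma = \Sigma Q_{a+1}$); the needed identification $\indec(\Sigma M)\cong\Sigma\,\indec(M)$ uses in addition that $Q_0$ acts trivially on objects of $\anar[1]$ (equivalently, it is cleanest to phrase it as the paper does---$\indec$ commutes with $\Sigma^{-1}$ on all of $\qm$, which follows from Dyer--Lashof instability).
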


\begin{proof}
 Straightforward (using the fact that $\indec$ commutes with $\Sigma^{-1}$).
\end{proof}

\begin{exam}
 Consider $N:= \Sigma^{-1} \sing \Sigma M$, for $M \in \amod ^{\geq 0}$. 
Observe 
that $N$ is the quotient
of $\sing M$ by the image of $Q_0$, by Proposition \ref{prop:qsusp}.

Then 
\[
 \untor_i (\field, \Sigma^{-1} \sing \Sigma M) 
\cong 
\left\{
\begin{array}{ll}
0 & * >0
\\
M & *=0.
\end{array}
\right.
\]
\end{exam}

\section{The Steinberg functors and the Koszul complex for $\lder_* \indec $}

The Steinberg functors $\stein_s$ that are introduced in this section are of independent interest. 
Their importance here is through the rôle that they play in the Koszul complex (see Definition \ref{def:kosz_cx_stein}) with homology 
 the calculating the derived functors $\lder_* \indec$ (see Corollary \ref{cor:conn_lderq_N}).

\subsection{The Steinberg functor}

The functor $\sing : \amod \rightarrow \qmgr$ has length components $\sing_s : 
\amod \rightarrow \amod$ for $s \in \nat$, where $\sing_s$ is the 
$s$th Singer functor (this should be taken as the definition of the Singer functors here).
 The adjunction unit induces natural transformations 
$$
\sing_s \sing_t \twoheadrightarrow \sing_{s+t}
$$
 and, in particular, there is a natural surjection
 \[
  (\sing_1)^s \twoheadrightarrow 
  \sing_s.
 \]
The Singer functors have a quadratic (co)presentation, and are thus determined 
by the surjection $ (\sing_1)^2 \twoheadrightarrow 
  \sing_2$. The {\em Steinberg functors} are defined by the quadratic dual 
construction.
  
  \begin{defn}
   \label{def:Steinberg}
   For $s \in \nat$, define the Steinberg functor $\stein_s : \amod \rightarrow 
\amod$ on $M \in \amod$ by:
   \[
    \stein_s M := \bigcap _{i=0}^{s-2} \ker \big \{ (\sing_1)^s M
    \rightarrow 
    (\sing_1)^i \sing_2 (\sing_1)^{s-(i+2)} M
    \big \}
   \]
for the natural transformations induced by $ (\sing_1)^2 \twoheadrightarrow 
  \sing_2$.
  \end{defn}

The following records  properties of the Steinberg functors. 
  
\begin{prop}
 \label{prop:Steinberg}
For $s \in \nat$, the following properties hold.
\begin{enumerate}
 \item 
 The functor $\stein_s$ is exact and there is a natural inclusion $\stein_s 
\hookrightarrow (\sing_1)^s$ that is an isomorphism for $s\in \{0, 1 \}$.
 \item 
 For $s=2$, there is a natural short exact sequence
 \[
  0
  \rightarrow 
  \stein_2 
  \rightarrow 
  (\sing_1)^2 
  \rightarrow 
  \sing_2
  \rightarrow 
  0.
 \]
\item 
For $s_1, s_2 \in \nat$, there is a natural inclusion
\[
 \stein_{s_1+ s_2} 
 \hookrightarrow 
 \stein_{s_1} \stein_{s_2}
\]
and, for varying $s_1, s_2$, these are coassociative. In particular, for $s>0$, 
there is a natural inclusion $\stein_s \hookrightarrow \stein _{s-1} \stein_1$. 
\end{enumerate}
\end{prop}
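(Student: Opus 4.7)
The plan is as follows. For assertion (1), the inclusion $\stein_s \hookrightarrow (\sing_1)^s$ is tautological from Definition \ref{def:Steinberg}, and the cases $s \in \{0,1\}$ are immediate because the index set $\{0,\dots,s-2\}$ of the defining intersection is empty. For exactness of $\stein_s$, I would establish an explicit monomial basis for $\stein_s M$, mirroring the allowable monomial basis of $\sing_s M$ recalled in the proof of Proposition \ref{prop:properties_sing}. Since $\stein_s$ is built by the quadratic dual construction from the quadratic presentation of $\sing_s$, the natural candidate is the set of ``strictly admissible'' (Steinberg-type) monomials $Q^{i_1}\cdots Q^{i_s}\otimes m$ in the big Dyer-Lashof algebra acting on $M$; one verifies by direct manipulation of the Adem relations together with the Dyer-Lashof instability condition that these form an $\field$-basis of $\stein_s M \subset (\sing_1)^s M$. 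Having identified $\stein_s M$ this way, it depends functorially on $M$ as a graded $\field$-vector space, so exactness follows by semisimplicity.

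For assertion (2), Definition \ref{def:Steinberg} specialises to $\stein_2 M = \ker\{(\sing_1)^2 M \twoheadrightarrow \sing_2 M\}$; since $\sing_2$ is by construction a quotient of $(\sing_1)^2$, this is the claimed short exact sequence. For assertion (3), I would compare the defining kernel conditions of $\stein_{s_1}\stein_{s_2} M$ and $\stein_{s_1+s_2} M$, both viewed as subfunctors of $(\sing_1)^{s_1+s_2} M$. Using exactness of $\sing_1$ and $\sing_2$ (so that $(\sing_1)^{s_1}$ preserves sub-objects and sends the defining intersection for $\stein_{s_2}$ to the corresponding intersection inside $(\sing_1)^{s_1+s_2} M$), one checks that $\stein_{s_1}\stein_{s_2} M$ is carved out by vanishing of the natural transformations $(\sing_1)^{s_1+s_2} \to (\sing_1)^i \sing_2 (\sing_1)^{s_1+s_2-i-2}$ for $i \in \{0,\dots,s_1-2\} \cup \{s_1, \dots, s_1+s_2-2\}$, whereas $\stein_{s_1+s_2}$ imposes the same conditions augmented by the extra one at $i = s_1-1$. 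This yields the desired inclusion. Coassociativity is then automatic, since both iterated maps $\stein_{s_1+s_2+s_3}\hookrightarrow \stein_{s_1}\stein_{s_2}\stein_{s_3}$ are realised as subfunctor inclusions inside $(\sing_1)^{s_1+s_2+s_3}$, which compose strictly associatively. The final statement is the special case $s_1=s-1$, $s_2=1$, using $\stein_1 = \sing_1$ from (1).

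The main obstacle is the exactness assertion in (1): the kernel of a natural transformation of exact functors is only left exact in general, so exactness of $(\sing_1)^s$ and $\sing_2$ alone does not suffice. The right-exactness half requires genuinely new input, either the explicit Steinberg basis sketched above or, equivalently, the Koszul property of the big Dyer-Lashof algebra (alluded to later in the paper) that guarantees the flatness of the quadratic-dual pieces defining $\stein_s$.
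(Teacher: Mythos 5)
Your proposal is correct and matches the paper's (very terse) proof: the essential step for exactness is exactly the observation that $\stein_s M$, as a graded vector space, depends functorially only on the underlying graded vector space of $M$, so exactness follows by semisimplicity of graded vector spaces plus the fact that the forgetful functor $\amod \rightarrow \field\text{-graded vector spaces}$ is exact and faithful; parts (2) and (3) are, as you say, immediate from Definition \ref{def:Steinberg} together with the exactness of $\sing_1$. One small overstatement: the explicit PBW basis and the Koszul property are not actually needed for exactness — the structure maps $(\sing_1)^s \rightarrow (\sing_1)^i \sing_2 (\sing_1)^{s-(i+2)}$ are given by the Adem relations in \emph{lower} index, which are degree-independent, so $\stein_s M \cong \bigoplus_\alpha \stein_s \Sigma^{|m_\alpha|}\field$ as graded vector spaces for any homogeneous basis $\{m_\alpha\}$ of $M$, and this alone gives the required factorization through graded vector spaces; the Koszul/PBW input is only needed later, when one wants the explicit basis of Lemma \ref{lem:basis_stein}.
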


 \begin{proof}
 The exactness of $\stein_s$ can be checked as for $\sing_s$. Indeed, as for 
the proof of Proposition \ref{prop:lder_indec_on_trivials_exact}, the 
underlying graded vector space of $\stein_s M$ depends only 
 upon that of $M$. The remaining statements are clear.
 \end{proof}

 \begin{cor}
 \label{cor:stein_alternative}
     For $2 \leq s \in \nat$ and $M \in \amod$, there is a natural isomorphism: 
   
     \[
    \stein_s M := \bigcap _{i=0}^{s-2} 
    (\stein_1)^i \stein_2 (\stein_1)^{s-(i+2)} M,
   \]
   where the intersection is formed within $(\stein_1)^s M$.
 \end{cor}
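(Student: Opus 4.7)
The plan is to reduce the claimed description of $\stein_s M$ directly to the defining expression in Definition~\ref{def:Steinberg} by invoking the short exact sequence for $\stein_2$ together with the exactness of $\sing_1$.

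First, I would observe that since $\stein_1 = \sing_1$ by Proposition~\ref{prop:Steinberg}(1), the ambient object $(\stein_1)^s M$ coincides with $(\sing_1)^s M$. Moreover each proposed factor satisfies
\[
(\stein_1)^i \stein_2 (\stein_1)^{s-(i+2)} M \;=\; (\sing_1)^i \stein_2 (\sing_1)^{s-(i+2)} M,
\]
realized as a subobject of $(\sing_1)^s M$ via the inclusion $\stein_2 \hookrightarrow (\sing_1)^2$ supplied by Proposition~\ref{prop:Steinberg}(2). Thus both intersections (the one in the claim and the one in Definition~\ref{def:Steinberg}) live inside the same ambient object.

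Next, I would evaluate the short exact sequence
\[
0 \to \stein_2 \to (\sing_1)^2 \to \sing_2 \to 0
\]
of Proposition~\ref{prop:Steinberg}(2) at $(\sing_1)^{s-(i+2)} M$ and apply $(\sing_1)^i$. Since $\sing_1$ is exact by Proposition~\ref{prop:properties_sing}, this produces a short exact sequence in $\amod$:
\[
0 \to (\sing_1)^i \stein_2 (\sing_1)^{s-(i+2)} M \to (\sing_1)^s M \to (\sing_1)^i \sing_2 (\sing_1)^{s-(i+2)} M \to 0.
\]
Hence $(\sing_1)^i \stein_2 (\sing_1)^{s-(i+2)} M$ is identified with the kernel of $(\sing_1)^s M \to (\sing_1)^i \sing_2 (\sing_1)^{s-(i+2)} M$, which is precisely the $i$th factor in the intersection of Definition~\ref{def:Steinberg}.

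Finally, intersecting over $0 \le i \le s-2$ inside $(\sing_1)^s M$ simultaneously converts both descriptions into $\stein_s M$, yielding the asserted natural isomorphism. There is no genuine obstacle in the argument; the only point requiring care is the compatibility of the two ambient-object realizations $(\stein_1)^s M = (\sing_1)^s M$, which is immediate from $\stein_1 = \sing_1$, and the verification that the inclusion $\stein_2 \hookrightarrow (\sing_1)^2$ used in the claim is consistent with the one coming from Proposition~\ref{prop:Steinberg}(2).
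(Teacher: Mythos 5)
Your argument is correct and is essentially the paper's own proof, which reads ``Follows from the exactness of $\stein_1$ and the short exact sequence identifying $\stein_2$''; you have simply spelled out the same reduction in full. The key steps — replacing $\stein_1$ by $\sing_1$, applying the exact functor $(\sing_1)^i$ to the short exact sequence $0 \to \stein_2 \to (\sing_1)^2 \to \sing_2 \to 0$ evaluated at $(\sing_1)^{s-(i+2)}M$, and then identifying each factor with the kernel appearing in Definition~\ref{def:Steinberg} — are exactly what the paper intends.
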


 \begin{proof}
  Follows from the exactness of $\stein_1$ and the short exact sequence 
identifying $\stein_2$. 
 \end{proof}

This constructs the Steinberg functors $\stein_s$ as the quadratic duals 
to the Singer functors $\sing_s$. An alternative approach, again explaining the 
dual nature of the construction, uses the Hecke algebra, as in \cite[Section 
4]{Kuhn:Whitehead}, as sketched below.
  
  \begin{nota}
   For $s \in \nat$, let 
\begin{enumerate}
 \item 
$\hecke_s$ denote the Hecke algebra of type $A_{s-1}$ defined by 
   \[
    \hecke_s := \mathrm{End}_{GL_s} (\field [B_s \backslash GL_s]) 
   \]
where $B_s < GL_s$ is the Borel subgroup of upper triangular matrices (for 
$s=0$, take $\hecke_0= \field$);
\item 
$D(s)$ denote the Dickson algebra $H^* (BV_s) ^{GL_s}$ (a {\em cohomological} unstable algebra), where $V_s$ is an elementary abelian $2$-group of rank $s$; 
\item 
$D(s)\dash \unst$, the category of $D(s)$-modules in {\em 
cohomological} unstable modules.
\end{enumerate}
\end{nota}

  \begin{prop}
  \label{prop:hecke_action}
   For $s \in \nat$, the Hecke algebra $\hecke_s$ acts by natural 
transformations upon the functor $(\sing_1)^s : \amod \rightarrow \amod$.
  \end{prop}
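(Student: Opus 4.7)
The plan is to produce the $\hecke_s$-action by identifying $(\sing_1)^s M$ as the $B_s$-invariants of a functorial $GL_s$-equivariant construction, following the pattern established in \cite[Section 4]{Kuhn:Whitehead}.

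First I would make $(\sing_1)^s$ sufficiently explicit. Working through the dual (cohomological) framework, $\sing_1$ is the Singer construction on one formal variable and admits an explicit monomial basis (as used in Proposition \ref{prop:properties_sing}). Iterating $s$ times, $(\sing_1)^s M$ is given by an explicit free module whose generators correspond to $s$-tuples of allowable monomials of length one acting on $M$. The combinatorics of these generators match precisely those of a suitable $GL_s$-equivariant module $W_s \otimes M$, where $W_s$ is built from the rank-$s$ analogue of the rank-one Singer construction; the identification $BV_s \simeq (BV_1)^{\times s}$ and the standard $GL_s$-action on $H^{\ast}(BV_s)$ are the geometric shadow of this construction.

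The decisive step is to upgrade the iterated structure: the $s$ iterations of $\sing_1$ carry an evident $\Sigma_s$-action by permutation of the factors, and one must extend this to a full action of $GL_s = GL_s(\field)$ which commutes with the $\cala$-module structure. Once this is in place, $(\sing_1)^s M$ identifies with the $B_s$-invariants $(W_s \otimes M)^{B_s}$ naturally in $M$, because the iterated monomial basis is exactly the flag-adapted one singled out by the Borel subgroup. Granted this identification, the $\hecke_s$-action is formal: for any $GL_s$-module $W$ there is a canonical isomorphism
\[
W^{B_s} \;\cong\; \mathrm{Hom}_{GL_s}\bigl(\field[B_s \backslash GL_s],\, W\bigr),
\]
and $\hecke_s = \mathrm{End}_{GL_s}(\field[B_s \backslash GL_s])$ acts on the right-hand side by precomposition. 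Naturality in $M$ is inherited from the functoriality of $M \mapsto W_s \otimes M$ and the $GL_s$-equivariance.

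The main obstacle is the construction of the $GL_s$-equivariant enhancement, and in particular the verification that the $\Sigma_s$-action on $(\sing_1)^s$ extends to $GL_s$ compatibly with the Steenrod action and the instability relations. This is the substantive content; once checked by matching allowable monomials with the flag decomposition of $W_s$, as in \cite[Section 4]{Kuhn:Whitehead}, the remainder of the proof reduces to standard representation-theoretic formalities concerning the Hecke algebra.
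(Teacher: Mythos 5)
Your plan rests on a claim that is not true: the iterated functor $(\sing_1)^s$ does not carry a natural $\Sigma_s$-action by permutation of the factors. The Singer construction $R_1$ (cohomologically) adjoins a variable whose instability condition and $\cala$-action are entangled with the module it is applied to; applying $R_1$ again produces a second variable whose role is \emph{not} symmetric with the first. This is visible already at $M = \field$: $(R_1)^s \field$ is the ring of $B_s$-invariants $H^*(BV_s)^{B_s}$ (a flag-type algebra), not a $\Sigma_s$-symmetric object. Correspondingly, there is no $GL_s$-equivariant object $W_s \otimes M$, functorial in $M$, whose $B_s$-fixed points recover $(\sing_1)^s M$; only the Dickson-level construction $R_s M$ and the fixed coefficient object $H^*(BV_s)$ carry $GL_s$-actions. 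Note also that over $\field_2$ the Hecke generators $T_i$ satisfy $T_i^2 = T_i$, so $\hecke_s$ is not a quotient of $\field[\Sigma_s]$ and cannot be bootstrapped from a putative permutation action.

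The route the paper actually takes is orthogonal to yours. It proves the natural isomorphism
\[
(R_1)^s M \;\cong\; H^*(BV_s)^{B_s} \otimes_{D(s)} R_s M,
\]
where $R_s M$ is canonically a $D(s)$-module in $\cala$-modules and $H^*(BV_s)^{B_s}$ is a fixed $D(s)$-algebra. The Hecke algebra acts on the \emph{left-hand factor} $H^*(BV_s)^{B_s}$ by $D(s)$-linear endomorphisms (this is where the identification $\hecke_s = \mathrm{End}_{GL_s}(\field[B_s\backslash GL_s])$ enters, applied to the fixed coefficient module, not to a functorial $M$-dependent object), and this induces the natural transformation on $(R_1)^s M$ because the tensor is over $D(s)$. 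So the desired action comes from base-changing a fixed $\hecke_s$-module along $D(s) \hookrightarrow H^*(BV_s)^{B_s}$, rather than from $B_s$-invariants of a variable $GL_s$-module. If you want to salvage your outline, you should replace "extend $\Sigma_s$ to $GL_s$" by "prove the base-change isomorphism displayed above"; that is the substantive step, and the Hecke action then follows formally as the paper indicates.
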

  
  \begin{proof}
 It is more transparent to present this proof for the {\em cohomological} 
Singer functors. For unstable modules 
 these are considered by Lannes and Zarati \cite{LZ} and they extend to all 
$\cala$-modules as in \cite{p_destab} (which is written for the odd primary 
case).
 To avoid confusion, denote the cohomological Singer functors by $R_s$. 
 
For $s\in \nat$, $(R_1)^s \field$ is isomorphic to  the unstable algebra $H^* 
(BV_s)^{B_s}$ and 
$R_s \field$ is the 
Dickson algebra $D(s)$. The Hecke algebra $\hecke_s$ acts 
via morphisms of $D(s)\dash \unst$:
\[
 \hecke_s \rightarrow \mathrm{End}_{D(s)\dash \unst} (H^* (BV_s)^{B_s}). 
\]

Now, for a (cohomological) $\cala$-module $M$, $R_s M$ is a $D(s)$-module in 
the category of $\cala$-modules and 
there is a natural isomorphism of $\cala$-modules:
\[
 (R_1)^s M 
 \cong 
 H^* (BV_s)^{B_s}
 \otimes_{D(s)} 
 R_s M.
\]
The action of $\hecke_s$ on $H^* (BV_s)^{B_s}$ therefore induces a natural 
action on $(R_1)^s M$. (Moreover, this action is $D(s)$-linear.)
  \end{proof}

  The Hecke algebra $\hecke_s$ is equipped with the involution $\hat{}  : 
\hecke_s \rightarrow \hecke_s$ (see  \cite[Proposition 4.4]{Kuhn:Whitehead}) 
and contains the idempotent $e_s \in \hecke_s$ of \cite[Definition/ Proposition 
4.6]{Kuhn:Whitehead}) and the corresponding idempotent $\hat{e}_s$.

  We record the following generalization of the results of 
\cite{Kuhn:Whitehead}:

 \begin{prop}
 \label{prop:sing_stein}
  For $s \in \nat$ and $M \in \amod$, there are natural isomorphisms:
\begin{eqnarray*}
 \sing_s M & \cong & \hat{e}_s (\sing_1)^s M  \\
  \stein_s M & \cong & e_s (\sing_1)^s M.
\end{eqnarray*}
 \end{prop}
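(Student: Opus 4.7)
The plan is to reduce to the case $M = \field$ via the natural isomorphism $(R_1)^s M \cong H^*(BV_s)^{B_s} \otimes_{D(s)} R_s M$ established in the proof of Proposition \ref{prop:hecke_action}. Since the $\hecke_s$-action is $D(s)$-linear and concentrated entirely on the left tensor factor, for any idempotent $\iota \in \hecke_s$ one obtains
\[
 \iota \cdot (R_1)^s M \cong \bigl(\iota \cdot H^*(BV_s)^{B_s}\bigr) \otimes_{D(s)} R_s M
\]
naturally in $M$. Working in the cohomological setting and then dualising, the problem reduces to identifying the summands of $H^*(BV_s)^{B_s}$ cut out by $\hat{e}_s$ and $e_s$.

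For the Singer identity, I would cite Kuhn's construction of $\hat{e}_s$ in \cite[\S 4]{Kuhn:Whitehead} as the idempotent projecting $H^*(BV_s)^{B_s}$ onto the $GL_s$-invariant subalgebra $D(s) = R_s \field$. Substituting yields $\hat{e}_s (R_1)^s M \cong D(s) \otimes_{D(s)} R_s M \cong R_s M$, which dualises to $\sing_s M \cong \hat{e}_s (\sing_1)^s M$.

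For the Steinberg identity, by Definition \ref{def:Steinberg} the task is to show that $e_s (\sing_1)^s M$ equals the intersection of the kernels of the natural projections
\[
 \pi_i : (\sing_1)^s M \twoheadrightarrow (\sing_1)^i \sing_2 (\sing_1)^{s-i-2} M.
\]
Each $\pi_i$ is induced by the surjection $\sing_1 \sing_1 \twoheadrightarrow \sing_2$ on the $(i+1)$st and $(i+2)$nd factors, and the already-proved Singer identity for $s=2$, applied inside the parabolic Hecke subalgebra $\hecke_2 \hookrightarrow \hecke_s$ at those positions, realises $\pi_i$ as the action of $\hat{e}_2$. The defining property of the Steinberg idempotent \cite[Def./Prop.~4.6]{Kuhn:Whitehead}---annihilation by every parabolic trivial idempotent---then forces $e_s \hat{e}_2 = 0$ in $\hecke_s$, hence $\pi_i \circ e_s = 0$, giving $e_s (\sing_1)^s M \subseteq \stein_s M$. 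For the reverse inclusion, $1 - e_s$ lies in the two-sided ideal of $\hecke_s$ generated by the $\hat{e}_2$'s at the various parabolic positions, so $(1-e_s)(\sing_1)^s M$ is contained in the sum of the images of the $\hat{e}_2$-actions at those positions, and hence meets $\stein_s M$ trivially; together with the orthogonal splitting $(\sing_1)^s M = e_s(\sing_1)^s M \oplus (1-e_s)(\sing_1)^s M$ this forces equality.

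The main obstacle will be this final Hecke-algebraic step: checking that Kuhn's normalisations in \cite[\S 4]{Kuhn:Whitehead} indeed identify $e_s$ as the Steinberg idempotent orthogonal to all the parabolic $\hat{e}_2$'s, and matching the parabolic embeddings $\hecke_2 \hookrightarrow \hecke_s$ with the position indexing used in Definition \ref{def:Steinberg}. Once this compatibility is verified, the quadratic-dual relationship between Singer and Steinberg predicts the resulting structure and the argument proceeds as sketched.
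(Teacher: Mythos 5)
The paper's own proof is a one-line citation of \cite[Corollaries~4.9 and 4.10]{Kuhn:Whitehead}, which already state (in cohomological form, and functorially in $M$) the two identities being asserted. You instead try to \emph{derive} them from the base case $M=\field$ via the tensor decomposition $(R_1)^s M \cong H^*(BV_s)^{B_s}\otimes_{D(s)} R_s M$ together with a Hecke-algebraic matching of $e_s(\sing_1)^s M$ with the kernel description of $\stein_s M$ from Definition~\ref{def:Steinberg}. The Singer half of this is fine: the $\hecke_s$-action is $D(s)$-linear and confined to the left tensor factor, so applying an idempotent commutes with $-\otimes_{D(s)}R_s M$, and the base-case identification $\hat{e}_s H^*(BV_s)^{B_s}=D(s)$ (which is itself Kuhn's computation) gives the claim.

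The Steinberg half has a genuine gap in the reverse inclusion. Your argument is: $1-e_s$ lies in the \emph{two-sided} ideal of $\hecke_s$ generated by the parabolic idempotents $\hat{e}_{2,i}$, hence $(1-e_s)(\sing_1)^s M$ is contained in $\sum_i \hat{e}_{2,i}(\sing_1)^s M$, hence meets $\stein_s M$ trivially. Neither implication holds as stated. Writing $1-e_s=\sum_j a_j\hat{e}_{2,i_j} b_j$ with $a_j,b_j\in\hecke_s$ only gives $(1-e_s)x=\sum_j a_j\hat{e}_{2,i_j}(b_jx)$, which lies in $\sum_j a_j\hat{e}_{2,i_j}(\sing_1)^s M$, not in $\sum_i\hat{e}_{2,i}(\sing_1)^s M$; and even the latter sum need not intersect $\bigcap_i\ker\hat{e}_{2,i}$ trivially, since the $\hat{e}_{2,i}$ do not commute and $\hat{e}_{2,j}\bigl(\sum_i\hat{e}_{2,i}y_i\bigr)$ has no reason to vanish. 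What the argument actually needs is a \emph{one-sided} ideal statement, in the direction such that the parabolic idempotent hits $x$ first: if $1-e_s=\sum_j a_j\hat{e}_{2,i_j}$, then for $x\in\stein_s M$ one has $(1-e_s)x=\sum_j a_j(\hat{e}_{2,i_j}x)=0$, so $x=e_s x\in e_s(\sing_1)^s M$, and the reverse inclusion follows cleanly with no direct-sum bookkeeping. The same order-of-composition care is needed for the forward inclusion: you want $\hat{e}_{2,i}\,e_s=0$, so that $\pi_i\circ e_s=0$; as written you assert ``$e_s\hat{e}_2=0$'', and the two are not a priori interchangeable in $\hecke_s$. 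Whether the one-sided ideal claim holds with Kuhn's normalisations is precisely the obstacle you flag at the end, but until it is established the reverse inclusion is unproved; the paper sidesteps all of this by citing Kuhn's corollaries directly.
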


\begin{proof}
 The result follows from \cite[Corollary 4.9]{Kuhn:Whitehead} and 
\cite[Corollary 4.10]{Kuhn:Whitehead}.
\end{proof}
  
  \begin{nota}
  \cite[Proposition 2.2]{HNS}
  For $s \in \nat$, denote by 
  \begin{enumerate}
   \item 
  $M_s$ the cohomological unstable module  defined using 
the Steinberg idempotent 
  $\mathrm{St}_s \in \field [GL_s]$:
  \[
   M_s := \mathrm{St}_s H^* (BV_s)
  \]
  for $V_s$ an elementary abelian $2$-group of rank $s$;
\item 
$L_s$ the cohomological unstable module defined by the canonical decomposition
\[
 M_s \cong L_s \oplus L_{s-1}.
\]
  \end{enumerate}
  \end{nota}

 \begin{rem}
  The notation $L_s$ must not be confused with that used in 
\cite{Kuhn:Whitehead}, where a suspension is introduced. 
 \end{rem}

\begin{nota}
 Write the top Dickson invariant as $\omega_s \in 
D(s)$ (this is the product of the non-zero 
classes of $H^1(BV_s)$ so that  $|\omega_s |= 2^s -1$; see \cite{HNS}, for 
example).
\end{nota}

 \begin{rem}
 \label{rem:cohom_stein}
 By construction, $M_s$ belongs to $D(s)\dash \unst$ and,  
 by \cite[Proposition 2.3]{HNS}, $L_s = \omega_s M_s$, in particular  $L_s 
\subset M_s$ is a sub-object in $D(s)\dash\unst$. 

In the cohomological setting, it follows from \cite[Section 
4.2]{Kuhn:Whitehead} that the Steinberg functor corresponds to the functor
\[
M\mapsto  M_s \otimes_{D(s)} R_s M.
\]
 \end{rem}

\begin{cor}
\label{cor:stein_field_Ls}
 For $s \in \nat$, there is a natural isomorphism in $\amod$:
 \[
  \stein_s \field 
  \cong 
  M_s^*.
 \]
\end{cor}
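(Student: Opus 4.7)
The plan is to transport the problem to the cohomological setting, where Remark \ref{rem:cohom_stein} already provides an essentially complete description of the Steinberg functor in terms of the modules $M_s$ and the Dickson algebra $D(s)$. Since $\field$ is of finite type (indeed, one-dimensional) and concentrated in degree zero, it is self-dual, and the duality between homological and cohomological $\cala$-modules restricts to an equivalence on the full subcategories of locally-finite, finite-type objects. It therefore suffices to compute the cohomological avatar of $\stein_s \field$ and then dualize.

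First I would observe that under the duality, the homological Singer and Steinberg functors $\sing_s$, $\stein_s$ correspond to the cohomological Singer functor $R_s$ and its quadratic dual, which by Proposition \ref{prop:sing_stein} and Remark \ref{rem:cohom_stein} sends a cohomological $\cala$-module $M$ to $M_s \otimes_{D(s)} R_s M$. (The idempotent $e_s$ is the same on either side of the duality, so the identifications of Proposition \ref{prop:sing_stein} are compatible with dualization.) Applied to $M=\field$, this reduces the calculation to identifying $R_s \field$.

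Next, as recorded in the proof of Proposition \ref{prop:hecke_action}, $R_s \field \cong D(s)$, the Dickson algebra. Plugging this in gives
\[
M_s \otimes_{D(s)} R_s \field \;\cong\; M_s \otimes_{D(s)} D(s) \;\cong\; M_s
\]
in $D(s)\dash\unst$, hence a fortiori as a cohomological $\cala$-module. Dualizing yields the required natural isomorphism $\stein_s \field \cong M_s^*$ in $\amod$.

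The only genuine point to verify is that the natural transformation provided by the cohomological formula of Remark \ref{rem:cohom_stein} really is the one characterizing $\stein_s$ via the idempotent $e_s$ of Proposition \ref{prop:sing_stein}; this is essentially the content of \cite[Section 4.2]{Kuhn:Whitehead} invoked in that remark, so no new calculation is needed. I do not foresee any substantive obstacle; the statement is a direct specialization, to $M=\field$, of the structural description of $\stein_s$ that has already been assembled.
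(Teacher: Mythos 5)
Your argument is correct and is essentially the paper's approach unpacked: the paper simply cites Proposition~\ref{prop:sing_stein} (which, dualized, gives $\stein_s \field \cong \big(e_s H^*(BV_s)^{B_s}\big)^* \cong M_s^*$ via Kuhn's identification of the Hecke and Steinberg idempotents), while you reach the same conclusion through the equivalent tensor-product formulation of Remark~\ref{rem:cohom_stein} together with the observation $R_s\field \cong D(s)$.
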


\begin{proof}
 This follows from Proposition \ref{prop:sing_stein}.
\end{proof}

\begin{rem}
Using  \cite[Theorem 6.3]{Kuhn:Whitehead}, Kuhn deduces that $\stein_s \Sigma 
\field \cong \Sigma L_s^*$ (using the notation adopted here, rather than 
Kuhn's), so that the right hand side should be understood as $\Sigma (\omega_s M_s)^*$.
\end{rem}

 \subsection{The Steinberg functor and suspension} 
  
  \begin{prop}
  \label{prop:stein_sigma_surject}
 The natural surjection $\Sigma \sing_1 \twoheadrightarrow \sing_1 \Sigma$, 
given by the restriction of $\epsilon : \Sigma \sing \twoheadrightarrow \sing 
\Sigma$ to length one, induces 
 a natural surjection for $s\in \nat$
 \[
  \Sigma \stein_s \twoheadrightarrow \stein_s \Sigma.
 \]
  \end{prop}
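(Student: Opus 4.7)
The plan is to exploit the idempotent description $\stein_s M \cong e_s (\sing_1)^s M$ from Proposition \ref{prop:sing_stein}, where $e_s \in \hecke_s$ is the Steinberg idempotent acting on $(\sing_1)^s$ by natural transformations by Proposition \ref{prop:hecke_action}. Given this characterization, the problem reduces to a compatibility between the given surjection $\Sigma \sing_1 \twoheadrightarrow \sing_1 \Sigma$ and the Hecke algebra action.

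First, I would iterate the given surjection in each of the $s$ factors to produce a natural surjection
\[
\alpha_s \;:\; \Sigma (\sing_1)^s \;\to\; \sing_1 \Sigma (\sing_1)^{s-1} \;\to\; \cdots \;\to\; (\sing_1)^s \Sigma,
\]
each intermediate arrow being the given length-one surjection sandwiched between appropriate copies of $\sing_1$. Surjectivity is preserved at each stage by the exactness of $\sing_1$ (Proposition \ref{prop:properties_sing}), and hence the composite $\alpha_s$ is itself a natural surjection.

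Next, I would verify that $\alpha_s$ is $\hecke_s$-equivariant, namely that $h_{\Sigma M}\circ \alpha_{s,M} = \alpha_{s,M}\circ \Sigma h_M$ for each $h \in \hecke_s$ and $M \in \amod$. Tracing through the proof of Proposition \ref{prop:hecke_action}, the Hecke action on $(\sing_1)^s$ factors through the $GL_s$-structure on the iterated cohomological Singer construction of $\field$ (via $H^\ast(BV_s)^{B_s}$ in the isomorphism identifying $(\sing_1)^s M$ with a tensor product of a Singer piece and a Dickson-module). This action is internal to the Singer factor, whereas the suspension acts on the $M$-slot; one checks that the given surjection $\Sigma \sing_1 \twoheadrightarrow \sing_1 \Sigma$ intertwines the $GL_s$-structure in its single Singer slot, and hence so does the iterated map $\alpha_s$.

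Once equivariance is established, applying the idempotent $e_s$ to $\alpha_s$ gives the desired natural surjection
\[
\Sigma \stein_s M \;\cong\; e_s \Sigma (\sing_1)^s M \;\twoheadrightarrow\; e_s (\sing_1)^s \Sigma M \;\cong\; \stein_s \Sigma M,
\]
where surjectivity is automatic from equivariance and the surjectivity of $\alpha_s$: any $e_s y$ on the right lifts to $e_s x$ for some $x$ with $\alpha_{s,M}(x)=y$. The main obstacle is precisely the $\hecke_s$-equivariance of step two; an alternative route that avoids the Hecke algebra would use the kernel description of Definition \ref{def:Steinberg}, exploiting that $\epsilon$ is a length-preserving morphism in $\qmgr$ and hence compatible with the quadratic projection $(\sing_1)^2 \twoheadrightarrow \sing_2$, but surjectivity onto $\stein_s \Sigma M$ would then require an additional snake-lemma argument.
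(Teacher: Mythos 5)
Your proposal matches the paper's proof: iterate the length-one surjection to obtain $\Sigma(\sing_1)^s \twoheadrightarrow (\sing_1)^s\Sigma$, then apply the idempotent $e_s$ from Proposition \ref{prop:sing_stein}. The paper states this very tersely (calling it ``clear''), whereas you flesh out the $\hecke_s$-equivariance needed to apply $e_s$ and sketch an alternative via the kernel presentation, but the underlying argument is the same.
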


 \begin{proof}
  Clearly $\epsilon$ induces a natural surjection $\Sigma(\sing_1 )^s 
\twoheadrightarrow (\sing_1)^s \Sigma$. The result follows by applying the 
idempotent $e_s$.
 \end{proof}

The above suspension morphism fits into a short exact sequence analogous to 
that of Corollary \ref{cor:qsusp}.

 \begin{prop}
 \label{prop:ses_stein_susp}
  For $M \in \amodc$ that is of finite type and $s \in \nat$, the suspension 
morphism fits into a natural short exact sequence 
  \[
   0
   \rightarrow 
   \Sigma^{-1} \stein_{s-1} \Sigma \Phi M 
   \rightarrow 
   \stein_s M 
   \rightarrow 
   \Sigma^{-1} \stein_s \Sigma M 
   \rightarrow 
   0.
  \]
 \end{prop}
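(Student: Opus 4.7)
The plan is to proceed by induction on $s$. The case $s=0$ is the trivial identification $M \cong \Sigma^{-1}\Sigma M$ (since $\stein_0 = \mathrm{id}_\amod$), and $s=1$ is the length-one component of the short exact sequence of Corollary \ref{cor:qsusp}, using $\stein_1 = \sing_1$.

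For the inductive step $s \geq 2$, Proposition \ref{prop:stein_sigma_surject} already provides the surjection $\Sigma \stein_s M \twoheadrightarrow \stein_s \Sigma M$, so I would focus on identifying its kernel. My approach would be to embed $\Sigma \stein_s M$ into $\Sigma \sing_1 \stein_{s-1} M$ using the inclusion $\stein_s \hookrightarrow \sing_1 \stein_{s-1}$ from Proposition \ref{prop:Steinberg}(3), and then combine two natural short exact sequences. Suspending the length-one component of Corollary \ref{cor:qsusp} applied to $\stein_{s-1} M$ gives
\[
0 \to \Sigma\Phi\stein_{s-1} M \to \Sigma\sing_1 \stein_{s-1} M \xrightarrow{\alpha} \sing_1 \Sigma\stein_{s-1} M \to 0,
\]
while applying the exact functor $\sing_1$ to the suspended inductive sequence at stage $s-1$ yields
\[
0 \to \sing_1 \stein_{s-2}\Sigma\Phi M \to \sing_1 \Sigma\stein_{s-1} M \xrightarrow{\beta} \sing_1 \stein_{s-1}\Sigma M \to 0.
\]
Naturality of the iterated decomposition $\sigma_s = \sing_1(\sigma_{s-1})\circ \epsilon$ of the suspension transformation for $(\sing_1)^s$ shows that $\beta\circ\alpha$ restricted to $\Sigma\stein_s M$ coincides with the surjection of Proposition \ref{prop:stein_sigma_surject}, with image $\stein_s\Sigma M \subset \sing_1 \stein_{s-1}\Sigma M$ (again using Proposition \ref{prop:Steinberg}(3)).

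The hard part will be the kernel computation. A diagram chase identifies it with the intersection of $\Sigma\stein_s M$ and the two-step filtration on $\ker(\beta\circ\alpha)$ whose layers are $\Sigma\Phi\stein_{s-1} M$ and $\alpha^{-1}(\sing_1 \stein_{s-2}\Sigma\Phi M)$; the claim is that these intersections assemble into a natural copy of $\stein_{s-1}\Sigma\Phi M$. I would verify this using the explicit Hecke-idempotent description $\stein_s = e_s(\sing_1)^s$ of Proposition \ref{prop:sing_stein}: the factorization $e_s = e_s\cdot(1\otimes e_{s-1})$ makes the trailing-factor analysis manageable, and I would then track how $e_s$ interacts with the $Q_0$ action on the leading factor to force the second-layer contribution to be absorbed into the first. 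An alternative, permitted by the finite-type hypothesis on $M$, is to dualize to the cohomological setting and use Remark \ref{rem:cohom_stein} to reduce the problem to a calculation involving $M_s$ over the Dickson algebra $D(s)$, where the desired sequence arises from the cohomological analogue of Corollary \ref{cor:qsusp}.
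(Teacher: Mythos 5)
Your fallback suggestion in the final sentence is essentially what the paper does: it dualizes to cohomology and works with $M_s$ over the Dickson algebra $D(s)$. The crucial technical ingredient the paper uses there, which your sketch does not flag, is that the underlying $D(s)$-module of $M_s$ is \emph{free} (via Mitchell). This is what makes the functor $M_s \otimes_{D(s)} (-)$ exact, so that applying it to the cohomological Singer short exact sequence
\[
0 \to \Sigma^{-1} R_s \Sigma N \to R_s N \to \Phi R_{s-1} N \to 0
\]
directly yields the desired sequence after the identifications $M_s/\omega_s M_s \cong L_{s-1} = \omega_{s-1} M_{s-1}$ and $D(s-1) \otimes_{\Phi D(s-1)} \Phi R_{s-1} N \cong R_{s-1}\Phi N$. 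Without the freeness input the exactness on the left is not automatic, so you should make it explicit if you take that route.

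Your primary inductive approach is a genuinely different strategy, but it has a real gap precisely at the step you call ``the hard part.'' The kernel of $(\beta\circ\alpha)|_{\Sigma\stein_s M}$ is \emph{not} simply $\Sigma\stein_s M \cap \Sigma\Phi\stein_{s-1}M$ (the intersection with the first layer of the filtration), and the subtlety is exactly the one signalled in the remark following Proposition \ref{prop:ses_stein_susp} in the paper: the resulting natural transformation $\stein_{s-1}\Phi M \to \stein_s M$ is \emph{not} the restriction of the naive inclusion $\Phi M \hookrightarrow \stein_1 M$ applied inside $\stein_{s-1}\stein_1 M$; rather it only factors as claimed because the Steinberg idempotent imposes ``complete unallowability.'' So a bare diagram chase on the two-step filtration will not close; you genuinely need the Hecke-idempotent interaction with $Q_0$ that you gesture at, and this is not simpler than the paper's cohomological argument. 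I would recommend presenting the cohomological route as the main proof and, if the inductive picture is wanted, deriving the description of the kernel inclusion from it afterwards (as the paper does in its remark), rather than the other way round.

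Two small points to tighten if you pursue the induction: (i) you should verify explicitly that the image of $(\beta\circ\alpha)|_{\Sigma\stein_s M}$ lands in $\stein_s\Sigma M$ and that the resulting surjection agrees with that of Proposition \ref{prop:stein_sigma_surject} — naturality of $\epsilon$ gives this, but it should be recorded; (ii) your base case $s=1$ is fine, and note that it recovers the sequence $0 \to \Phi M \to \stein_1 M \to \Sigma^{-1}\stein_1\Sigma M \to 0$, consistent with $\stein_0 = \mathrm{id}$.
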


 \begin{proof}
  It is convenient to give the proof working in cohomological $\cala$-modules, 
using the hypothesis that $M$ is of finite type and bounded below to translate 
to this setting. 
  
  Using the cohomological Singer functors $R_s$, there is a natural short exact 
sequence 
  \[
   0
   \rightarrow 
   \Sigma^{-1} R_s \Sigma N
   \rightarrow 
   R_s N
   \rightarrow 
   \Phi R_{s-1} N
   \rightarrow 
   0
  \]
of $D(s)$-modules in $\amod$, where $D(s)$ acts upon $\Phi R_{s-1}N$ via the 
natural surjection $D(s) \twoheadrightarrow \Phi D(s-1)$ which has kernel 
$\omega_s D(s)$.

The underlying $D(s)$-module of $M_s$ is free (cf. for example the $2$-primary case of \cite[Corollary 3.11]{Mitchell}), hence applying 
 the functor $(M_s \otimes_{D(s)} - )$ yields an exact sequence 
\[
   0
   \rightarrow 
   \Sigma^{-1} L'_s \Sigma N
   \rightarrow 
   L'_s N
   \rightarrow 
   M_s \otimes_{D(s)} \Phi R_{s-1} N
   \rightarrow 
   0
  \]
  where $L'_s$ denotes the functor $M_s \otimes_{D(s)} R_s (-) $, which corresponds to $\stein_s$, by Remark \ref{rem:cohom_stein}. 
  
  The inclusion $\Phi D(s-1) \hookrightarrow D(s-1)$ of unstable algebras 
induces a canonical isomorphism of $D(s-1)$-modules (in $\cala$-modules):
  \[
   D(s-1) \otimes_{\Phi D(s-1) } \Phi R_{s-1} N 
   \cong 
   R_{s-1} \Phi N. 
  \]
Hence, there is a natural isomorphism
\[
 M_s \otimes_{D(s)} \Phi R_{s-1} N
 \cong 
 L_{s-1} \otimes _{D(s-1)} R_{s-1}\Phi N,
\]
using the identification $M_s / \omega_s M_s \cong L_{s-1}$ as $D(s)$-modules, 
where $D(s)$ acts on $L_{s-1}$ via the composite $D(s) \twoheadrightarrow \Phi 
D(s-1) \hookrightarrow D(s-1)$ and the $D(s-1)$-module structure of $L_{s-1}$. 

Now, $L_{s-1}$ identifies as $\omega_{s-1} M_{s-1}$, hence there is a natural 
isomorphism 
\[
 L_{s-1} \otimes _{D(s-1)} R_{s-1}\Phi N
 \cong 
 \Sigma^{-1} M_{s-1} \otimes_{D(s-1)} R_{s-1}\Sigma \Phi N
\]
and, by definition, the latter is $\Sigma^{-1}L'_{s-1} \Sigma \Phi N$. This 
provides the required short exact sequence.
 \end{proof}

\begin{rem}
 For $s=1$, the short exact sequence is 
 \[
   0
   \rightarrow 
  \Phi M 
   \rightarrow 
   \stein_1 M 
   \rightarrow 
   \Sigma^{-1} \stein_1 \Sigma M 
   \rightarrow 
   0
  \]
(where $\stein_1$ identifies with $\sing_1$).

For $s>1$, by composing with the natural surjection $\stein_{s-1} \Phi M 
\twoheadrightarrow \Sigma^{-1} \stein_{s-1} \Sigma \Phi M$, 
the inclusion of the kernel induces a natural transformation
\begin{eqnarray}
 \label{eqn:stein_Phi}
 \stein_{s-1} \Phi M 
 \rightarrow 
 \stein_s M.
\end{eqnarray}
Upon composing with the natural inclusion $\stein_s M \hookrightarrow 
\stein_{s-1} \stein_1 M$, one obtains a natural transformation 
$\stein_{s-1}\Phi M \rightarrow  \stein_{s-1} \stein_1 M$. This is {\em not} 
the 
natural inclusion induced by $\Phi M \hookrightarrow \stein_1 M$. 

Rather, the natural transformation (\ref{eqn:stein_Phi}) is given as the 
composite
\[
 \stein_{s-1} \Phi M \rightarrow \stein_{s-1} \stein_1 M \twoheadrightarrow 
\stein_s M
\]
of this inclusion with the projection induced by the Steinberg 
idempotent. The fact that this factorizes across $\Sigma^{-1} \stein_{s-1} 
\Sigma \Phi M$ follows, since the Steinberg relation imposes `complete unallowability' (as in \cite{Kuhn:Whitehead}).
\end{rem}

 \subsection{The Steinberg functor and instability}

 It is important to understand the behaviour of the functors $\sing_s$, 
$(\sing_1)^s$ and $\stein_s$ when applied to 
 modules which are iterated desuspensions of unstable modules. It is a 
fundamental fact that these functors 
 preserve instability.

 \begin{prop}
 \label{prop:iterated_sing_instability}
  For $0 < s \in \nat$ and $M \in \unst$, 
  \begin{enumerate}
   \item 
   $(\sing_1)^s M$ is unstable; 
   \item 
 for    $0< d \in \nat$,
  $(\sing_1)^s \Sigma^{-d}M $ admits a finite filtration with associated graded 
$\gr \big((\sing_1)^s \Sigma^{-d}M \big) $ such that 
  \[
   \Sigma^{2^s d} \gr \big((\sing_1)^s \Sigma^{-d}M \big)
  \]
is unstable.
  \end{enumerate}
 \end{prop}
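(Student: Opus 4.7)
The plan is a double induction, with outer induction on $s$ and, for the base case of (2), an inner induction on $d$. The key structural input is the length-one component of the short exact sequence of Corollary \ref{cor:qsusp}, namely
\[
0 \to \Phi M \to \sing_1 M \to \Sigma^{-1}\sing_1 \Sigma M \to 0
\]
in $\amod$ for any $M \in \amod$, combined with the exactness of $\sing_1$ and the identity $\Phi \Sigma^{-d} \cong \Sigma^{-2d}\Phi$ (which follows directly from $\Phi\Sigma \cong \Sigma^2 \Phi$).

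For part (1), the inductive step is immediate from $(\sing_1)^s M = \sing_1\bigl((\sing_1)^{s-1}M\bigr)$; the base case $s=1$---that $\sing_1$ sends $\unst$ into $\unst$---is the homological incarnation of the classical Lannes--Zarati fact that the cohomological Singer functor $R_1$ preserves unstability (cf.\ the discussion surrounding the proof of Proposition \ref{prop:hecke_action}). For part (2), the inductive step in $s$ uses the exactness of $\sing_1$ to transport the finite filtration of $(\sing_1)^{s-1}\Sigma^{-d}M$ supplied by the induction hypothesis to a filtration of $(\sing_1)^s \Sigma^{-d}M$; each of its graded pieces then has the form $\sing_1 \Sigma^{-2^{s-1}d} U_\alpha$ with $U_\alpha$ unstable, to which the base case $s=1$ of (2) (with $d$ replaced by $2^{s-1}d$) applies. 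Refining yields the required finite filtration whose graded pieces become unstable after the shift $\Sigma^{2^s d}$.

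The base case $s=1$, $d \geq 1$, is the only concrete computation. Replacing $M$ by $\Sigma^{-d}M$ in the displayed short exact sequence gives
\[
0 \to \Sigma^{-2d}\Phi M \to \sing_1 \Sigma^{-d}M \to \Sigma^{-1}\sing_1 \Sigma^{-(d-1)}M \to 0,
\]
and iterating this construction $d$ times on the quotient term produces a finite filtration of $\sing_1 \Sigma^{-d}M$ of length $d+1$ with successive quotients
\[
\Sigma^{-2d}\Phi M,\ \Sigma^{-(2d-1)}\Phi M,\ \ldots,\ \Sigma^{-(d+1)}\Phi M,\ \Sigma^{-d}\sing_1 M.
\]
Each is unstable after the shift $\Sigma^{2d}$: the $\Phi M$-pieces because $\Phi$ preserves unstability (Proposition \ref{prop:Omega_es}), and the final piece $\Sigma^d \sing_1 M$ by part (1). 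The main obstacle is the bookkeeping---keeping the filtrations finite through the iteration, and verifying that the degree shifts in the inner and outer inductions telescope to exactly $\Sigma^{2^s d}$; the structural input is otherwise supplied entirely by the results already established.
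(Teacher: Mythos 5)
Your proof is correct and follows essentially the same strategy as the paper's: reduce to $s=1$ by an outer induction on $s$ exploiting exactness of $\sing_1$, then handle $s=1$ by an inner induction on $d$ using the length-one component of the short exact sequence from Corollary \ref{cor:qsusp}, together with the isomorphism $\Phi\Sigma^{-d}\cong\Sigma^{-2d}\Phi$. You simply spell out details the paper leaves implicit (the explicit list of filtration quotients and the telescoping of the suspension shifts).
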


 \begin{proof}
  Since $\sing_1$ is an exact functor, by induction upon $s$ it suffices to 
consider the case $s=1$. This case is treated by induction upon $d$, the case 
$d=0$ following 
  from the fact that $\sing_1$ restricts to a functor $\sing_1 : \unst 
\rightarrow \unst$. 
  
  For the inductive step, consider the exact sequence derived from Proposition 
\ref{prop:qmgr}:
  \[
   0
   \rightarrow 
   \Phi \Sigma^{-d} M
 \rightarrow
 \sing_1 \Sigma^{-d}M
 \rightarrow
 \Sigma^{-1}\sing_1 \Sigma ^{-(d-1)} M
 \rightarrow 
 0.
 \]
The right hand term is treated by the inductive hypothesis and the left hand 
term by using the natural isomorphism $  \Phi \Sigma^{-d} M \cong \Sigma^{-2d} 
\Phi M$, where $\Phi M$ is unstable.
 \end{proof}

 \begin{cor}
 \label{cor:sing_stein_instability}
 For $0 < s \in \nat$ and $M \in \unst$, 
  \begin{enumerate}
   \item 
   $\sing_s M$ and $\stein_s M$ are unstable; 
   \item 
 for    $0< d \in \nat$,
  $\sing_s\Sigma^{-d}M $ (respectively $\stein_s \Sigma^{-d}M$)  admit  finite 
filtrations with associated gradeds $\gr \big(\sing_s \Sigma^{-d}M \big) $ 
(respectively $\gr \big(\stein_s \Sigma^{-d}M \big) $) such that 
  \begin{eqnarray*}
   \Sigma^{2^s d} \gr \big(\sing_s\Sigma^{-d}M \big)\\
    \Sigma^{2^s d} \gr \big(\stein_s\Sigma^{-d}M \big)
  \end{eqnarray*}
are unstable.
  \end{enumerate}  
 \end{cor}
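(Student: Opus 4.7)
The plan is to deduce the corollary from Proposition \ref{prop:iterated_sing_instability} by means of the Hecke-idempotent description of Proposition \ref{prop:sing_stein}, which exhibits both
\[
 \sing_s M \cong \hat{e}_s (\sing_1)^s M \quad \text{and} \quad \stein_s M \cong e_s (\sing_1)^s M
\]
as direct summands (in particular, subobjects) of $(\sing_1)^s M$ in $\amod$. The only additional categorical input is the standard fact that $\unst \subset \amod$ is closed under subobjects.

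For statement (1): if $M \in \unst$, Proposition \ref{prop:iterated_sing_instability}(1) gives that $(\sing_1)^s M$ is unstable; the subobjects $\sing_s M$ and $\stein_s M$ are therefore unstable as well.

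For statement (2): by Proposition \ref{prop:iterated_sing_instability}(2), there is a finite filtration $\filt_\bullet$ of $(\sing_1)^s \Sigma^{-d} M$ such that $\Sigma^{2^s d} \gr \filt_\bullet$ is unstable. Restricting $\filt_\bullet$ along the inclusions of the direct summands produces finite filtrations of $\sing_s \Sigma^{-d} M$ and $\stein_s \Sigma^{-d} M$ whose associated gradeds embed termwise into $\gr \filt_\bullet$. Applying $\Sigma^{2^s d}$ therefore realises each associated graded as a subobject of an unstable module, hence unstable.

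The argument is essentially formal once Propositions \ref{prop:iterated_sing_instability} and \ref{prop:sing_stein} are in hand, so no significant obstacle arises; the mildest care needed is to note that restricting a filtration along an inclusion of a direct summand gives an associated graded that embeds termwise into the original, which is immediate from the exactness of intersection with a direct summand.
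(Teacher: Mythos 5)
Your proof is correct, and it elaborates the deduction that the paper leaves implicit (the paper's proof is the single line ``A straightforward consequence of Proposition \ref{prop:iterated_sing_instability}''). The route through Proposition \ref{prop:sing_stein}, exhibiting both $\sing_s M$ and $\stein_s M$ as direct summands $\hat e_s (\sing_1)^s M$ and $e_s (\sing_1)^s M$, is a clean and uniform way to transfer both the instability in part (1) and the filtration in part (2). One small remark: the direct-summand structure is not really needed for the embedding of associated gradeds; for any subobject $A\subset N$ with $\filt_i A := \filt_i N\cap A$ the map $\gr_i A\to\gr_i N$ is already injective, so $\stein_s M\hookrightarrow(\sing_1)^s M$ (Definition \ref{def:Steinberg}) handles the Steinberg case directly, while for $\sing_s M$ one could equally use the quotient description $(\sing_1)^s M\twoheadrightarrow\sing_s M$ together with closure of $\unst$ under quotients and the fact that pushing a filtration forward along a surjection makes the associated graded a quotient. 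Your idempotent approach avoids having to make this sub/quotient distinction and is the tidier argument.
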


 \begin{proof}
  A straightforward consequence of Proposition 
\ref{prop:iterated_sing_instability}.
 \end{proof}

 \begin{rem}
  The analysis of $\stein_s \Sigma^{-d}$ can be refined by using induction upon 
$s$ and $d$ together with the short exact sequence provided by Proposition 
\ref{prop:ses_stein_susp}.
 \end{rem}

 \subsection{Identifying $\stein_s \Sigma^d \field$}

 Corollary \ref{cor:stein_field_Ls} identifies $\stein_s \field$, for $s \in 
\nat$; this can be generalized to   consider $\stein_s \Sigma ^d \field$ for 
all $d \in \zed$. It is conceptually clearer to present this  in the 
cohomological setting.

The localized algebra $D(s)[\omega_s^{-1}]$ is 
an algebra in cohomological $\cala$-modules and the construction of the 
Singer functors shows that 
\[
  \Sigma^{-d} R_s \Sigma^d \field  = \omega_s ^d D(s) \subset D(s)[\omega_s^{-1}],
\]
for any $d\in \zed$, as $D(s)$-modules in $\cala$-modules. 
 
As in Remark \ref{rem:cohom_stein}, it follows that there is an isomorphism 
\[
 (\Sigma^{-d} \stein_s \Sigma^d \field) ^* \cong \omega_s ^d M_s 
\]
where the right hand side is understood as a sub-object of $M_s [\omega_s^{-1}] 
:= M_s \otimes_{D(s)} D(s)[\omega_s^{-1}]$. (Here, $\omega_s ^d M_s$
 is of finite type, so the duality causes no difficulty.) 
 
 In particular, for $d \geq 0$, this gives a decreasing filtration with 
 \[
  \omega_s^d M_s =  \omega_s^{d-1} L_s\subseteq L_s \mathrm{,\  for\  }d>0.
 \]

For current purposes, it is sufficient to describe bases of the underlying 
graded vector spaces.  Recall the following standard definition:

  \begin{defn}
\label{def:sequence_admissibility}
\ 
\begin{enumerate}
 \item 
 A sequence of length $s$ (for $s \in \nat$) is an ordered sequence  $\{i_1, 
\ldots , i_s \} \in \zed^s$ (empty if $s=0$).
 The degree of $I$ is $d (I):= \sum_j i_j$ and the length $s$ is denoted by 
$l(I)$.
 \item 
 The sequence $I$ is admissible if $i_j \geq 2 i_{j+1}$ for all $0 \leq j< s$; 
the excess of an admissible sequence $I$ is 
 $e(I) = \sum_j (i_j - 2 i_{j+1})$. 
 \end{enumerate}
\end{defn}
  
 \begin{nota}
  For applications here, sequences will usually lie in $\nat^s$ and, often, 
will be sequences of positive integers (that is lying in $\nat_*^s$). 
  
However, much of this material should be considered in the context of the big 
Steenrod algebra (cf. \cite{BCL} and \cite{p_quad}), where the indexing is by 
$\zed$.
 \end{nota}
 
  \begin{prop}
  \label{prop:admissible_basis}
For $0<s \in \nat$ and $d\in \zed$, the graded vector space $\stein_s \Sigma^d 
\field$ has a basis indexed by admissible sequences of length $s$:
   \[
   \{
\sigma_I \iota_{d-s} |    \mathrm{admissible, \ } i_s > d 
   \}
   \]
where $\sigma_I \iota_{d-s}$ has degree $d(I)+ d -s$. 
\end{prop}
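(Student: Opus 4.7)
The plan is to induct on $s \in \nat$ using the short exact sequence of Proposition \ref{prop:ses_stein_susp} applied to $M = \Sigma^d \field$. Since $\Phi \Sigma^d \field \cong \Sigma^{2d} \field$, this specializes to
$$
0 \to \Sigma^{-1} \stein_{s-1} \Sigma^{2d+1} \field \to \stein_s \Sigma^d \field \to \Sigma^{-1} \stein_s \Sigma^{d+1} \field \to 0.
$$
The outer induction runs on $s$, with trivial base $s = 0$ using $\stein_0 = \mathrm{Id}$ (so that the left term for $s=1$ is the one-dimensional $\Sigma^{2d}\field$, realising the unique admissible sequence $(d+1)$ of length one). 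For fixed $s$, an inner iteration in $d$ will cover the remaining sequences, terminating in each internal degree thanks to Corollary \ref{cor:sing_stein_instability}.

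For the inductive step, the outer hypothesis supplies a basis of $\stein_{s-1} \Sigma^{2d+1} \field$ indexed by admissible sequences $J = (j_1, \ldots, j_{s-1})$ with $j_{s-1} > 2d+1$. Under the appending map $J \mapsto I := (j_1, \ldots, j_{s-1}, d+1)$, such a $J$ becomes an admissible sequence of length $s$: the threshold admissibility $j_{s-1} \geq 2(d+1)$ is precisely equivalent to $j_{s-1} > 2d+1$, and $i_s = d+1 > d$. Degree-tracking confirms the dictionary: $\sigma_J \iota_{(2d+1)-(s-1)}$ has degree $d(J) + 2d + 2 - s$, so its image under $\Sigma^{-1}$ sits in degree $d(J) + 2d + 1 - s = d(I) + d - s$, matching the convention for $\sigma_I \iota_{d-s}$. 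Thus the left term of the SES contributes precisely those $I$ with $i_s = d+1$, and iterating the quotient term in $d$ contributes those with $i_s = d+2, d+3, \ldots$.

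Termination of the iteration in any fixed degree follows because $\stein_s \Sigma^{d+k} \field$ has connectivity growing linearly in $k$ by Corollary \ref{cor:sing_stein_instability}, so only finitely many iterations contribute in any bounded range. Concatenating the two kinds of contribution produces exactly the admissible sequences of length $s$ with $i_s > d$, and the exactness of each SES delivers linear independence. The main technical obstacle is the index bookkeeping: one must verify that the appending dictionary $J \mapsto (J, d+1)$ matches, via the connecting map of the SES, the natural operation sending $\sigma_J \iota_{(2d+1)-(s-1)}$ to $\sigma_{(J,d+1)} \iota_{d-s}$, so that the basis elements produced by induction really do have the advertised form $\sigma_I \iota_{d-s}$. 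This essentially reinterprets the inclusion of the $Q_0$-summand inside $\stein_s$ as post-composition with the Steinberg-idempotent projection onto $\sigma_{d+1}$, and requires retracing the proof of Proposition \ref{prop:ses_stein_susp}; once that identification is in hand, the rest of the argument is a direct degree count.
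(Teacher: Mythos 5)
Your proof is correct in substance but takes a genuinely different route from the paper's. The paper argues cohomologically: it identifies $(\Sigma^{-d}\stein_s\Sigma^d\field)^*$ with $\omega_s^d M_s$ and observes that multiplication by the top Dickson class $\omega_s$ shifts admissible sequences by the excess-zero sequence $(2^{s-1},\ldots,1)$, reducing to the known case $d=0$. (It also notes alternatives via Miller's result through Proposition~\ref{prop:untor_Lq}, or via Lemmas~\ref{lem:basis_stein}, \ref{lem:lower_2_upper}, \ref{lem:reindex_admissible}.) You instead run a double induction on the short exact sequence of Proposition~\ref{prop:ses_stein_susp}, appending $i_s=d+1$ to sequences of length $s-1$ from the kernel term and iterating the cokernel term in $d$. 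Your combinatorial bookkeeping — the equivalence $j_{s-1}>2d+1 \Leftrightarrow j_{s-1}\geq 2(d+1)$, and the degree count $d(I)+d-s = d(J)+2d+1-s$ — checks out, and the exactness of the SES delivers the dimension count that the statement (a claim about graded vector spaces only) requires.

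Two points deserve correction. First, the citation for termination of the inner $d$-iteration is wrong: Corollary~\ref{cor:sing_stein_instability} only records instability (hence $(-1)$-connectedness), which does not grow with the suspension degree. You also cannot invoke Proposition~\ref{prop:conn_stein} or Corollary~\ref{cor:conn_stein}, since the paper deduces those \emph{from} the present proposition. The correct source is the inclusion $\stein_s \hookrightarrow (\sing_1)^s$ from Proposition~\ref{prop:Steinberg}(1) combined with the connectivity estimate of Proposition~\ref{prop:properties_sing}: if $M$ is $(c-1)$-connected then $(\sing_1)^s M$ is $(2^s c-1)$-connected, so $\stein_s\Sigma^{d+k}\field$ is $(2^s(d+k+1)-1)$-connected, and in a fixed internal degree the iteration does stop. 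Second, the concern in your final paragraph is unnecessary: the proposition asserts only that the underlying graded vector space has a basis \emph{indexed} by admissible sequences with the stated degrees, so $\sigma_I\iota_{d-s}$ is merely a label and no identification of the connecting map with ``post-composition with the Steinberg-idempotent projection'' is needed for the claim as stated. Identifying which operation the label names is genuinely extra content and is handled separately in the paper via Lemmas~\ref{lem:basis_stein}--\ref{lem:reindex_admissible}.
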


\begin{proof} (Indications.)
This is best understood in the cohomological setting, so that the above refers 
to a dual basis. For $d=0$, this is standard. The effect of multiplying by $\omega_s$ is to shift an 
admissible sequence by the excess zero sequence $(2^{s-1}, 2^{s-2}, \ldots , 
1)$ of length $s$.
\end{proof}

 \begin{rem}
For $d\geq 1$, this result can be deduced from  from \cite[Proposition 
3.3.11]{Miller} by  using  Proposition 
\ref{prop:untor_Lq}. 
\end{rem}

For completeness, the underlying relationship between Dyer-Lashof operations 
and admissible sequences is explained below. Proposition 
\ref{prop:admissible_basis} can also be deduced
from Lemmas \ref{lem:basis_stein}, \ref{lem:lower_2_upper} and 
\ref{lem:reindex_admissible}.

  \begin{nota}
   For $0<s \in \nat$ and  $J \in \nat^s$ an ordered sequence of non-negative 
integers, write $Q_J$ for the iterated operation $Q_J := Q_{j_1} Q_{j_2} \ldots 
Q_{j_s}$, to be interpreted either as a formal composition (as in 
$(\stein_1)^s=(\sing_1)^s$) or 
as the quotient in $\sing_s$ (respectively $\stein_s$). 
  \end{nota}

  \begin{lem}
  \label{lem:basis_sing}
   For $0< s \in \nat$ and $d \in \zed$, 
   \begin{enumerate}
    \item 
    $(\sing_1)^s \Sigma^d \field$ has a basis $\{ Q_J \iota_d | J \in \nat^s 
\}$; 
    \item 
    $\sing_s \Sigma^d \field$ has a basis $\{ Q_J \iota_d | J \in \nat^s, \  
j_i\leq j_{i+1} \mathrm{\ for \ } 1 \leq i < s  \}$, 
   \end{enumerate}
   where $\iota_d$ denotes the generator of $\Sigma^d \field$.
  \end{lem}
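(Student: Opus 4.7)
The plan is to dispose of the two parts in sequence. Part (1) will fall out of the fact that the length-one component of the big Dyer-Lashof algebra carries no Adem relations; part (2) will be extracted from the standard admissible-monomial basis of $\sing_s$ already recorded in Proposition \ref{prop:properties_sing}, together with a reindexing from upper to lower indices.

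For (1), the length-one part of the big Dyer-Lashof algebra is freely generated (as a graded vector space) by $\{Q^i : i \in \zed\}$, since all Adem relations are quadratic and Nishida relations only prescribe how $Sq^r$ acts on $Q^s x$, not what $Q^s x$ equals. Hence the construction of $\sing$ in Proposition \ref{prop:properties_sing} gives, for any $N \in \amod$, a natural isomorphism of graded vector spaces
\[
\sing_1 N \;\cong\; \bigoplus_{j \in \nat} \Sigma^j \Phi N,
\]
where the restriction $j \geq 0$ is imposed by Dyer-Lashof instability. In particular, $\{Q_j n : j \in \nat,\; n \in \mathcal{B}\}$ is a basis of $\sing_1 N$ whenever $\mathcal{B}$ is a basis of $N$. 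Taking $\mathcal{B} = \{\iota_d\}$ and iterating yields (1) by induction on $s$, with $Q_{j_1} \cdots Q_{j_s} \iota_d$ having the degree determined by $|Q_a x| = 2|x| + a$.

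For (2), the proof of Proposition \ref{prop:properties_sing} (following \cite[Lemma 4.19]{KMcC}) supplies a basis of $\sing_s M$ given in upper indexing by allowable admissible monomials $Q^{i_1} \cdots Q^{i_s} m$ (admissibility being $i_k \leq 2 i_{k+1}$ for $1 \leq k < s$, allowability being the Dyer-Lashof instability condition at each intermediate stage). Specialize to $M = \Sigma^d \field$ with $m = \iota_d$ and set $y_k := Q^{i_k} \cdots Q^{i_s} \iota_d$ and $j_k := i_k - |y_{k+1}|$; then $Q^{i_k}$ acting on $y_{k+1}$ is exactly $Q_{j_k}$, and allowability at stage $k$ is precisely $j_k \geq 0$, so that $J \in \nat^s$. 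Using the recursion $|y_k| = 2|y_{k+1}| + j_k$, one checks by substitution that the admissibility inequality $i_k \leq 2i_{k+1}$ collapses to the clean condition $j_k \leq j_{k+1}$.

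The sole obstacle is this admissibility translation: one must carefully manage the three quantities $i_k$, $j_k$ and $|y_{k+1}|$ and verify that admissibility in upper indexing corresponds to the nondecreasing condition $j_k \leq j_{k+1}$ in lower indexing. Once this short computation is in hand, both basis statements follow immediately from Proposition \ref{prop:properties_sing}.
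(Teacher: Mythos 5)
Your proof is correct and takes essentially the same route as the paper, which simply defers to \cite[Lemma 4.19]{KMcC}; you rely on the same allowable-admissible basis from that reference for part (2) and make the upper-to-lower reindexing explicit. The translation $j_k = i_k - |y_{k+1}|$ correctly converts intermediate allowability to $j_k \geq 0$ and Dyer--Lashof admissibility $i_k \leq 2i_{k+1}$ to $j_k \leq j_{k+1}$, consistent with the reindexing appearing later in Lemmas \ref{lem:lower_2_upper} and \ref{lem:reindex_admissible}.
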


  \begin{proof}
   This follows from the proof of \cite[Lemma 4.19]{KMcC}.
  \end{proof}

  \begin{lem}
  \label{lem:basis_stein}
   For $0<s \in \nat$ and $d \in \zed$, $\stein_s \Sigma^d \field$ has a basis 
   \[
    \{ Q_J \iota_d | J \in \nat^s, \  j_i > j_{i+1} \mathrm{\ for \ } 1 \leq i 
< s  \}
   \]
  \end{lem}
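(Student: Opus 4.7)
The plan is to proceed by induction on $s$, combining the explicit monomial bases of Lemma \ref{lem:basis_sing} with the idempotent description $\stein_s = e_s (\sing_1)^s$ from Proposition \ref{prop:sing_stein} and the intersection characterization of Corollary \ref{cor:stein_alternative}. The case $s = 1$ is immediate, since $\stein_1 = \sing_1$ and the strict descent condition is vacuous on length-one sequences.

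For the case $s = 2$, I would exploit the short exact sequence $0 \to \stein_2 \to (\sing_1)^2 \to \sing_2 \to 0$ of Proposition \ref{prop:Steinberg}(2); this is split by the orthogonal idempotents $e_2, \hat{e}_2 \in \hecke_2$ provided by Proposition \ref{prop:sing_stein}. Combined with the monomial bases of Lemma \ref{lem:basis_sing}, the Hilbert series of $\stein_2 \Sigma^d \field$ in each degree is the count of pairs $(j_1, j_2) \in \nat^2$ with $j_1 > j_2$ in the corresponding weighted degree, and explicit basis representatives are $\{e_2 Q_a Q_b \iota_d : a > b\}$. Linear independence follows since the complementary summand $\sing_2 \Sigma^d \field$ has basis indexed by the disjoint set $\{(a, b) : a \leq b\}$, so the projection $e_2$ restricted to the span of strictly decreasing monomials is injective.

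For the inductive step $s > 2$, Corollary \ref{cor:stein_alternative} expresses $\stein_s \Sigma^d \field$ as an intersection inside $(\sing_1)^s \Sigma^d \field$ of the factors $(\stein_1)^i \stein_2 (\stein_1)^{s-i-2} \Sigma^d \field$ for $i = 0, \ldots, s-2$. Each such factor is, at positions $(i+1, i+2)$, the $s = 2$ case applied to the $\cala$-module $(\sing_1)^{s-i-2} \Sigma^d \field$ (whose basis $\{Q_L \iota_d : L \in \nat^{s-i-2}\}$ comes from Lemma \ref{lem:basis_sing}(1)), with $(\stein_1)^i = (\sing_1)^i$ acting freely on top; this picks out the local condition $j_{i+1} > j_{i+2}$ on the monomial basis, and intersecting over all $i$ yields the global strict descent $j_1 > j_2 > \cdots > j_s$. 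The main obstacle will be in making the intersection rigorous for $s \geq 3$: each factor is a subspace of $(\sing_1)^s \Sigma^d \field$ defined via a $\hecke_s$-idempotent acting at a pair of adjacent positions, rather than being literally spanned by a subset of the standard monomials, so controlling the intersection requires careful tracking of the $\hecke_s$-action---either via a finite-dimensional dimension check in each total degree together with a triangular argument for linear independence, or by analyzing the Steinberg idempotent $e_s$ acting directly on the monomial basis along the lines of \cite[\S 4]{Kuhn:Whitehead}.
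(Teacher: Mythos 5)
Your outline matches the spirit of the paper's sketch: for $s\le 2$ the monomial bases of Lemma \ref{lem:basis_sing} and the short exact sequence from Proposition \ref{prop:Steinberg}(2) (or the idempotent description from Proposition \ref{prop:sing_stein}) do the job, and for general $s$ the intersection description of Corollary \ref{cor:stein_alternative} is the right starting point. But you have correctly diagnosed that the intersection argument is not self-contained, and this is precisely where the paper's invocation of ``PBW-basis'' and the Koszul property \emph{\`a la} Priddy does the work. The issue you raise is exactly that the subspace $(\stein_1)^i\stein_2(\stein_1)^{s-i-2}\Sigma^d\field$ is \emph{not} spanned by a sub\emph{set} of the monomials $Q_J\iota_d$, but only by certain linear combinations (images of a quadratic idempotent in two adjacent slots), so that the intersection over $i$ could a priori be smaller than the span of the strictly decreasing monomials. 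The fact that it comes out with exactly the expected dimension in each internal degree, and that the strictly decreasing $Q_J\iota_d$ give a basis, is the content of a PBW theorem for the quadratic dual: since the Adem relations give a confluent rewriting system to admissible monomials, the big Dyer-Lashof algebra is PBW and hence Koszul by Priddy's criterion, and the quadratic dual then inherits a PBW basis indexed by the \emph{complementary} (here, strictly decreasing/``completely unallowable'') monomials. Without this input, your dimension count and triangularity argument does not go through for $s\ge 3$; with it, the argument closes. Your alternative route via Kuhn's analysis of the Steinberg idempotent $e_s$ rewriting into completely unallowable form is also legitimate and is precisely what the paper points to in the remark following the lemma; it is a concrete realization of the same PBW statement rather than an independent way around it.
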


  \begin{proof} (Sketch.)
   The case $s=2$ follows  from Lemma \ref{lem:basis_sing} and then 
the general case from the construction of $\stein_s$, which corresponds to the 
assertion that there is a PBW-basis of this form. This is equivalent to the 
Koszul property
 {\em à la Priddy} \cite{Priddy}; see Section \ref{subsect:Koszul} below.
\end{proof}

  \begin{rem}
   The previous result should be compared with Kuhn's comments following 
\cite[Theorem 6.3]{Kuhn:Whitehead}, where $\hat{e}_s$ is described as rewriting 
terms in Dyer-Lashof allowable form and 
   $e_s$ as rewriting in Dyer-Lashof `completely unallowable' form.
  \end{rem}

  To proceed, Dyer-Lashof operations must be rewritten with  upper indexing:
  
  \begin{lem}
   \label{lem:lower_2_upper}
   For $0 < s \in \nat$ and $d \in \zed$:
   \[
    Q_J \iota_d 
    = 
    Q^{\alpha_1} \ldots Q^{\alpha_s} \iota_d,
   \]
where $\underline{\alpha}= \underline{\alpha(J, d)}\in \zed^s$ is the ordered 
sequence of integers given by 
\[
\alpha_t := 
i_t + \sum_{l >t} 2^{l- (t+1) } j_l + 2^{s-t} d.
 \]
 \end{lem}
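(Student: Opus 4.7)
The plan is a direct computation via iterated application of the defining identity $Q_a x = Q^{|x|+a} x$, processing the composite $Q_J \iota_d = Q_{j_1} Q_{j_2} \cdots Q_{j_s} \iota_d$ from the innermost operation outward. I would first introduce the shorthand $y_t := Q_{j_{t+1}} Q_{j_{t+2}} \cdots Q_{j_s} \iota_d$ for $0 \le t \le s$, so that $y_s = \iota_d$ and $y_0 = Q_J \iota_d$. The defining identity then gives $y_{t-1} = Q^{|y_t| + j_t} y_t$, so the upper index extracted at the $t$th step is exactly $\alpha_t = |y_t| + j_t$, and it suffices to determine $|y_t|$.

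From $y_{t-1} = Q^{\alpha_t} y_t$ and the convention $|Q^i| = i$, one reads off the recurrence $|y_{t-1}| = |y_t| + \alpha_t = 2|y_t| + j_t$ with initial value $|y_s| = d$. Solving this by downward induction on $t$ yields
\[
|y_t| = 2^{s-t} d + \sum_{l = t+1}^{s} 2^{l - t - 1} j_l,
\]
and substitution into $\alpha_t = |y_t| + j_t$ produces the stated formula $\alpha_t = j_t + \sum_{l > t} 2^{l - (t+1)} j_l + 2^{s-t} d$. Iterating $y_{t-1} = Q^{\alpha_t} y_t$ from $t = s$ down to $t = 1$ then assembles the desired presentation $Q_J \iota_d = Q^{\alpha_1} Q^{\alpha_2} \cdots Q^{\alpha_s} \iota_d$.

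There is no serious obstacle here: the argument is pure bookkeeping of the degree doubling that occurs at each lower-to-upper conversion, and no Adem or Nishida relation is invoked. The only point requiring care is the tracking of the geometric-series coefficient $2^{l - t - 1}$ attached to each $j_l$ with $l > t$, and of the coefficient $2^{s-t}$ attached to $d$; both arise naturally from the iterated doubling encoded in the recurrence $|y_{t-1}| = 2|y_t| + j_t$. I note that the formula is valid in $\zed$, so no positivity hypothesis on the $j_l$ or on $d$ is required at this stage, consistent with the statement.
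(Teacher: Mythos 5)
Your computation is correct and is essentially the argument the paper has in mind (the paper's proof just says ``Induction upon $s$''): you iterate the defining identity $Q_a x = Q^{|x|+a}x$ from the inside out, which produces the recurrence $|y_{t-1}| = 2|y_t| + j_t$ with $|y_s| = d$, and solve it. One small remark: the formula in the statement reads $\alpha_t := i_t + \cdots$, which is a typographical slip for $j_t + \cdots$ (there is no $i_t$ in play at this point; the index of $J$ is $j$). Your derivation correctly yields $\alpha_t = j_t + \sum_{l>t} 2^{l-(t+1)} j_l + 2^{s-t}d$, and this is the version consistent with the identities $\alpha_s = j_s + d$ and $\alpha_t - 2\alpha_{t+1} = j_t - j_{t+1}$ that are used in Lemma \ref{lem:reindex_admissible}.
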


 \begin{proof}
  Induction upon $s$.
 \end{proof}

 \begin{nota}
  For $\underline{\alpha} \in \zed^s$, write $\underline{\alpha +1}$ for the 
sequence $(\alpha_1 +1, \ldots , \alpha_s +1)$.
 \end{nota}

\begin{lem}
 \label{lem:reindex_admissible}
 For $d \in \zed$, the association $J \mapsto \underline{\alpha(J,d) +1}$ 
induces a bijection between $\nat^s$ and 
 \[
  \{
  I \in \zed^s | \mathrm{admissible, \ } i_s> d 
  \}.
 \]
 If $d\geq 0$, then $\underline{\alpha(J,d) +1} \in \nat_* ^s$;  
$\underline{\alpha(J,-1) +1}\in \nat^s$.
\end{lem}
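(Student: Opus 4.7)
The plan is to reduce everything to the elementary telescoping identity
\[
\alpha_t(J,d) - 2\alpha_{t+1}(J,d) = j_t - j_{t+1} \quad (1 \le t < s),
\qquad
\alpha_s(J,d) = j_s + d,
\]
which falls out of the formula for $\alpha_t$ in Lemma~\ref{lem:lower_2_upper} upon expansion: halving $\alpha_{t+1}$ realigns the sum $\sum_{l > t+1} 2^{l-(t+2)} j_l$ and the term $2^{s-t-1} d$ with the corresponding terms of $\alpha_t$, leaving only the boundary contributions $j_t$ and $j_{t+1}$. Passing to $i_t := \alpha_t + 1$ this becomes
\[
i_t - 2 i_{t+1} = (j_t - j_{t+1}) - 1,
\qquad
i_s = j_s + d + 1.
\]
Hence admissibility of $\underline{\alpha(J,d)+1}$ is equivalent to $j_t > j_{t+1}$ for $1 \le t < s$, and the condition $i_s > d$ is equivalent to $j_s \ge 0$. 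Via the indexing convention of Lemma~\ref{lem:basis_stein}, this is exactly the condition cutting out the $\nat^s$ in the statement from the set of all integer sequences.

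Bijectivity then follows from the obvious inverse: given an admissible $I$ with $i_s > d$, one reads off $j_s := i_s - d - 1 \in \nat$ and $j_t := j_{t+1} + (i_t - 2 i_{t+1}) + 1$ for $t < s$; each step produces a non-negative integer by admissibility of $I$. The two constructions invert each other by the identity displayed above, and the map is clearly natural in $J$.

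For the closing positivity assertions, a direct inspection of the formula for $\alpha_t$ (using $j_l \ge 0$) gives the bound $\alpha_t \ge 2^{s-t} d$. When $d \ge 0$ this forces $i_t \ge 1$ for all $t$, so $\underline{\alpha(J,d)+1} \in \nat_*^s$. The case $d = -1$ is the delicate one, because the crude bound reads $\alpha_t \ge -2^{s-t}$ and is useless. Here one must use the telescoping bound $j_l \ge s - l$ (consequence of $j_l - j_{l+1} \ge 1$ and $j_s \ge 0$) to compute the minimum over the admissible set; it is attained at $J = (s-1, s-2, \ldots, 0)$, where a short sum evaluation using
\[
\sum_{m=0}^{n-1} 2^m (n-1-m) = 2^n - n - 1
\]
yields $\alpha_t = -1$ and hence $i_t = 0$ identically. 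Thus $\underline{\alpha(J,-1)+1} \in \nat^s$ for all $J$.

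The main obstacle is precisely this $d = -1$ estimate: the bound is sharp and attained, so the calculation must be carried out exactly rather than estimated. Everything else is formal bookkeeping from the telescoping identity.
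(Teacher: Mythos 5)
Your handling of the bijection coincides with the paper's own proof: from the telescoping identity $\alpha_t - 2\alpha_{t+1} = j_t - j_{t+1}$ and the boundary value $\alpha_s = j_s + d$, admissibility of $\underline{\alpha(J,d)+1}$ and the condition $i_s > d$ translate to $j_t > j_{t+1}$ and $j_s \geq 0$ respectively, and the inverse is read off inductively. (Both you and the paper are implicitly reading ``$\nat^s$'' as the strictly decreasing index set $\{J \in \nat^s : j_t > j_{t+1}\}$ appearing in Lemma~\ref{lem:basis_stein}; as your own remark about $d=-1$ shows, the assertion fails verbatim for, say, $J=(0,\dots,0)$.) Where you genuinely diverge is in the closing positivity assertions, which the paper dismisses as ``clear''. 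The short argument that is almost certainly intended is simply to invoke the bijection just established: $I := \underline{\alpha(J,d)+1}$ is admissible with $i_s > d$, and admissibility gives $i_t \geq 2 i_{t+1} \geq \dots \geq 2^{s-t} i_s$, so $i_s \geq 0$ forces every $i_t \geq 0$ (the $d=-1$ case) while $i_s \geq 1$ forces every $i_t \geq 2^{s-t} \geq 1$ (the $d \geq 0$ case). Your explicit extremal computation --- the summation identity $\sum_{m=0}^{n-1} 2^m (n-1-m) = 2^n - n - 1$ evaluated at $J = (s-1,\dots,1,0)$ to get $\alpha_t = -1$ exactly --- is correct and incidentally shows the bound is attained, but it is considerably more work, and your closing remark that the $d=-1$ case ``must be carried out exactly rather than estimated'' misreads the situation: admissibility already propagates the sign of $i_s$ up the sequence for free. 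One minor slip: ``the map is clearly natural in $J$'' should read ``well-defined''; naturality is not in play here.
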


 \begin{proof}
  It is straightforward to check that $J  \mapsto \underline{\alpha(J,d) +1}$ 
is one to one, hence it suffices to identify the image. 
  Since $\alpha_s = j_s +d$ and $j_s \geq 0$, the condition on $i_s$ follows 
immediately. 
  Now, for $0 \leq t < s$, $\alpha_t - 2 \alpha_{t+1} = j_t - j_{t+1}$; it 
follows that $\underline{\alpha (J,d) +1}$ is admissible if and only if $j_t > 
j_{t+1}$ for all $t$.
  This establishes the bijection; the final statement is clear.
 \end{proof}

 \begin{rem}
  The above re-indexing is intimately related to the quadratic duality of 
\cite{p_quad}.
 \end{rem}

\subsection{Connectivity}

Proposition \ref{prop:admissible_basis} leads to an understanding of the 
connectivity of objects 
of the form $\stein_t M$, for $M \in \amodc$, by applying the following result:

\begin{prop}
 \label{prop:conn_stein}
 For $t \in \nat$ and $d \in \zed$, $\stein _t \Sigma^d \field$ is precisely 
  $\big(2^t (d+1) -(t+2)\big)$-connected (that is, the lowest degree class is 
in 
degree $2^t (d+1) -(t+1)$).

  In particular, the lowest degree class of $\stein_t \Sigma^{-1} \field$ is in 
degree $-(t+1)$.
  \end{prop}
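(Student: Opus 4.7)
The plan is to read the statement off directly from the admissible-sequence basis provided by Proposition \ref{prop:admissible_basis} and then perform a short combinatorial minimisation.

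First I would invoke Proposition \ref{prop:admissible_basis} to obtain a basis of $\stein_t \Sigma^d \field$ indexed by admissible sequences $I = (i_1, \ldots, i_t) \in \zed^t$ with $i_t > d$, the basis element $\sigma_I \iota_{d-t}$ sitting in internal degree $d(I) + d - t$. Since $\stein_t \Sigma^d \field$ is spanned by these elements, its connectivity is determined by the minimum of $d(I) + d - t$ as $I$ ranges over admissible sequences with $i_t \geq d+1$.

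Next I would minimise $d(I) = \sum_j i_j$ subject to the two constraints $i_t \geq d+1$ and $i_j \geq 2 i_{j+1}$. Both constraints push the entries upward, so the minimum is attained by saturating them in order: $i_t = d+1$ and $i_{t-k} = 2^k(d+1)$ for $0 \leq k \leq t-1$. Summing the resulting geometric series gives
\[
 d(I)_{\min} = (d+1)\sum_{k=0}^{t-1} 2^k = (d+1)(2^t - 1).
\]
Adding back the shift $d-t$, the minimum internal degree of a basis element of $\stein_t \Sigma^d \field$ equals
\[
 (d+1)(2^t - 1) + d - t = 2^t(d+1) - (t+1),
\]
so $\stein_t \Sigma^d \field$ is $(2^t(d+1) - (t+2))$-connected, with the bound attained since the extremal admissible sequence genuinely contributes a basis vector. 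Specialising to $d = -1$ recovers the stated lowest degree $-(t+1)$.

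I do not anticipate any real obstacle: the argument is an immediate consequence of the explicit basis, and the only nontrivial input, Proposition \ref{prop:admissible_basis}, is already in hand. The one point worth checking carefully is that the minimising sequence $(2^{t-1}(d+1), \ldots, 2(d+1), d+1)$ really does satisfy both constraints for every $d \in \zed$ (including negative $d$, where admissibility could a priori allow cancellations between positive and negative entries), but since $d+1$ and $2(d+1)$ always have the same sign, no subtler behaviour can occur.
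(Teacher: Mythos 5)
Your proof is correct and follows one of the two approaches the paper itself suggests in its proof of Proposition \ref{prop:conn_stein}: extracting the connectivity from the admissible-sequence basis of Proposition \ref{prop:admissible_basis} by minimising $d(I)$ over the constrained set, and the geometric-series minimisation and the check for negative $d$ are both handled correctly. The paper nudges the reader toward the alternative route via Lemma \ref{lem:basis_stein} (minimising $\sum_k 2^{k-1}j_k$ over strictly decreasing $J\in\nat^t$ at $d=-1$ and then suspending), but your route is equally valid and yields a slightly cleaner closed-form sum.
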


\begin{proof}
 This result can be proved either by using Proposition 
\ref{prop:admissible_basis} or directly from Lemma \ref{lem:basis_stein}. 
 (The reader is encouraged to use the Lemma  in the case $d=-1$ and 
then deduce the general case.)
\end{proof}

\begin{cor}
\label{cor:conn_stein}
For $M \in \amod$ which is $(d-1)$-connected and $t \in \nat$, $\stein_t M$ is 
$\big(2^t (d+1) - (t+2)\big)$-connected
hence is at least $(2^t d -1)$-connected.  
\end{cor}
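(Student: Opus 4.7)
The plan is to reduce to the case of shifted copies of $\field$ and apply Proposition \ref{prop:conn_stein} directly. The key observation is that, as noted in the proofs of Proposition \ref{prop:lder_indec_on_trivials_exact} and Proposition \ref{prop:Steinberg}, the underlying graded $\field$-vector space of $\stein_t M$ depends only on the underlying graded vector space of $M$. Consequently, connectivity bounds for $\stein_t M$ can be computed as though $M$ were a trivial $\cala$-module, ignoring the $\cala$-action entirely.

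First I would use the $(d-1)$-connectivity hypothesis to write $M$, as a graded $\field$-vector space, in the form
\[
M \;\cong\; \bigoplus_{n \geq d} (\Sigma^n \field)^{\oplus \dim_\field M_n}.
\]
By the dependence principle recalled above, this yields a vector-space isomorphism
\[
\stein_t M \;\cong\; \bigoplus_{n \geq d} (\stein_t \Sigma^n \field)^{\oplus \dim_\field M_n}.
\]
Proposition \ref{prop:conn_stein} asserts that $\stein_t \Sigma^n \field$ is $(2^t(n+1) - (t+2))$-connected; since $n \geq d$, each summand is at least $(2^t(d+1) - (t+2))$-connected, and the same bound therefore applies to the direct sum. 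This establishes the first connectivity statement.

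For the second assertion, it remains to verify the elementary inequality
\[
2^t(d+1) - (t+2) \;\geq\; 2^t d - 1,
\]
which is equivalent to $2^t \geq t+1$. This holds for every $t \in \nat$ by a trivial induction (the step uses $2^{t+1} = 2\cdot 2^t \geq 2(t+1) \geq t+2$).

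There is no real obstacle here: the content of the argument has been pushed into Proposition \ref{prop:conn_stein}, and the only point that requires care is the invocation of the vector-space-level dependence of $\stein_t$, which is the conceptual reason the connectivity estimate is so clean.
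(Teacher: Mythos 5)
Your argument is correct and follows the same route as the paper: reduce to the case $M = \Sigma^n\field$ via the observation that the underlying graded vector space of $\stein_t M$ depends only on that of $M$, apply Proposition \ref{prop:conn_stein}, and conclude the weaker bound from the inequality $2^t \geq t+1$ (the paper phrases it as $2^t - (t+2) \geq -1$, which is the same thing). The paper's own proof is terser — it simply declares the first statement an immediate consequence of Proposition \ref{prop:conn_stein} — so you have usefully made explicit the reduction to shifted copies of $\field$ and the fact that $\stein_t$ commutes with direct sums at the level of underlying graded vector spaces.
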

 
 \begin{proof}
  The first statement is an immediate consequence of Proposition 
\ref{prop:conn_stein}. 
  The second follows from 
  \[
   2^t - (t+2) \geq -1,
  \]
  for $t \in \nat$, where the inequality is strict for $t \geq 2$.
 \end{proof}

\begin{rem}
\label{rem:improve_conn}
Clearly the first statement gives a much better bound for connectivity for 
large 
$t$. The weaker bound is sometimes more convenient for 
describing generic behaviour. 
\end{rem}

\subsection{The Koszul property}
\label{subsect:Koszul}

In \cite[Section 3]{Miller}, Miller observes that Priddy's results on Koszul 
duality \cite{Priddy} carry over to $\untor_* (\field, -)$. This is also true 
for $\lder_* \indec$.

\begin{thm}
 \label{thm:Koszul_property}
 For $M \in \amod_{\geq -1}$, considered as an object of $\qmgr$ via $\triv$, 
 $\big (\lder_s \indec M \big) \lgth{i}=0$ if $i \neq s$ and there is a natural 
isomorphism in $\amod$:
 \[
 \big (\lder_s \indec M \big) \lgth{s}
 \cong 
 \stein_s M.
 \]
\end{thm}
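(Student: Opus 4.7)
The plan is to realize Koszul duality between $\sing$ and $\stein$ \emph{à la} Priddy \cite{Priddy} by constructing an explicit Koszul resolution of $\triv M$ in $\qmgr$ whose relative projective terms are length-shifted free $\sing$-objects on the Steinberg functors.

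First, I would introduce the complex $P_\bullet$ in $\qmgr$ with $i$-th term
\[
 P_i := \bigl(\sing(\stein_i M)\bigr)(i),
\]
so that its length-$n$ component is $\sing_{n-i}(\stein_i M)$. The differential $d\colon P_i \to P_{i-1}$ is defined, via the $\sing$-forgetful adjunction, by the map in $\amodgr$
\[
 \stein_i M \,\hookrightarrow\, \sing_1 \stein_{i-1}M
\]
into the length-$i$ component of $P_{i-1}$, where the inclusion comes from Proposition~\ref{prop:Steinberg}(3) (with $s_1=1$, $s_2=i-1$, and $\stein_1=\sing_1$). The augmentation $P_0 = \sing M \twoheadrightarrow \triv M$ is the standard projection. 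The relation $d^2=0$ reduces, via coassociativity of the inclusions $\stein_{s_1+s_2}\hookrightarrow \stein_{s_1}\stein_{s_2}$ from Proposition~\ref{prop:Steinberg}(3), to the vanishing of the composite
\[
 \stein_2 \hookrightarrow \sing_1\sing_1 \twoheadrightarrow \sing_2,
\]
which is the defining short exact sequence in Proposition~\ref{prop:Steinberg}(2).

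Second, I would establish that $P_\bullet \to \triv M$ is acyclic. Since every term and differential respects length grading, acyclicity is checked length by length: in length $n\geq 1$ the complex is
\[
 0 \to \stein_n M \to \sing_1 \stein_{n-1}M \to \cdots \to \sing_{n-1}\stein_1 M \to \sing_n M \to 0,
\]
and in length $0$ it is the identity on $M$. Exactness of $\sing_s$ and $\stein_s$ (Propositions~\ref{prop:properties_sing} and~\ref{prop:Steinberg}) reduces the verification to the case $M=\Sigma^d\field$, where it is Priddy's PBW criterion: the complementary bases of $(\sing_1)^s \Sigma^d\field$ given by non-decreasing Dyer-Lashof monomials (Lemma~\ref{lem:basis_sing}) and strictly decreasing ones (Lemma~\ref{lem:basis_stein}) together exhibit every Dyer-Lashof monomial uniquely as a product ``strictly decreasing $\cdot$ non-decreasing'', which is precisely the Koszul/PBW condition of \cite{Priddy}.

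Third, each $P_i$ is a relative projective of $\qmgr$, so $P_\bullet$ may be used to compute $\lder_*\indec\,\triv M$. Applying $\indec$ and invoking $\indec\,\sing\cong \mathrm{id}$ (Lemma~\ref{lem:indec}, extended to $\amodgr$ via Proposition~\ref{prop:length_gr_Lder_q}), the $i$-th term of $\indec P_\bullet$ is $\stein_i M$ placed entirely in length $i$. Since morphisms in $\amodgr$ preserve length grading, every differential of $\indec P_\bullet$ is forced to vanish, yielding $\lder_s \indec \,\triv M \cong \stein_s M$ concentrated in length $s$, as claimed. The main obstacle is the acyclicity step: this \emph{is} the Koszul property, and at bottom it rests on Priddy's theorem applied to the PBW-style decomposition provided by Lemmas~\ref{lem:basis_sing} and~\ref{lem:basis_stein}. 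The connectivity hypothesis $M\in\amod_{\geq -1}$ enters to ensure that the admissible-monomial bases of Proposition~\ref{prop:admissible_basis} (and their supporting lemmas) apply without pathology from negatively indexed Dyer-Lashof operations, so that the length-wise complex really is the classical Koszul complex of the big Dyer-Lashof algebra with coefficients in $M$.
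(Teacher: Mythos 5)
You take a genuinely different route from the paper. The paper reduces to $M=\Sigma^d\field$ by the exactness of Proposition~\ref{prop:lder_indec_on_trivials_exact}, then appeals to Miller's \cite[Proposition 3.1.2]{Miller} via Proposition~\ref{prop:untor_Lq} to dispose of $d\geq 1$, handles $d\in\{-1,0\}$ by a supplementary argument involving the spectral sequence of Section~\ref{sect:ss}, and identifies $(\lder_s\indec M)\lgth{s}$ by inspecting the reduced complex $\redqcx M$. You instead construct from scratch the Koszul resolution $P_\bullet=(\sing\stein_\bullet M)(\bullet)\to\triv M$ in $\qmgr$ and apply $\indec$. The formal parts of your argument are sound: the $d^2=0$ check via coassociativity of the inclusions $\stein_{s_1+s_2}\hookrightarrow\stein_{s_1}\stein_{s_2}$ and the defining short exact sequence for $\stein_2$ is correct, and the observation that $\indec P_i\cong(\stein_i M)(i)$ is concentrated in a single length and hence forces the differential of $\indec P_\bullet$ to vanish is a clean way to get the conclusion. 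This is more self-contained than the paper's proof, at the cost of having to verify the Koszul property directly rather than borrowing it.

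The acyclicity step is where you are thinnest and where the real content lies; you correctly flag it, but the way it is phrased glosses over what actually needs checking. ``Every Dyer--Lashof monomial factors uniquely as strictly-decreasing $\cdot$ non-decreasing'' is not literally the PBW condition; what Lemmas~\ref{lem:basis_sing} and~\ref{lem:basis_stein} give you is a PBW-type monomial basis for $\sing_s\Sigma^d\field$ cut out by the local condition $j_i\leq j_{i+1}$, with the complementary condition carving out $\stein_s$, and one must then argue that the lower-indexed Adem relations (which are degree-independent in that indexing) make this a genuine PBW datum to which Priddy's theorem applies, giving exactness of the length-$n$ strands $\stein_n M\to\sing_1\stein_{n-1}M\to\cdots\to\sing_n M$. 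This is exactly the content that the paper outsources to Miller. Finally, your account of why the hypothesis $M\in\amod_{\geq -1}$ is needed is not accurate: the monomial-basis lemmas and Proposition~\ref{prop:admissible_basis} are all stated for arbitrary $d\in\zed$, so there is no ``pathology from negatively indexed operations'' at that point. Your argument as written would seem to prove the statement for all $d$; this is consistent with the remark following the theorem, which says the restriction can be relaxed using \cite{BCL}, and reflects the fact that in the paper the bound $d\geq -1$ is an artifact of citing Miller (whose setup is non-negatively graded) rather than an intrinsic constraint on the Koszul complex.
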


\begin{proof}
For the first statement, by exactness of $\lder_s \indec$ restricted to $\amod$ 
(see Proposition \ref{prop:lder_indec_on_trivials_exact}), it suffices to 
consider the case $M = \Sigma^d \field$. 
Here the result follows as for \cite[Proposition 3.1.2]{Miller}; indeed, for $d 
\geq 1$, the statement can be deduced from this result by  using  Proposition 
\ref{prop:untor_Lq}. (The cases $d\in \{ -1, 0 \}$ can then be deduced from 
this by using the spectral sequence of Section \ref{sect:ss}.)

The identification of $ \big (\lder_s \indec M \big) \lgth{s}$ follows by 
considering the reduced complex $\redqcx M$ in length degree $s$ (see 
Proposition \ref{prop:reduced_qcx}). In  homological degree $s$ this identifies 
the cycles as the kernel of the map defining $\stein_s M$ and there are no 
non-trivial boundaries.
\end{proof}

\begin{rem}
The hypothesis $d\geq -1$ for the vanishing of $(\lder_s \indec M )\lgth{i}$  
(for $i \neq s$) can be relaxed by using the Koszul duality result of 
\cite{BCL} 
(cf. also \cite{p_quad}).
\end{rem}

If $N \in \qmgr$, then the Dyer-Lashof action induces natural transformations: 
\[
 \sing_1 N\lgth{i}
 \rightarrow 
 N \lgth{i+1}
\]
for $i \in \zed$. Since $\stein_1 = \sing_1$, composing with the natural 
inclusion $\stein_s \rightarrow \stein_{s-1}\stein_1$ (for $s=0$ this is taken 
to be zero) leads to the natural `Koszul differential':
\[
 d^\stein_s :\stein_s N\lgth{i}
 \rightarrow 
 \stein_{s-1} N \lgth{i+1}
\]
which can be considered as a natural transformation in $\amodgr$:
\[
 \stein_ * N 
 \rightarrow 
 \stein_{*-1}\big( N (-1) \big).
\]

\begin{lem}
 \label{lem:d^2=0}
 For $N \in \qmgr$ and $s\in \nat$, the composite 
 \[
  \stein_s N\lgth{i}
 \stackrel{ d^\stein_s }{\rightarrow} 
 \stein_{s-1} N \lgth{i+1}
 \stackrel{ d^\stein_{s-1} }{\rightarrow} 
 \stein_{s-2} N \lgth{i+2}
 \]
is trivial.
\end{lem}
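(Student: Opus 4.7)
The plan is to combine the coassociativity of the Steinberg inclusions with the defining short exact sequence for $\stein_2$, exploiting the fact that the iterated Dyer-Lashof action on any $N \in \qmgr$ respects the Adem relations, so that the composite factors through a module which is known to be killed.

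First I would unravel the definition of $d^\stein_s$. Writing $\iota_s : \stein_s \hookrightarrow \stein_{s-1}\stein_1$ for the natural inclusion of Proposition~\ref{prop:Steinberg}(3) and $\alpha_j : \sing_1 N\lgth{j} \to N\lgth{j+1}$ for the Dyer-Lashof action map, the differential $d^\stein_s$ is, by construction, the composite $\stein_{s-1}(\alpha_i) \circ \iota_s(N\lgth{i})$, and likewise for $d^\stein_{s-1}$. Iterating these two composites and using the naturality of $\iota_{s-1}$ applied to the morphism $\alpha_i$, the full composite $d^\stein_{s-1}\circ d^\stein_s$ rewrites as
\[
\stein_{s-2}(\alpha_{i+1})\circ \stein_{s-2}\sing_1(\alpha_i)\circ \iota_{s-1}(\stein_1 N\lgth{i})\circ \iota_s(N\lgth{i}),
\]
so the map first embeds $\stein_s N\lgth{i}$ into $\stein_{s-2}(\sing_1)^2 N\lgth{i}$ and then applies $\stein_{s-2}$ to the iterated action map $(\sing_1)^2 N\lgth{i} \to N\lgth{i+2}$.

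Next I would invoke the coassociativity asserted in Proposition~\ref{prop:Steinberg}(3): the composite $\iota_{s-1}(\stein_1 N\lgth{i}) \circ \iota_s(N\lgth{i})$ coincides with the map $\stein_s N\lgth{i} \hookrightarrow \stein_{s-2}\stein_2 N\lgth{i} \hookrightarrow \stein_{s-2}(\sing_1)^2 N\lgth{i}$, where the second arrow is $\stein_{s-2}$ applied to the inclusion $\stein_2 \hookrightarrow (\sing_1)^2$ from the short exact sequence of Proposition~\ref{prop:Steinberg}(2). In particular, the whole composite factors through $\stein_{s-2}\stein_2 N\lgth{i}$.

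Finally, the key observation is that the iterated action $\alpha_{i+1}\circ \sing_1(\alpha_i):(\sing_1)^2 N\lgth{i} \to N\lgth{i+2}$ factors through the quadratic projection $(\sing_1)^2 \twoheadrightarrow \sing_2$; this is because $N\in \qmgr$ satisfies the Adem relations and $\sing_2$ is precisely the quadratic quotient enforcing them (cf.\ Proposition~\ref{prop:properties_sing}). Applying the exact functor $\stein_{s-2}$ to the short exact sequence of Proposition~\ref{prop:Steinberg}(2) shows that the composite $\stein_{s-2}\stein_2 \hookrightarrow \stein_{s-2}(\sing_1)^2 \twoheadrightarrow \stein_{s-2}\sing_2$ is zero, and this kills $d^\stein_{s-1}\circ d^\stein_s$. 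The main obstacle is the bookkeeping in the first two steps: rearranging the interleaved inclusions and action maps via naturality to isolate the inclusion $\stein_s \hookrightarrow \stein_{s-2}\stein_2$ at the front, but this is entirely formal given Proposition~\ref{prop:Steinberg}(3).
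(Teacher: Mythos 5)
Your proof is correct and it is precisely the standard Koszul-complex argument that the paper's (very terse) proof invokes: you unfold the differential via the inclusions $\iota_s$, slide the action maps past the inclusions by naturality, reroute the resulting composite inclusion through $\stein_{s-2}\stein_2$ by coassociativity, note that the iterated Dyer-Lashof action factors through $\sing_2$ because $N$ is a $\sing$-module (equivalently, satisfies the Adem relations), and finish by applying the exact functor $\stein_{s-2}$ to the short exact sequence $0\to\stein_2\to(\sing_1)^2\to\sing_2\to 0$. This is the same approach as the paper, just with the bookkeeping made explicit.
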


\begin{proof}
 This is the standard Koszul complex argument: since the `coproducts' 
$\stein_{s+t} \rightarrow \stein_s \stein_t$ are coassociative (by Proposition 
\ref{prop:Steinberg}), that $d^2=0$ follows 
from the fact that $\stein_2$ is the kernel of $(\sing_1)^2 \twoheadrightarrow 
\sing_2$. 
\end{proof}

\begin{defn}
\label{def:kosz_cx_stein}
 For $N \in \qmgr$ and $n \in \nat$, let $\kz[n] N$ denote the complex in 
$\amod$:
 \[
  \stein_n N\lgth{0}
  \stackrel{d^\stein_n }{\rightarrow}
  \stein_{n-1} N\lgth{1}
  \stackrel{d^\stein_{n-1} }{\rightarrow}
\stein_{n-2} N\lgth{2}
\rightarrow 
\ldots 
\rightarrow 
\stein_0 N \lgth{n} = N\lgth{n},
  \]
where $\stein_{i} N\lgth{n-i}$ is placed in homological degree $i$.
\end{defn}

\begin{cor}
 \label{cor:vanishing_lder_indec}
(Cf. \cite[Theorem 3.3.16]{Miller}.)
 Let $N \in \qmgr$ such that $\tau\lgth{\leq -1} N =0$ and $N\lgth{i} \in 
\amod_{\geq -1}$ for each $i$.  For $ s, t \in \nat$, 
 there is a natural isomorphism
 \[
  \big( \lder_t \indec N) \lgth{s} 
  \cong 
  H_t (\kz N).
 \]
In particular,
 $
 \big( \lder_t \indec N \big) \lgth{s} =0
 $
for integers $t > s\geq 0$ and 
\[
  \big( \lder_s \indec N) \lgth{s} 
  \cong 
  \ker \big\{
  \stein_s N\lgth{0} 
  \stackrel{d^\stein_s}{\rightarrow}
  \stein_{s-1} N \lgth{1}
 \big \}.
\]

\end{cor}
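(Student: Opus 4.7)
The plan is to set up a spectral sequence converging to $(\lder_t \indec N)\lgth{s}$ whose $E_1$-page coincides with the Koszul complex $\kz[s] N$, then show that it collapses at $E_2$.

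By Proposition \ref{prop:reduced_qcx}, $(\lder_t \indec N)\lgth{s}$ is computed as the $t$th homology of the length-$s$ component of $\redqcx N$, whose $i$th term decomposes as
\[
\big((\overline{\sing})^i N\big)\lgth{s} = \bigoplus \sing_{k_1}\cdots \sing_{k_i} N\lgth{j},
\]
summed over $k_1, \ldots, k_i \geq 1$ and $j \geq 0$ with $k_1 + \ldots + k_i + j = s$. Filter decreasingly by $F^p$, the summands with $j \geq p$. The simplicial face maps multiplying adjacent $\sing$-factors preserve $j$, whereas the final face (the Dyer-Lashof action on $N$) strictly increases $j$; hence $F^p$ is preserved by the differential. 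Since $F^0$ is the whole complex and $F^{s+1} = 0$, the associated spectral sequence converges strongly.

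The $p$th associated graded is obtained by suppressing the action face, i.e., by replacing $N\lgth{p}$ by $\triv N\lgth{p} \in \qmgr$ concentrated in length zero; it therefore identifies with the length-$(s-p)$ part of $\redqcx(\triv N\lgth{p})$. The hypothesis $N\lgth{p} \in \amod_{\geq -1}$ permits invocation of Theorem \ref{thm:Koszul_property}: its homology is concentrated in homological degree $s - p$, where it equals $\stein_{s-p} N\lgth{p}$. Hence the $E_1$-page is supported on the single diagonal indexed by $0 \leq p \leq s$, with term $\stein_{s-p} N\lgth{p}$ at filtration $p$.

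The main technical step is identifying the induced $d_1$-differential with $d^{\stein}_{s-p}$. Tracing the definition, $d_1$ picks out the sole action-face contribution that increases $j$ by exactly one: the restriction of the action $\sing_1 N\lgth{p} \to N\lgth{p+1}$ to the rightmost $\sing_1$-factor in $(\sing_1)^{s-p} N\lgth{p}$, followed by the projection to Steinberg via $\stein_{s-p-1} \hookrightarrow (\sing_1)^{s-p-1}$; this matches Definition \ref{def:kosz_cx_stein}, and this compatibility (including with signs and the Steinberg projections) is the expected main obstacle. Since $E_1$ is supported on a single anti-diagonal, any higher $d_r$ ($r \geq 2$) must leave the diagonal and hence vanish; thus $E_2 = E_\infty$ and one obtains the natural isomorphism $(\lder_t \indec N)\lgth{s} \cong H_t(\kz[s] N)$. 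The vanishing for $t > s$ is immediate, since $\kz[s] N$ has no terms in such homological degrees, and for $t = s$ one reads off the kernel of the leftmost differential.
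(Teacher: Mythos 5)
Your argument is correct and is essentially the paper's own proof written out in detail: the paper constructs the spectral sequence of the length filtration of $N$ (equivalent, up to reindexing, to your $j$-filtration on the reduced complex), applies Theorem \ref{thm:Koszul_property} to identify the associated graded, and observes the resulting degeneration to the Koszul complex. Your identification of the $d_1$-differential with $d^\stein$ is the right unwinding (note that the phrase ``projection to Steinberg'' should really be ``observing the image lies in the subobject $\stein_{s-p-1} \hookrightarrow (\sing_1)^{s-p-1}$'', and over $\field_2$ there are no signs to track), but this is exactly the level of detail the paper itself elides.
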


\begin{proof}
The spectral sequence associated to the length filtration of $N$ degenerates to 
the Koszul complexes, by Theorem \ref{thm:Koszul_property} (which uses the 
hypothesis $N\lgth{i} \in \amod_{\geq -1}$).
\end{proof}

\begin{exam}
\label{exam:lder_q_one}
 For $N \in \qmgr$ such that $\tau\lgth{\leq -1}N=0$ and $N\lgth{i} \in 
\amod_{\geq -1}$ for each $i$, there are natural 
isomorphisms:
 \begin{eqnarray*}
  (\lder_1 \indec N)\lgth{s} &\cong& \ker\{ \sing_1 N\lgth{s-1} \rightarrow 
N\lgth{s} \}
\\
(\lder_0 \indec N)\lgth{s} &\cong& \mathrm{coker} \{ \sing_1 N\lgth{s-1} 
\rightarrow 
N\lgth{s} \},
 \end{eqnarray*}
where the morphism is given by the Dyer-Lashof action,  since $\stein_0$ is the 
identity functor and $\stein_1 = \sing_1$. In particular, $(\lder_1 \indec 
N)\lgth{0}=0$ and $(\lder_0 \indec N)\lgth{0}=N\lgth{0}$.
\end{exam}

\begin{cor}
\label{cor:conn_lderq_N}
 Let $N \in \qmgr$ such that $\tau\lgth{\leq -1} N =0$ and $N\lgth{i}$ is $(d_i 
-1 \geq -2)$-connected for each $i$. 
 For $s, t \in \nat$, $(\lder_t q N )\lgth{s}$ is  at least $\Big(2^t (d_{s-t} +1) - 
(t+2)\Big)$-connected, hence is at least 
 $(2^t d_{s-t}-1)$-connected.  
\end{cor}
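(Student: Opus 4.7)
The plan is to combine the Koszul-complex identification of $(\lder_t \indec N)\lgth{s}$ from Corollary \ref{cor:vanishing_lder_indec} with the connectivity estimate for Steinberg functors from Corollary \ref{cor:conn_stein}. Note first that the hypothesis $d_i \geq -1$ translates exactly to $N\lgth{i} \in \amod_{\geq -1}$, so the Koszul identification applies.

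The key observation is purely formal: by Corollary \ref{cor:vanishing_lder_indec}, $(\lder_t \indec N)\lgth{s} \cong H_t(\kz[s] N)$, and in the Koszul complex $\kz[s] N$ the term in homological degree $t$ is $\stein_t N\lgth{s-t}$. Hence $(\lder_t \indec N)\lgth{s}$ is a subquotient of $\stein_t N\lgth{s-t}$, so any connectivity lower bound on $\stein_t N\lgth{s-t}$ descends to $(\lder_t \indec N)\lgth{s}$.

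To finish, I would apply Corollary \ref{cor:conn_stein} to $M := N\lgth{s-t}$, which by hypothesis is $(d_{s-t}-1)$-connected. This gives that $\stein_t N\lgth{s-t}$ is at least $\bigl(2^t(d_{s-t}+1)-(t+2)\bigr)$-connected, and in particular at least $(2^t d_{s-t} - 1)$-connected. Transferring these bounds to the subquotient $(\lder_t \indec N)\lgth{s}$ yields both claimed estimates.

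No step should present a genuine obstacle: the result is essentially an immediate corollary of two previously established results. One only has to verify carefully that the hypothesis $d_i - 1 \geq -2$ is precisely what is needed to invoke Corollary \ref{cor:vanishing_lder_indec} (so that the Koszul complex computes the derived functor in the relevant length degree), and that the indexing $s - t$ on the Steinberg input is the correct one, dictated by the placement of $\stein_t N\lgth{s-t}$ in homological degree $t$ of $\kz[s] N$. The edge cases $t = 0$ and $t = s$ are covered automatically: for $t = 0$ one gets a quotient of $N\lgth{s}$, whose connectivity is $d_s - 1$, matching the formula; for $t > s$ the Koszul complex is zero, consistent with vanishing.
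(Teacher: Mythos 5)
Your argument is correct and is exactly the paper's: combine Corollary \ref{cor:vanishing_lder_indec} (the Koszul complex computes $(\lder_t \indec N)\lgth{s}$, so this is a subquotient of $\stein_t N\lgth{s-t}$) with the connectivity estimate from Corollary \ref{cor:conn_stein}. The paper simply states it as a consequence of those two corollaries; your write-up adds the correct verification of the hypotheses and the indexing.
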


\begin{proof}
 A consequence of Corollaries \ref{cor:conn_stein} and 
\ref{cor:vanishing_lder_indec}.
\end{proof}

\section{Derived functors of $\indec$ and desuspension}
\label{sect:lderq_susp}
 
In this section, the behaviour of the functors $\lder_* \indec $ on a (de)suspension is considered.

\subsection{Relating $\lder_* \indec (\Sigma^{-1}M)$ and $\Sigma^{-1} \lder_* 
\indec M$}

Recall from Section \ref{sect:prelim} that the suspension morphism 
\[
 \epsilon_{\Sigma^{-1} M} : \sing \Sigma^{-1} M 
 \rightarrow 
 \Sigma^{-1} \sing M 
\]
is a natural transformation in $\qmgr$ for $M \in \amodgr$. This is compatible 
with the comonad structure of $\sing$: 

\begin{lem}
\label{lem:epsilon_comonad}
 \ 
 \begin{enumerate}
  \item 
  For $N \in \qmgr$ there is a natural commutative diagram in $\qmgr$:
  \[
   \xymatrix{
   \sing \Sigma^{-1} N 
   \ar[r]^{\epsilon_{\Sigma^{-1}N}}
   \ar[d]_{\mu_{\Sigma^{-1}N}}
   &
   \Sigma^{-1}  \sing  N 
       \ar[d]^{\Sigma^{-1}\mu_{N}}
       \\
       \Sigma^{-1}N
       \ar@{=}[r]
       &
       \Sigma^{-1}N,
   }
  \]
where $\mu$ denotes the adjunction counit.
\item  
For $M \in \amodgr$, the adjunction unit $\eta$ fits into a commutative diagram:
\[
 \xymatrix{
 \Sigma^{-1}M 
 \ar[r]^{\Sigma^{-1}\eta_M}
 \ar[d]_{\eta_{\Sigma^{-1}M}}
 &
 \Sigma^{-1}\sing M 
 \ar@{=}[d]
 \\
 \sing \Sigma^{-1} M 
 \ar[r]_{\epsilon_{\Sigma^{-1}M}}
 &
  \Sigma^{-1}\sing M .
 }
\]
\item 
For $N \in \qmgr$, there is a natural commutative diagram in $\qmgr$:
\[
 \xymatrix{
 \sing\Sigma^{-1}N 
 \ar[rr]^{\epsilon_{\Sigma^{-1}N}}
 \ar[d]_{\Delta_{\Sigma^{-1}N}}
 &&
 \Sigma^{-1}\sing N
 \ar[d]^{\Sigma^{-1} \Delta_N}
 \\
 \sing \sing \Sigma^{-1} N
 \ar[rr]_{\epsilon_{\Sigma^{-1}\sing N}\sing\epsilon_{ \Sigma^{-1}N}}
 &&
  \Sigma^{-1}\sing \sing N ,
 }
\]
in which $\Delta : \sing \rightarrow \sing \sing$ is the comonad structure map. 
 
 \end{enumerate}
\end{lem}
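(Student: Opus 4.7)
The plan is to reduce all three statements to formal manipulations with the adjunction $\sing \dashv U$, where $U: \qmgr \to \amodgr$ is the forgetful functor from Proposition~\ref{prop:qmgr} (with unit $\eta$ and counit $\mu$, and comultiplication $\Delta_N = \sing\eta_{UN}$). The key conceptual point is that, by its construction, $\epsilon_{M}: \sing M \to \Sigma^{-1}\sing\Sigma M$ is the unique morphism in $\qmgr$ whose adjoint transpose is the desuspended unit $\Sigma^{-1}\eta_{\Sigma M}: M \to \Sigma^{-1}U\sing\Sigma M$, where I identify $U\Sigma^{-1}=\Sigma^{-1}U$. Substituting $\Sigma^{-1}M$ for $M$ gives $U(\epsilon_{\Sigma^{-1}M})\circ \eta_{\Sigma^{-1}M} = \Sigma^{-1}\eta_{M}$, which is exactly part~(2); in this sense part~(2) is tautological.

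For part~(1), I would verify that the two morphisms $\sing\Sigma^{-1}N \rightrightarrows \Sigma^{-1}N$ in $\qmgr$ coincide by passing to their adjoint transposes in $\amodgr$. The counit $\mu_{\Sigma^{-1}N}$ transposes to the identity $1_{\Sigma^{-1}UN}$ via the triangle identity $U\mu\circ\eta_U = 1_U$. The composite $\Sigma^{-1}\mu_N \circ \epsilon_{\Sigma^{-1}N}$ transposes, using part~(2) at the first step and the same triangle identity at the last, to
\[
\Sigma^{-1}U\mu_N \circ \Sigma^{-1}\eta_{UN}
= \Sigma^{-1}(U\mu_N \circ \eta_{UN})
= \Sigma^{-1}(1_{UN}).
\]
The two transposes agree, so part~(1) follows.

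Part~(3) is analogous but requires more bookkeeping. Both candidate morphisms $\sing\Sigma^{-1}N \to \Sigma^{-1}\sing\sing N$ will be tested by transposition, the target in each case being $\Sigma^{-1}(\eta_{U\sing UN}\circ \eta_{UN})$. On the right-hand side this is reached using $\Delta_N = \sing\eta_{UN}$, one application of part~(2), and one application of naturality of $\eta$ to $\eta_{UN}$. On the left-hand side it is reached using two applications of part~(2) (one for each occurrence of $\epsilon$) interleaved with two applications of naturality of $\eta$: first to $\eta_{\Sigma^{-1}UN}$, then to $U(\epsilon_{\Sigma^{-1}N})$. The hard part will be precisely this last step, namely keeping track of the nested instances of $\sing$ and the systematic suppression of $U$ in the paper's diagrams. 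Conceptually, however, the lemma is a purely formal consequence of the adjunction structure and contains no geometric or computational content.
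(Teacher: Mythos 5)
Your proposal is correct and follows exactly the route the paper intends: the paper's own (one-line) proof says parts (1) and (2) are straightforward "using the fact that $\epsilon$ is constructed via the adjunction from $\eta$," with (3) following, and your write-up is precisely that argument carried out in detail — characterizing $\epsilon_M$ as the adjoint transpose of $\Sigma^{-1}\eta_{\Sigma M}$, so (2) is tautological, (1) reduces to the triangle identity $U\mu\circ\eta_U=1$, and (3) reduces to $\Delta=\sing\eta$ plus two naturality squares for $\eta$.
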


\begin{proof}
 The first two points are straightforward, using the fact that $\epsilon$ is 
constructed via the adjunction from $\eta$. The third then follows.
\end{proof}

\begin{prop}
 \label{prop:epsilon_resolutions}
For $N \in \qmgr$,  $\epsilon$ induces natural 
transformations:
 \begin{enumerate}
  \item 
  $
   \sing^{\bullet +1}  \Sigma^{-1} N
   \rightarrow 
   \Sigma^{-1} \sing^{\bullet +1}  N
  $ in $\ch \qmgr$;
\item 
$
 \qcx \Sigma^{-1}N 
 \rightarrow 
 \Sigma^{-1} \qcx N
$
 in $\ch \amodgr$;
\item 
\label{label:lder_epsilon}
$
 \lder_i \indec \Sigma^{-1}N 
 \rightarrow 
 \Sigma^{-1} \lder_i \indec N
$
 in $\amodgr$, 
for $i \in\nat$, which is an isomorphism for $i=0$.
 \end{enumerate}
\end{prop}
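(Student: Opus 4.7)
The plan is to build the natural transformation one simplicial level at a time in the bar resolution, then transport it through $\indec$ and through homology. The key observation is that Lemma \ref{lem:epsilon_comonad} encodes exactly the compatibilities of $\epsilon$ with the counit $\mu$ and the comultiplication $\Delta$ of the comonad $\sing$ that are needed to make $\epsilon$ behave like a (colax) morphism of comonads from $\sing$ to $\sing$ along the endofunctor $\Sigma^{-1}$.

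For part (1), at simplicial degree $n$ I would define
\[
\phi_n \;:=\; \epsilon_{\sing^n N} \circ \sing \epsilon_{\sing^{n-1}N} \circ \cdots \circ \sing^n \epsilon_N \;:\; \sing^{n+1} \Sigma^{-1} N \longrightarrow \Sigma^{-1} \sing^{n+1} N,
\]
which commutes $\Sigma^{-1}$ past each copy of $\sing$ in turn (the order is forced since $\epsilon$ only applies where a $\sing$ stands immediately to the left of a $\Sigma^{-1}$). One then verifies that $\phi_\bullet$ is a morphism of simplicial objects in $\qmgr$: compatibility with each face map $\sing^i \mu\, \sing^{n-i}$ reduces to sandwiching the square of Lemma \ref{lem:epsilon_comonad}(1) between appropriate powers of $\sing$ applied on the left and on the right, together with the other $\epsilon$-squares (which are naturality) to migrate the $\Sigma^{-1}$ past the surviving $\sing$'s; compatibility with each degeneracy $\sing^i \Delta\, \sing^{n-i}$ is obtained in the same fashion from Lemma \ref{lem:epsilon_comonad}(3). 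Taking associated (alternating-sum) chain complexes gives the morphism in $\ch\qmgr$.

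For part (2), apply the functor $\indec:\qmgr \to \amodgr$ to $\phi_\bullet$. Since $\indec$ commutes with $\Sigma^{-1}$ (the same fact invoked in the proof of Proposition \ref{prop:untor_Lq}), one obtains a chain map
\[
\qcx \Sigma^{-1} N \;=\; \indec \sing^{\bullet +1} \Sigma^{-1} N \;\longrightarrow\; \indec \Sigma^{-1} \sing^{\bullet+1} N \;\cong\; \Sigma^{-1} \qcx N
\]
in $\ch\amodgr$. For part (3), take $i$th homology; exactness of $\Sigma^{-1}$ ensures that it commutes with $H_i$, which yields the desired natural transformation $\lder_i \indec \Sigma^{-1} N \to \Sigma^{-1} \lder_i \indec N$. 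For $i=0$, use $\indec \sing = \mathrm{id}_{\amodgr}$ from Lemma \ref{lem:indec} together with Lemma \ref{lem:epsilon_comonad}(2) to trace $\phi_0 = \epsilon_N$ through the augmentation; both sides identify with $\Sigma^{-1}\indec N \cong \indec \Sigma^{-1} N$ and the induced map is the canonical isomorphism.

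The main obstacle is the simplicial-identity verification in step (1). It is conceptually routine because Lemma \ref{lem:epsilon_comonad} was prepared precisely for this purpose, but the bookkeeping of which $\sing$-factors sit on which side of each square requires some care; none of the other steps involves anything beyond the formal functoriality of $\indec$, $\Sigma^{-1}$, and $H_i$.
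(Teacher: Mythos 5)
Your proof is correct and follows the same strategy the paper intends: the paper's one-line proof simply cites Lemma \ref{lem:epsilon_comonad} for part (1) and the commutativity of $\indec$ with $\Sigma^{-1}$ for parts (2) and (3), which is precisely what you unpack by building $\phi_\bullet$ factor by factor and checking the simplicial identities against the squares in Lemma \ref{lem:epsilon_comonad}. Your explicit formula for $\phi_n$ composes correctly, and the treatment of $i=0$ via $\indec\sing = \mathrm{id}$ and Lemma \ref{lem:epsilon_comonad}(2) matches the intended argument.
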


\begin{proof}
 The first statement follows from Lemma \ref{lem:epsilon_comonad} and the 
remaining statements follow since $\indec$ commutes with $\Sigma^{-1}$.
\end{proof}

\begin{rem}
 Analogous results hold in the non-length-graded case, replacing $\amodgr$ and 
$\qmgr$ respectively by $\amod$ and $\qm$.
\end{rem}

The morphism appearing in Proposition \ref{prop:epsilon_resolutions} 
(\ref{label:lder_epsilon}) fits into a long exact sequence 
associated to a Grothendieck composite functor spectral sequence. 

\begin{prop}
\label{prop:lder_indec_Gss}
 There is a natural isomorphism of functors $\indec \qsusp \cong \Sigma \indec 
: 
\qm \rightarrow \amod$ and the associated 
 Grothendieck spectral sequence 
 \[
  (\lder_s \indec ) \qsusp_t 
  \Rightarrow 
  \Sigma \lder_{s+t} \indec 
 \]
degenerates to a long exact sequence of the form 
\[
 \ldots 
 \rightarrow 
 (\lder_{n-1}\indec)\qsusp_1 
 \rightarrow 
 \Sigma \lder_n \indec 
 \rightarrow 
 (\lder_n \indec) \qsusp 
 \rightarrow  
 (\lder_{n-1}\indec)\qsusp_1 
\rightarrow \ldots .
 \]
The natural morphism $\Sigma \lder_n \indec 
 \rightarrow 
 (\lder_n \indec) \qsusp $ evaluated on $\Sigma^{-1} N$ yields the natural 
transformation of Proposition \ref{prop:epsilon_resolutions} 
(\ref{label:lder_epsilon}).
\end{prop}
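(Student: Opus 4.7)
The plan proceeds in four steps. \emph{First}, establish the natural isomorphism $\indec \qsusp \cong \Sigma \indec$ by uniqueness of left adjoints. By Proposition~\ref{prop:qsusp}, $\qsusp$ has right adjoint $\Sigma^{-1}$, and by definition $\indec$ has right adjoint $\triv$. Hence $\indec \qsusp$ is left adjoint to $\Sigma^{-1}\triv$, while $\Sigma \indec$ is left adjoint to $\triv\Sigma^{-1}$. Since a trivial Dyer-Lashof module remains trivial after desuspension, $\Sigma^{-1}\triv = \triv\Sigma^{-1}$, so the two left adjoints agree.

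\emph{Second}, construct the Grothendieck composite-functor spectral sequence in the relative setting of Section~\ref{sect:relhom}. One must verify the acyclicity hypothesis, namely that $\qsusp$ sends the relative projectives of $\qm$ to $\indec$-acyclics: by Corollary~\ref{cor:qsusp}, $\qsusp\sing M \cong \sing\Sigma M$, which is again a relative projective and hence $\indec$-acyclic by Proposition~\ref{prop:proj_class_acyclic}. The standard double-complex argument (applied to a Cartan-Eilenberg resolution of the bar complex $\sing^{\bullet+1}N$) then produces a first-quadrant homological spectral sequence $E^2_{s,t} = (\lder_s\indec)\qsusp_t \Rightarrow \lder_{s+t}(\indec\qsusp)$. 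Using the isomorphism from step one together with exactness of $\Sigma$, the abutment identifies with $\Sigma\lder_{s+t}\indec$.

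\emph{Third}, degenerate to the long exact sequence. By Example~\ref{exam:qsusp}, $\qsusp_t$ vanishes for $t\geq 2$, so the $E^2$-page is concentrated on the two rows $t\in\{0,1\}$. Consequently only the $d_2$-differential can be non-zero, and splicing the short exact sequences associated to the resulting two-step filtration on each $\Sigma\lder_n\indec$ yields the stated long exact sequence, with connecting homomorphisms given by $d_2$.

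\emph{Fourth}, identify the edge morphism. The edge map $\Sigma\lder_n\indec \to (\lder_n\indec)\qsusp$ arises from the $t=0$ row and can be represented at the level of the bar resolution via the chain of isomorphisms $\indec\qsusp\sing \cong \indec\sing\Sigma \cong \Sigma\indec\sing = \Sigma$, drawing on Corollary~\ref{cor:qsusp} and Lemma~\ref{lem:indec}. On the other hand, the natural transformation of Proposition~\ref{prop:epsilon_resolutions}(\ref{label:lder_epsilon}) is induced termwise by $\epsilon$. Evaluated at $\Sigma^{-1}N$ and composed with $\lder_n\indec$ applied to the counit $\qsusp\Sigma^{-1}N \to N$ of the $\qsusp\dashv\Sigma^{-1}$ adjunction, both morphisms are adjoint under $\sing\dashv\mathrm{Forget}$ to the same morphism of resolutions, namely the one determined by the simplicial unit. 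The principal technical obstacle lies in this last step: ensuring that the bar-level comparison is coherent with both the simplicial structure and the differentials, so that the resulting maps on homology coincide. This amounts to a careful but routine diagram chase, controlled by the compatibility statements of Lemma~\ref{lem:epsilon_comonad}.
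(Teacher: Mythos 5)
Your proof is correct and follows essentially the same route as the paper: the key observation, that $\qsusp$ sends relative projectives to relative projectives (via Corollary~\ref{cor:qsusp}) and hence to $\indec$-acyclics (via Proposition~\ref{prop:proj_class_acyclic}), is exactly what the paper invokes, while your elaborations of the natural isomorphism via adjoint uniqueness, the two-row degeneration using Example~\ref{exam:qsusp}, and the edge-morphism identification via Lemma~\ref{lem:epsilon_comonad} correspond to the parts the paper leaves as ``straightforward.''
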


\begin{proof}
 For the existence of the spectral sequence, it suffices to use the fact that 
$\qsusp$ carries relative projectives to relative projectives, which holds by 
Corollary \ref{cor:qsusp}.
  The remainder is straightforward.
\end{proof}

\begin{rem}
 This spectral sequence is analogous to that introduced by Miller (see 
\cite[Equation 2.3.3]{Miller} and following). This is defined for functors on a 
certain category of allowable Hopf algebras
  and is the composite of the indecomposables functors $Q$ and the functor 
$\untor_0 (\field, -)$. The functor $Q$ has $\lder_n Q=0$ for $n>1$ so the 
spectral sequence is of similar form.
\end{rem}

\subsection{The suspension and the Koszul complex $\kz$}

\begin{prop}
 \label{prop:susp_koszul_cx}
The suspension morphism $\stein_* \Sigma^{-1} \twoheadrightarrow \Sigma^{-1} 
\stein_*$  of Proposition \ref{prop:stein_sigma_surject} induces a surjective 
morphism 
of Koszul complexes for $N \in \qmgr$:
\[
 \kz \Sigma^{-1} N 
\twoheadrightarrow 
\Sigma^{-1} \kz N,
\]
for $s \in \nat$.

The induced morphism in homology, $$H_t  \kz \Sigma^{-1} N 
\rightarrow 
H_t \Sigma^{-1} \kz N$$ 
identifies via the isomorphism of Corollary \ref{cor:vanishing_lder_indec} with 
the natural transformation
\[
   \big( \lder_t \indec \Sigma^{-1} N) \lgth{s} 
\rightarrow 
  \big( \Sigma^{-1} \lder_t \indec N) \lgth{s} 
\]
 of Proposition \ref{prop:epsilon_resolutions}.
\end{prop}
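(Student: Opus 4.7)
The approach splits naturally into two parts: first, construct the morphism of Koszul complexes and verify surjectivity (part A); second, identify the induced homology morphism with the map from Proposition \ref{prop:epsilon_resolutions} (part B). Both parts are naturality arguments, so no new construction is needed.

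For part A, recall that the Koszul differential is, by construction, the composite
\[
d^\stein_s : \stein_s N\lgth{i} \hookrightarrow \stein_{s-1}\stein_1 N\lgth{i} \to \stein_{s-1} N\lgth{i+1},
\]
where the second arrow is induced by the restriction of the counit $\mu_N : \sing N \to N$ to its length-one component $\sing_1 N\lgth{i} \to N\lgth{i+1}$. The surjection $\stein_s \Sigma^{-1} \twoheadrightarrow \Sigma^{-1}\stein_s$ of Proposition \ref{prop:stein_sigma_surject} is obtained from the $s$-fold iterate of $\epsilon : \sing_1 \Sigma^{-1} \twoheadrightarrow \Sigma^{-1}\sing_1$ via the inclusion $\stein_s \hookrightarrow (\sing_1)^s$. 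Commutativity with the Koszul differentials then reduces to two checks: naturality of the inclusion $\stein_s \hookrightarrow \stein_{s-1}\stein_1$ supplied by Proposition \ref{prop:Steinberg}, and the compatibility $\Sigma^{-1}\mu_N \circ \epsilon_N = \mu_{\Sigma^{-1}N}$ furnished by the first diagram of Lemma \ref{lem:epsilon_comonad}. Termwise surjectivity is immediate from Proposition \ref{prop:stein_sigma_surject}.

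For part B, I would exploit the construction of the Koszul complex as a degeneration of the length filtration spectral sequence on the reduced bar complex $\redqcx$, as used in the proof of Corollary \ref{cor:vanishing_lder_indec}. Since $\epsilon$ preserves length grading, the comparison $\redqcx \Sigma^{-1}N \to \Sigma^{-1}\redqcx N$ (compare Proposition \ref{prop:epsilon_resolutions}) is a morphism of filtered complexes. Its associated graded reproduces the morphism of Koszul complexes from part A: at the level of terms this is essentially tautological, since both come from iterating $\epsilon$ on the length-one component, while the compatibility of differentials is exactly what part A establishes. Passing to homology and invoking Corollary \ref{cor:vanishing_lder_indec} then identifies $H_t(\kz\Sigma^{-1}N) \to H_t(\Sigma^{-1}\kz N)$ with the morphism $(\lder_t\indec \Sigma^{-1}N)\lgth{s} \to (\Sigma^{-1}\lder_t\indec N)\lgth{s}$ of Proposition \ref{prop:epsilon_resolutions}.

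The main obstacle is the bookkeeping in part B, specifically verifying that the identification of the degenerate spectral sequence page with the Koszul complex is compatible, both at the level of terms and of differentials, with the map induced by $\epsilon$. Everything is formal once the length indexing is handled correctly, but tracing through the identifications to see that the associated graded of the bar-complex morphism genuinely coincides (on the nose) with the map from part A is the one place where some care is required.
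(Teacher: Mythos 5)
Your proposal is correct and follows essentially the paper's own route. For the morphism of complexes the paper exhibits a single commutative square relating the Steinberg coproducts $\stein_{s+t}\hookrightarrow\stein_s\stein_t$ to the suspension morphisms, which you repackage as the compatibility of the inclusion $\stein_s\hookrightarrow\stein_{s-1}\stein_1$ with $\epsilon$ together with the first diagram of Lemma \ref{lem:epsilon_comonad} (the paper leaves the latter, length-one, compatibility implicit); the identification of the induced map in homology, which the paper dismisses as ``unravelling the definitions'', you realize concretely via the length-filtration spectral sequence underlying Corollary \ref{cor:vanishing_lder_indec}, correctly flagging the bookkeeping there as the only place needing care.
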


\begin{proof}
For the first statement, it suffices to check that the differential of the 
Koszul complexes is compatible with the suspension morphisms
 $\stein_* \Sigma^{-1} \twoheadrightarrow \Sigma^{-1} \stein_*$. This follows 
from the commutativity of the following
diagram, in which the horizontal morphisms are the coproducts of Proposition 
\ref{prop:Steinberg} and the vertical morphisms
 the suspension morphisms of Proposition \ref{prop:stein_sigma_surject}:
\[
 \xymatrix{
\stein_{s+t} \Sigma^{-1} 
\ar[dd]
\ar[r]
&
\stein_s \stein_t \Sigma^{-1}
\ar[d]
\\
&
\stein_s \Sigma^{-1} \stein_t 
\ar[d]
\\
\Sigma^{-1}\stein_{s+t}
\ar[r]
&
\Sigma^{-1} \stein_s  \stein_t 
}
\]
where $s,t \in \nat$.

The identification of the induced morphism in homology follows from unravelling the 
definitions.
\end{proof}

\begin{rem}
 By Proposition \ref{prop:ses_stein_susp}, the kernel of the surjective morphism 
of complexes 
$ \kz \Sigma^{-1} N 
\twoheadrightarrow 
\Sigma^{-1} \kz N$ is a complex with $i$th term 
\[
 \Sigma^{-1} \stein_{i-1} \Sigma^{-1} \Phi N
\]
 and with differential induced from $\kz \Sigma^{-1}N$. 
\end{rem}

\section{The functor $H_0 \rcx$ and destabilization}
\label{sect:destab}

The relationship with instability of $\cala$-modules is established by the construction of a 
chain complex that calculates 
the derived functors of destabilization. 
Such results go back to Singer (see \cite{Singer} for example), the work of 
Lannes and Zarati \cite{LZ} on 
derived functors of destabilization 
and more recent treatments such as \cite{p_destab} (working with cohomology and for odd primes) 
and \cite{KMcC}. The latter  reference is 
followed here, since the relationship with the action of Dyer-Lashof operations 
is fundamental. 

\subsection{The chain complex}
\label{subsect:chcx}

This section presents the construction of the chain complex of \cite{KMcC} in a 
form suitable for current purposes.

\begin{nota}
 For $M \in \amod$, let $d_M : M \rightarrow \sing_1 \Sigma M \subset \sing 
\Sigma M$ denote the Singer differential (see \cite[Definition 4.11]{KMcC}) 
and also its extension to a morphism of 
$\qm$:
\[
 d_M : \sing M \rightarrow \sing \Sigma M.
\]
\end{nota}
 
The following resumes some of the fundamental properties of $d_M$:

\begin{prop}
 \cite{KMcC} 
For $M \in \amod$,
\begin{enumerate}
 \item 
the kernel of $\Sigma d_{\Sigma^{-1} M} : M \rightarrow \Sigma \sing_1 M$ is 
$\Omega^\infty M \subset M$;
\item 
the composite $d_{\Sigma M} d_M : \sing M \rightarrow \sing \Sigma^2 M$ is 
trivial;
\item 
$d$ commutes with the suspension $\epsilon : \sing \rightarrow \Sigma^{-1} 
\sing \Sigma$.
\end{enumerate}
\end{prop}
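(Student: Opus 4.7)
My plan is to split the proposition into its three structural assertions and treat each in turn, with the main computational burden concentrated in (2). The starting point is the explicit formula for $d_M$ given in \cite{KMcC}: on an element $x \in M$, $d_M(x)$ is a finite sum of allowable terms $Q^{?}(xSq^{?})\otimes \sigma$ in $\sing_1 \Sigma M$. I would first fix this formula in the present conventions and, using Lemma \ref{lem:basis_sing}, arrange each summand in the allowable Dyer-Lashof basis so that coefficients are visible.

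For (1), the coefficients of $\Sigma d_{\Sigma^{-1} M}(x) \in \Sigma \sing_1 M$ with respect to the allowable basis coincide (up to non-zero scalars) with the expressions $xSq^{i}$ ranging over the indices which detect instability. Their simultaneous vanishing is precisely the instability condition defining $\Omega^\infty M \subset M$, so the kernel of $\Sigma d_{\Sigma^{-1} M}$ is identified with $\Omega^\infty M$. The extension of $d_M$ from $M$ to all of $\sing M$ is governed by the Dyer-Lashof module structure, hence need not be revisited for (1).

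For (3), the Singer differential is natural in $M$ and the morphism $\epsilon : \sing \to \Sigma^{-1}\sing\Sigma$ is built from the adjunction unit $\eta$ (Lemma \ref{lem:epsilon_comonad}). Hence the required commutativity reduces to a diagram chase: check equality on the generating copy of $M \subset \sing M$ using naturality of $d$ with respect to $\eta_M : M \to \sing M$, then extend to all of $\sing M$ by the universal property of $\sing$ as the left adjoint together with compatibility of $d_M$ with the $\sing$-action.

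The main obstacle is (2), the identity $d_{\Sigma M}d_M = 0$. Expanding the composite on $x \in M$ gives a double sum over pairs $(i,j)$ of products $Q^{a} Q^{b}(xSq^{i}Sq^{j})$, and termwise cancellation requires the Adem relations among Steenrod operations to be matched against the Adem and Nishida relations on the Dyer-Lashof side. Rather than force this by a bare calculation, I would appeal to Koszul duality: the big Dyer-Lashof algebra is quadratic dual to $\cala$ (\cite{BCL}, \cite{p_quad}), and the Priddy-style construction of the Koszul complex produces a canonical differential of bidegree $(1,0)$ which squares to zero by the defining quadratic relations. After checking that the $d_M$ of \cite{KMcC} agrees with this canonical Koszul differential on generators, one obtains $d_{\Sigma M}d_M = 0$ abstractly, bypassing the combinatorics.
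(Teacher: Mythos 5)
The paper does not actually prove this proposition: it is stated as a summary of results from \cite{KMcC} (where the Singer differential $d_M$ is constructed in \cite[Definition 4.11]{KMcC} and these properties are established), so there is no in-paper argument to compare your route against. On its own terms, your sketch for (1) is sound: restricted to the generating copy of $M$, the components of $\Sigma d_{\Sigma^{-1}M}(x)$ in the allowable basis of $\sing_1 M$ are exactly the excess-detecting $xSq^i$, so the kernel is the largest unstable submodule $\Omega^\infty M$, and since only the restriction to $M$ is in question you are right that the $\sing$-linear extension is irrelevant there. For (3), the diagram chase via the adjunction unit and $\sing$-linearity of both $d$ and $\epsilon$ is the expected reduction to a check on the generating copy of $M$, and is fine modulo carrying out that check.

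Part (2) is where I would push back. Your plan is to ``appeal to Koszul duality'' and inherit $d^2=0$ from Priddy's formalism, but this does not actually bypass the work, it relocates it, and in doing so leaves an unaddressed gap. The duality results of \cite{BCL} and \cite{p_quad} are algebra-level statements about the big (universal) Steenrod algebra and its quadratic dual; they do \emph{not} by themselves furnish a differential on a complex with coefficients in an arbitrary $\cala$-module, nor do they encode the Nishida relations. The composite $d_{\Sigma M} d_M$ produces cross-terms of the form $Q^a(Q^b(xSq^iSq^j))$ in which Steenrod operations must be moved across Dyer--Lashof operations before the DL-Adem relations and $\cala$-Adem relations can be matched, and it is precisely the Nishida relations that make these cross-terms cancel. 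Your phrase ``after checking that the $d_M$ of \cite{KMcC} agrees with this canonical Koszul differential on generators'' is therefore not a routine verification but the entire content of the claim: one must verify that the Priddy differential on the algebra Koszul complex, extended to $M$-coefficients, agrees with Singer's $d_M$, and that this extension still squares to zero once the Dyer--Lashof instability truncation defining $\sing$ is imposed (the instability quotient is not part of the algebra-level duality). Unless you either carry out that identification or cite a module-level, Nishida-compatible form of the duality, the argument for (2) is incomplete.
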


The above statements  can be made more precise by observing that the 
differential respects the length grading, increasing it by 
one, hence can be written as the natural transformation of 
$\qmgr$: 
\[
 d_M : \sing M \rightarrow \big( \sing \Sigma M \big) (-1),
\]
for $M \in \amod$, using the notation of Section \ref{subsect:length1}.

\begin{defn}
\label{def:rcx}
For $M \in \amod$, let $\rcx M$ denote the homological chain complex in $\qmgr$ with 
$$\rcx_s M := \big(\sing \Sigma^{-s} M\big)(s)$$
and differential $d_{\Sigma^{-s} M}(s) : \rcx_s 
M \rightarrow \rcx_{s-1} M$.
\end{defn}

\begin{nota}
\ 
\begin{enumerate}
 \item 
Let $\ch \qmgr$ denote the category of $\zed$-graded, homological chain complexes in $\qmgr$. 
\item 
For $d\in \zed$, let $[d] : \ch \qmgr \rightarrow \ch \qmgr$ denote the $d$-fold 
homological shift of complexes.  
\end{enumerate}
\end{nota}

\begin{prop}
\label{prop:rcx_length}
 The construction $\rcx$ defines  an exact functor $\rcx : \amod \rightarrow 
\ch 
\qmgr$. 
This satisfies the following properties for $M \in \amod$:
\begin{enumerate}
 \item 
$H_0 \rcx M \cong \bigoplus _{i\geq 0} \Sigma^{-1} \Omega^\infty _i 
\Sigma^{1-i} M  \in \qmgr$;
\item 
for $d\in \zed$, there is a natural isomorphism $\rcx \Sigma^d M \cong \big(\rcx M \big)[d](d)$ 
and 
$H_d \rcx M \cong H_0 \rcx \Sigma^{-d} M (d)$;
\item 
there is an exact sequence of complexes in $\qmgr$:
\[
 0
\rightarrow 
\big(\Phi \rcx M \big)(1) 
\stackrel{Q_0}{\rightarrow}
\rcx M
\stackrel{\epsilon}{\rightarrow}
\Sigma^{-1} \rcx \Sigma M
\rightarrow 
0 .
\]
\end{enumerate}
\end{prop}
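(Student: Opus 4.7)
That $\rcx : \amod \to \ch\qmgr$ is a well-defined exact functor follows from exactness of $\sing$ (Proposition \ref{prop:properties_sing}), of $\Sigma^{-1}$, and of the length-shift functors $\cdot(s)$ (Proposition \ref{prop:qmgr}), together with naturality of the Singer differential $d_M$ in $M$; the relation $d_{\Sigma M}d_M = 0$ recalled before Definition \ref{def:rcx} then ensures that $\rcx M$ really is a chain complex. For statement (2), a direct term-by-term comparison shows that both $(\rcx\Sigma^d M)_s$ and $\big((\rcx M)[d](d)\big)_s$ unwind to $\sing \Sigma^{d-s}M(s)$, and the differentials agree by naturality of $d$ in its argument. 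Specialising to $M' = \Sigma^{-d}M$ and taking $H_0$ yields $H_0 \rcx \Sigma^{-d}M \cong H_d(\rcx M)(-d)$, from which the claimed identity $H_d\rcx M \cong H_0\rcx \Sigma^{-d}M(d)$ follows by applying $\cdot(d)$.

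Statement (3) is obtained degreewise by applying the length-graded short exact sequence of Proposition \ref{prop:qmgr}(8) to the $\cala$-module $\Sigma^{-s}M$ and then shifting by $\cdot(s)$; the resulting terms in homological degree $s$ are precisely $(\Phi \rcx_s M)(1)$, $\rcx_s M$ and $(\Sigma^{-1}\rcx\Sigma M)_s$, so exactness at each $s$ is immediate. Compatibility of the termwise short exact sequences with the Singer differentials of the three complexes reduces to the fact that $d$ commutes with $\epsilon$ (recalled before Definition \ref{def:rcx}) together with the naturality of $\Phi$ and of $Q_0$ (Proposition \ref{prop:Frobenius}) with respect to $d_M$.

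The substantive content, and main obstacle, lies in statement (1). In length degree $l \in \nat$, the complex $(\rcx M)\lgth{l}$ takes the explicit finite form
\[
\Sigma^{-l}M \to \sing_1 \Sigma^{-(l-1)}M \to \cdots \to \sing_{l-1}\Sigma^{-1}M \to \sing_l M,
\]
concentrated in homological degrees $l, l-1, \ldots, 0$, with differential induced from $d$. The required identification
\[
H_0\big((\rcx M)\lgth{l}\big) \cong \Sigma^{-1}\Omega^\infty_l \Sigma^{1-l}M
\]
in $\amod$ is precisely the Kuhn--McCarty computation of the derived functors of destabilization via the Singer-type complex, for which I would cite the treatment in \cite{KMcC}. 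The ambient $\qmgr$-structure on $H_0\rcx M$ is then inherited from the $\sing$-module structure carried by each $\rcx_s M$, and the assembly of the length-$l$ pieces into a single object of $\qmgr$ is part of that structure.
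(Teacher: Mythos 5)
Your treatment of statements (2) and (3) is correct and matches the route the paper takes (the paper compresses (2) to ``clear from the definition'' and invokes Corollary~\ref{cor:qsusp} for (3); you've filled in the termwise check). The exactness argument for $\rcx$ is also fine.

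For statement (1), however, your explicit description of the length-graded complex is wrong, and this is a material error rather than a cosmetic one. You assert that $(\rcx M)\lgth{l}$ is the \emph{finite} complex
\[
\Sigma^{-l}M \to \sing_1 \Sigma^{1-l}M \to \cdots \to \sing_{l-1}\Sigma^{-1}M \to \sing_l M
\]
concentrated in homological degrees $l,\ldots,0$. But $\rcx_s M = (\sing\Sigma^{-s}M)(s)$ is defined for \emph{all} $s\in\zed$, and $(\rcx_s M)\lgth{l} = \sing_{l-s}\Sigma^{-s}M$ is nonzero for every $s\le l$, including all negative $s$. So the length-$l$ piece continues to the left: $\cdots \to \sing_{l+2}\Sigma^2 M \to \sing_{l+1}\Sigma M \to \sing_l M \to \cdots$, with $\sing_{l+1}\Sigma M$ sitting in homological degree $-1$. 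If the complex really terminated at degree $0$ as you claim, then $H_0(\rcx M)\lgth{l}$ would be a cokernel; in fact it is the genuine homology $\ker(\sing_l M \to \sing_{l+1}\Sigma M)/\mathrm{im}(\sing_{l-1}\Sigma^{-1}M\to\sing_l M)$. The case $l=0$ shows the discrepancy starkly: your finite complex is just $M$ in degree $0$, whose $H_0$ is $M$, whereas the correct $H_0(\rcx M)\lgth{0}$ is $\ker(d_M : M \to \sing_1\Sigma M) = \Sigma^{-1}\Omega^\infty\Sigma M$ (using the kernel identification recalled just before Definition~\ref{def:rcx}). The negative-degree tail is not an artifact — it is precisely where the destabilization condition lives, and the paper's statement (2) with $d<0$ encodes that $H_{-d}\rcx M$ can be nonzero.

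Your reference to Kuhn--McCarty (\cite[Theorem 4.22]{KMcC}) for the identification $H_0\rcx M\cong\bigoplus_i\Sigma^{-1}\Omega^\infty_i\Sigma^{1-i}M$ is the right citation and is exactly what the paper uses, so the \emph{conclusion} of statement (1) is reached correctly provided you replace the finite-complex claim with the correct description of $(\rcx M)\lgth{l}$, which is bounded above in homological degree $l$ but unbounded below.
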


\begin{proof}
 The first statement is a consequence of \cite[Theorem 4.22]{KMcC} and the second
is clear from the definition of the chain complex; the final statement is a chain complex version of Corollary \ref{cor:qsusp} 
(cf. \cite[Proposition 4.26]{KMcC}).
\end{proof}

\begin{prop}
\label{prop:H0_unstable}
For $M \in \unst$ unstable, there is a natural isomorphism $H_0 \rcx M \cong 
\sing M$ in $\qmgr$.
\end{prop}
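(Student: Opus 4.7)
The plan is to show that when $M$ is unstable, the differential $d \colon \rcx_1 M \to \rcx_0 M$ vanishes. Since the complex $\rcx M$ has no terms in negative homological degree and $\rcx_0 M = (\sing M)(0) = \sing M$, this immediately yields $H_0 \rcx M \cong \sing M$ in $\qmgr$, naturally in $M$.

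The key input is the property that the kernel of $\Sigma d_{\Sigma^{-1} M} \colon M \to \Sigma \sing_1 M$ equals $\Omega^\infty M \subset M$. For $M \in \unst$, one has $\Omega^\infty M = M$, so $\Sigma d_{\Sigma^{-1} M}$ vanishes identically, and hence so does the original Singer differential $d_{\Sigma^{-1} M} \colon \Sigma^{-1} M \to \sing_1 M$.

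Next, I would promote this length-$0$ vanishing to the full extension $d_{\Sigma^{-1} M} \colon \sing \Sigma^{-1} M \to \sing M$ as a morphism in $\qm$. By the adjunction $\sing \dashv \mathrm{Forget}$ of Proposition \ref{prop:properties_sing}, morphisms in $\qm$ out of the free object $\sing \Sigma^{-1} M$ correspond bijectively to $\amod$-morphisms $\Sigma^{-1} M \to \sing M$, and under this correspondence the extension is the morphism adjoint to the original Singer differential. Since the latter is zero, the whole morphism in $\qm$ is zero, and therefore also in $\qmgr$. After the length-grading shift $(s)$ built into Definition \ref{def:rcx}, the differential $d \colon \rcx_1 M \to \rcx_0 M$ is identified with this zero morphism.

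The only potentially subtle point is the compatibility between the $\qm$-extension of $d$ and its length-grading behaviour in $\rcx M$: the Singer differential raises length by one, as built into its construction in \cite{KMcC}, so this is immediate and no separate verification is needed. As a consistency check, comparing with the description $H_0 \rcx M \cong \bigoplus_{i \geq 0} \Sigma^{-1} \Omega^\infty_i \Sigma^{1-i} M$ from Proposition \ref{prop:rcx_length} yields, on length-$i$ components, the Lannes--Zarati-type identification $\Omega^\infty_i \Sigma^{1-i} M \cong \Sigma \sing_i M$ for $M \in \unst$, which serves as a sanity check on the statement.
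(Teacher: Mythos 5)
Your overall strategy — deduce $H_0 \rcx M \cong \sing M$ from the vanishing of differentials in $\rcx M$ — is a legitimate alternative to the paper's argument, which instead invokes the Lannes--Zarati identification $\Omega^\infty_i \Sigma^{1-i}M \cong \Sigma \sing_i M$ (via \cite{LZ} and \cite[Theorem 4.34]{KMcC}) and then verifies separately that the Dyer--Lashof action is the canonical one. Your route has the attraction that, once the differentials at degree zero are seen to vanish, the identification in $\qmgr$ is automatic.

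However, the claim ``the complex $\rcx M$ has no terms in negative homological degree'' is false, and this is where the argument breaks. Definition \ref{def:rcx} gives $\rcx_s M = (\sing \Sigma^{-s}M)(s)$ for \emph{all} $s \in \zed$; in particular $\rcx_{-1} M = (\sing \Sigma M)(-1) \neq 0$, and Proposition \ref{prop:rcx_length}(2) and Proposition \ref{prop:les_H0rcx} manifestly make use of $H_{-1}\rcx M$. So showing $d \colon \rcx_1 M \to \rcx_0 M$ vanishes only controls the image; you have said nothing about the kernel of $d \colon \rcx_0 M \to \rcx_{-1} M$. The omission is not cosmetic: in length grading $0$ one has $(\rcx_1 M)\lgth{0} = \sing_{-1}\Sigma^{-1}M = 0$, so the vanishing you establish is vacuous there, and the desired identification $(H_0 \rcx M)\lgth{0} \cong M$ rests entirely on the outgoing differential. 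The repair is short: the outgoing differential is the $\qm$-extension of $d_M \colon M \to \sing_1 \Sigma M$, whose kernel is $\Sigma^{-1}\Omega^\infty\Sigma M$ by the same cited property applied with $\Sigma M$ in place of $M$; since $\Sigma M$ is again unstable this kernel is all of $M$, so $d_M = 0$ and the extension vanishes. More uniformly, every differential $\rcx_s M \to \rcx_{s-1}M$ with $s \leq 1$ vanishes because $\Sigma^{1-s}M \in \unst$; hence $H_0 \rcx M = \rcx_0 M = \sing M$ in $\qmgr$, naturally in $M$. You should incorporate this second vanishing explicitly.
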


\begin{proof}
 This fundamental fact relies on the calculation of certain derived functors of 
destabilization by Lannes and Zarati
\cite{LZ}, which can be recovered from the chain complex viewpoint as in 
\cite[Theorem 4.34]{KMcC} (see also \cite{p_destab}).
Namely, for $M$ unstable, $\Omega^\infty_i \Sigma ^{1-i}M \cong \Sigma \sing_i 
M$ and the proof shows that 
  $\bigoplus _{i\geq 0} \Sigma^{-1} \Omega^\infty _i \Sigma^{1-i} M \cong \sing 
M$ in $\qmgr$ (i.e. the Dyer-Lashof action is the canonical one).
\end{proof}

The following connectivity result is useful:

\begin{cor}
\label{cor:connectivity_H0rcx}
If $M \in \amod$ is $c$-connected, then for $i \in \nat$, $(H_0 \rcx 
M)\lgth{i}$ 
is at least $(2^i (c+1) -1)$-connected.  

If $c \geq -1$ and for $l \in \nat$,  $\big( \lder_* \indec H_0 \rcx M \big)\lgth{l}$ is 
at least $(2^l(c+1) -1)$-connected.
\end{cor}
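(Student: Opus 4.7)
The plan is to reduce both assertions to results already established in the preceding sections. For the first claim, I would unpack the definition of $\rcx M$ in length grading $i$. From Definition \ref{def:rcx}, together with the convention that the length-shift functor $(\cdot)(s)$ acts by $N(s)\lgth{j} = N\lgth{j-s}$, one has $\rcx_s M\lgth{i} = \sing_{i-s}\Sigma^{-s}M$, so that the chain complex $\rcx M\lgth{i}$ has the form
\[
\Sigma^{-i}M \;\to\; \sing_1 \Sigma^{-(i-1)}M \;\to\; \cdots \;\to\; \sing_{i-1}\Sigma^{-1}M \;\to\; \sing_i M,
\]
with $\sing_i M$ placed in homological degree zero. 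In particular $(H_0 \rcx M)\lgth{i}$ is a quotient of $\sing_i M$. Applying Proposition \ref{prop:properties_sing} with $d = c+1$ shows that $\sing_i M$ is $(2^i(c+1)-1)$-connected whenever $M$ is $c$-connected, and since connectivity is inherited by quotients, the first statement follows at once.

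For the second claim, I plan to feed the output of the first claim into Corollary \ref{cor:conn_lderq_N}, applied to $N := H_0 \rcx M \in \qmgr$. The hypothesis $\tau\lgth{\leq -1} N = 0$ is immediate from the direct sum decomposition $H_0 \rcx M \cong \bigoplus_{i \geq 0} \Sigma^{-1} \Omega^\infty_i \Sigma^{1-i} M$ of Proposition \ref{prop:rcx_length}. The assumption $c \geq -1$ together with the first claim gives that $N\lgth{i}$ is $(d_i - 1)$-connected for $d_i := 2^i(c+1) \geq 0$, so the connectivity hypothesis $d_i - 1 \geq -2$ is satisfied. Corollary \ref{cor:conn_lderq_N} then furnishes the estimate that $(\lder_t \indec N)\lgth{l}$ is at least $(2^t d_{l-t} - 1)$-connected. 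The pleasant feature is that the exponents collapse: $2^t d_{l-t} = 2^t \cdot 2^{l-t}(c+1) = 2^l(c+1)$ is independent of $t$, so the same bound controls every $(\lder_t \indec H_0 \rcx M)\lgth{l}$ and hence the entire $(\lder_* \indec H_0 \rcx M)\lgth{l}$.

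There is no substantial obstacle here, since the reduction to Proposition \ref{prop:properties_sing} and Corollary \ref{cor:conn_lderq_N} is essentially formal; the main point to watch is the bookkeeping of the length-grading shifts in the definition of $\rcx$ and matching the `$(d-1)$-connected' convention of Proposition \ref{prop:properties_sing} with the given $c$-connectivity via $d = c+1$.
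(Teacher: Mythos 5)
Your proof is essentially the direct argument that the paper explicitly mentions as an alternative: the paper's primary reference is to \cite[Corollary 4.31]{KMcC} via the identification of $H_0\rcx M$ with $\bigoplus_i\Sigma^{-1}\Omega^\infty_i\Sigma^{1-i}M$, but it notes in the same breath that "this can be proved directly from the connectivity statement in Proposition \ref{prop:properties_sing}," which is exactly what you do. The second half of your argument (feeding the first estimate into Corollary \ref{cor:conn_lderq_N} and observing that $2^t\cdot 2^{l-t}(c+1)=2^l(c+1)$ is independent of $t$) matches the paper's proof.

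One small inaccuracy worth flagging: $(H_0\rcx M)\lgth{i}$ is a \emph{subquotient} of $\sing_i M$, not a quotient. The chain complex $\rcx M$ is unbounded in both directions, and $H_0$ in length degree $i$ is $\ker\bigl(\sing_i M \to \sing_{i+1}\Sigma M\bigr)$ modulo the image of $\sing_{i-1}\Sigma^{-1}M$, so there is both a subobject and a quotient step. This does not damage the argument since connectivity passes to subquotients just as readily, but the wording should be corrected. Your reading of the length-grading shift convention, i.e.\ $\rcx_s M\lgth{i} = \sing_{i-s}\Sigma^{-s}M$, is correct and is the right bookkeeping to make the reduction to Proposition \ref{prop:properties_sing} transparent.
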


\begin{proof}
Using the identification of  $H_0 \rcx M \cong \bigoplus _{i\geq 0} \Sigma^{-1} 
\Omega^\infty _i \Sigma^{1-i} M $ from  Proposition \ref{prop:rcx_length}, 
this result follows from 
\cite[Corollary 4.31]{KMcC} (and can be proved directly from the connectivity 
statement in Proposition \ref{prop:properties_sing}).

The final statement then follows by applying Corollary \ref{cor:conn_lderq_N}.
\end{proof}

\subsection{The long exact sequence for $H_0 \rcx$}
The exactness of $\rcx : \amod \rightarrow \ch \qmgr$ leads to the following:

\begin{prop}
\label{prop:les_H0rcx}
For $0\rightarrow M_1 \rightarrow M_2 \rightarrow M_3 \rightarrow 0$ a short 
exact sequence in $\amod$, there is a natural long exact sequence in $\qmgr$:
\[
\ldots
\rightarrow 
 H_0 \rcx \Sigma^{-1} M_3 (1) 
 \rightarrow 
 H_0 \rcx M_1 
 \rightarrow 
 H_0 \rcx M_2 
 \rightarrow 
 H_0 \rcx M_3 
 \rightarrow 
 H_0 \rcx \Sigma M_1 (-1)
 \rightarrow 
 \ldots 
 .
\]
\end{prop}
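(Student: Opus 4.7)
The plan is to derive this long exact sequence from the standard homology long exact sequence associated to a short exact sequence of chain complexes, combined with the shifting identities for $\rcx$ established in Proposition \ref{prop:rcx_length}.

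First I would invoke the exactness of the functor $\rcx : \amod \rightarrow \ch \qmgr$ (Proposition \ref{prop:rcx_length}) to conclude that the given short exact sequence $0 \to M_1 \to M_2 \to M_3 \to 0$ in $\amod$ produces a short exact sequence of chain complexes in $\qmgr$:
\[
0 \rightarrow \rcx M_1 \rightarrow \rcx M_2 \rightarrow \rcx M_3 \rightarrow 0.
\]
Since $\qmgr$ is abelian (Proposition \ref{prop:qmgr}), the standard snake lemma argument produces a natural long exact sequence in homology:
\[
\ldots \rightarrow H_{n+1} \rcx M_3 \rightarrow H_n \rcx M_1 \rightarrow H_n \rcx M_2 \rightarrow H_n \rcx M_3 \rightarrow H_{n-1} \rcx M_1 \rightarrow \ldots
\]

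Next, I would apply the natural isomorphism $H_d \rcx M \cong H_0 \rcx \Sigma^{-d} M (d)$ from Proposition \ref{prop:rcx_length} to reindex this sequence. Taking $n = 0$, the connecting homomorphism $H_1 \rcx M_3 \rightarrow H_0 \rcx M_1$ becomes the claimed $H_0 \rcx \Sigma^{-1} M_3 (1) \rightarrow H_0 \rcx M_1$, while the next connecting homomorphism $H_0 \rcx M_3 \rightarrow H_{-1} \rcx M_1$ becomes $H_0 \rcx M_3 \rightarrow H_0 \rcx \Sigma M_1 (-1)$. The three-term strand in the middle, $H_0 \rcx M_1 \rightarrow H_0 \rcx M_2 \rightarrow H_0 \rcx M_3$, is the direct image of the morphism of complexes under $H_0$, requiring no reindexing.

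The only point of real content is to check that the identification $H_d \rcx M \cong H_0 \rcx \Sigma^{-d} M (d)$ is natural in $M$ with respect to short exact sequences in a way that intertwines the homology connecting morphism with the connecting morphism expected in the proposition. This follows because the shift isomorphism $\rcx \Sigma^d M \cong (\rcx M)[d](d)$ is given functorially in $M$, so applying $H_0$ to the induced short exact sequence $0 \to \rcx \Sigma^{-1} M_1 [1](1) \to \rcx \Sigma^{-1} M_2 [1](1) \to \rcx \Sigma^{-1} M_3 [1](1) \to 0$ produces a connecting morphism that matches, after the translation of indices, the one coming from $0 \to \rcx M_1 \to \rcx M_2 \to \rcx M_3 \to 0$. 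This is a routine naturality verification; there is no substantive obstacle.
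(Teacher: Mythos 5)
Your proof is correct and takes essentially the same route as the paper: apply the exact functor $\rcx$ to obtain a short exact sequence of complexes in $\ch\qmgr$, pass to the long exact sequence in homology, and reindex using the natural isomorphism $H_d\rcx M\cong H_0\rcx\Sigma^{-d}M(d)$ from Proposition \ref{prop:rcx_length}. The extra paragraph on naturality of the reindexing is a reasonable (if slightly more explicit) elaboration of what the paper leaves implicit.
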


\begin{proof}
 This is the long exact sequence in homology associated to the short exact 
sequence in $\ch \qmgr$: 
 \[
  0 
  \rightarrow 
   \rcx M_1 
   \rightarrow 
   \rcx M_2 
   \rightarrow 
   \rcx M_3
   \rightarrow 0,
 \]
by using the natural isomorphisms $H_1 \rcx M_3 \cong H_0 \rcx\Sigma^{-1} 
M_3(1) 
$ and $H_{-1} \rcx M_1 \cong  H_0 \rcx \Sigma M_1 (-1)$ provided by Proposition 
\ref{prop:rcx_length} (the non-displayed terms are treated similarly).
\end{proof}

\subsection{Connectivity estimates}

For $M \in \amod$ and $c \in \zed$, there is a natural truncation short exact 
sequence 
\[
 0
 \rightarrow 
 M_{\leq c} 
 \rightarrow 
 M
 \rightarrow 
 M_{\geq c+1}
 \rightarrow 
 0
\]
in which $M_{\leq c}$ is the subobject of $M$ formed by elements of degree at 
most $c$ so that $M_{\geq c+1}$ is at least $c$-connected.

\begin{prop}
\label{prop:conn_H0rcxM}
 For $M\in \amod$, $c \in \zed$ and $i \in \nat$, the natural morphism 
 \[
 \big( H_0 \rcx (M_{\leq c})\big)\lgth{i}
  \rightarrow 
  \big( H_0 \rcx M\big)\lgth{i}
 \]
is 
\begin{enumerate}
 \item 
 surjective in degrees $\leq 2^i (c+1) -1$;
 \item 
 injective if $i=0$ and, if $i>0$, in degrees $\leq 2^{i-1}c -1$. 
\end{enumerate}
In particular, for $c \in \nat$, it is an isomorphism in degrees 
\begin{enumerate}
 \item 
 $\leq c$ for $i=0$; 
 \item 
$\leq  2^{i-1}c -1$ for $i>0$.
\end{enumerate}
\end{prop}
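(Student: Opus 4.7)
The plan is to apply the long exact sequence of Proposition \ref{prop:les_H0rcx} to the truncation short exact sequence
\[
0 \rightarrow M_{\leq c} \rightarrow M \rightarrow M_{\geq c+1} \rightarrow 0
\]
and control the two flanking terms by means of the connectivity estimate of Corollary \ref{cor:connectivity_H0rcx}. The relevant portion of the sequence in $\qmgr$ reads
\[
H_0 \rcx \Sigma^{-1} M_{\geq c+1}(1) \rightarrow H_0 \rcx M_{\leq c} \rightarrow H_0 \rcx M \rightarrow H_0 \rcx M_{\geq c+1},
\]
so the problem reduces to estimating the connectivity of $(H_0 \rcx M_{\geq c+1})\lgth{i}$ (for surjectivity) and of $(H_0 \rcx \Sigma^{-1} M_{\geq c+1}(1))\lgth{i}$ (for injectivity).

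For surjectivity I would observe that $M_{\geq c+1}$ is $c$-connected in the sense of Definition \ref{def:conn}, so Corollary \ref{cor:connectivity_H0rcx} gives that $(H_0 \rcx M_{\geq c+1})\lgth{i}$ is at least $(2^i(c+1)-1)$-connected, which is precisely the claimed surjectivity range. For injectivity I would rewrite
\[
\big(H_0 \rcx \Sigma^{-1} M_{\geq c+1}(1)\big)\lgth{i} \; = \; \big(H_0 \rcx \Sigma^{-1} M_{\geq c+1}\big)\lgth{i-1},
\]
using the definition of the length-shift $\cdot(1)$. For $i=0$ the right-hand side is in length $-1$, hence is zero by the convention that length grading is extended trivially to negative degrees, so injectivity is automatic. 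For $i>0$, $\Sigma^{-1} M_{\geq c+1}$ is $(c-1)$-connected, and Corollary \ref{cor:connectivity_H0rcx} then gives $(2^{i-1}c-1)$-connectivity, yielding the claimed injectivity range.

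The "in particular" statement, assuming $c \in \nat$, then follows by intersecting the two ranges: for $i=0$ one has surjectivity in degrees $\leq c$ and automatic injectivity, giving an isomorphism in degrees $\leq c$; for $i>0$ the injectivity range $\leq 2^{i-1}c-1$ is always contained in the surjectivity range $\leq 2^i(c+1)-1$, so it is the binding constraint.

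There is no conceptual obstacle; the only step requiring care is the bookkeeping with the length-grading shift in the connecting term $H_0\rcx \Sigma^{-1} M_{\geq c+1}(1)$, where one must correctly account both for the length shift by $+1$ and for the single desuspension of the coefficients before applying Corollary \ref{cor:connectivity_H0rcx}. Once that translation is made, the argument is a direct diagram chase in the long exact sequence.
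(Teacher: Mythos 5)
Your argument is correct and is the same as the paper's: both apply the long exact sequence of Proposition \ref{prop:les_H0rcx} to the truncation short exact sequence and then invoke the connectivity estimate of Corollary \ref{cor:connectivity_H0rcx} on the two flanking terms $(H_0 \rcx M_{\geq c+1})\lgth{i}$ and $(H_0 \rcx \Sigma^{-1} M_{\geq c+1})\lgth{i-1}$. The bookkeeping with the length shift and the single desuspension, which you flag as the delicate step, is handled correctly.
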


\begin{proof}
 The result follows by applying the connectivity result of Corollary 
\ref{cor:connectivity_H0rcx} to the exact sequence 
 \[
 \big( H_0 \rcx \Sigma^{-1} (M _{\geq c+1})\big) \lgth{i-1}
  \rightarrow 
 \big( H_0 \rcx (M_{\leq c}) \big)\lgth{i}
 \rightarrow
 \big(H_0 \rcx M \big)\lgth{i}
 \rightarrow
 \big(H_0 \rcx (M_{\geq c+1}) \big)\lgth{i}
 \]
given by Proposition \ref{prop:les_H0rcx}.
\end{proof}

\begin{cor}
\label{cor:conn_lder_q_HO}
 For $M \in \amod$ that is $(-1)$-connected and $0< c, l \in \nat$, the 
 morphism 
 \[
  \big( \lder_* \indec H_0 \rcx M_{\leq c} \big )\lgth{l}
  \rightarrow 
  \big( \lder_* \indec H_0 \rcx M \big )\lgth{l}
 \]
induced by the canonical inclusion $M_{\leq c} \hookrightarrow M$ is an 
isomorphism in degrees $\leq 2^{l-1} c -1$. 
\end{cor}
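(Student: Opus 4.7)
The plan is to apply Proposition \ref{prop:les_H0rcx} to the canonical short exact sequence $0 \to M_{\leq c} \to M \to M_{\geq c+1} \to 0$ in $\amod$, obtaining the long exact sequence in $\qmgr$
\[
\ldots \to H_0 \rcx \Sigma^{-1}(M_{\geq c+1})(1) \stackrel{\alpha}{\to} H_0 \rcx M_{\leq c} \stackrel{\beta}{\to} H_0 \rcx M \stackrel{\gamma}{\to} H_0 \rcx M_{\geq c+1} \to \ldots
\]
in which $\beta$ is the morphism of interest. I would split this into the two short exact sequences $0 \to \mathrm{im}(\alpha) \to H_0 \rcx M_{\leq c} \to \mathrm{im}(\beta) \to 0$ and $0 \to \mathrm{im}(\beta) \to H_0 \rcx M \to \mathrm{im}(\gamma) \to 0$ in $\qmgr$ and apply Proposition \ref{prop:les} to each, obtaining two long exact sequences for $\lder_* \indec$. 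A chase in these then reduces the required isomorphism to showing that $(\lder_t \indec \mathrm{im}(\alpha))\lgth{l}$ and $(\lder_t \indec \mathrm{im}(\gamma))\lgth{l}$ both vanish in degrees $\leq 2^{l-1}c - 1$ for every $t$.

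For the connectivity estimates, note that $M_{\geq c+1}$ is $c$-connected and $\Sigma^{-1} M_{\geq c+1}$ is $(c-1)$-connected, so Corollary \ref{cor:connectivity_H0rcx} yields that $(H_0 \rcx M_{\geq c+1})\lgth{i}$ is at least $(2^i(c+1) - 1)$-connected, while $(H_0 \rcx \Sigma^{-1}(M_{\geq c+1})(1))\lgth{i}$ vanishes for $i = 0$ and is at least $(2^{i-1}c - 1)$-connected for $i \geq 1$. Since $\mathrm{im}(\gamma)$ is a subobject of $H_0 \rcx M_{\geq c+1}$ and $\mathrm{im}(\alpha)$ is a quotient of $H_0 \rcx \Sigma^{-1}(M_{\geq c+1})(1)$, each inherits the relevant length-wise connectivity. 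Applying Corollary \ref{cor:conn_lderq_N} then gives that $(\lder_t \indec \mathrm{im}(\gamma))\lgth{l}$ is at least $(2^l(c+1) - 1)$-connected and $(\lder_t \indec \mathrm{im}(\alpha))\lgth{l}$ is at least $(2^{l-1}c - 1)$-connected; both imply vanishing in degrees $\leq 2^{l-1}c - 1$ (using $c > 0$), and feeding this back into the two long exact sequences produces the desired chain of isomorphisms $(\lder_* \indec H_0 \rcx M_{\leq c})\lgth{l} \cong (\lder_* \indec \mathrm{im}(\beta))\lgth{l} \cong (\lder_* \indec H_0 \rcx M)\lgth{l}$ in this range.

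The main obstacle is the bookkeeping required to legitimately invoke Corollary \ref{cor:conn_lderq_N}: one must verify that its hypotheses $\tau\lgth{\leq -1} N = 0$ and $N\lgth{i} \in \amod_{\geq -1}$ are preserved on passage to $\mathrm{im}(\alpha)$ and $\mathrm{im}(\gamma)$ inside $\qmgr$. This is routine, but needs separate attention at the $i = 0$ length piece, where $\mathrm{im}(\alpha)\lgth{0} = 0$ because $H_0 \rcx \Sigma^{-1}(M_{\geq c+1})(1)$ is concentrated in lengths $\geq 1$; in this case Corollary \ref{cor:conn_lderq_N} can be applied with no effective constraint from $d_0$.
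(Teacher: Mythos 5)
Your proof is correct and follows essentially the same route as the paper: the paper cites Proposition \ref{prop:conn_H0rcxM} (connectivity of the kernel and cokernel of $H_0\rcx M_{\leq c}\to H_0\rcx M$) together with Corollary \ref{cor:conn_lderq_N}, whereas you re-derive those kernel/cokernel connectivity bounds directly from Corollary \ref{cor:connectivity_H0rcx} via the long exact sequence of Proposition \ref{prop:les_H0rcx} — which is precisely how Proposition \ref{prop:conn_H0rcxM} is itself proved — and then feed them through the long exact sequences of Proposition \ref{prop:les}. The bookkeeping (including the vanishing of $\mathrm{im}(\alpha)\lgth{0}$, the hypotheses of Corollary \ref{cor:conn_lderq_N}, and the final comparison $2^l(c+1)-1\geq 2^{l-1}c-1$) all checks out.
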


\begin{proof}
 The result follows by combining Corollary \ref{cor:conn_lderq_N} 
with 
Proposition \ref{prop:conn_H0rcxM}.
\end{proof}

\section{The suspension morphism for $H_0 \rcx$}
\label{sect:H0R_susp}

The behaviour of $H_0 \rcx$ with respect to the suspension morphism is of 
independent interest; the material of this section
sheds light on results of Kuhn and McCarty \cite{KMcC}.

\subsection{The suspension morphism $\epsilon$}
\label{subsect:susp}

Recall from Proposition 
\ref{prop:rcx_length} that, for $M \in \amod$, the suspension morphism induces a 
morphism of 
complexes in $\qmgr$
\[
 \epsilon : \rcx M \rightarrow \Sigma^{-1} \rcx \Sigma M.
\]
The morphism $H_0 \epsilon$ has adjoint $\qsusp 
  H_ 0 \rcx M
  \rightarrow 
  H_0 \rcx \Sigma M $.

\begin{thm}
\label{thm:susp_rcx_H0}
Let $M \in \amod$.
 \begin{enumerate}
  \item 
  \label{item:ses_susp_qsusp}
  There is a natural short exact sequence in $\qmgr$:
  \[
   0
   \rightarrow 
   \qsusp 
  H_ 0 \rcx M
  \rightarrow 
  H_0 \rcx \Sigma M 
  \rightarrow 
  \qsusp_1 H_0 \rcx \Sigma M (-1)
  \rightarrow 
  0.   
  \]
\item 
There are natural isomorphisms in $\qmgr$:
\begin{eqnarray*}
 \qsusp H_0 \rcx M &\cong & \mathrm{image} \{
   H_0\rcx M 
   \stackrel{H_0 \epsilon}{\longrightarrow}
   \Sigma^{-1} H_0 \rcx \Sigma M
   \}\\
   &\cong &  \bigoplus _{i\geq 0} \Omega \Omega^\infty _i \Sigma^{1-i} (\Sigma 
M).
\end{eqnarray*}
\item 
\label{item:ses_imQ}
Writing $\imQ$ for the image of $\Phi (H_0 \rcx M) (1) 
\stackrel{Q_0}{\rightarrow} H_0 \rcx M$, there are  natural short exact 
sequences in $\qmgr$:
\[
 0
 \rightarrow 
 \imQ 
 \rightarrow 
 H_0 \rcx M
 \rightarrow 
 \Sigma^{-1} \qsusp H_0 \rcx M
 \rightarrow 
 0
\]
\[
 0 
 \rightarrow 
 \Sigma^{-1} 
 \qsusp_1 
 H_0 \rcx M
 \rightarrow 
 \Phi \big( H_0 \rcx M) (1) 
 \rightarrow 
 \imQ
 \rightarrow 
 0.
\]
\item 
There is a natural isomorphism in $\qmgr$
\[
 \imQ \cong  \Sigma^{-1}  \bigoplus_{i \geq 1} \Omega_1 
\Omega^\infty_{i-1}\Sigma^{1-i} (\Sigma M).
\]
\end{enumerate}
\end{thm}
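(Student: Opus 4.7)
The plan is to identify $(\imQ)\lgth{i}$ at each length $i \in \nat$ by comparing the length-wise decomposition of the short exact sequence in (3) with the Singer-type edge sequence of Proposition \ref{prop:Omega_infty_desusp}. Because the sequence in (3) lives in $\qmgr$ and $\qsusp$ respects length (Proposition \ref{prop:qsuspgr}), I would extract its length-$i$ component in $\amod$. Substituting $(H_0\rcx M)\lgth{i} \cong \Sigma^{-1}\Omega^\infty_i\Sigma^{1-i}M$ from Proposition \ref{prop:rcx_length}(1) for the middle term, and $(\qsusp H_0\rcx M)\lgth{i} \cong \Omega\Omega^\infty_i\Sigma^{2-i}M$ from (2) for the quotient, this becomes
\[
 0 \to (\imQ)\lgth{i} \to \Sigma^{-1}\Omega^\infty_i\Sigma^{1-i}M \to \Sigma^{-1}\Omega\Omega^\infty_i\Sigma^{2-i}M \to 0.
\]

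For $i=0$ the left term is immediately zero, since $(\Phi H_0\rcx M)(1)\lgth{0}=0$; this matches the absence of an $i=0$ summand on the right-hand side of the claimed isomorphism. For $i\geq 1$, I would apply Proposition \ref{prop:Omega_infty_desusp} with $N := \Sigma^{2-i}M$ and $s := i-1$, giving
\[
 0 \to \Omega_1\Omega^\infty_{i-1}\Sigma^{2-i}M \to \Omega^\infty_i\Sigma^{1-i}M \to \Omega\Omega^\infty_i\Sigma^{2-i}M \to 0.
\]
Equating kernels with the sequence above identifies $(\imQ)\lgth{i} \cong \Sigma^{-1}\Omega_1\Omega^\infty_{i-1}\Sigma^{1-i}(\Sigma M)$, and summing over $i \geq 1$ recovers the stated isomorphism in $\qmgr$.

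The main obstacle is verifying that the two surjections onto $\Omega\Omega^\infty_i\Sigma^{2-i}M$ are genuinely the same natural transformation: one arises through (2) from the chain-level suspension morphism $\epsilon : \rcx M \to \Sigma^{-1}\rcx\Sigma M$, while the other is the edge morphism of the Grothendieck spectral sequence for $\Omega\circ\Omega^\infty = \Omega^\infty\Sigma^{-1}$ appearing in Proposition \ref{prop:Omega_infty_desusp}. Both are natural in $M$ and are ultimately induced by the unit $\mathrm{id}\to\Omega\Sigma$ applied at $\Omega^\infty_i\Sigma^{2-i}M$; I expect the compatibility to follow from unravelling the identification of $H_0\rcx$ with derived destabilization in \cite[Theorem 4.22]{KMcC} and matching it against the Singer-type construction underlying Proposition \ref{prop:Omega_infty_desusp}, but this is the step requiring the most care.
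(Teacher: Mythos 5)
Your proposal addresses only part (4) of the theorem, and treats parts (1)--(3) as given inputs. The paper's proof of the full statement proceeds by first applying $H_*(-)$ to the short exact sequence of complexes in $\qmgr$
\[
 0
\rightarrow
\big(\Phi \rcx M \big)(1)
\stackrel{Q_0}{\rightarrow}
\rcx M
\stackrel{\epsilon}{\rightarrow}
\Sigma^{-1} \rcx \Sigma M
\rightarrow
0
\]
from Proposition \ref{prop:rcx_length}, using exactness of $\Sigma^{-1}$, $\Phi$ and $\cdot(1)$ to commute with formation of homology, identifying the kernel and cokernel of the $Q_0$ map via Proposition \ref{prop:qsuspgr}, and then obtaining (1) and the two sequences of (3) by dévissage of the resulting long exact sequence (using the shift isomorphism $H_{-1}\rcx M \cong H_0\rcx\Sigma M(-1)$). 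Part (2) is identified by the argument of \cite[Proposition 1.11]{KMcC}. None of this appears in your proposal, so as a proof of the theorem it is incomplete: you would at minimum need to construct that long exact sequence and extract the short exact sequences before your argument can even begin.

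For part (4) itself, your approach is essentially the same as the paper's: decompose the first sequence of (3) length-wise, substitute the identifications from Proposition \ref{prop:rcx_length} and part (2), and compare against Proposition \ref{prop:Omega_infty_desusp} applied at $\Sigma^{1-i}M$. The index bookkeeping in your proposal is correct, as is the $i=0$ case. Your highlighted concern --- that the surjection onto $\Sigma^{-1}\Omega\Omega^\infty_i\Sigma^{2-i}M$ arising from $H_0\epsilon$ must coincide with the one from the Singer exact sequence --- is a legitimate point; the paper asserts this identification (``the surjection is the desuspension of the surjection derived from the short exact sequence of Proposition \ref{prop:Omega_infty_desusp}'') without spelling out the verification, and you are right that it ultimately comes down to tracing the adjunction unit $1\to\Omega\Sigma$ through both constructions. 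So your treatment of (4) is sound modulo that naturality check, but the remaining three parts of the theorem are missing.
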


\begin{proof}
The short exact sequence of complexes in $\qmgr$ given by Proposition 
\ref{prop:rcx_length},
\[
 0
\rightarrow 
\big(\Phi \rcx M \big)(1) 
\stackrel{Q_0}{\rightarrow}
\rcx M
\stackrel{\epsilon}{\rightarrow}
\Sigma^{-1} \rcx \Sigma M
\rightarrow 
0 ,
\]
induces a long exact sequence in homology, occurring as the top row of the 
following commutative diagram:
\[
 \xymatrix@C=-1.25pc @R=1.5pc{
\ldots\ar[r]
 &
 \Phi \big(H_0 \rcx M\big)(1) 
 \ar[rr]^(.6){Q_0}
 \ar@{.>>}[rd]
 &&
 H_0 \rcx M 
  \ar@{.>>}[rd]
  \ar[rr]^{H_0 \epsilon} 
 &&
 \Sigma^{-1} H_0 \rcx \Sigma M 
 \ar[rr]
   \ar@{.>>}[rd]
   &&
   \Phi \big(H_{-1} \rcx M\big)(1)
 \\
 \Sigma^{-1} \qsusp_1 H_0 \rcx M
 \ar@{^(.>}[ur]
 &&
\imQ 
 \ar@{^(.>}[ur]
 &&
\Sigma^{-1} \qsusp H_0 \rcx M
 \ar@{^(.>}[ur]
 &&
  \Sigma^{-1} \qsusp_1 H_{-1} \rcx M
   \ar@{^(.>}[ur]
   }
 \]
 where the exactness of the functors $\Sigma^{-1}$, $\Phi$ and $\cdot(1)$ 
provides the 
commutation with formation of homology and 
  the identification of the kernel and  cokernel of $Q_0$ comes from 
Proposition 
\ref{prop:qsuspgr}.
    
  This provides the short exact sequence of part (\ref{item:ses_susp_qsusp}), 
by 
using the isomorphism $H_{-1} \rcx M \cong H_0 \rcx \Sigma M (-1)$ given 
  by Proposition \ref{prop:rcx_length}. The identification of $\qsusp H_0 \rcx 
M$ follows using the argument employed in the proof of \cite[Proposition 
1.11]{KMcC}.

The two short exact sequences of (\ref{item:ses_imQ}) are given  by 
dévissage of the long exact sequence, as indicated by the dotted arrows in the 
diagram. 

Finally, in length grading $i$, the short exact sequence  
$$0
 \rightarrow 
 \imQ \lgth{i}
 \rightarrow 
 H_0 \rcx M\lgth{i}
 \rightarrow 
 \Sigma^{-1} \qsusp H_0 \rcx M\lgth{i}
 \rightarrow 
 0$$
 identifies as 
 \[
  0
  \rightarrow 
  \big(\imQ\big)\lgth{i}
  \rightarrow 
  \Sigma^{-1} \Omega^{\infty}_i \Sigma^{1-i} M 
  \rightarrow 
  \Sigma^{-1} \Omega \Omega^{\infty}_i \Sigma^{1-i}\Sigma M 
  \rightarrow 
  0,
 \]
where the surjection is the desuspension of the surjection derived from the 
 short exact sequence of Proposition \ref{prop:Omega_infty_desusp}:
\[
 0 
 \rightarrow 
 \Omega_1 \Omega^\infty_{i-1} \Sigma 
 \rightarrow 
 \Omega^\infty_i
 \rightarrow 
 \Omega\Omega^\infty _i \Sigma
 \rightarrow 
 0
\]
applied to $\Sigma^{1-i}M$. This yields the identification of $\imQ$. 
\end{proof}

\begin{exam}
\label{exam:L*M}
 In \cite[Proposition 1.11]{KMcC}, for $M \in \amod$, Kuhn and McCarty 
introduce 
an object 
$L_* M$ that lies in $\qmgr$ and such that each component is unstable. This is 
used in their description
of the $E^\infty$-page of the algebraic approximation to the infinite looping 
spectral sequence (see \cite[Theorem 1.12]{KMcC}). 

This object is naturally isomorphic to $\qsusp H_0 \rcx \Sigma^{-1}M$, where 
the 
passage to $\qsusp$
 serves to ensure $\cala$-instability. (The notation $L_*M$ is not used here, 
since it could lead to 
confusion with the Steinberg functors.)
\end{exam}

\begin{cor}
\label{cor:delooping_H0RM}
 For $M \in \amod$, there is a natural exact sequence in length-graded objects 
of $\Sigma \qm \cap \unst$:
 \[
  0
  \rightarrow 
  \Sigma 
  (\qsusp H_0 \rcx M)
  \rightarrow 
  (\Sigma H_0 \rcx \Sigma M) 
  \stackrel{Sq_0} {\rightarrow}
  \Phi(\Sigma H_0 \rcx \Sigma M) 
  \rightarrow 
  \Sigma^2 \imQ[\Sigma M] (-1)
  \rightarrow 
  0
 \]
that identifies in each length grading with the exact sequence of Proposition 
\ref{prop:Omega_es}.
\end{cor}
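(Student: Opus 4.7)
The plan is to verify the exact sequence length-gradingly and identify it, at each length $i$, with an instance of the four-term exact sequence of Proposition \ref{prop:Omega_es}. Fix $i \in \nat$ and set $N_i := \Omega^\infty_i \Sigma^{2-i}M$, an unstable module. Using Proposition \ref{prop:rcx_length}(1) one has $(\Sigma H_0 \rcx \Sigma M)\lgth{i} \cong N_i$, and Theorem \ref{thm:susp_rcx_H0}(2) identifies $(\Sigma \qsusp H_0\rcx M)\lgth{i}\cong \Sigma \Omega N_i$; clearly $(\Phi \Sigma H_0 \rcx \Sigma M)\lgth{i}\cong \Phi N_i$. Finally, Theorem \ref{thm:susp_rcx_H0}(4), combined with the length-shift convention $N(-1)\lgth{i}=N\lgth{i+1}$ and the natural isomorphism $\Phi \Sigma \cong \Sigma^2 \Phi$, yields $(\Sigma^2 \imQ[\Sigma M](-1))\lgth{i} \cong \Sigma \Omega_1 N_i$.

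Applying Proposition \ref{prop:Omega_es} to $N_i$ produces the four-term exact sequence $0\to \Sigma \Omega N_i \to N_i \stackrel{Sq_0}{\to} \Phi N_i \to \Sigma \Omega_1 N_i \to 0$ in each length grading. To upgrade this to a sequence of morphisms in length-graded objects, one checks that: the monomorphism $\Sigma \qsusp H_0 \rcx M \hookrightarrow \Sigma H_0\rcx \Sigma M$ is $\Sigma$ applied to the monomorphism of Theorem \ref{thm:susp_rcx_H0}(1); the middle map $Sq_0$ is a natural transformation in $\qmbig\cap\unst$ by Proposition \ref{prop:Sq_0_DL-linear}; and the final epimorphism is obtained by applying $\Sigma^2$ and the length-shift $(-1)$ to the epimorphism of Theorem \ref{thm:susp_rcx_H0}(3) for $\Sigma M$, rewriting the source via $\Phi \Sigma \cong \Sigma^2 \Phi$.

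To conclude, one verifies that the four terms lie in length-graded objects of $\Sigma\qm\cap\unst$: instability in each length grading follows from the unstability of $N_i$ and Proposition \ref{prop:Omega_es}; the $\Sigma\qm$-membership is immediate for $\Sigma \qsusp H_0 \rcx M$ and $\Sigma H_0 \rcx \Sigma M$, holds for $\Phi \Sigma H_0 \rcx \Sigma M$ since (as in the proof of Proposition \ref{prop:Frobenius}) odd-degree Dyer-Lashof operations vanish on every Frobenius class, and is inherited by the cokernel. The main obstacle is the length-shift bookkeeping needed to confirm that the globally defined epimorphism identifies, length-gradingly, with the surjection $\Phi N_i \twoheadrightarrow \Sigma \Omega_1 N_i$ of Proposition \ref{prop:Omega_es}, including the careful use of $\Phi \Sigma \cong \Sigma^2 \Phi$ to match the source with $\Phi N_i$.
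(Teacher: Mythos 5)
Your proposal follows essentially the same route as the paper: suspend so that each length-grading component lands in $\unst$, invoke Proposition~\ref{prop:Sq_0_DL-linear} so that $Sq_0$ is a morphism in $\qmbig$, identify the four terms via Theorem~\ref{thm:susp_rcx_H0}, and reassemble. The length-grading bookkeeping with $N_i := \Omega^\infty_i \Sigma^{2-i}M$ and the use of $\Phi\Sigma \cong \Sigma^2\Phi$ to match the source of the epimorphism are exactly the computations the paper compresses into ``by reassembling two of the short exact sequences.''

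One caveat: your argument that $\Phi(\Sigma H_0 \rcx \Sigma M)$ lies in $\Sigma\qm$ is not sound as stated. That odd-indexed Dyer--Lashof operations vanish on Frobenius classes does not control $Q_{-1}$ on the desuspension; indeed, for $N' = H_0 \rcx \Sigma M$ one computes $Q_{-1}(\sigma\phi y) = \sigma\phi(Q_0 y)$ in $\Sigma\Phi N' = \Sigma^{-1}\Phi(\Sigma N')$, and $Q_0$ does not generally act trivially on $H_0\rcx\Sigma M$ (e.g.\ when $\Sigma M$ is unstable so $H_0\rcx\Sigma M \cong \sing\Sigma M$). Thus the claimed $\Sigma\qm$-membership of the Frobenius term (and, by a similar Adem computation, of the cokernel term) is delicate and your argument does not establish it. To be fair, the paper's own proof only argues the $\unst$-membership needed for $Sq_0$ to be $\qmbig$-linear and is silent on the $\Sigma\qm$ half of the assertion, so this is as much a soft spot in the statement as in your proof; but you should not cite the vanishing of odd Dyer--Lashof operations on Frobenius classes as if it settled the point.
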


\begin{proof}
 The suspensions ensure that each term lies in $\unst$, hence $Sq_0$ is a 
morphism of $\qmbig$ by Proposition \ref{prop:Sq_0_DL-linear}. 
 The terms are identified by Theorem \ref{thm:susp_rcx_H0} (by reassembling two 
of the short exact sequences). 
\end{proof}

\begin{rem}
 \ 
 \begin{enumerate}
  \item 
  The previous results should be compared with \cite[Corollary 4.36]{KMcC}.
\item 
Corollary \ref{cor:delooping_H0RM} shows what is required to pass from $H_0 
\rcx 
M$ to $H_0 \rcx \Sigma M$; whereas $\qsusp H_0 \rcx M$ is calculated 
as a functor of $H_0 \rcx M$, one also requires the input of $\imQ[\Sigma M]$. 
With these in hand, the delooping spectral sequence 
of Theorem \ref{thm:delooping_ss} can be used, enhanced so as to take into 
account the Dyer-Lashof action. (Compare Example \ref{exam:imQ_trivial} below.)
 \end{enumerate}
\end{rem}

\begin{cor}
\label{cor:suspension_stable_range}
For $M \in \amod$ which is $c$-connected and $i \in \nat$, the following 
statements hold.
 \begin{enumerate}
  \item 
The natural inclusion
 \[
  \big(
  \qsusp H_0 \rcx M
  \big)\lgth{i}
  \hookrightarrow 
  \big(
  H_0 \rcx \Sigma M
  \big)\lgth{i}
 \]
is an isomorphism in degrees $\leq 2^{i+1} (c+2)$.
\item 
The natural surjection 
\[
\big (H_0 \rcx M \big) \lgth{i}
\twoheadrightarrow
\big(
 \Sigma^{-1}
  \qsusp H_0 \rcx M
  \big)\lgth{i}
\]
is an isomorphism for $i =0$ and, for $i>0$, in  degrees $\leq 2^i (c+1) -2$.
\end{enumerate}
In particular, if $c \geq -1$, the suspension morphism 
\[
 \big(H_0 \rcx M\big)\lgth{i}
 \stackrel{H_0 \epsilon}{\rightarrow }
 \big( \Sigma^{-1} H_0 \rcx \Sigma M \big) \lgth{i}
\]
is an isomorphism in degrees $\leq 2^i (c+1)-2$.
\end{cor}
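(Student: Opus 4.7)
The plan is to derive all three statements from the short exact sequences established in Theorem~\ref{thm:susp_rcx_H0}, combined with the connectivity estimates already at hand. The three basic inputs are: Corollary~\ref{cor:connectivity_H0rcx}, which gives that $(H_0 \rcx M)\lgth{j}$ is at least $(2^j(c+1)-1)$-connected; Lemma~\ref{lem:Phi_conn}, which describes the doubling of connectivity under $\Phi$; and Proposition~\ref{prop:qsusp_1_conn}, which bounds the connectivity of $\qsusp_1$ applied to a length-graded object.

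For the first statement, I would use the short exact sequence
\[
0 \to \qsusp H_0 \rcx M \to H_0 \rcx \Sigma M \to \qsusp_1 H_0 \rcx \Sigma M\,(-1) \to 0
\]
from Theorem~\ref{thm:susp_rcx_H0}(\ref{item:ses_susp_qsusp}), so that the inclusion is an isomorphism in a given degree precisely when $(\qsusp_1 H_0 \rcx \Sigma M)\lgth{i+1}$ vanishes there. Applying Corollary~\ref{cor:connectivity_H0rcx} to the $(c+1)$-connected module $\Sigma M$ supplies a connectivity bound on each length-graded piece of $H_0 \rcx \Sigma M$, and feeding these into Proposition~\ref{prop:qsusp_1_conn} produces the stated bound $2^{i+1}(c+2)$.

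For the second statement, I would use both short exact sequences from Theorem~\ref{thm:susp_rcx_H0}(\ref{item:ses_imQ}). The surjection $(H_0 \rcx M)\lgth{i} \twoheadrightarrow (\Sigma^{-1}\qsusp H_0 \rcx M)\lgth{i}$ is an isomorphism precisely where $\imQ\lgth{i}$ vanishes. For $i=0$ this is immediate, since $\imQ\lgth{0}$ is a quotient of $\Phi(H_0 \rcx M)(1)\lgth{0} = \Phi\bigl((H_0 \rcx M)\lgth{-1}\bigr) = 0$. For $i>0$, the same surjection from the second short exact sequence exhibits $\imQ\lgth{i}$ as a quotient of $\Phi\bigl((H_0 \rcx M)\lgth{i-1}\bigr)$, and combining the connectivity bound of Corollary~\ref{cor:connectivity_H0rcx} with the doubling from Lemma~\ref{lem:Phi_conn} yields the stated range.

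Finally, the third statement follows by factoring the suspension morphism $H_0\epsilon$ as
\[
(H_0 \rcx M)\lgth{i} \twoheadrightarrow (\Sigma^{-1}\qsusp H_0 \rcx M)\lgth{i} \hookrightarrow (\Sigma^{-1} H_0 \rcx \Sigma M)\lgth{i},
\]
where the first map is the surjection from part~(2) and the second is $\Sigma^{-1}$ applied to the inclusion from part~(1). The composite is an isomorphism in the minimum of the two ranges, and under the hypothesis $c \geq -1$ a direct comparison (accounting for the degree shift $-1$ induced by $\Sigma^{-1}$ on the inclusion) shows that the bound coming from~(2) is the binding one. The only real obstacle, though purely bookkeeping, is the simultaneous tracking of the three independent shifts at play: the internal desuspension $\Sigma^{-1}$, the length shift by $\pm 1$ appearing in the short exact sequences, and the doubling coming from $\Phi$.
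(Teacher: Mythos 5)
Your proposal is correct and follows essentially the same route as the paper: use the two short exact sequences of Theorem~\ref{thm:susp_rcx_H0}, feed in the connectivity estimates from Corollary~\ref{cor:connectivity_H0rcx}, Lemma~\ref{lem:Phi_conn} and Proposition~\ref{prop:qsusp_1_conn}, and conclude the third point by composing the two maps. The paper's own argument is a condensed version of exactly this.
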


\begin{proof}
For the first statement, consider the short exact sequence 
  \[
   0
   \rightarrow 
\big(   \qsusp H_ 0 \rcx M \big)\lgth{i}
  \rightarrow 
\big(  H_0 \rcx \Sigma M  \big)\lgth{i}
  \rightarrow 
 \big( \qsusp_1 H_0 \rcx \Sigma M \big)\lgth{i+1}
  \rightarrow 
  0
  \]
of Theorem \ref{thm:susp_rcx_H0}.  Now  $\big(H_0 \rcx \Sigma M \big)\lgth{i}$ 
is at least $(2^i (c+2)-1)$-connected, by Corollary 
\ref{cor:connectivity_H0rcx}, hence Proposition \ref{prop:qsusp_1_conn} implies 
that $\big( \qsusp_1 H_0 \rcx \Sigma M \big)\lgth{i+1}$ is at least $2^{i+1} 
(c+2)$-connected, for $i \in \nat$.

The second point is proved by a similar argument, based upon the connectivity 
of 
$\Phi H_0 \rcx M (1)$ and hence of the image of $\imQ$. The conclusion for 
$H_0 \epsilon$ follows by putting these statements together.
\end{proof}

\begin{exam}
\label{exam:imQ_trivial}
 Suppose that $\imQ =0$, then one has 
 \[
  \qsusp_1 H_0 \rcx M 
  \cong 
  \Sigma \Phi H_0 \rcx M ( 1).
 \]
In this case, the short exact sequence of Theorem \ref{thm:susp_rcx_H0} 
part (\ref{item:ses_susp_qsusp}) for $\Sigma^{-1} M$ becomes:
 \[
   0
   \rightarrow 
   \qsusp 
  H_ 0 \rcx \Sigma^{-1} M
  \rightarrow 
  H_0 \rcx  M 
  \rightarrow 
  \Sigma \Phi H_0 \rcx M 
  \rightarrow 
  0   
  \]
  in $\qmgr$.
  
  If, moreover, $Q_0$ is trivial on $H_0 \rcx \Sigma^{-1}M$, then  $\qsusp 
  H_ 0 \rcx \Sigma^{-1} M \cong \Sigma H_ 0 \rcx \Sigma^{-1} M$. After 
suspension, the short exact sequence then becomes:
  \[
    0
   \rightarrow 
   \Sigma (
  \Sigma H_ 0 \rcx \Sigma^{-1} M ) 
  \rightarrow 
  (\Sigma H_0 \rcx  M) 
  \rightarrow 
  \Phi (\Sigma H_0 \rcx M)
  \rightarrow 
  0.  
  \]
Here $(\Sigma H_ 0 \rcx \Sigma^{-1} M )$ and $  (\Sigma H_0 \rcx  M) $ are 
length-graded unstable modules and 
the surjection $(\Sigma H_0 \rcx  M) 
  \rightarrow 
  \Phi (\Sigma H_0 \rcx M) $ is induced by $Sq_0$, with surjectivity a 
consequence of the vanishing of $Q_0$. 
  
  In this case, it follows that there is an identification
  \[
   \Omega  (\Sigma H_0 \rcx  M) \cong (  \Sigma H_ 0 \rcx \Sigma^{-1} M ).
  \]
  
  This situation arises for example when considering the case $M = \Sigma^n 
\cala^* $ for $n>0$ an integer (cf. Example \ref{exam:dual_Steenrod}).
\end{exam}

\section{An infinite delooping spectral sequence}
\label{sect:ss}

The spectral sequence constructed in this section is derived from a double 
complex. The construction is analogous to 
that of the Grothendieck spectral sequence associated to a composite of 
functors: one of the spectral sequences associated to the 
double complex degenerates and the second gives the required spectral 
sequence.

\subsection{Construction of the spectral sequence}

In Section \ref{sect:destab} introduced the exact functors 
$
 \rcx : \amod \rightarrow \ch \qmgr
$
and $\qcx : \qmgr \rightarrow 
\ch \amodgr$. The composite $\qcx \rcx$ defines an exact functor 
to homological 
bicomplexes in $\amodgr$, 
where 
\[
 (\qcx \rcx M) _{s,t} = (\qcx)_t (\rcx_s M)
\]
so that the bicomplex is concentrated in bidegrees $(s, t) \in \zed \times 
\nat$ 
and the terms of the bicomplex belong to $\amodgr$ (i.e. are length-graded 
$\cala$-modules).
In particular, this can be considered as a length-graded bicomplex in $\amod$. 
Because of the 
shifting properties given in Proposition \ref{prop:rcx_length}, without loss of 
generality, we may restrict to
length grading zero: 
\[
  (\qcx \rcx M)\lgth{0}.
\]

\begin{lem}
 For $M \in \amod$, $(\qcx \rcx M)\lgth{0}$ is a second quadrant, 
homological bicomplex (concentrated in the quadrant $s \leq 0$, $t\geq 0$).
\end{lem}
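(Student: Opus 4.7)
The plan is to verify the two support conditions separately; both reduce to tracking how the length grading propagates through the functors $\sing$ and $\indec$. The constraint $t \geq 0$ is immediate from the construction of $\qcx$: by definition $(\qcx N)_t = \indec \sing^{t+1} N$ is a homological chain complex concentrated in non-negative degrees, irrespective of $N$. Applied termwise to $\rcx M$, this gives $t \geq 0$ throughout the bicomplex, regardless of the length grading.

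The constraint $s \leq 0$ is where passage to length grading zero becomes essential. The key observation is that $\rcx_s M = (\sing \Sigma^{-s}M)(s)$ is concentrated in length gradings $\geq s$: the underlying object $\sing \Sigma^{-s}M$ is length-graded in $\nat$ by Proposition \ref{prop:properties_sing}, and the shift by $(s)$ raises this lower bound to $s$. Equivalently, $(\rcx_s M)\lgth{0} = \sing_{-s}\Sigma^{-s}M$, which vanishes for $s > 0$. To propagate this lower bound through $\qcx_t = \indec \sing^{t+1}$, I would check that both $\sing : \qmgr \to \qmgr$ and $\indec : \qmgr \to \amodgr$ preserve the property that $N\lgth{l} = 0$ for all $l < s$. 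For $\sing$, the components of $\sing N$ in length grading $l$ are assembled from pieces $\sing_j N\lgth{i}$ with $j \geq 0$ and $i + j = l$, so $l < s$ forces $i < s$ and hence each summand vanishes. For $\indec$, the component $(\indec N)\lgth{l}$ is a natural quotient of $N\lgth{l}$, either directly as the cokernel of the Dyer-Lashof action into length $l$, or via the adjunction with $\triv$. Iterating yields $(\qcx_t \rcx_s M)\lgth{0} = 0$ for every $s > 0$.

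The argument is essentially bookkeeping, and I anticipate no substantive obstacle: the only point requiring care is the preservation of lower bounds on length grading by $\sing$ and $\indec$, which is clean from Proposition \ref{prop:qmgr}.
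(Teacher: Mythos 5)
Your proof is correct and takes essentially the same approach as the paper, which observes tersely that the claim is an immediate consequence of $\rcx_s M$ being concentrated in positive length degree for $s>0$. You have simply made explicit the propagation of the lower bound on length grading through $\sing$ and $\indec$, which the paper leaves implicit.
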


\begin{proof}
 An immediate consequence of the fact that $\rcx_s M$ for $s>0$ is concentrated 
in positive length degree.
\end{proof}

\begin{rem}
\label{rem:boundedness}
If $M \in \amodc$, Proposition \ref{prop:properties_sing} provides a local 
finiteness property of this bicomplex: for  $n, d\in \zed$, 
$$\big((\qcx)_{n-s} (\rcx_s M)\big)_d =0$$ for $s \ll 0$. This is 
sufficient to ensure strong convergence of the spectral sequences considered 
below.
\end{rem}

There are two spectral sequences associated to the horizontal and vertical 
filtrations of the bicomplex.
The horizontal filtration is the decreasing filtration defined by 
\[
(\qcx \rcx M)\lgth{0}_{s \leq -n}
\]
for $n \in \nat$, so that $(\qcx  \rcx M)\lgth{0}_{s \leq 0} = (\qcx 
\rcx M)\lgth{0}$.

Since  $\rcx_s M = \sing \Sigma^{-s} M (s)$, which is acyclic for $\indec$, the 
associated spectral sequence has
\[
 E^1_{s,t} =\left\{ \begin{array}{ll}
                     M  & (s,t)=(0,0) \\
                     0 & \mathrm{otherwise}
                    \end{array}
\right.
\]
and degenerates at $E^1$.

The vertical filtration is the increasing filtration defined by 
\[
(\qcx \rcx M)\lgth{0} _{t\leq n}
\]
for $n \in \nat$, so that $(\qcx  \rcx M)\lgth{0} _{t\leq 0} = (\rcx 
M)\lgth{0}$.  

\begin{nota}
For $n \in \nat$, denote by $\vertic_n M \subset M$ the image in homology of 
the 
morphism of the total complexes associated to $(\qcx \rcx M)\lgth{0} 
_{t\leq n} \hookrightarrow (\qcx  \rcx M)\lgth{0}$
\end{nota}

\begin{thm}
\label{thm:ss}
 Let $M \in \amodc$.
\begin{enumerate}
 \item 
There is a natural second quadrant homological spectral sequence  $(E^r_{s,t}, 
d^r)$
 with $d_r$ of $(s,t)$-bidegree $(r-1, -r)$ and $E^2$-page: 
\[
 E^2_{s,t} =
\lder_t\indec \Big(
H_0 \rcx \Sigma^{-s}M
\Big)\lgth{-s}.
\]
\item 
There is a natural isomorphism
\[
 E^2_{-t, t} 
 \cong 
 \ker 
 \big \{
 \stein_t \Sigma^{-1} \Omega^\infty \Sigma^{1+t}M 
 \rightarrow 
 \stein_{t-1} \Sigma^{-1} \Omega^{\infty}_1 \Sigma^{t}M 
 \big \}.
\]
 \item 
The $t=0$ edge morphism identifies with the natural inclusion
\[ 
\vertic_0 M \cong 
\Sigma^{-1} \Omega^{\infty} \Sigma M  
\hookrightarrow 
 M
\]
in $(s,t)$-degree $(0,0)$ and is zero elsewhere.
\item
$E^2_{s,t}=0$ for $t>-s$, so that the spectral sequence 
is concentrated below the anti-diagonal and there is a second edge homomorphism:
\[
 E^\infty_{-t, t} \cong \vertic_t M / 
\vertic_{t-1} M
\hookrightarrow 
 E^2_{-t, t}.
 \]
\item 
The spectral sequence converges strongly to  $M$; 
$E^\infty_{s,t}=0$ if $s \neq -t$ and $E^\infty_{-t, t} \cong \vertic_t M / 
\vertic_{t-1} M$.
\end{enumerate}
\end{thm}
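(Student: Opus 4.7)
The plan is a hyperhomology spectral sequence computation applied to the length-zero bicomplex $(\qcx \rcx M)\lgth{0}$, which is concentrated in the second quadrant ($s \leq 0$, $t \geq 0$) because $\rcx_s M = (\sing \Sigma^{-s}M)(s)$ has length $\geq s$ in $\qmgr$. The two associated spectral sequences share the same abutment; the horizontal one degenerates to pin this abutment down as $M$, and the vertical one yields the announced SS. Specifically, for the horizontal filtration, $E^1_{s,t} = \lder_t \indec(\rcx_s M)\lgth{0}$; each $\rcx_s M$ is projective in the $\sing$-projective class (Proposition \ref{prop:proj_class_acyclic}), so only $t=0$ survives, equal to $(\Sigma^{-s}M)(s)\lgth{0}$, which vanishes except at $s=0$ where it is $M$. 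Hence the total complex has homology $M$ concentrated in degree zero, and strong convergence in (5) follows from the boundedness property of Remark \ref{rem:boundedness}, guaranteed by $M \in \amodc$.

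For the vertical filtration, exactness of each $\qcx_t : \qmgr \to \amodgr$ (Proposition \ref{prop:length_gr_Lder_q}) yields $E^1_{s,t} = \qcx_t(H_s \rcx M)\lgth{0}$. Substituting $H_s \rcx M \cong (H_0 \rcx \Sigma^{-s}M)(s)$ (Proposition \ref{prop:rcx_length}) and commuting the length-shift produces $E^1_{s,t} = \qcx_t(H_0 \rcx \Sigma^{-s}M)\lgth{-s}$; passing to $d^1$-homology then gives the $E^2$-page of (1). Part (4) follows from Corollary \ref{cor:vanishing_lder_indec}'s vanishing $\lder_t \indec(N)\lgth{-s} = 0$ for $t > -s$, and the injective edge homomorphism $E^\infty_{-t,t} \hookrightarrow E^2_{-t,t}$ is automatic since no higher differentials target the anti-diagonal from the established vanishing region.

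For part (2), Corollary \ref{cor:vanishing_lder_indec} applied to $N = H_0 \rcx \Sigma^t M$ in top length $t$ yields the Koszul-cycle description as the Steinberg kernel, with $N\lgth{0} = \Sigma^{-1}\Omega^\infty \Sigma^{1+t}M$ and $N\lgth{1} = \Sigma^{-1}\Omega^\infty_1 \Sigma^t M$ supplied by Proposition \ref{prop:rcx_length}. Part (3) is then read off: the $t=0$ row of the vertical filtration is $\qcx_0 \rcx M = \rcx M$ (since $\qcx_0$ is the forgetful functor), whose length-zero $H_0$ is $\Sigma^{-1}\Omega^\infty \Sigma M$, mapping into $M$ as the natural inclusion, and zero elsewhere since the abutment is concentrated in total degree zero. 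The main technical point to verify is the connectivity hypothesis $(H_0 \rcx \Sigma^{-s}M)\lgth{i} \in \amod_{\geq -1}$ needed to apply Corollary \ref{cor:vanishing_lder_indec}; this is supplied by Corollary \ref{cor:connectivity_H0rcx} together with the bounded-below hypothesis on $M$ (since $\Sigma^{-s}M$ for $s \leq 0$ is at least as connected as $M$), with the remaining work being bookkeeping for length shifts and the second-quadrant bidegree convention.
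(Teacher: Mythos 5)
Your proposal is correct and follows essentially the same route as the paper: the second-quadrant bicomplex $(\qcx \rcx M)\lgth{0}$, horizontal filtration collapsing via $\sing$-projectivity to identify the abutment as $M$, vertical filtration giving the stated $E^2$-page via exactness of $\qcx$ and the shift isomorphism $H_s \rcx M \cong (H_0 \rcx \Sigma^{-s}M)(s)$, with the vanishing, edge morphisms and convergence all drawn from Corollary \ref{cor:vanishing_lder_indec} and Remark \ref{rem:boundedness} exactly as in the paper. The one small superfluity is your appeal to bounded-belowness to get $N\lgth{i} \in \amod_{\geq -1}$: since $(H_0\rcx N)\lgth{i} = \Sigma^{-1}\Omega^\infty_i\Sigma^{1-i}N$ is the desuspension of an unstable module, this holds automatically for any $N \in \amod$; the bounded-below hypothesis is needed only for strong convergence.
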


\begin{proof}
 This is the second spectral sequence associated to the bicomplex  $(\qcx 
\rcx M)\lgth{0}$. Since the functor $\qcx$ is 
exact, 
  $$E^1_{s,t} \cong \Big((\qcx)_t  H_s \rcx M\Big)\lgth{0}$$
  and the differential $d^1$ is that of $\qcx$. This identifies the $E^2$-page 
of the spectral sequence, by using the isomorphism $H_s \rcx M \cong H_0 \rcx 
\Sigma^{-s} M (s)$
 provided by Proposition \ref{prop:rcx_length}. Identification of the edge 
morphism is straightforward.

The vanishing above the anti-diagonal for $s\leq -2$ follows from Corollary 
\ref{cor:vanishing_lder_indec} which also gives the identification of 
$E^2_{-t,t}$. 
Since the spectral sequence is homological, the anti-diagonal provides a second 
edge homomorphism.
 
Strong convergence, under the hypothesis that $M \in \amodc$, is a consequence 
of Remark \ref{rem:boundedness} and the associated filtration of $M$ is given 
by 
the construction of the spectral sequence.
\end{proof}

\begin{cor}
\label{cor:second edge hom}
 For $M \in \amodc$, the edge morphism $E^\infty_{-1,1} 
\hookrightarrow E^2_{-1, 1}$ is an isomorphism, 
 hence 
 \begin{eqnarray*}
  \vertic_1 M/ \vertic_0 M
  &\cong &  \ker 
  \{
\stein_1 \Sigma^{-1} \Omega^\infty \Sigma^2 M 
\rightarrow 
\Sigma^{-1} \Omega^{\infty}_1 \Sigma M
  \}.
 \end{eqnarray*}
\end{cor}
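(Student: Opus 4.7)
The plan is to show that the edge morphism $E^\infty_{-1,1}\hookrightarrow E^2_{-1,1}$ is in fact an isomorphism by verifying that no non-trivial higher differentials can affect the entry $(-1,1)$, and then to identify $E^2_{-1,1}$ explicitly via Theorem~\ref{thm:ss}(2) using the fact that $\stein_0$ is the identity.

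First, I would perform the degree chase on the differentials. The differential $d^r$ has bidegree $(r-1,-r)$, so for $r\geq 2$ the outgoing differential from $(-1,1)$ has target $(r-2, 1-r)$, whose $t$-coordinate $1-r\leq -1$ is negative. Since the bicomplex $(\qcx\rcx M)\lgth{0}$ is concentrated in $t\geq 0$ (in particular its associated spectral sequence vanishes outside the second quadrant), every such target group is zero. Thus all outgoing differentials from $(-1,1)$ are trivial.

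Next, incoming differentials into $(-1,1)$ originate from $(-r, r+1)$ for $r\geq 2$. Here $r+1>r=-(-r)$, so the source sits strictly above the anti-diagonal. By Theorem~\ref{thm:ss}(4) this forces $E^2_{-r,r+1}=0$, and subquotients of zero are zero, so $E^r_{-r,r+1}=0$ for every $r\geq 2$. Consequently no differential can hit $(-1,1)$ either, and we conclude $E^\infty_{-1,1}\cong E^2_{-1,1}$, proving that the edge morphism is an isomorphism.

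It remains to identify the target. Applying Theorem~\ref{thm:ss}(2) with $t=1$ gives
\[
E^2_{-1,1}\cong \ker\bigl\{\stein_1\Sigma^{-1}\Omega^\infty\Sigma^{2}M\rightarrow \stein_0\Sigma^{-1}\Omega^\infty_1\Sigma M\bigr\},
\]
and since $\stein_0$ is the identity functor by Proposition~\ref{prop:Steinberg}(1), the right-hand side is exactly the kernel appearing in the statement. Combined with the identification $E^\infty_{-t,t}\cong \vertic_t M/\vertic_{t-1}M$ from Theorem~\ref{thm:ss}(5) specialised to $t=1$, this yields the stated formula for $\vertic_1 M/\vertic_0 M$. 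There is essentially no obstacle beyond bookkeeping of bidegrees: the position $(-1,1)$ lies on the anti-diagonal adjacent to the edge $(0,0)$, so the concentration below the anti-diagonal combined with second-quadrant vanishing automatically collapses the spectral sequence at this bidegree.
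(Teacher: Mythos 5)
Your argument is correct and is essentially the paper's argument made explicit: the paper's proof consists of the single observation that the terms of $E^2_{-1,1}$ are permanent cycles (which your bidegree chase on outgoing and incoming differentials verifies in detail, using the second-quadrant vanishing for $t<0$ and the anti-diagonal vanishing of Theorem~\ref{thm:ss}(4) respectively), combined with the identification of $E^2_{-1,1}$ which the paper draws from Example~\ref{exam:lder_q_one} while you read it off directly from Theorem~\ref{thm:ss}(2) with $\stein_0 = \mathrm{id}$. The two identification routes coincide, since Theorem~\ref{thm:ss}(2) is itself deduced from the Koszul-complex description that underlies Example~\ref{exam:lder_q_one}.
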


\begin{proof}
The terms of $E^2_{-1, 1}$ are permanent cycles and $E^2_{-1,1}$ is given by 
Example \ref{exam:lder_q_one}.
\end{proof}

\begin{cor}
 \label{cor:almost unstable}
 For $M \in \amodc$ and $t \in \nat$, 
 the module $\vertic_t M \subset M$ admits a finite filtration such that the 
associated graded satisfies:
 \[
  \Sigma^{2^t} \gr\big( \vertic_t M\big) \in \unst
 \]
is unstable.
\end{cor}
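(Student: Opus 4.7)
The plan is to build the required filtration of $\vertic_t M$ in two steps: start from the filtration $\vertic_0 M \subset \vertic_1 M \subset \cdots \subset \vertic_t M$ provided by the spectral sequence of Theorem \ref{thm:ss}, and then refine each successive quotient using the instability properties of the Steinberg functors established in Corollary \ref{cor:sing_stein_instability}.

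First I would invoke parts (4) and (5) of Theorem \ref{thm:ss}: for each $0 \leq i \leq t$, the subquotient $\vertic_i M/\vertic_{i-1}M$ embeds into $E^2_{-i,i}$, which is itself a submodule of $\stein_i \Sigma^{-1} \Omega^\infty \Sigma^{1+i} M$. The key observation is that this target has the form $\stein_i \Sigma^{-1} N_i$ with $N_i \in \unst$, so Corollary \ref{cor:sing_stein_instability} (applied with $d=1$) furnishes a finite filtration of $\stein_i \Sigma^{-1} N_i$ whose associated graded $G_i$ satisfies $\Sigma^{2^i} G_i \in \unst$. The case $i=0$ is trivial, since $\stein_0$ is the identity and $\Sigma \vertic_0 M \cong \Omega^\infty \Sigma M$ is already unstable.

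Next I would pull this filtration back to the subobject $\vertic_i M / \vertic_{i-1}M$ by intersection. Because $\unst$ is closed under submodules and under further suspension, the resulting associated graded becomes unstable once shifted by $\Sigma^{2^t}$, which dominates $\Sigma^{2^i}$ for every $i \leq t$. Interleaving these refinements with the original $\vertic_\bullet M$-filtration then produces a finite filtration of $\vertic_t M$ whose associated graded $\gr(\vertic_t M)$ satisfies $\Sigma^{2^t} \gr(\vertic_t M) \in \unst$, as required.

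I do not anticipate a serious obstacle: the substantive content is packaged in Corollary \ref{cor:sing_stein_instability}, and the remainder is routine manipulation of finite filtrations. The only step warranting care is the standard verification that the refinement of each layer genuinely lifts to a filtration of $\vertic_t M$ inside $\amod$, via the usual interleaving construction.
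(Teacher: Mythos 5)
Your proposal is correct and follows essentially the same route as the paper: invoke Theorem \ref{thm:ss} to embed each layer $\vertic_i M/\vertic_{i-1}M$ into $\stein_i\Sigma^{-1}\Omega^\infty\Sigma^{1+i}M$, then apply Corollary \ref{cor:sing_stein_instability} with $d=1$ and interleave the resulting filtrations. The paper's proof is the same argument stated more tersely (it simply cites the embedding and the corollary and says the result follows); your version spells out the refinement and interleaving steps that the paper leaves implicit.
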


\begin{proof}
For $n \in \nat$,  the subquotient $\vertic_n M/ \vertic_{n-1} M $ is a 
submodule of $\stein_n 
\Sigma^{-1} \Omega^\infty \Sigma^{1+n} M$ by Theorem \ref{thm:ss}, hence the 
result 
 follows by applying Corollary \ref{cor:sing_stein_instability}.
\end{proof}

\begin{rem}
\label{rem:attentive}
 The attentive reader will observe that the $E^2$-page of the spectral sequence 
of Theorem \ref{thm:ss} depends upon $\Sigma^{-1} \Omega^{\infty} 
\Sigma^{t+1} M$ for all $t\in \nat$. 
 Since  any bounded-below $\cala$-module $M$ can be recovered as
  \[
   M \cong \mathrm{colim}_d  \Sigma^{-d} \Omega^{\infty} \Sigma^d M, 
  \]
  the input would appear to contain the information to be calculated. That the 
spectral sequence {\em does} provide an effective tool for calculations is 
explained by the connectivity results 
  of the following section, in particular Theorem \ref{thm:convergence}.
\end{rem}

\subsection{Connectivity results}
\label{subsect:conn_ss}

\begin{prop}
\label{prop:conn_estimate_ss}
Let $M \in \amodc$ be $c$-connected and consider the spectral sequence of 
Theorem \ref{thm:ss}.
Then $E^2_{-t, t}$ has connectivity at least
\[
 2^t (d(M)+1) - (t+2)  \geq 2^t d(M)-1
\]
where $d(M)= \sup \{ (c+t+1), -1 \}$.

If $c+t+1 \leq -1$, then $E^2_{-t,i}$, for $0 \leq i \leq t$ has connectivity at 
least $-(i+2)$.  
\end{prop}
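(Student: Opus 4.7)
The plan is to leverage the explicit identifications of $E^2$-page terms already in hand, reducing the connectivity bound to the Steinberg-functor bound of Corollary \ref{cor:conn_stein}.

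For the main assertion concerning $E^2_{-t,t}$, I would first invoke Theorem \ref{thm:ss}(3), which embeds
\[
E^2_{-t,t} \hookrightarrow \stein_t \Sigma^{-1} \Omega^\infty \Sigma^{1+t}M,
\]
so that its connectivity is controlled by the target. The module $\Omega^\infty \Sigma^{1+t}M$ is unstable, hence $(-1)$-connected, and sits inside $\Sigma^{1+t}M$, which is $(c+t+1)$-connected; it is therefore $d(M)$-connected with $d(M) = \sup\{c+t+1,-1\}$. Desuspending, $\Sigma^{-1}\Omega^\infty \Sigma^{1+t}M$ is $(d(M)-1)$-connected, and Corollary \ref{cor:conn_stein} applied with parameter $d = d(M)$ delivers the stated bound $2^t(d(M)+1) - (t+2)$. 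The secondary inequality $2^t(d(M)+1) - (t+2) \geq 2^t d(M) - 1$ reduces to $2^t \geq t+1$, which holds for all $t \in \nat$.

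For the second assertion, under the hypothesis $c+t+1\leq -1$ one has $d(M) = -1$, and I would instead use Corollary \ref{cor:vanishing_lder_indec} to realize
\[
E^2_{-t,i} \cong H_i\big(\kz[t] H_0 \rcx \Sigma^t M\big),
\]
which is a subquotient of the homological-degree-$i$ term $\stein_i (H_0 \rcx \Sigma^t M)\lgth{t-i}$. By Proposition \ref{prop:rcx_length}, the inner module is isomorphic to $\Sigma^{-1}\Omega^\infty_{t-i} \Sigma^{1+i}M$; since the right derived functors $\Omega^\infty_*$ take values in $\unst$ by construction, this is the desuspension of an unstable module, hence automatically $(-2)$-connected with no further hypothesis on $c$. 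Corollary \ref{cor:conn_stein} applied with $d=-1$ then yields that $\stein_i$ of this module is at least $\big(2^i \cdot 0 - (i+2)\big) = -(i+2)$-connected, and the bound on $E^2_{-t,i}$ follows as a subquotient.

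The main obstacle is essentially bookkeeping: aligning the two conventions for connectivity (raw $c$ versus the $(d-1)$-formulation of Corollary \ref{cor:conn_stein}) and tracking the role of $c+t+1$ in selecting the appropriate regime. There is no substantive analytic content beyond the Steinberg connectivity bound; the second, unconditional, bound for $i\leq t$ is precisely what one obtains by discarding all information about $M$ beyond the automatic $(-1)$-connectivity of the unstable modules $\Omega^\infty_j \Sigma^{1+i}M$, which is why it becomes interesting exactly under the hypothesis $c+t+1\leq -1$.
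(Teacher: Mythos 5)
Your proof is correct and follows essentially the same route as the paper: reduce via Corollary \ref{cor:vanishing_lder_indec} (equivalently, the identification in Theorem \ref{thm:ss}) to the connectivity of $\stein_t$ applied to $\Sigma^{-1}\Omega^\infty\Sigma^{1+t}M$ (respectively $\Sigma^{-1}\Omega^\infty_{t-i}\Sigma^{1+i}M$ for the off-diagonal terms), and then apply Proposition \ref{prop:conn_stein}/Corollary \ref{cor:conn_stein} together with the observation that the derived functors of destabilization take values in unstable, hence $(-1)$-connected, modules. Your closing remark that the second bound in fact holds unconditionally and the hypothesis $c+t+1\leq -1$ merely selects the regime in which it is not subsumed is a correct and worthwhile clarification of the paper's terse ``similar argument.''
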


\begin{proof}
For the first statement, by Corollary \ref{cor:vanishing_lder_indec}, it 
suffices to consider the connectivity 
of $\stein_t \Sigma^{-1} \Omega^{\infty} \Sigma^{1+t}M$. The module $\Sigma^{-1} 
\Omega^{\infty} \Sigma^{1+t}M$ has lowest class in degree 
at least $d(M)$, using the fact that $\Omega^{\infty} \Sigma^{1+t}M$ is 
unstable, hence is always $(-1)$-connected. The result then follows from 
Proposition \ref{prop:conn_stein}. 

The second statement is proved by a similar argument.
\end{proof}

\begin{rem}
This leads to an explicit form of the convergence stated in Theorem 
\ref{thm:ss}, by replacing Remark \ref{rem:boundedness} by the 
result of Proposition \ref{prop:conn_estimate_ss}. In particular, as soon as 
$c+t 
>0$, this ensures exponential growth of the connectivity with $t$.
\end{rem}

\begin{exam}
\label{exam:Sigma_n_cala}
 Consider the fundamental example $M= \Sigma^{n} \cala^*$, with $n \in \zed$ 
(cf. Example \ref{exam:dual_Steenrod}). Then $\Omega^\infty _i \cala^* =0$ for 
$i >0$ and $\Omega^{\infty} \Sigma^{t+1} M = \Omega^\infty \Sigma^{n+t+1}\cala$ 
(for $t\in \nat$) is $0$-connected unless $1+t+n=0$. In particular, for 
$E^{2}_{-t, t}$ to be non-zero in negative degree, this requires $n<0$.  

For $n<0$, the only terms in the spectral sequence of negative degree arise 
from 
 \[
E^2_{n+1,-(n+1)} = \stein_{-(n+1)} \Sigma^{-1} \field.
 \]
In particular, these are permanent cycles in the spectral sequence and 
contribute to the negative degree part of $\Sigma^{n} \cala$. 
By Proposition \ref{prop:conn_stein}, the lowest degree non-trivial class in 
$\stein _{-(n+1)} \Sigma^{-1} \field$ is in degree $-(-(n+1) +1 ) = n$, 
as expected. This is again a manifestation of Koszul duality (cf. Section 
\ref{subsect:Koszul}).
\end{exam}

Proposition \ref{prop:conn_estimate_ss} gives a generalized {\em stable range} 
for the increasing 
filtration $\vertic_n M$ of $M$.

\begin{cor}
\label{cor:filt_semistable}
 Let $M \in \amodc$ be $c$-connected and $n \in \nat$.
Then 
 \[
  \vertic_n M \hookrightarrow M
 \]
is an isomorphism in degrees 
$
 \leq 2^{n+1} (d(M)+1) - (n+3), 
$
where $d(M)= \sup \{ (c+n+2), -1 \}$.
\end{cor}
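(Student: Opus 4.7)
The plan is to read off the claim from the spectral sequence of Theorem \ref{thm:ss} by bounding the cokernel $M/\vertic_n M$ using the connectivity estimates of Proposition \ref{prop:conn_estimate_ss}.

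First I would observe that Theorem \ref{thm:ss} provides strong convergence of the spectral sequence to $M$, with $E^\infty$ concentrated on the antidiagonal and $E^\infty_{-t,t} \cong \vertic_t M/\vertic_{t-1}M$. Strong convergence (cf.\ Remark \ref{rem:boundedness}) implies that $M/\vertic_n M$ admits a locally finite filtration whose successive subquotients are the $E^\infty_{-t,t}$ for $t \geq n+1$, each injecting into $E^2_{-t,t}$ via the second edge homomorphism of Theorem \ref{thm:ss}. Consequently, the claim reduces to the connectivity estimate
\[
 \mathrm{conn}\bigl(E^2_{-t,t}\bigr) \;\geq\; 2^{n+1}(d(M)+1) - (n+3) \qquad \text{for all } t \geq n+1.
\]

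By Proposition \ref{prop:conn_estimate_ss}, $E^2_{-t,t}$ has connectivity at least $2^t(d'(t)+1) - (t+2)$, where $d'(t) := \sup\{c+t+1,-1\}$; evaluating at $t = n+1$ recovers exactly the target bound, since $d(M) = d'(n+1)$ by definition. Only the monotonicity of this lower bound for $t > n+1$ remains. Since $d'(t)$ is non-decreasing in $t$, the generic case $d(M) \geq 0$ is straightforward: the function $f(t) := 2^t(d'(t)+1) - (t+2)$ is strictly increasing on $\{t \geq n+1\}$, as consecutive differences $f(t+1) - f(t)$ are bounded below by $2^t(c + t + 4) - 1$, which is positive once $c + t + 4 \geq 1$. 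This settles the main case.

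The main obstacle is the degenerate case $d(M) = -1$ (equivalently, $c \leq -n-3$), in which the exponential factor vanishes at $t = n+1$ and the naive monotonicity argument fails. One resolves this using the second clause of Proposition \ref{prop:conn_estimate_ss}, which provides the alternative bound $\mathrm{conn}(E^2_{-t,t}) \geq -(t+2)$ whenever $c + t + 1 \leq -1$: since $M$ is itself $c$-connected with $c \leq -(n+3)$, the isomorphism is automatic in degrees $\leq c$, so that only the finitely many degrees $c < d \leq -(n+3)$ require further verification. In those degrees the local finiteness of the bicomplex, together with the differentials of the spectral sequence emanating from or landing on the antidiagonal, ensures that the potentially surviving contributions to the cokernel vanish, completing the proof.
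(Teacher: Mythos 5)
Your main reduction---passing to the antidiagonal of the $E^\infty$-page, embedding $E^\infty_{-t,t}\cong\vertic_t M/\vertic_{t-1}M$ into $E^2_{-t,t}$ by the second edge homomorphism, and bounding the connectivity of $E^2_{-t,t}$ for $t\geq n+1$ via Proposition \ref{prop:conn_estimate_ss}---is the intended argument; the paper gives no explicit proof beyond citing that Proposition. Your monotonicity computation for $f(t)=2^t(d'(t)+1)-(t+2)$ with $d'(t)=\sup\{c+t+1,-1\}$ is also sound when $d(M)\geq 0$: then $d'(t)=c+t+1$ throughout $t\geq n+1$, the consecutive differences are $2^t(c+t+4)-1>0$, and $f(n+1)$ is exactly the stated bound.

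Your handling of the degenerate case $d(M)=-1$ is, however, a genuine gap that cannot be repaired as sketched. The ``alternative bound'' of connectivity at least $-(t+2)$ from the second clause of Proposition \ref{prop:conn_estimate_ss} is \emph{worse} than $-(n+3)$ once $t>n+1$, so it does not rescue the estimate; and the cancellation you invoke, ``ensured by the differentials of the spectral sequence,'' is not there in general. Take $M=\Sigma^{-(n+3)}\cala^*$, so $c=-(n+4)$, $d(M)=-1$, and the claimed range is degrees $\leq -(n+3)$. Then $\Omega^\infty\Sigma^{1+t}M=F(t-n-2)^*=0$ for all $t\leq n$, so $\vertic_n M=0$, whereas $M_{-(n+3)}=\cala^*_0=\field\neq 0$; the class in degree $-(n+3)$ is accounted for by a permanent cycle in $E^2_{-(n+2),n+2}\cong\stein_{n+2}\Sigma^{-1}\field$ which has no incoming or outgoing differentials (for $n=0$ this is precisely the paper's Example \ref{exam:Sigma_n_cala} with $n=-3$ there). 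Indeed $f(n+2)=-(n+4)<-(n+3)=f(n+1)$ here, so the minimum of $f$ over $t>n$ is not attained at $n+1$. So in the range $c\leq -(n+4)$ the statement of the Corollary itself fails, and the correct argument should either assume $c\geq -(n+3)$ (the boundary $c=-(n+3)$ being vacuous in the claimed range, since $M$ vanishes there), or replace the conclusion by the bound $\min_{t>n}f(t)$ that the $E^2$-connectivity actually provides.
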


Forgetting the $\cala$-action, the spectral sequence splits into degree summands 
(as for the length grading), in particular
one can truncate and consider the behaviour in degrees $\leq D$, for an integer 
$D$. 
Then, combining Corollary \ref{cor:filt_semistable} with the connectivity 
result 
Corollary \ref{cor:conn_lder_q_HO} leads to the following:

\begin{thm}
\label{thm:convergence}
 Let $M \in \amodc$ be $(-1)$-connected and let $n \in \nat$. The spectral 
sequence of Theorem \ref{thm:ss} to calculate $M$ in degrees $\leq 2^{n+1}(n+1) 
-1$ 
 depends only upon 
 \[
  \tau\lgth{\leq t} H_0 \rcx \Sigma^t (M_{\leq c_t})
 \]
for $0 \leq t \leq n$, where 
\begin{eqnarray*}
 c_0 &=& 2^{n+1}(n+1) -1 \\
 c_t &=& 2^{n-t+2}(n+1) \mathrm{\ for \ } t>0.
\end{eqnarray*}
\end{thm}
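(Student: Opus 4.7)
The strategy is to combine the semistable range for $\vertic_\bullet M$ (Corollary \ref{cor:filt_semistable}) with the Koszul description of the columns of the $E^2$-page (Corollary \ref{cor:vanishing_lder_indec}) and the truncation estimates for $\lder_* \indec H_0 \rcx$ (Corollaries \ref{cor:connectivity_H0rcx} and \ref{cor:conn_lder_q_HO}). First, invoking Corollary \ref{cor:filt_semistable} for the $(-1)$-connected module $M$ shows that $\vertic_n M \hookrightarrow M$ is an isomorphism in degrees $\leq 2^{n+1}(n+2) - (n+3)$, which dominates $2^{n+1}(n+1) - 1$ since $2^{n+1} \geq n + 2$ for all $n \in \nat$. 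Hence in the target degree range $M \cong \vertic_n M$, and it suffices to identify the filtration quotients $E^\infty_{-t, t} \cong \vertic_t M / \vertic_{t-1} M$ for $0 \leq t \leq n$.

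Next, I would argue that only the $E^2$-terms in the region $\{(-u, v) : 0 \leq v \leq u \leq n\}$ are needed in degrees $\leq 2^{n+1}(n+1) - 1$. Each $E^\infty_{-t, t}$ with $t \leq n$ is an iterated kernel of outgoing differentials $d^r: E^r_{-t, t} \to E^r_{-t+r-1, t-r}$, whose targets sit in columns $u' = t - r + 1 \leq n - 1$; there are no incoming differentials to the anti-diagonal since the spectral sequence vanishes strictly above it (Theorem \ref{thm:ss}). Moreover, columns with $u > n$ vanish in the relevant degree range: by Corollary \ref{cor:connectivity_H0rcx} applied to the $(u-1)$-connected module $\Sigma^u M$, the term $E^2_{-u, v} = (\lder_v \indec H_0 \rcx \Sigma^u M)\lgth{u}$ is at least $(2^u u - 1)$-connected, which exceeds $2^{n+1}(n+1) - 1$ once $u \geq n + 1$. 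Since every $d^r$ preserves the internal $\cala$-module degree, restriction of the spectral sequence to degrees $\leq 2^{n+1}(n+1) - 1$ is unambiguous.

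Finally, I would truncate $M$ in each column. By Corollary \ref{cor:vanishing_lder_indec}, $E^2_{-u, v} \cong H_v(\kz[u] H_0 \rcx \Sigma^u M)$, so the column of index $u$ is determined by $\tau\lgth{\leq u} H_0 \rcx \Sigma^u M$. For $u > 0$, Corollary \ref{cor:conn_lder_q_HO} applied to the $(-1)$-connected module $\Sigma^u M$ with bound $c_u + u$ (noting that $(\Sigma^u M)_{\leq c_u + u} = \Sigma^u (M_{\leq c_u})$) gives an isomorphism on $(\lder_* \indec H_0 \rcx (-))\lgth{u}$ in degrees $\leq 2^{u-1}(c_u + u) - 1 = 2^{n+1}(n+1) + 2^{u-1} u - 1$, which majorizes $2^{n+1}(n+1) - 1$. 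For $u = 0$, Proposition \ref{prop:conn_H0rcxM} (case $i = 0$) identifies $(H_0 \rcx M)\lgth{0}$ with $(H_0 \rcx M_{\leq c_0})\lgth{0}$ in degrees $\leq c_0 = 2^{n+1}(n+1) - 1$. These combine to prove the theorem. The main obstacle is the arithmetic book-keeping, in particular verifying that $c_u = 2^{n-u+2}(n+1)$ is simultaneously large enough for the truncation estimate and compatible with the length truncation $\tau\lgth{\leq u}$; the rest of the argument is structural and uses only the preceding results.
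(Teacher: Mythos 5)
Your proof is correct and follows exactly the route the paper indicates, namely combining Corollary \ref{cor:filt_semistable} (to replace $M$ by $\vertic_n M$ in the relevant degree range), the vanishing of columns $s < -n$ in that range via Corollary \ref{cor:connectivity_H0rcx}, and Corollary \ref{cor:conn_lder_q_HO} (together with Proposition \ref{prop:conn_H0rcxM} for $t=0$) to truncate $M$ in each remaining column; the length-truncation $\tau\lgth{\leq t}$ is then justified by Corollary \ref{cor:vanishing_lder_indec}, since $\kz[t]$ only sees lengths $0$ through $t$. The arithmetic checks out: $2^{t-1}(c_t + t) - 1 = 2^{n+1}(n+1) + 2^{t-1}t - 1 \geq 2^{n+1}(n+1) - 1$ and $2^{n+1}(n+2) - (n+3) \geq 2^{n+1}(n+1) - 1$; the only minor slip is the phrase that the connectivity $2^u u - 1$ \emph{exceeds} $2^{n+1}(n+1)-1$ at $u = n+1$ (it equals it, which already suffices since $c$-connected means zero in degrees $\leq c$).
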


\subsection{The suspension morphism and the spectral sequence}

The natural suspension morphism of complexes, $\epsilon : \rcx M \rightarrow 
\Sigma^{-1} \rcx \Sigma M$ induces a morphism of bicomplexes 
\[
 \qcx  \rcx M 
 \rightarrow 
 \qcx \Sigma^{-1} \Sigma M
\]
and hence, by composing with the natural transformation $\qcx \Sigma{-1} 
\rightarrow \Sigma^{-1} \qcx $ of Proposition \ref{prop:epsilon_resolutions}
and restricting to length grading zero, a morphism of bicomplexes 
\[
 \big( \qcx  \rcx M \big) \lgth{0}
 \rightarrow 
\Sigma^{-1} \big( \qcx \rcx \Sigma M \big)\lgth{0}.
\]

\begin{thm}
\label{thm:ss_susp}
 For $M \in \amodc$,
\begin{enumerate}
 \item 
the suspension morphism induces a morphism of spectral 
sequences 
\[
 E^r_{s,t} (M) \rightarrow \Sigma^{-1} E^r_{s,t} (\Sigma M);
\]
\item
given on the $E^2$-page by the natural transformation 
\[
 \big (\lder_t \indec H_0 \rcx \Sigma^{-s} M \big) \lgth{-s}
 \rightarrow 
 \Sigma^{-1} \big (\lder_t \indec H_0 \rcx \Sigma^{1-s} M \big) \lgth{-s}.
\]
provided by Proposition \ref{prop:epsilon_resolutions};
\item 
on $E^2_{-t,t}$, the morphism is induced by the natural transformation 
\[
 \stein_t \Sigma^{-1}  \Omega^\infty \Sigma^{1+t}M 
\rightarrow 
\Sigma^{-1} \stein_t \Sigma^{-1}  \Omega^\infty \Sigma^{2+t}M 
\]
induced by the natural transformation $\stein_t \Sigma^{-1} \rightarrow 
\Sigma^{-1} \stein_t$ of Proposition 
\ref{prop:stein_sigma_surject} together with the natural transformation induced 
by $\Omega^\infty \hookrightarrow \Sigma^{-1} \Omega^\infty \Sigma$;
\item 
the morphism induces a natural monomorphism of filtered objects
\[
 \vertic _t M \hookrightarrow \Sigma^{-1} \vertic_t \Sigma M
\]
which, for $t=0$, is the natural transformation
\[
 \Sigma^{-1} \Omega^\infty \Sigma M 
 \hookrightarrow 
 \Sigma^{-2} \Omega^\infty \Sigma^2 M .
\]
\end{enumerate}
 \end{thm}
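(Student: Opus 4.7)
The plan is to derive all four parts from the single morphism of bicomplexes
\[
(\qcx \rcx M)\lgth{0} \longrightarrow \Sigma^{-1}(\qcx \rcx \Sigma M)\lgth{0}
\]
introduced just before the statement. This morphism respects both filtrations of the bicomplex, so part (1) is immediate: it induces a morphism of the vertical spectral sequences of Theorem \ref{thm:ss}.

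For parts (2) and (3), I identify the $E^2$-morphism in two steps. Exactness of $\qcx$ implies that the vertical $E^1$-morphism is obtained by applying $\qcx$ levelwise to the morphism $H_* \epsilon : H_* \rcx M \to \Sigma^{-1} H_* \rcx \Sigma M$ in $\qmgr$ and then composing with the natural transformation $\qcx \Sigma^{-1} \to \Sigma^{-1} \qcx$ of Proposition \ref{prop:epsilon_resolutions}. Passing to vertical homology and invoking the shift isomorphism $H_s \rcx M \cong H_0 \rcx \Sigma^{-s} M (s)$ of Proposition \ref{prop:rcx_length} gives the $E^2$-morphism claimed in (2). For (3), I combine Corollary \ref{cor:vanishing_lder_indec}, which identifies $E^2_{-t,t}$ as the kernel of the Koszul differential $d^\stein_t$, with Proposition \ref{prop:susp_koszul_cx}, which describes the morphism of such kernels induced by $H_0 \epsilon$ in terms of $\stein_t \Sigma^{-1} \twoheadrightarrow \Sigma^{-1} \stein_t$; the length-zero part of $H_0 \epsilon$ itself is, by Proposition \ref{prop:rcx_length}(1), the natural inclusion $\Sigma^{-1}\Omega^\infty \Sigma^{t+1} M \hookrightarrow \Sigma^{-2}\Omega^\infty \Sigma^{t+2} M$ obtained by desuspending the unit $\Omega^\infty \hookrightarrow \Sigma^{-1}\Omega^\infty \Sigma$ applied to $\Sigma^{t+1} M$.

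For part (4), I exploit the fact, already used in the proof of Theorem \ref{thm:ss}, that the \emph{horizontal} spectral sequence of the bicomplex degenerates to $M$ at $(0,0)$. The induced morphism of horizontal spectral sequences is, on $E^1_{0,0} = M$, the length-zero component of $\epsilon_M : \sing M \to \Sigma^{-1}\sing \Sigma M$; by the short exact sequence of Corollary \ref{cor:qsusp}, this is an isomorphism and, under the canonical identifications $(\sing M)\lgth{0} \cong M \cong (\Sigma^{-1}\sing \Sigma M)\lgth{0}$, it is the identity of $M$. Hence the induced map on total homology is the identity $M \to \Sigma^{-1}\Sigma M$, and strong convergence (Remark \ref{rem:boundedness}, applied to the morphism) forces the morphism of filtered objects to be the inclusion $\vertic_t M \subseteq \Sigma^{-1}\vertic_t \Sigma M$ as submodules of $M$, hence in particular a monomorphism. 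For $t=0$, Theorem \ref{thm:ss}(4) identifies source and target as $\Sigma^{-1}\Omega^\infty \Sigma M$ and $\Sigma^{-2}\Omega^\infty \Sigma^2 M$ respectively, and the inclusion agrees with the natural one by part (3) specialized to $\stein_0 = \mathrm{id}$.

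The main obstacle is the compatibility check in part (3): tracing the morphism of bicomplexes through the identification of $E^2_{-t,t}$ with the kernel of $d^\stein_t$ requires unwinding the degeneration argument behind Corollary \ref{cor:vanishing_lder_indec} (which rests on the length filtration together with Theorem \ref{thm:Koszul_property}), and then matching the resulting induced morphism of Koszul complexes with that described by Proposition \ref{prop:susp_koszul_cx}. The remaining parts are largely formal consequences of the exactness of $\qcx$ and $\rcx$ and the degeneration of the horizontal spectral sequence.
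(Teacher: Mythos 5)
The paper's proof of this theorem is literally the single word \emph{Straightforward}, so your task was to supply the details that the paper omits; you have done so and along the expected lines, starting from the morphism of bicomplexes displayed just before the theorem statement. Your parts (1) and (2) correctly exploit the exactness of $\qcx$ to identify the induced morphism on the $E^1$- and $E^2$-pages (noting, rightly, that the $E^2$-morphism is the composite of $\lder_t\indec$ applied to $H_0\epsilon$ with the natural transformation of Proposition \ref{prop:epsilon_resolutions}, which the paper compresses into ``provided by''). Part (3) correctly factors through the Koszul-complex comparison of Proposition \ref{prop:susp_koszul_cx}, and your identification of the length-zero component of $H_0\epsilon$ with the desuspended counit $\Sigma^{-1}\Omega^\infty\Sigma^{1+t}M\hookrightarrow\Sigma^{-2}\Omega^\infty\Sigma^{2+t}M$ is accurate. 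Part (4) is the only place the paper's terseness hides a small argument: you extract it cleanly by observing that the degenerate horizontal spectral sequence shows the induced map on total homology is the identity of $M$, so that the filtration morphism is an inclusion of submodules of $M$. One could tighten the phrasing in (4) slightly — the relevant computation is that $\lder_0\indec(\epsilon_M)\lgth{0}$ is the identity, not merely that the length-zero component of $\epsilon_M$ is — but since $\lder_0\indec\sing M\cong M$ and $\indec$ commutes with $\Sigma^{-1}$, this amounts to the same thing, and your use of Corollary \ref{cor:qsusp} to see that $(\epsilon_M)\lgth{0}$ is an isomorphism is fine. No genuine gap; this is the intended ``straightforward'' verification.
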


\begin{proof}
 Straightforward.
\end{proof}

\subsection{A characterization of instability}

As an immediate application of the spectral sequence of Theorem \ref{thm:ss}, 
one has the following characterization of instability in terms of the $E^2$-page
 of the spectral sequence.

\begin{thm}
\label{thm:characterize_U}
For $M \in \amodc$, the following conditions are equivalent:
\begin{enumerate}
 \item 
 $\Sigma M \in \unst$.
 \item 
For all $t>0$ and $i>0$, $\big(\lder_t \indec H_0 \rcx \Sigma^i M\big)\lgth{i} 
=0$.
\item 
For all $t>0$, $\big(\lder_t\indec H_0 \rcx \Sigma^t M\big)\lgth{t}=0$.
\item 
For all $t>0$, the morphism from the Koszul complex 
\[
 \stein_t \Sigma^{-1} \Omega^\infty \Sigma^{1+t}M 
 \rightarrow 
 \stein_{t-1} \Sigma^{-1} \Omega^{\infty}_1 \Sigma^{t}M 
\]
is injective.
\end{enumerate}
\end{thm}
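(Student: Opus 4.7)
The plan is to establish the cycle $(1) \Rightarrow (2) \Rightarrow (3) \Leftrightarrow (4) \Rightarrow (1)$, each step drawing on machinery already assembled.

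For $(1) \Rightarrow (2)$, I would observe that since $\Sigma : \unst \to \unst$ (as recalled in Proposition \ref{prop:Omega_es}), the hypothesis $\Sigma M \in \unst$ implies $\Sigma^i M \in \unst$ for every $i \geq 1$. Proposition \ref{prop:H0_unstable} then identifies $H_0 \rcx \Sigma^i M \cong \sing \Sigma^i M$ in $\qmgr$. But objects of the form $\sing N$ are precisely the relative projectives used to compute $\lder_* \indec$, so by Proposition \ref{prop:proj_class_acyclic} (see also the example following it, reinterpreted in the length-graded setting via Proposition \ref{prop:length_gr_Lder_q}) one has $\lder_t \indec \sing \Sigma^i M = 0$ for $t > 0$, which gives (2) (in all length gradings, in particular in $\lgth{i}$).

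The implication $(2) \Rightarrow (3)$ is obtained by specialising $i = t$. For the equivalence $(3) \Leftrightarrow (4)$, I would apply Corollary \ref{cor:vanishing_lder_indec} to $N := H_0 \rcx \Sigma^t M$ with $n = t$: the Koszul complex $\kz[t] N$ realises $\big(\lder_t \indec N\big)\lgth{t}$ in top homological degree as the kernel of $d^\stein_t : \stein_t N\lgth{0} \to \stein_{t-1} N\lgth{1}$. The identifications $N\lgth{0} = \Sigma^{-1} \Omega^\infty \Sigma^{1+t} M$ and $N\lgth{1} = \Sigma^{-1} \Omega^\infty_1 \Sigma^{t} M$ provided by Proposition \ref{prop:rcx_length} then translate this kernel into precisely the kernel of the morphism appearing in (4), so vanishing of (3) and injectivity in (4) are the same statement. (One should also check that $N\lgth{0} = \Sigma^{-1}\Omega^\infty\Sigma^{1+t}M$ is $(-1)$-connected so that Corollary \ref{cor:vanishing_lder_indec} applies; this is automatic, since $\Omega^\infty$ takes values in $\unst$ and $\Sigma^{1+t}M \in \amodc$ has $1+t > 0$.)

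Finally, for $(4) \Rightarrow (1)$, I invoke the spectral sequence of Theorem \ref{thm:ss}. Condition (4), via the equivalence just established, says $E^2_{-t,t} = 0$ for all $t > 0$. The anti-diagonal edge homomorphism $E^\infty_{-t,t} \hookrightarrow E^2_{-t,t}$ from Theorem \ref{thm:ss}(5) then forces $E^\infty_{-t,t} = 0$ for $t > 0$, i.e.\ $\vertic_t M / \vertic_{t-1} M = 0$. Hence the filtration is constant: $\vertic_0 M = \vertic_t M$ for all $t$, and strong convergence (which holds because $M \in \amodc$) gives $\vertic_0 M = M$. By Theorem \ref{thm:ss}(3) this means $M \cong \Sigma^{-1} \Omega^\infty \Sigma M$, whence $\Sigma M \cong \Omega^\infty \Sigma M \in \unst$. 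The only genuinely subtle point is this last step: one must be satisfied that the strong convergence asserted in Theorem \ref{thm:ss} really forces $\bigcup_t \vertic_t M = M$ when all successive quotients vanish — but Remark \ref{rem:boundedness} together with the connectivity bounds of Proposition \ref{prop:conn_estimate_ss} make this straightforward, so no serious obstacle arises.
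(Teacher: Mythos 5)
Your argument is correct and follows the paper's proof essentially step for step: the identifications $(1)\Rightarrow(2)$ via Propositions~\ref{prop:H0_unstable} and~\ref{prop:proj_class_acyclic}, $(3)\Leftrightarrow(4)$ via Corollary~\ref{cor:vanishing_lder_indec}, and $(3)\Rightarrow(1)$ via the spectral sequence of Theorem~\ref{thm:ss} are precisely the paper's ingredients. (One small imprecision: you only verify the connectivity hypothesis of Corollary~\ref{cor:vanishing_lder_indec} for $N\lgth{0}$, and you call it $(-1)$-connected when it is only guaranteed to lie in $\amod_{\geq -1}$, i.e.\ $(-2)$-connected; but the same observation that the $\Omega^\infty_i$ land in $\unst$ handles every $N\lgth{i}$, and $(-2)$-connectivity is all the corollary needs, so the argument stands.)
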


\begin{proof}
The equivalence of (3) and (4) follows from Corollary 
\ref{cor:vanishing_lder_indec}. 

The implication (1)$\Rightarrow$(2) follows from Propositions 
\ref{prop:H0_unstable} and \ref{prop:proj_class_acyclic} and 
(2)$\Rightarrow$(3) is clear. 

 The spectral sequence of Theorem \ref{thm:ss} provides the implication 
(3)$\Rightarrow$(1), since it implies that the $t=0$ edge morphism is an 
isomorphism 
in total $(s,t)$-degree zero.
\end{proof}

\section{Examples}
\label{sect:exam_ss}

Some insight into the behaviour of the spectral sequence of Theorem \ref{thm:ss} 
is gained by considering 
basic examples such as the case $M = \Sigma^{-2} N$, with $N \in \unst$ and the 
cases $M= \Sigma^n \cala^*$, for $n \in \zed$.

\subsection{The spectral sequence for desuspensions of unstable modules}
\label{subsect:Sigma_-2_unstable}

 Consider the spectral sequence of Theorem \ref{thm:ss} for $M= \Sigma^{-2}N$,  
where $N \in\unst$. This is not covered by Theorem \ref{thm:characterize_U}. 
Then $E^2_{0,0} = \Sigma^{-1}\Omega^\infty \Sigma^{-1} N \cong \Sigma^{-1} 
\Omega N$. To calculate the $E^2_{-1, *}$ column it 
suffices to consider $\tau\lgth{\leq 1} H_0 \rcx \Sigma^{-1}N$, this identifies 
as 
 \[
  \xymatrix{
  \Sigma^{-1}\Omega^\infty N = \Sigma^{-1} N
  \ar@/^1pc/@{.>}[r]
  &
  \Sigma^{-1} \Omega^{\infty}_1 \Sigma^{-1}N,
  }
 \]
where the dotted arrow indicates the action of Dyer-Lashof operations.

The short exact sequence given by Proposition \ref{prop:Omega_infty_desusp} for 
$\Omega^\infty_1 \Sigma^{-1}N$ identifies as
\begin{eqnarray*}
 0
 \rightarrow 
 \Omega_1 N
 \rightarrow 
 \Omega^\infty_1 \Sigma^{-1}N
 \rightarrow 
 \Omega \Omega^\infty _1 N
 \rightarrow 
 0
\end{eqnarray*}
and $ \Omega \Omega^\infty _1 N\cong \sing_1 N$, since $N$ is unstable. 

The inclusion 
$\Sigma^{-1} \Omega_1 N
 \rightarrow 
\Sigma^{-1} \Omega^\infty_1 \Sigma^{-1}N$
induces a natural short exact sequence in $\qmgr$:
\begin{eqnarray}
\label{eqn:Omega_infty_1_unstable}
 0
 \rightarrow 
 \Sigma^{-1} \Omega_1 N (1) 
 \rightarrow 
 \tau\lgth{\leq 1} H_0 \rcx \Sigma^{-1}N
 \rightarrow 
 \tau\lgth{\leq 1}\Sigma^{-1} \sing N 
 \rightarrow 
 0.
\end{eqnarray}
 
 Now, by Corollary \ref{cor:vanishing_lder_indec},  $\big(\lder_* \indec 
\Sigma^{-1}\sing 
N \big)\lgth{1}$ is calculated as the homology of the complex:
 \[
 \sing_1 \Sigma^{-1} N  
 \rightarrow 
 \Sigma^{-1} \sing_1 N ,
\]
 where the morphism is surjective, hence the non-zero homology corresponds to 
$\big(\lder_1 \indec \Sigma^{-1}\sing N\big)\lgth{1} $
  given by the kernel, which is isomorphic to $ \Phi \sing \Sigma^{-1} N$  by 
 the short exact sequence of Proposition \ref{prop:qmgr}.

The long exact sequence for $\lder_* \indec$ associated to the 
short exact sequence (\ref{eqn:Omega_infty_1_unstable}) induces an exact 
sequence:
 \[
 \big( \lder_1 \indec \Sigma^{-1}\Omega_1 N (1)\big)\lgth{1} 
  \rightarrow 
 \big( \lder_1 \indec H_0 \rcx \Sigma^{-1}N\big) \lgth{1}
 \rightarrow 
 \big( \lder_1 \indec \Sigma^{-1} \sing N \big) \lgth{1}
 \rightarrow 
 \Sigma^{-1} \Omega_1 N (1),
 \]
 where the left hand term is zero for length degree reasons.
 
Hence this identifies with the exact sequence 
\[
 0
 \rightarrow 
  \big( \lder_1 \indec H_0 \rcx \Sigma^{-1}N\big) \lgth{1}
  \rightarrow 
  \Phi \Sigma^{-1} N (1) 
  \rightarrow 
 \Sigma^{-1} \Omega_1 N (1)
\]
and the right hand morphism is induced by the canonical surjection $\Phi N 
\twoheadrightarrow \Sigma \Omega_1 N$ (recalling that $\Phi \Sigma^{-1} \cong 
\Sigma^{-2} \Phi$). 

Thus the $E^2$-page of the spectral sequence is closely related to the exact 
sequence of Proposition \ref{prop:Omega_es}
\[
 0
 \rightarrow 
 \Sigma \Omega N 
 \rightarrow 
 N
 \rightarrow 
 \Phi N 
 \rightarrow 
 \Sigma \Omega_1 N
 \rightarrow 
 0
\]
(after applying $\Sigma^{-2}$). In particular
\begin{eqnarray*}
 \vertic_0 (\Sigma^{-2} N) &\cong&\Sigma^{-1} \Omega N \\
 \vertic_1 (\Sigma^{-2} N)/ \vertic_0 (\Sigma^{-2} N) & \cong & \Sigma^{-2} 
\ker 
\{ \Phi N \rightarrow \Sigma \Omega_1 N \}
\\
\vertic_1 (\Sigma^{-2} N) & \cong & \Sigma^{-2} N.
\end{eqnarray*}

The suspension morphism of spectral sequences can also be described explicitly 
in this case, 
via the natural transformations
\[
 \vertic_t \Sigma^{-2}N 
\rightarrow 
\Sigma^{-1} \vertic_t \Sigma^{-1}N.
\]
Now, $M= \Sigma^{-1}N$ falls within the case of Theorem 
\ref{thm:characterize_U}, in particular 
$\vertic_t \Sigma^{-1}N = \Sigma^{-1} N$ for all $t \geq 0$. By inspection
\[
  \vertic_0 \Sigma^{-2}N = \Sigma^{-1} \Omega N 
\rightarrow 
\Sigma^{-1} \vertic_0 \Sigma^{-1} N = \Sigma^{-2}N 
\]
 is the natural inclusion and on $\vertic_1$ one has the identity.

\begin{rem}
 The above approach may be applied to 
consider modules of the form $M = \Sigma^{-d} N$ for $N \in \unst$ and 
any $d \in \nat$. The case $d=3$ already shows how the complexity in calculating 
the $E^2$-page
increases significantly with $d$.   
\end{rem}

\subsection{The spectral sequence for $\Sigma^n \cala^*$, with $n \in \nat$}
\label{subsect:Sigma_n_cala}

In \cite{Miller}, Miller explains (working cohomologically) how to recover 
$\cala/ \cala Sq^1$ from $\untor_* (\field, \field)$, the 
homological filtration corresponding to the length filtration of $\cala/ \cala 
Sq^1$ (see \cite[Example 3.3.12]{Miller}).

Similarly, the spectral sequence of Theorem \ref{thm:ss} can be used to 
calculate 
$\Sigma^n \cala^*$ (now working in homology). The $E^2$-page is concentrated on 
the anti-diagonal, 
hence the spectral sequence degenerates and 
\[
 E^\infty_{-t,t} = \stein_t \Sigma^{-1} F (n+t+1).
\]
Here, as in \cite{Miller}, the calculation is only carried out at the  level of 
graded vector spaces, although 
this can be made much more precise.

Recall the definition of an admissible sequence from Definition 
\ref{def:sequence_admissibility} and the fact that the excess of an admissible 
sequence identifies as 
$$
e(I) = i_1 - (\sum_{j>1} i_j).
$$
By convention, the excess of $\emptyset$ is taken to be $\infty$.

\begin{nota}
 For $I$ a sequence of length $s$ and $0 \leq j \leq  s$, write 
 \[
  \omega_j I := I \backslash\{ i_1, \ldots , i_{j} \},
 \]
so that $\omega_0 I = I$, $\omega_s I= \emptyset$ and, for $0< j <s$, 
$\omega_{j-1} I = \{i_j \} \cup 
\omega_{j}I$.
\end{nota}

Clearly if $I$ is admissible, so is $\omega_j I$ and $e (\omega_j I) \leq e 
(I)$ for $j <s$. 

\begin{prop}
\label{prop:unicity-gamma}
Let $d \in \zed$.  For $I$ an admissible sequence of length $s$, there is a 
unique integer $j \in \{0, \ldots , s\}$ such that both the following 
conditions hold
 \[
 \begin{array}{ll}
  e(\omega_j I) \leq d+j & \mathrm{if \ } j< s \\
  e(\omega_{j-1}I) \geq d+j &  \mathrm{if \ } j >0.
 \end{array}
\]
\end{prop}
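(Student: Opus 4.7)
The plan is to reformulate both conditions as inequalities involving a single auxiliary function and then show that this function is strictly monotone, which makes the existence and uniqueness of $j$ essentially tautological.

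First, I would introduce $g : \{0, 1, \ldots, s-1\} \to \zed$ defined by
\[
g(k) := e(\omega_k I) - k,
\]
and rewrite the two conditions. Condition 1 at $j$ (for $j < s$) becomes $g(j) \leq d$; condition 2 at $j$ (for $j > 0$) becomes $g(j-1) \geq d+1$, i.e.\ $g(j-1) > d$. Thus the proposition reduces to showing that there is a unique $j \in \{0, \ldots, s\}$ with $g(j-1) > d$ if $j > 0$ and $g(j) \leq d$ if $j < s$.

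Next, I would establish that $g$ is strictly decreasing on $\{0, 1, \ldots, s-1\}$. Using the explicit formula $e(J) = j_1 - \sum_{k \geq 2} j_k$ for an admissible sequence $J$ together with $\omega_k I = \{i_{k+1}, \ldots, i_s\}$, a direct computation gives
\[
g(k) - g(k+1) \;=\; e(\omega_k I) - e(\omega_{k+1} I) + 1 \;=\; (i_{k+1} - 2 i_{k+2}) + 1 \;\geq\; 1
\]
for $0 \leq k \leq s-2$, the inequality following from the admissibility of $I$. Hence $g$ is strictly decreasing on the interval where it is finite.

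With this monotonicity in hand, existence and uniqueness are immediate. Let $S := \{k \in \{0, \ldots, s-1\} : g(k) \leq d\}$; by strict monotonicity, $S$ is either empty or of the form $\{j_0, j_0+1, \ldots, s-1\}$. Set $j := j_0$ if $S$ is nonempty, and $j := s$ otherwise. Then both conditions hold at this $j$ (condition 2 at $j = j_0 > 0$ uses minimality; condition 2 at $j = s$ uses that $S = \emptyset$ forces $g(s-1) > d$). For uniqueness, any $j' < j$ has $g(j') > d$ by the definition of $j$, violating condition 1 at $j'$; any $j' > j$ has $j' - 1 \geq j$ with $g(j'-1) \leq g(j) \leq d$ by monotonicity, violating condition 2 at $j'$.

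There is no real obstacle: the only subtlety is bookkeeping of the boundary cases $j \in \{0, s\}$ (where one of the conditions is vacuous) and of the convention $e(\emptyset) = \infty$, which is naturally absorbed by writing things in terms of $g$. The whole content of the proposition is the one-line inequality $i_{k+1} - 2 i_{k+2} + 1 \geq 1$.
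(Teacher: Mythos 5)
Your proof is correct and rests on the same underlying observation as the paper's: that the excess decreases along suffixes, so that (after accounting for the shift by $j$) the two conditions amount to locating a unique threshold. Your reformulation via $g(k) = e(\omega_k I) - k$, which is strictly decreasing, is a cleaner packaging; the paper's proof states the non-strict monotonicity $e(\omega_k I) \geq e(\omega_{k+1} I)$ and picks $j$ maximal with the conditions holding, which addresses uniqueness directly but leaves existence implicit, whereas you handle both explicitly.
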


\begin{proof}
 The definition of excess implies that, for $I$ admissible of length $s>0$, 
 \[
  e (I) \geq e(\omega_1 I) \geq e(\omega_2 I) \geq e(\omega_{s-1} I) >0.
 \]
Choose $j \in \{0, \ldots , s+1\}$ maximal such that the conditions of the 
Proposition hold. If $j=0$, there is nothing to prove. 
Otherwise, by the above, for $0 \leq k < j$ one has 
$$e(\omega_k I) \geq e(\omega_{j-1} I) \geq d+j > d+k.$$
Hence there is no solution to the given inequalities for $k <j$.  
\end{proof}

\begin{rem}
 For $j=0$, the condition of Proposition \ref{prop:unicity-gamma} reduces to $e 
(I) \leq d$; for  $j=s$ it reduces to $i_s \geq  d+ s$.  
\end{rem}

\begin{cor}
 \label{cor:unique_decomposition}
For $d \in \zed$ and $I \not = \emptyset$ an admissible sequence of length $s$, 
there 
is a unique $l \in \{0 , \ldots , s  \}$ such that 
\[
 \begin{array}{ll}
  e(\omega_j I) \leq d + j  & \mathrm{if\ } j <s \\
 i_j \geq  d+ d(\omega_j I) + j & \mathrm{if \ } j >0. 
 \end{array}
\]
\end{cor}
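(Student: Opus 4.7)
The plan is to deduce Corollary \ref{cor:unique_decomposition} as a straightforward reformulation of Proposition \ref{prop:unicity-gamma}. The first displayed inequality, $e(\omega_l I) \le d+l$ for $l<s$, is literally the first condition appearing in Proposition \ref{prop:unicity-gamma}, so nothing has to be done there. The entire content is therefore to check that the second inequality, $i_l \ge d + d(\omega_l I)+l$ for $l>0$, is equivalent to the second condition $e(\omega_{l-1} I) \ge d+l$ of Proposition \ref{prop:unicity-gamma}.

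First I would unpack the excess. By definition,
\[
 e(\omega_{l-1} I) \;=\; i_l \;-\; \sum_{k>l} i_k \;=\; i_l \,-\, d(\omega_l I),
\]
since $\omega_{l-1} I = \{ i_l \} \cup \omega_l I$ and the excess of an admissible sequence is its first term minus the degree of its tail (using the formula for $e(I)$ recorded just before Proposition \ref{prop:unicity-gamma} and the obvious fact that $\omega_{l-1}I$ is still admissible). Rearranging, the inequality $e(\omega_{l-1}I) \ge d+l$ is literally $i_l \ge d + d(\omega_l I) + l$.

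Having matched the two pairs of conditions, I would invoke Proposition \ref{prop:unicity-gamma} directly to obtain the existence and uniqueness of the index $l \in \{0,\ldots,s\}$. No separate argument is needed in the boundary cases $l=0$ or $l=s$, since the corresponding condition is vacuous on both sides of the equivalence. I do not anticipate any substantive obstacle; the only subtlety to be careful about is the indexing convention (remembering that $\omega_{l-1}I$ has first term $i_l$ and tail $\omega_l I$), but that is dictated by the notation fixed just before Proposition \ref{prop:unicity-gamma}.
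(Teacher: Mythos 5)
Your proposal is correct and follows exactly the same route as the paper's own proof: rewrite $e(\omega_{l-1}I) = i_l - d(\omega_l I)$ using $\omega_{l-1}I = \{i_l\}\cup\omega_l I$, observe that this makes the second condition equivalent to $e(\omega_{l-1}I)\ge d+l$, and then invoke Proposition \ref{prop:unicity-gamma}. Nothing further is needed.
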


\begin{proof}
 Suppose that $j >0$.  Since $\omega_{j-1} I = \{i_j \} \cup \omega_j (I)$, one 
has 
 $e(\omega _{j-1}I) = i_j - d(\omega_j I)$, where $d(\omega_s I)=0$, by 
convention.
 Hence the condition $i_j \geq  d + d(\omega_j I) + j$ is equivalent to 
$e(\omega _{j-1}I)\geq  d+j$. The result therefore follows from  Proposition 
\ref{prop:unicity-gamma}.
\end{proof}

Corollary \ref{cor:unique_decomposition} is the combinatorial input to 
the spectral sequence calculating $\Sigma^n \cala^*$, together with 
the  calculation  of $\stein_* \Sigma^d \field$ given in 
Proposition \ref{prop:admissible_basis} and the following classical fact (see 
\cite{schwartz_book} for example):

\begin{prop}
\label{prop:basis-F(n)}
The free (cohomological) unstable module $F(n)$ on a generator of degree $n$ 
has basis $Sq^I \iota_n$, where $|\iota_n|= n$ and $I$ is an admissible 
sequence 
with $e(I) \leq n$.
\end{prop}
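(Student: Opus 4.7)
The plan is to establish spanning and linear independence separately, following the classical treatment (see \cite[Chapter 1]{schwartz_book}).

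For spanning, first I would invoke the Serre--Cartan theorem: the free $\cala$-module on $\iota_n$ admits the basis $\{ Sq^I \iota_n : I \text{ admissible} \}$, so a fortiori this family spans $F(n)$. Expanding the excess as $e(I) = \sum_j (i_j - 2 i_{j+1}) = i_1 - \sum_{j>1} i_j$, one sees that when $e(I) > n$ the first entry $i_1$ strictly exceeds $n + \sum_{j>1} i_j = |Sq^{i_2} \cdots Sq^{i_s}\iota_n|$; the cohomological instability axiom $Sq^i y = 0$ for $i > |y|$ therefore forces $Sq^I \iota_n = 0$ in $F(n)$. Hence $\{ Sq^I \iota_n : I \text{ admissible},\ e(I) \leq n \}$ spans $F(n)$.

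For linear independence I would exhibit a concrete realization: the universal property of $F(n)$ yields a morphism of unstable modules $F(n) \to H^*(BV_n) = \field[x_1, \ldots, x_n]$ sending $\iota_n$ to $x_1 \cdots x_n$, and one checks by an explicit computation with the Cartan formula that the admissible monomials of excess $\leq n$ act on $x_1 \cdots x_n$ to produce linearly independent polynomials in $H^*(BV_n)$. Equivalently, one may compute $\dim_\field F(n)_d$ via a Hilbert series count and verify that it agrees with the cardinality of admissible sequences $I$ with $e(I) \leq n$ and $d(I) = d - n$ (this count being purely combinatorial from Definition \ref{def:sequence_admissibility}).

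The spanning argument is purely formal; the substantive content and main obstacle is linear independence, which requires a concrete model of $F(n)$ (or an independent dimension calculation) to rule out further collapse beyond the obvious vanishing coming from cohomological instability.
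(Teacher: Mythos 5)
The paper does not prove this proposition; it simply records it as a ``classical fact'' with a pointer to \cite{schwartz_book}, so there is no in-paper argument to compare against. Your spanning half is correct and complete: the Serre--Cartan admissible basis of the free $\cala$-module spans the quotient $F(n)$, and for an admissible $I$ with $e(I)>n$ the relation $e(I)=i_1-\sum_{j>1}i_j$ forces $i_1 > |Sq^{i_2}\cdots Sq^{i_s}\iota_n|$, whence $Sq^I\iota_n=0$ by the instability axiom. That part is a genuine proof.

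The linear independence half, however, is where the entire content of the statement lives, and your write-up leaves it at ``one checks by an explicit computation with the Cartan formula.'' That check is not routine: it is essentially the Steenrod--Epstein argument showing that the annihilator of $x_1\cdots x_n$ in $\cala$ is spanned by the admissible monomials of excess $>n$, and it requires a careful inductive analysis of how admissibles of excess $\leq n$ act on the top product class (typically by filtering by weight and using a leading-term argument), not a one-line Cartan-formula computation. Your alternative via a Hilbert-series count is also incomplete as stated, and in fact slightly circular: to know $\dim_\field F(n)_d$ independently you need some structural input, e.g.\ the short exact sequence $0\to\Phi F(n)\to F(n)\to\Sigma F(n-1)\to 0$ coming from the adjunction $\Omega\dashv\Sigma$ together with the vanishing of $\Omega_1$ on $F(n)$ (reducedness), and that vanishing itself requires an argument. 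Either route is fine and both are classical, but as written you have correctly identified \emph{where} the work is without actually doing it; a proof should either carry out the Steenrod--Epstein detection argument in $H^*(BV_n)$ or set up the recurrence $\hilb_{F(n)}(t)=\hilb_{F(n)}(t^2)+t\,\hilb_{F(n-1)}(t)$ with a justification of the exact sequence it rests on.
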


\begin{prop}
\label{prop:calc_n_lderq_F}
 For $0< n \in \nat$ and $t\in \nat$,  as graded vector spaces
 \[
 \stein_t  \Sigma^{-1} F (n+t+1) 
 \cong 
 \langle \sigma_{I'} Sq^{I''} \iota _n \rangle
 \]
where the sum ranges over admissible sequences $I'$, $I''$ such that
\begin{enumerate}
 \item 
 $l(I') = t$;
 \item 
  $e (I'') \leq (n+1)+t$;
  \item 
  $i'_t \geq (n+1) + d(I'') + t $ if $t>0$.
\end{enumerate}
The element $\sigma_{I'} Sq^{I''} \iota _n$ has degree $d(I') + d(I'') + n$. 
\end{prop}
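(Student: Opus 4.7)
The plan is to reduce the computation to Propositions \ref{prop:basis-F(n)} and \ref{prop:admissible_basis} by forgetting the $\cala$-module structure on $\Sigma^{-1} F(n+t+1)$. This is legitimate because, as observed in the proof of Proposition \ref{prop:Steinberg} (cf.\ also Proposition \ref{prop:lder_indec_on_trivials_exact}), the underlying graded vector space of $\stein_t M$ depends only on the underlying graded vector space of $M$, so that $\stein_t$ commutes with direct-sum decompositions at this level.

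First, I would apply Proposition \ref{prop:basis-F(n)} to write down a basis $\{Sq^{I''} \iota_{n+t+1}\}$ of $F(n+t+1)$ indexed by admissible sequences $I''$ with $e(I'') \leq n+t+1 = (n+1)+t$, the element $Sq^{I''} \iota_{n+t+1}$ lying in degree $n+t+1+d(I'')$. Desuspending yields a graded-vector-space decomposition
\[
\Sigma^{-1} F(n+t+1) \cong \bigoplus_{I''} \Sigma^{n+t+d(I'')} \field.
\]
Applying $\stein_t$ summand by summand, Proposition \ref{prop:admissible_basis} then provides a basis $\{\sigma_{I'} \iota_{d-t}\}$ of $\stein_t \Sigma^d \field$, for $d = n+t+d(I'')$, indexed by admissible $I'$ of length $t$ with $i'_t > d$; the element $\sigma_{I'}\iota_{d-t}$ lies in degree $d(I') + d - t$. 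Combining the two steps and relabelling $\sigma_{I'}\iota_{n+d(I'')}$ as the formal symbol $\sigma_{I'} Sq^{I''} \iota_n$ produces the basis asserted in the statement.

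The only work left is arithmetic bookkeeping: the strict inequality $i'_t > n+t+d(I'')$ coming from Proposition \ref{prop:admissible_basis} rewrites as $i'_t \geq (n+1)+d(I'')+t$, matching condition (3); conditions (1) and (2) are immediate from the two bases; and the degree computation $d(I') + (n+t+d(I'')) - t$ simplifies to $d(I') + d(I'') + n$, as claimed. No real obstacle arises, since the proposition is essentially a reindexing assembled from the two cited basis results.
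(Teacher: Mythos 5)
Your proof is correct and follows essentially the same route as the paper's own sketch: take the admissible-sequence basis of $F(n+t+1)$ from Proposition \ref{prop:basis-F(n)}, desuspend, then apply Proposition \ref{prop:admissible_basis} degree by degree, with the final inequalities and degree count being routine bookkeeping. The only thing you make more explicit than the paper is the justification that $\stein_t$ may be applied summand-by-summand at the level of underlying graded vector spaces, which is a fair point to record even though the paper leaves it implicit.
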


\begin{proof}
 A basis of $\Sigma^{-1} F(n+t+1)$ is indexed by elements $\Sigma^{-1} Sq^{I''} 
\iota_{n+t+1}$, where $I''$ is admissible with $e (I'') \leq n+1+t$
  and the element has degree $d(I'') + n + t$. Proposition 
\ref{prop:admissible_basis} then gives the stated basis for 
  $\stein_t \Sigma^{-1} F (n+t+1)$.
\end{proof}

Combining Corollary \ref{cor:unique_decomposition} with Proposition 
\ref{prop:calc_n_lderq_F} shows that the spectral sequence calculates $\Sigma^n 
\cala^*$, as expected.

\begin{rem}
\ 
\begin{enumerate}
\item 
It is instructive to consider the suspension morphism of Theorem 
\ref{thm:ss_susp} in this case. 
Details are left to the reader.
 \item 
 This analysis extends to all integers $n \in \zed$. (Cf. Example 
\ref{exam:Sigma_n_cala}.)
\end{enumerate}
\end{rem}

\begin{rem} 
The infinite delooping spectral sequence of \cite{Miller} can also be used to 
recover $\Sigma^n \cala$, for $n 
\in \nat$; this relies upon an analysis of $\untor_* (\field, \Sigma F(n-1))$. 

 It is interesting to compare this with the above calculation. For this, 
Corollary \ref{cor:unique_decomposition} is replaced by the following 
observation for a fixed natural number $n \in \nat$:

 For an admissible sequence $I \not = \emptyset$ of length $s$, there is a 
unique integer $j \in \{0 , \ldots , s \}$ such that 
 the following conditions are satisfied:
 \[
  \begin{array}{ll}
  e(\omega_j I) \leq n& \mathrm{if \ } j<s \\
  i_j > d (\omega_j I ) +n & \mathrm{if \ } j> 0,
  \end{array}
 \]
where $d(\omega_s I) =0$, by convention. 

There is a degree shift which occurs 
in the calculation, due to the intervention of the homological degree of 
$\untor_s$. 
 \end{rem}

 \begin{rem}
  The above arguments based upon decompositions of admissible sequences are 
related to the calculations occurring in \cite[Section 5]{MH}, 
   which involve allowable monomial bases of the Dyer-Lashof algebra.
 \end{rem}


\providecommand{\bysame}{\leavevmode\hbox to3em{\hrulefill}\thinspace}
\providecommand{\MR}{\relax\ifhmode\unskip\space\fi MR }
\providecommand{\MRhref}[2]{%
  \href{http://www.ams.org/mathscinet-getitem?mr=#1}{#2}
}
\providecommand{\href}[2]{#2}

\end{document}